\numberwithin{equation}{section}
\theoremstyle{plain}
\newtheorem{thm}{\protect\theoremname}[section]
  \theoremstyle{remark}
  \newtheorem{rem}[thm]{\protect\remarkname}
  \theoremstyle{plain}
  \newtheorem{prop}[thm]{\protect\propositionname}
\theoremstyle{remark}
\newtheorem*{convention*}{Convention}
  \theoremstyle{definition}
  \newtheorem{defn}[thm]{\protect\definitionname}
  \theoremstyle{plain}
  \newtheorem{lem}[thm]{\protect\lemmaname}
  \theoremstyle{plain}
  \newtheorem{cor}[thm]{\protect\corollaryname}
  \theoremstyle{definition}
  \newtheorem{example}[thm]{\protect\examplename}
  \theoremstyle{plain}
  \newtheorem{question}[thm]{\protect\questionname}
\let\mathcal=\CMcal
\let\ldash=\l
\renewcommand{\theenumi}{(\alph{enumi})}
\DeclareSymbolFont{YHlargesymbols}{OMX}{yhex}{m}{n}
\DeclareMathAccent{\wideparen}{\mathord}{YHlargesymbols}{"F3}
  \providecommand{\corollaryname}{Corollary}
  \providecommand{\definitionname}{Definition}
  \providecommand{\examplename}{Example}
  \providecommand{\lemmaname}{Lemma}
  \providecommand{\propositionname}{Proposition}
  \providecommand{\questionname}{Question}
  \providecommand{\remarkname}{Remark}
\providecommand{\theoremname}{Theorem}
\begin{document}
\global\long\def\e{\varepsilon}
\global\long\def\N{\mathbb{N}}
\global\long\def\Z{\mathbb{Z}}
\global\long\def\Q{\mathbb{Q}}
\global\long\def\R{\mathbb{R}}
\global\long\def\C{\mathbb{C}}
\global\long\def\G{\mathbb{G}}
\global\long\def\QG{\mathbb{G}}
\global\long\def\QH{\mathbb{H}}
\global\long\def\bn{\mathbb{N}}
\global\long\def\br{\mathbb{R}}
\global\long\def\bc{\mathbb{C}}
\global\long\def\bt{\mathbb{T}}

\global\long\def\H{\EuScript H}
\global\long\def\J{\mathcal{J}}
\global\long\def\K{\mathcal{K}}
\global\long\def\KHilb{\EuScript K}
\global\long\def\a{\alpha}
\global\long\def\be{\beta}
\global\long\def\l{\lambda}
\global\long\def\om{\omega}
\global\long\def\z{\zeta}
\global\long\def\gnsmap{\upeta}
\global\long\def\Aa{\mathcal{A}}
\global\long\def\Aalg{\mathsf{A}}
\global\long\def\Sant{\mathtt{S}}
\global\long\def\Rant{\mathtt{R}}

\global\long\def\Ree{\operatorname{Re}}
\global\long\def\Img{\operatorname{Im}}
\global\long\def\linspan{\operatorname{span}}
\global\long\def\slim{\operatorname*{s-lim}}
\global\long\def\clinspan{\operatorname{\overline{span}}}
\global\long\def\co{\operatorname{co}}
\global\long\def\pres#1#2#3{\prescript{#1}{#2}{#3}}

\global\long\def\tensor{\otimes}
\global\long\def\tensormin{\mathbin{\otimes_{\mathrm{min}}}}
\global\long\def\tensorn{\mathbin{\overline{\otimes}}}

\global\long\def\A{\forall}

\global\long\def\i{\mathrm{id}}
\global\long\def\tr{\operatorname{tr}}

\global\long\def\one{\mathds{1}}
\global\long\def\Ww{\mathds{W}}
\global\long\def\wW{\text{\reflectbox{\ensuremath{\Ww}}}\:\!}
\global\long\def\op{\mathrm{op}}
\global\long\def\WW{{\mathds{V}\!\!\text{\reflectbox{\ensuremath{\mathds{V}}}}}}
\global\long\def\Vv{\mathds{V}}
\global\long\def\vV{\text{\reflectbox{\ensuremath{\Vv}}}\:\!}

\global\long\def\B#1{\mathcal{B}(#1)}
\global\long\def\M#1{\operatorname{M}(#1)}
\global\long\def\Linfty#1{L^{\infty}(#1)}
\global\long\def\Lone#1{L^{1}(#1)}
\global\long\def\Lp#1{L^{p}(#1)}
\global\long\def\Lq#1{L^{q}(#1)}
\global\long\def\LoneSharp#1{L_{\sharp}^{1}(#1)}
\global\long\def\Ltwo#1{L^{2}(#1)}
\global\long\def\Cz#1{\mathrm{C}_{0}(#1)}
\global\long\def\CzU#1{\mathrm{C}_{0}^{\mathrm{u}}(#1)}
\global\long\def\CzUSSharp#1{\mathrm{C}_{0}^{\mathrm{u}}(#1)_{\sharp}^{*}}
\global\long\def\CU#1{\mathrm{C}^{\mathrm{u}}(#1)}
\global\long\def\Cb#1{\mathrm{C}_{b}(#1)}
\global\long\def\CStarF#1{\mathrm{C}^{*}(#1)}
\global\long\def\CStarR#1{\mathrm{C}_{\mathrm{r}}^{*}(#1)}
\global\long\def\Cc#1{\mathrm{C}_{c}(#1)}
\global\long\def\CC#1{\mathrm{C}(#1)}

\global\long\def\linfty#1{\ell^{\infty}(#1)}
\global\long\def\lone#1{\ell^{1}(#1)}
\global\long\def\ltwo#1{\ell^{2}(#1)}
\global\long\def\Pol#1{\mathrm{Pol}(#1)}
\global\long\def\Ltwozero#1{L_{0}^{2}(#1)}
\global\long\def\Irred#1{\mathrm{Irred}(#1)}
\global\long\def\conv{\star}

\global\long\def\Ad#1{\mathrm{Ad}(#1)}
\global\long\def\VN#1{\mathrm{VN}(#1)}
\global\long\def\d{\,\mathrm{d}}
\global\long\def\t{\mathrm{t}}

\global\long\def\tie#1{\wideparen{#1}}

\title[Convolution semigroups on quantum groups and noncommutative Dirichlet
forms]{Convolution semigroups on locally compact quantum groups and noncommutative
Dirichlet forms}

\author{Adam Skalski}

\address{Institute of Mathematics of the Polish Academy of Sciences, ul.~\'Sniadeckich
8, 00-656 Warszawa, Poland}

\email{a.skalski@impan.pl}

\author{Ami Viselter}

\address{Department of Mathematics, University of Haifa, 31905 Haifa, Israel}

\email{aviselter@univ.haifa.ac.il}
\begin{abstract}
The subject of this paper is the study of convolution semigroups of states on a locally compact quantum group,
generalising classical families of distributions of a L\'{e}vy process
on a locally compact group.  In particular a definitive one-to-one correspondence between symmetric
convolution semigroups of states and noncommutative Dirichlet forms
satisfying the natural translation invariance property is established, extending earlier partial results and providing a powerful tool to analyse such semigroups.
This is then applied to provide new characterisations of the Haagerup
Property and Property (T) for locally compact quantum groups, and some
examples are presented. The proofs of the main theorems require developing
certain general results concerning Haagerup's $L^{p}$-spaces.
\end{abstract}

\subjclass[2010]{Primary: 46L65, Secondary: 46L30, 46L53, 46L57, 47B38, 47D07}

\keywords{locally compact quantum group; noncommutative Dirichlet form; convolution
operator; convolution semigroup}
\maketitle

\section*{Introduction}

The connection between the convolution operation, defined on probability
measures on $\br$, and independence of random variables is one of
the key elementary features of classical probability theory. In particular
convolution semigroups of probability measures are precisely families
of distributions of $\br$-valued stochastic processes with independent
and identically distributed increments, i.e.~of \emph{L\'{e}vy processes}.
It is not too difficult to see that defining the convolution operation
requires only that the underlying space admits a semigroup structure;
in fact the natural setup for studying L\'{e}vy processes, on one
hand sufficiently rich to allow plenty of important examples, and
on the other hand sufficiently specific to facilitate the application
of strong functional-analytic methods, is that of a \emph{locally
compact group}, denoted henceforth by $G$. Here convolution semigroups
of measures generate convolution semigroups of operators, acting either
on the algebra of continuous functions vanishing at infinity, $\Cz G$,
on the von Neumann algebra of essentially bounded measurable functions, $\Linfty G$,
or on the scale of the $\Lp G$-spaces. Among recent monographs describing
(some of) this vast area of study we recommend \citep{Applebaum__Levy_proc_stock_calc}
and \citep{Liao__book}. The convolution semigroups of operators associated
to a L\'{e}vy process form a specific subclass of \emph{Markov semigroups}
\citep{Bakry_Ledoux_Saloff-Coste__Markov_sgrps_SF}. The latter are
often studied via their generators; a related key tool, which will
play a very important role in this paper, is that of \emph{Dirichlet
forms} \textendash{} quadratic forms on the appropriate $L^{2}$-space satisfying
the specific conditions identified by Beurling and Deny (see \citep{Fukushima_Oshima_Takeda__Dirichlet_forms_Markov_proc}
and references therein). 

The development of mathematical approaches to quantum mechanics using
the language of operator algebras, dating back to von Neumann, has
led to the study of \emph{quantum Markov semigroups}, understood as
semigroups of completely positive unital maps on a von Neumann algebra
equipped with a reference state or weight and representing quantum
stochastic evolutions of open systems (see for example \citep{Davies__quantum_thy_open_sys,Meyer__quant_prob_for_prob}).
These, as their classical counterparts, are studied via their generators,
and once again the Dirichlet forms become an indispensable tool. In
the noncommutative context Dirichlet forms were first studied by Albeverio
and H{\o}egh-Krohn, then by Davies and Lindsay in the case where the
reference state is tracial, and later by Goldstein and Lindsay \citep{Goldstein_Lindsay__KMS_symm_Markov_sgs}
and independently by Cipriani \citep{Cipriani__Dirichlet_forms_Markov_sgs_standard_forms}
for general non-tracial states. Finally in \citep{Goldstein_Lindsay__Markov_sgs_KMS_symm_weight}
noncommutative Dirichlet forms were investigated in the most general
context, in which the role of the reference `measure' was played by
an arbitrary \emph{normal semi-finite faithful weight}. For the history,
motivations behind the introduction of quantum Dirichlet forms and
several examples we refer to the survey \citep{Cipriani__Dirichlet_forms_noncomm_space}.
It has to be noted that the passage from the tracial to the non-tracial
case vastly increases the technical complexity of the problem, as
for example one needs to consider Haagerup's $L^{p}$-spaces \citep{Terp__L_p_spaces,Terp__interpolation}
instead of the `tracial' $L^{p}$-spaces of Nelson \citep{Pisier_Xu__noncomm_Lp_spaces}.
In the non-tracial context we may also consider several different
natural ways of passing from the maps at the von Neumann algebra level
to the maps on the corresponding $L^{2}$-space, with the two most
prominent ones being the so-called \emph{GNS-} and \emph{KMS-implementations},
see for example \citep{Caspers_Skalski__Haagerup_AP_Dirichlet_forms}.

In view of the first paragraph above it is natural to ask about the
convolution structures which might play an important role also in
the quantum context. This was recognised relatively early in the development
of the theory of quantum L\'{e}vy process initiated by Sch\"{u}rmann
(see \citep{Schurmann__white_noise_bialg,Franz__Levy_proc_QGs_dual_grps}),
in which the underlying quantum probability space was represented
by a Hopf $*$-algebra, or more generally a $*$-bialgebra. The huge
success of the theory of compact quantum groups due to Woronowicz
\citep{Woronowicz__CMP,Woronowicz__symetries_quantiques}, and more
generally locally compact quantum groups due to Kustermans and Vaes
\citep{Kustermans_Vaes__LCQG_C_star}, with the latter objects denoted below by $\QG$,
has opened a possibility to study analogous problems in the much richer
analytic setting. Here, in parallel to the situation described in
the previous paragraph, the level of technical difficulty involved
in the locally compact theory by far exceeds that of the compact case,
where many questions can be still investigated via purely algebraic
means, exploiting the Hopf $*$-algebra $\textup{Pol}(\QG)$. Kustermans's
and Vaes's theory admits perfect duality, with $\hat{\QG}$ denoting
the dual of $\QG$. Discrete quantum groups are duals of compact ones,
which means that also in the discrete case one can take advantage
of certain algebraic techniques. Recall here only that quantum groups
are in fact studied indirectly, via associated algebras of functions,
such as $\Cz{\QG}$, $\CzU{\QG}$ or $\Linfty{\QG}$ (see \citep{Kustermans_Vaes__LCQG_C_star,Kustermans_Vaes__LCQG_von_Neumann,Kustermans__LCQG_universal}).

Recent years brought a significant interest in convolution
operators on arbitrary locally compact quantum groups: these featured
prominently in the work of Junge, Neufang, Ruan, Daws, Hu and others
(see e.g.\ \citep{Junge_Neufang_Ruan__rep_thm_LCQG} and \citep{Daws__mult_LCQG_Hilbert_C_mod}).
It should be noted that the quantum group context allows one to treat
the usual convolution operators and Herz\textendash Schur multipliers
on a locally compact group $G$ within the same framework, with the
latter viewed as convolution operators on the dual quantum group $\hat{G}$.
Perhaps slightly surprisingly, there has been less work on the convolution
semigroups of states on quantum groups beyond the compact case: some
initial facts were established in \citep{Lindsay_Skalski__conv_semigrp_states}
(where in particular such semigroups were shown to be determined by
densely defined \emph{generating functionals}) and then applied in
\citep{Lindsay_Skalski__quant_stoch_conv_cocyc_3} to study the analytic
theory of quantum L\'{e}vy processes. On the other hand, in an important
recent paper \citep{Cipriani_Franz_Kula__sym_Levy_proc}, Cipriani,
Franz and Kula continued the study of convolution semigroups of states
on compact quantum groups, earlier investigated in the context of
algebraic quantum L\'{e}vy processes, for the first time involving
in the analysis the Dirichlet form techniques.

In this work we undertake a deeper study of the convolution semigroups
associated with a locally compact quantum group $\QG$. As in the
classical case these turn out to have several avatars: semigroups
of states of $\CzU{\QG}$, or semigroups of completely positive operators
on each of the algebras $\CzU{\QG}$, $\Cz{\QG}$, $\Linfty{\QG}$, etc. 
Here we show further that every convolution operator on $\Linfty{\QG}$
leads, in two different ways (via the KMS- and GNS-implementations
mentioned above) to a bounded operator on $\Ltwo{\QG}$. In presence of the natural
symmetry (again, visible on all the levels at which we can view the
respective convolution operator or convolution semigroup) we use the
theory of noncommutative Dirichlet forms for weights to obtain
the central result of the article (\prettyref{thm:corres_conv_smgrps_Dirichlet_forms}
in the main body of the paper).
\begin{thm}
\label{thm:intro__corres_conv_smgrps_Dirichlet_forms}There exist
$1-1$ correspondences between the following objects: 
\begin{itemize}
\item $w^{*}$-continuous convolution semigroups 
 of states of $\CzU{\QG}$ invariant under the unitary antipode; 
\item $C_{0}^{*}$-semigroups of normal, unital, completely positive maps
on $\Linfty{\QG}$ that satisfy the intertwining relation with the
co-product and are KMS-symmetric with respect to the left Haar weight
of $\QG$; 
\item $C_{0}$-semigroups of selfadjoint completely Markov operators on
$\Ltwo{\QG}$ (with respect to the left Haar weight of $\QG$) that
belong to $\Linfty{\hat{\QG}}$; 
\item completely Dirichlet forms (with respect to the left Haar weight of
$\QG$) on $\Ltwo{\QG}$ that are invariant under $\mathcal{U}(\Linfty{\hat{\QG}}')$, modulo multiplication of forms by a positive number. 
\end{itemize}
\end{thm}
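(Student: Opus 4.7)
The plan is to establish the four-way equivalence by chaining three bijections, (1)$\leftrightarrow$(2), (2)$\leftrightarrow$(3), and (3)$\leftrightarrow$(4), which together implement the passage between the three natural levels $\CzU{\QG}$, $\Linfty{\QG}$, and $\Ltwo{\QG}$ at which a convolution semigroup is visible. Throughout, the left Haar weight of $\QG$ plays the role of the reference weight, and the appropriate machinery of Haagerup $L^{p}$-spaces is used to move consistently between these levels.

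For (1)$\leftrightarrow$(2), given a $w^{*}$-continuous convolution semigroup of states $\{\mu_{t}\}_{t\geq 0}$ of $\CzU{\QG}$, I would assign the operator family $T_{t}:=(\i\tensor\mu_{t})\circ\widetilde{\Delta}$ on $\Linfty{\QG}$, where $\widetilde{\Delta}$ is the natural extension of the coproduct involving the universal $C^{*}$-algebra. Coassociativity together with the convolution semigroup property yields the semigroup property of $\{T_{t}\}$, while normality, unitality, complete positivity and the intertwining $\Delta\circ T_{t}=(\i\tensor T_{t})\circ\Delta$ are immediate. The reverse direction rests on the standard identification of $\Delta$-intertwining normal unital completely positive maps on $\Linfty{\QG}$ with states of $\CzU{\QG}$. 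The more delicate point is to match $\Rant$-invariance of $\mu_{t}$ with KMS-symmetry of $T_{t}$, which uses the compatibility of the unitary antipode with the Haar weight and its modular group.

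For (2)$\leftrightarrow$(3), I would pass from a KMS-symmetric normal unital completely positive map $T$ on $\Linfty{\QG}$ to its KMS-implementation, a bounded selfadjoint operator $T^{(2)}$ on $\Ltwo{\QG}$; the semigroup structure and the $C_{0}/C_{0}^{*}$-continuity transfer. The intertwining relation with $\Delta$ forces $T^{(2)}$ to commute with the canonical right action of $\QG$, hence to lie in $\Linfty{\hat{\QG}}$; conversely, any bounded operator in $\Linfty{\hat{\QG}}$ is of Fourier-multiplier type and its restriction to $\Linfty{\QG}$ recovers $T$. Complete Markovianity of $T^{(2)}$ is equivalent to complete positivity and unitality of $T$, once the interaction with the left Haar weight is correctly handled. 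This step relies decisively on the preliminary facts on Haagerup $L^{p}$-spaces flagged in the introduction.

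For (3)$\leftrightarrow$(4), I would invoke the Beurling\textendash Deny\textendash Goldstein\textendash Lindsay theory from \citep{Goldstein_Lindsay__Markov_sgs_KMS_symm_weight} establishing a bijection between $C_{0}$-semigroups of selfadjoint completely Markov operators on $\Ltwo{\QG}$ (with respect to a normal semifinite faithful weight) and completely Dirichlet forms. By the bicommutant theorem, the semigroup operators lie in $\Linfty{\hat{\QG}}$ if and only if their generator, and hence the associated form, commutes with every unitary in $\Linfty{\hat{\QG}}'$. The modulo-positive-scaling clause reflects the natural rescaling freedom when one passes from the $L^{\infty}$-level datum to the $L^{2}$-form. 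The main anticipated obstacle is a sufficiently precise version of the KMS-implementation and of the Goldstein\textendash Lindsay correspondence for weights in the presence of the translation-invariance condition, together with careful bookkeeping of how complete positivity, unitality and symmetry transport across the three implementations via Haagerup's $L^{p}$-spaces.
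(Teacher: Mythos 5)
Your overall architecture (chaining (1)$\leftrightarrow$(2)$\leftrightarrow$(3)$\leftrightarrow$(4) through the three levels $\CzU{\QG}$, $\Linfty{\QG}$, $\Ltwo{\QG}$) is the right one and matches the paper, but there are two genuine problems. First, a left/right mismatch: the operator $T_{t}:=(\i\tensor\mu_{t})\circ\Delta$ you propose is the \emph{left} convolution operator $L_{\mu_{t}}$, which preserves and is KMS-symmetric with respect to the \emph{right} Haar weight $\psi$, not the left Haar weight $\varphi$ fixed in the statement; its $L^{2}$-implementation lands in $\Linfty{\hat{\QG}}'$ and the associated forms are invariant under $\mathcal{U}(\Linfty{\hat{\QG}})$, i.e.\ you would prove the primed version of the theorem. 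To match the statement you must take $T_{t}=R_{\mu_{t}}=(\mu_{t}\tensor\i)\circ\Delta$ with the intertwining $\Delta\circ T_{t}=(T_{t}\tensor\i)\circ\Delta$; for $\Rant^{\mathrm{u}}$-invariant $\mu$ it is $R_{\mu}$ that is KMS-symmetric with respect to $\varphi$. Relatedly, the claim that the intertwining relation ``forces $T^{(2)}$ to lie in $\Linfty{\hat{\QG}}$'' is not a formality for the \emph{KMS}-implementation: unlike the GNS-implementation $(\mu\tensor\i)(\Ww^{*})$, the KMS-implementation is its twist by $\hat{\tau}_{-i/4}$, and identifying its commutation relations requires computing the KMS-implementations of the operators $L_{\om_{\z,\eta}}$ with respect to the wrong-sided weight (which brings in the modular element $\delta$ and the operator $P$). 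This is a substantive lemma, not bookkeeping.

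The more serious gap is in the return trip (3)/(4)$\to$(2). You cite the Goldstein--Lindsay correspondence as a black box for producing, from a symmetric Markov operator on $\Ltwo{\QG}$ with domain $\mathfrak{i}^{(2)}(\mathcal{M}^{(2)})$, a normal KMS-symmetric Markov operator on $\Linfty{\QG}$. The published proof of that converse is incorrect: it uses that $\mathcal{M}^{(2)}$ is closed under the absolute value map to establish boundedness of the induced map $T_{0}$ on $\mathcal{M}^{(2)}$, and this closure property can fail. One must re-prove boundedness under an additional hypothesis; $2$-Markovianity suffices (via a matrix trick with $\left(\begin{smallmatrix}e_{\l} & a\\ 0 & 0\end{smallmatrix}\right)$ and a Cauchy--Schwarz estimate for $2$-positive maps), and it is automatic in the present setting precisely because the operators in (3) are \emph{completely} Markov. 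Without supplying this argument, the bijection (4)$\to$(3)$\to$(2) is not established, and this is exactly where the new work of the paper lies.
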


All the notions featuring in the above result will be made precise in the remainder of the paper. Here
note only that the special property of the Dirichlet forms in the last
point above can be naturally interpreted as translational invariance,
bringing us back to the classical context studied in \citep[Sections 1.4 and 1.5]{Fukushima_Oshima_Takeda__Dirichlet_forms_Markov_proc}
and in \citep[Section 6]{Deny__methods_hilbertiennes_thy_potentiel}.

The theorem above provides a definitive extension of the results of \citep{Cipriani_Franz_Kula__sym_Levy_proc}.
The technical difficulties in the locally compact case are far
more daunting: the generators of the respective semigroups admit no `obvious'
domain and one needs to deal with weights as opposed to states. The
key step of the proof uses ideas of Goldstein and Lindsay; however as
the proof of one of the main results in \citep{Goldstein_Lindsay__Markov_sgs_KMS_symm_weight}
appears to be incorrect, we have to provide a new argument to obtain
the passage from the Dirichlet form to the semigroup of operators acting on the von Neumann
algebra, requiring slightly stronger assumptions than those stated
in \citep{Goldstein_Lindsay__Markov_sgs_KMS_symm_weight}. Nevertheless, these are
automatically satisfied in our quantum group context. We believe this
general result is of independent interest. Returning to the quantum group
framework, we furthermore present a very explicit connection between
the relevant noncommutative Dirichlet form and the respective generating
functional as studied in \citep{Lindsay_Skalski__conv_semigrp_states}.

The above theorem and its technical extensions turn out to have significant
applications to geometric quantum group theory. In particular we deduce
that a second countable locally compact quantum group $\QG$ does
not have Kazhdan's Property (T) (see \citep{Daws_Skalski_Viselter__prop_T}
and references therein) if and only if there exists a symmetric (in
the sense of the first bullet point in Theorem \ref{thm:intro__corres_conv_smgrps_Dirichlet_forms}) convolution semigroup
of states of $\CzU{\hat{\QG}}$ with an unbounded (equivalently, not
everywhere defined) generating functional. On the other hand $\QG$
has the Haagerup Property if and only if there exists a semigroup
of Markov convolution operators on $\Ltwo{\QG}$ (in the sense of
the third bullet point of the Theorem above) belonging to $\Cz{\QG}$.
Classically the passage between the representation theoretic aspects
of properties such as Property (T) and their geometric guises goes through
the Sch\"onberg correspondence: suitable positive-definite functions
are first assembled into a semigroup, whose generator is a conditionally
negative-definite function $\psi$, and then one identifies the desired affine
action of the group on a Hilbert space with the cocycle associated
with $\psi$; the converse direction requires Sch\"onberg's result. 
For discrete quantum groups one no longer has a direct notion of affine
actions, and works only with the respective cocycles \textendash{}
understood as certain derivations of the underlying Hopf algebra (instances of such techniques
can be found in \citep{Kyed__cohom_prop_T_QG} and
\citep{Daws_Fima_Skalski_White_Haagerup_LCQG}). 
We emphasise that this approach relies crucially on the algebraic nature of discrete quantum groups, which is absent from the general locally compact case.
Thus in this paper we provide a yet different perspective, encompassing all the results mentioned above: namely, we
encode the representation theoretic properties of a locally compact
quantum group via the existence (or non-existence) of convolution
semigroups with particular properties.

Also the construction of examples in the locally compact setting is
far more complicated than it was in \citep{Cipriani_Franz_Kula__sym_Levy_proc},
once again mainly due to the lack of a canonical `purely algebraic'
domain for the generators. We describe in detail the dual to the classical
case, and for the classical case refer again to  \citep[Sections 1.4 and 1.5]{Fukushima_Oshima_Takeda__Dirichlet_forms_Markov_proc} and \citep[Section 6]{Deny__methods_hilbertiennes_thy_potentiel}.
We further show how to use cocycle twisting to generate interesting
examples acting on genuine locally compact quantum groups, e.g.~on
the quantised Heisenberg group of Enock and Vainerman \citep{Enock_Vainerman__deform_Kac_alg_abel_grp}.

Our work opens several future directions of research. It is natural
to ask about the possibility of extending the key construction of
derivations out of quantum Dirichlet forms due to Cipriani and Sauvageot
\citep{Cipriani_Sauvageot__der_sqroot_Dirichlet_forms} beyond the
tracial case, exploiting the additional quantum group structure. We
intend also to study further examples of non-trivial convolution semigroups
using the Rieffel deformation techniques of \citep{Kasprzak__Rieffel_deform_crossed_prod},
in a sense dual to the procedure of cocycle twisting discussed in
this paper. Finally one might investigate how the properties of the
noncommutative Dirichlet form affect the long-term behaviour of the
quantum Markov semigroup in question both in the quantitative and
the qualitative senses, and exploit the Dirichlet form techniques
to produce interesting perturbations of the objects studied here (perhaps
landing outside of the class of the convolution semigroups).

The concrete plan of the paper is as follows: in Section \ref{sec:prelim} we discuss preliminaries concerning von Neumann algebras, the associated Haagerup $L^p$-spaces, and noncommutative Dirichlet forms,  prove several technical results related to the $L^p$-embeddings introduced in \cite{Goldstein_Lindsay__Markov_sgs_KMS_symm_weight} and introduce notation and terminology concerning locally compact quantum groups. Section \ref{sec:L_R_oper} treats convolution operators associated with quantum groups, first recalling known results, and then focusing on the existence and properties of $L^2(\QG)$-implementations and equivalences between various modes of convergence. A short Section \ref{sec:semigroups} contains the main general theorems of the paper, in particular Theorem \ref{thm:intro__corres_conv_smgrps_Dirichlet_forms}, and connects the noncommutative Dirichlet form associated with a convolution semigroup with the generating functional of \cite{Lindsay_Skalski__conv_semigrp_states}. Section \ref{sec:prop_T_Haagerup} provides applications to geometric quantum group theory, first concerning Property (T) and then the Haagerup Property, and Section \ref{sec:examples} discusses several examples. Finally in the 
Appendix we provide a correct proof of 
\citep[Theorem 4.7]{Goldstein_Lindsay__Markov_sgs_KMS_symm_weight} under two different sets of mild additional assumptions.

\subsection*{Acknowledgements}

The first author was partially supported by the National Science Centre
(NCN) grant no.~2014/14/E/ST1/00525. We thank Martin Lindsay and Stanis{\ldash}aw Goldstein for encouragement, and Stuart White and the anonymous referee for valuable comments.
During the last stage of the preparation of this paper the second author was visiting Sutanu Roy at the School of Mathematical Sciences of NISER, Bhubaneswar, India. He is grateful to him and to his colleagues for their warm hospitality.

\section{Preliminaries}\label{sec:prelim}

We begin with some basic notation and conventions. For a Banach space
$\mathsf{X}$, let $B(\mathsf{X})$ stand for the algebra of bounded
operators on $\mathsf{X}$. A (one-parameter) \emph{operator semigroup}
on $\mathsf{X}$ is a family $\left(T_{t}\right)_{t\ge0}$ in $B(\mathsf{X})$
satisfying $T(0)=I$ and the semigroup identity: $T(t+s)=T(t)T(s)$
for all $t,s\ge0$. We say that $\left(T_{t}\right)_{t\ge0}$ is a
\emph{$C_{0}$-semigroup} if it is continuous at $0^{+}$ in the strong
operator topology, namely $\lim_{t\to0^{+}}T(t)x=x$ for every $x\in\mathsf{X}$
\citep{Davies__semigroups,EngelNagel,Kantorovitz__oper_semi}. We will frequently
use the fact that this continuity condition is equivalent to continuity
at $0^{+}$ in the weak operator topology, namely that $\lim_{t\to0^{+}}T(t)x=x$
weakly for every $x\in\mathsf{X}$ \citep[Proposition 1.23]{Davies__semigroups}.

For an algebra $\mathsf{A}$ we denote by $\i:\mathsf{A}\to\mathsf{A}$
the identity map and by $\one$ the unit of $\mathsf{A}$, if it exists. For subsets $X,Y\subseteq\mathsf{A}$
we write $XY:=\linspan\left\{ xy:x\in X,y\in Y\right\} $.

Let $\mathsf{A}$ be a C$^{*}$-algebra. The multiplier algebra of
$\mathsf{A}$ is denoted by $\M{\mathsf{A}}$. For $\om\in\mathsf{A}^{*}$
we define $\overline{\om}\in\mathsf{A}^{*}$ by $\overline{\om}(x):=\overline{\om(x^{*})}$,
$x\in\mathsf{A}$. Representations of C$^{*}$-algebras are always
assumed nondegenerate.

We use $\tensor,\tensormin,\tensorn$ for the algebraic or Hilbert
space tensor product, the minimal tensor product of C$^{*}$-algebras,
and the normal tensor product of von Neumann algebras, respectively.
We write $\sigma$ for the flip map at the C$^{*}$- or the von Neumann
algebra level.

For a Hilbert space $\H$, write $\K(\H)$ for the C$^{*}$-algebra
of compact operators on $\H$ and $\mathcal{U}(\H)$ for the group
of all unitary operators on $\H$. Given $u\in\mathcal{U}(\H)$, set
$\Ad u(x):=uxu^{*}$ for $x\in B(\H)$. Inner products are linear
in the left variable. For vectors $\z,\eta\in\H$, $\om_{\z,\eta}\in B(\H)_{*}$
is the functional given by $\om_{\z,\eta}(x):=\left\langle x\z,\eta\right\rangle $,
$x\in B(\H)$. We let $\om_{\z}:=\om_{\z,\z}$. The ultraweak operator topology, 
resp.~ultrastrong operator topology, on a von Neumann algebra
will be called simply the ultraweak topology, resp.~ultrastrong topology. An automorphism
group of a C$^{*}$- or von Neumann algebra is assumed by definition
to be continuous with respect to the suitable topology, namely point\textendash norm
or point\textendash ultraweak, respectively.

Subsections \ref{sub:prelim_Haagerup_Lp}\textendash \ref{sub:prelim_new_results}
discuss preliminaries on von Neumann algebras and in particular on the Haagerup $L^p$-spaces. \prettyref{sub:prelim_LCQGs}
introduces locally compact quantum groups.

\subsection{\label{sub:prelim_Haagerup_Lp}The Haagerup noncommutative $L^{p}$-spaces
of von Neumann algebras and the maps $\mathfrak{j}^{(q)}$ and $\mathfrak{i}^{(p)}$}

In this subsection we introduce Haagerup's construction of $L^{p}$-spaces
of von Neumann algebras as announced in \citep{Haagerup__L_p_spaces}
and described in full detail by Terp \citep{Terp__L_p_spaces}, and
the subsequent work of Goldstein and Lindsay \citep[Sections 1--2]{Goldstein_Lindsay__Markov_sgs_KMS_symm_weight}.
We assume that the reader is familiar with modular theory \citep{Stratila__mod_thy,Takesaki__Tomita_thy,Takesaki__book_vol_2}. 

We will rely heavily on the theory of unbounded operators on Hilbert
spaces \citep[Chapter XII]{Dunford_Schwartz_2}. For linear operators
$a,b$ on a Hilbert space $\H$, not necessarily densely defined or
closable, denote by $D(a)$ the domain of $a$, and write $a+b$ and
$ab$ for the sum, respectively product, of $a$ and $b$ with maximal
domains. If $a$ is densely defined, we denote its adjoint by $a^{*}$;
if $a$ is closable, we denote its closure by $\overline{a}$. 

Fix a von Neumann algebra $\Aa$ acting, not necessarily standardly,
on a Hilbert space $\H$. Recall that a (not necessarily densely defined
or closable) linear operator $a$ on $\H$ is \emph{affiliated} with
$\Aa$ if $y'a\subseteq ay'$ for every $y'\in\Aa'$. 

Fix a normal, semi-finite, faithful (n.s.f.) weight $\varphi$ on
$\Aa$. Write $\sigma^{\varphi}=\left(\sigma_{t}^{\varphi}\right)_{t\in\R}$
for the modular automorphism group of $\varphi$, let 
\[
\begin{split}\mathcal{N}_{\varphi} & :=\left\{ a\in\Aa:\varphi(a^{*}a)<\infty\right\} ,\\
\mathcal{M}_{\varphi} & :=\linspan\left\{ a\in\Aa_{+}:\varphi(a)<\infty\right\} =\mathcal{N}_{\varphi}^{*}\mathcal{N}_{\varphi},
\end{split}
\]
let $\mathcal{T}_{\varphi}$ denote the Tomita $*$-algebra of all
elements $a\in\Aa$ that are entire analytic with respect to $\sigma^{\varphi}$
and satisfy $\sigma_{z}^{\varphi}(a)\in\mathcal{N}_{\varphi}\cap\mathcal{N}_{\varphi}^{*}$
for every $z\in\C$, and let $\mathcal{M}_{\varphi,\infty}:=\mathcal{T}_{\varphi}^{*}\mathcal{T}_{\varphi}=\mathcal{T}_{\varphi}\mathcal{T}_{\varphi}$.
The GNS construction for the pair $(\Aa,\varphi)$ gives a Hilbert
space $\Ltwo{\Aa,\varphi}$ and a map $\gnsmap_{\varphi}:\mathcal{N}_{\varphi}\to\Ltwo{\Aa,\varphi}$.
When viewing $\Aa$ as acting on $\Ltwo{\Aa,\varphi}$ we do not use
any additional symbol for the representation. Write $\nabla_{\varphi}$
and $J_{\varphi}$ for the modular operator and modular conjugation
of $\varphi$, respectively, both acting on $\Ltwo{\Aa,\varphi}$.
In the sequel we usually omit $\varphi$ from the notation. 

We begin with an approximation lemma, providing a `good' net of elements in the Tomita algebra converging to $\one$ in the $*$-strong operator topology.

\begin{lem}
\label{lem:Str_Terp_approx}For each $\delta>0$ there exists a net
$\left(e_{j}\right)_{j\in\mathcal{J}}$ in $\mathcal{T}$ such that
for every $z\in\C$ we have: 
\begin{enumerate}
\item \label{enu:Str_Terp_approx__1}$\left\Vert \sigma_{z}(e_{j})\right\Vert \le e^{\delta(\Im z)^{2}}$
for all $j\in\mathcal{J}$; 
\item \label{enu:Str_Terp_approx__2}$\sigma_{z}(e_{j})\xrightarrow[j\in\mathcal{J}]{}\one$
in the $*$-strong operator topology.
\end{enumerate}
\end{lem}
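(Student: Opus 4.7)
The standard strategy is to smooth an approximate unit with a Gaussian, producing entire analytic regularisations whose estimates and limits are both controlled by the Gaussian kernel. Fix $\delta > 0$. The plan is to start from any bounded net $(u_i)_{i \in \mathcal{I}}$ in $\mathcal{N} \cap \mathcal{N}^*$ with $\|u_i\| \le 1$ that converges to $\one$ in the $*$-strong topology (the existence of such a net is standard: $\varphi$ is semi-finite and faithful). Then define the Gaussian smoothings
\[
e_i := \sqrt{\delta/\pi}\,\int_{\R} e^{-\delta t^{2}}\,\sigma_{t}(u_i)\,\d t,
\]
and take $\mathcal{J} = \mathcal{I}$. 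The Bochner integral converges since $\|\sigma_t(u_i)\| = \|u_i\| \le 1$ and the Gaussian is integrable. It is a classical computation (found in Takesaki, using the invariance $\varphi \circ \sigma_t = \varphi$, which implies $\sigma_t(\mathcal{N} \cap \mathcal{N}^*) = \mathcal{N} \cap \mathcal{N}^*$) that $e_i \in \mathcal{T}$, with
\[
\sigma_{z}(e_i) = \sqrt{\delta/\pi}\int_{\R} e^{-\delta t^{2}}\,\sigma_{t+z}(u_i)\,\d t = \sqrt{\delta/\pi}\int_{\R} e^{-\delta(s-z)^{2}}\,\sigma_{s}(u_i)\,\d s
\]
for all $z \in \C$, where the second equality is obtained by contour shifting (justified by entire analyticity and Gaussian decay).

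For part \eqref{enu:Str_Terp_approx__1}, the estimate $|e^{-\delta(s-z)^2}| = e^{\delta(\Im z)^2}\,e^{-\delta(s-\Re z)^2}$ together with $\|u_i\| \le 1$ yields
\[
\|\sigma_{z}(e_i)\| \;\le\; \sqrt{\delta/\pi}\,e^{\delta(\Im z)^{2}}\int_{\R} e^{-\delta(s-\Re z)^{2}}\,\d s \;=\; e^{\delta(\Im z)^{2}}.
\]
For part \eqref{enu:Str_Terp_approx__2}, contour shifting also gives $\sqrt{\delta/\pi}\int_{\R} e^{-\delta(s-z)^2}\,\d s = 1$, so for any $\xi \in \H$,
\[
(\sigma_{z}(e_i) - \one)\xi \;=\; \sqrt{\delta/\pi}\int_{\R} e^{-\delta(s-z)^{2}}\,(\sigma_{s}(u_i) - \one)\xi\,\d s.
\]
The integrand is dominated by $2|e^{-\delta(s-z)^2}|\,\|\xi\|$, which is integrable in $s$. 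Since $(u_i)$ is bounded and converges $*$-strongly to $\one$, it converges $*$-ultrastrongly, and each $\sigma_s$ is point-ultrastrongly continuous, so $\sigma_{s}(u_i)\xi \to \xi$ for every fixed $s$. Dominated convergence then gives $\sigma_{z}(e_i)\xi \to \xi$. Applying the same argument to the net $(u_i^*)$, which also converges $*$-strongly to $\one$, and noting $\sigma_{z}(e_i)^{*} = \sigma_{\bar z}(e_i^{*})$ with $e_i^{*}$ given by the analogous Gaussian smoothing of $u_i^{*}$, yields $\sigma_{z}(e_i)^{*}\xi \to \xi$ as well.

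The only mildly delicate point is the verification that $e_i \in \mathcal{T}$ (entire analyticity with $\sigma_z(e_i) \in \mathcal{N} \cap \mathcal{N}^*$ for all $z$), but this is entirely standard for Gaussian regularisations of elements of $\mathcal{N} \cap \mathcal{N}^*$. No genuine obstacle is anticipated.
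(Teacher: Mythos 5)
Your construction (Gaussian smoothing of a bounded $*$-strongly convergent net in $\mathcal{N}\cap\mathcal{N}^{*}$) and your verification of part \prettyref{enu:Str_Terp_approx__1} coincide with the argument in the paper, which itself follows Terp and Str\u{a}til\u{a}. The problem is in part \prettyref{enu:Str_Terp_approx__2}: you apply the dominated convergence theorem to the \emph{net} of functions $s\mapsto\|(\sigma_{s}(u_{i})-\one)\xi\|$. Dominated convergence is a theorem about sequences and is false for nets, even with a fixed integrable dominating function: take the net of indicator functions of finite subsets of $[0,1]$, directed by inclusion; it is dominated by $\one_{[0,1]}$ and converges pointwise to $\one_{[0,1]}$, yet every member has integral $0$. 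This is not a pedantic point here \textendash{} it is precisely the gap in Str\u{a}til\u{a}'s proof that the lemma in the paper was written to repair, and it is why the statement is nontrivial for a general von Neumann algebra (if $\Aa_{*}$ were separable one could pass to sequences and your argument would close).

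The missing ingredient is a convergence statement that is \emph{uniform over the unit ball} in the variable $f=u_{i}$. The paper's fix: put $\Aa$ in standard form, so that $\left\Vert \sigma_{t}(f)\z-\z\right\Vert =\left\Vert f\nabla^{-it}\z-\nabla^{-it}\z\right\Vert$ for $\left\Vert f\right\Vert \le1$. The Gaussian tail of $\int_{\R}e^{-\delta(t-a)^{2}}\left\Vert \sigma_{t}(f)\z-\z\right\Vert \d t$ is cut off up to $\e$ uniformly in $f$ (the integrand is bounded by $2\left\Vert \z\right\Vert e^{-\delta(t-a)^{2}}$), and on the remaining compact interval the uniform continuity of $t\mapsto\nabla^{-it}\z$ makes the family $t\mapsto\left\Vert f\nabla^{-it}\z-\nabla^{-it}\z\right\Vert$, $\left\Vert f\right\Vert \le1$, equicontinuous, so the integral is approximated up to $\e$ by a Riemann sum \emph{independently of} $f$. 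Each Riemann sum is a finite sum, so it converges to $0$ along the net once $u_{i}\to\one$ strongly. Your proof needs this (or an equivalent equicontinuity argument) in place of the appeal to dominated convergence; as written, the key limit in \prettyref{enu:Str_Terp_approx__2} is unjustified.
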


\begin{proof}
Combine \citep[Lemma 9]{Terp__interpolation} and \citep[Proposition 2.16]{Stratila__mod_thy},
whose constructions agree: the former proves the existence of a net
satisfying \prettyref{enu:Str_Terp_approx__1}, as well as \prettyref{enu:Str_Terp_approx__2}
for $z=0$, while the latter treats \prettyref{enu:Str_Terp_approx__1}
and \prettyref{enu:Str_Terp_approx__2} for $\delta=1$. Unfortunately,
in proving \prettyref{enu:Str_Terp_approx__2}, \citep{Stratila__mod_thy}
uses the Lebesgue dominated convergence theorem, which is applicable
only when one can choose in the proof sequences instead of nets (e.g.,
when $\Aa_{*}$ is separable). This can be fixed via the following argument (see also the proof of \citep[Proposition 2.25]{Kustermans__one_param_rep}).
Assume that $\Aa$ is in standard form in
$\H$ and fix $z\in\C$, $\z\in\H$. Write $a:=\Re z$, $b:=\Im z$.
Given $f\in\Aa$ with $\left\Vert f\right\Vert \le1$ let $e:=\frac{\sqrt{\delta}}{\sqrt{\pi}}\int_{\R}e^{-\delta t^{2}}\sigma_{t}(f)\d t$
(convergence in the $*$-strong operator topology). Then we have $\frac{\sqrt{\pi}}{\sqrt{\delta}}\left\Vert \sigma_{z}(e)\z-\z\right\Vert \le e^{\delta b^{2}}\int_{\R}e^{-\delta(t-a)^{2}}\left\Vert \sigma_{t}(f)\z-\z\right\Vert \mathrm{d}t$.
For $\e>0$ one can approximate the last integral up to $\e$ by the
integral on some (bounded) interval $I$ in $\R$ independently of
$f$. For every $t\in\R$ we have $\left\Vert \sigma_{t}(f)\z-\z\right\Vert =\left\Vert f\nabla^{-it}\z-\nabla^{-it}\z\right\Vert $
because $\sigma_{t}$ is implemented by $\nabla^{it}$. By uniform
continuity of $t\mapsto\nabla^{-it}\z$ on $I$ one can approximate
the integral on $I$ up to $\e$ by Riemann sums, again independently of
$f$. This implies that if $\left(f_{j}\right)_{j\in\mathcal{J}}$
in the closed unit ball of $\Aa$ converges in the strong operator
topology to $\one$ then the associated $\left(e_{j}\right)_{j\in\mathcal{J}}$
will satisfy $\sigma_{z}(e_{j})\z\xrightarrow[j\in\mathcal{J}]{}\z$.
\end{proof}

The following material is taken from \citep{NelsonIntegration, Terp__L_p_spaces} unless
indicated otherwise. Let $\mathbb{A}$ be a semi-finite von Neumann
algebra acting, not necessarily standardly, on a Hilbert space $\KHilb$,
and $\tau$ an n.s.f.~trace on $\mathbb{A}$. A linear subspace $E\subseteq\KHilb$
is called $\tau$-\emph{dense} if for every $\delta>0$ there exists
a projection $p\in\mathbb{A}$ such that $\Img p\subseteq E$ and
$\tau(\one-p)<\delta$. A closed, densely defined operator $a$ affiliated
with $\mathbb{A}$ is called $\tau$-\emph{measurable} if $D(a)$
is $\tau$-dense. The set of all $\tau$-measurable operators is denoted by $\pres{\tau}{}{\mathbb{A}}$.
Evidently, $\mathbb{A}\subseteq\pres{\tau}{}{\mathbb{A}}$. In fact,
$\pres{\tau}{}{\mathbb{A}}$ is a $*$-algebra with respect to the
sum $a\dotplus b:=\overline{a+b}$, product $a\cdot b:=\overline{ab}$
and scalar product $\l\cdot a:=\overline{\l a}$ (the `strong operations'),
and the involution being the adjoint $a^{*}$ ($a,b\in\pres{\tau}{}{\mathbb{A}}$,
$\l\in\C$).  
\begin{rem}[{see \citep[proof of Proposition I.24]{Terp__L_p_spaces} or \citep[Corollary 1.2]{Goldstein_Lindsay__Markov_sgs_KMS_symm_weight}}]
\label{rem:tau_meas_poly}Every polynomial in elements of $\pres{\tau}{}{\mathbb{A}}$
formed using sums and products of unbounded operators (without closures) has a unique extension
in $\pres{\tau}{}{\mathbb{A}}$, namely the one given at each step by the operations
in $\pres{\tau}{}{\mathbb{A}}$ (with closures). It also equals
the closure of the original polynomial (which is densely defined).
\end{rem}

To introduce the Haagerup $L^{p}$-spaces, we 
recall that $\Aa$ and $\varphi$ have been fixed above, and
set henceforth $\mathbb{A}:=\Aa\rtimes_{\sigma^{\varphi}}\R$,
the crossed product of $\Aa$ by $\sigma^{\varphi}$ acting on $\Ltwo{\R}\tensor\H\cong\Ltwo{\R,\H}$.
It is generated as a von Neumann algebra by $\pi(\Aa)\cup\lambda(\R)$,
where $\pi$ is the canonical embedding of $\Aa$ in $\mathbb{A}$
given by $(\pi(a)f)(t):=\sigma_{-t}^{\varphi}(a)(f(t))$ ($a\in\Aa$,
$f\in\Ltwo{\R,\H}$, $t\in\R$) and $\l$ is the left regular representation
of $\R$ tensored by $\one_{\H}$, namely $(\l(s)f)(t):=f(t-s)$ ($f\in\Ltwo{\R,\H}$,
$s,t\in\R$). Write $h$ for the injective, positive selfadjoint
operator on $\Ltwo{\R,\H}$ such that $\l(s)=h^{is}$ for all $s\in\R$, whose existence and uniqueness follow from 
Stone's theorem. For simplicity, we usually suppress $\pi$. By
construction, $\sigma_{t}^{\varphi}=\Ad{\lambda(t)}=\Ad{h^{it}}$
for every $t\in\R$. The crossed product $\mathbb{A}$ is equipped
with an automorphism group $\theta=\left(\theta_{s}\right)_{s\in\R}$
that is the action of $\R$ dual to $\sigma^{\varphi}$, characterised
by the equalities  $\theta_{s}(a)=a$ and $\theta_{s}(h^{it})=e^{-ist}h^{it}$ for
all $s,t\in\R$ and $a\in\Aa$. For an n.s.f.~weight $\psi$ on $\Aa$,
we denote by $\widetilde{\psi}$ the (n.s.f.) dual weight on $\mathbb{A}$
\citep{Haagerup__dual_weights_1,Haagerup__dual_weights_2}. There
is a unique n.s.f.~trace $\tau$ on $\mathbb{A}$ that, using the
Pedersen\textendash Takesaki Radon\textendash Nikodym derivative notation,
satisfies $\frac{\mathrm{d}\widetilde{\varphi}}{\mathrm{d}\tau}=h$. 

Denote by $\hat{\mathbb{A}}_{+}$ the extended positive part of $\mathbb{A}$.
The definition of $\widetilde{\psi}$ in \citep{Haagerup__dual_weights_2}
extends naturally to every normal, not necessarily faithful or semi-finite,
weight $\psi$ on $\Aa$, yielding a dual (normal, not necessarily
faithful or semi-finite) weight on $\mathbb{A}$. For every such $\psi$,
let $k_{\psi}:=\frac{\mathrm{d}\widetilde{\psi}}{\mathrm{d}\tau}\in\hat{\mathbb{A}}_{+}$
in the sense of \citep[Theorem 1.12]{Haagerup__oper_val_weights_1};
by definition, $k_{\varphi}=h$. Then $k_{\psi}$ is a genuine (unbounded,
positive, selfadjoint) operator if and only if $\psi$ is semi-finite,
and in this case, $k_{\psi}$ is $\tau$-measurable if and only if
$\psi$ is bounded, that is, belongs to $\Aa_{*}$. Furthermore, the
map ${(\Aa_{*})}_{+}\ni\psi\mapsto k_{\psi}$ extends linearly to
an injection $\psi\mapsto k_{\psi}$ from $\Aa_{*}$ into $\pres{\tau}{}{\mathbb{A}}$.

For each $t\in\R$ the map $\theta_{t}$ extends naturally to a $*$-automorphism
of $\pres{\tau}{}{\mathbb{A}}$, also denoted by $\theta_{t}$. We
define, for $p\in\left[1,\infty\right]$, 
\[
\Lp{\Aa}:=\left\{ x\in\pres{\tau}{}{\mathbb{A}}:\theta_{t}(x)=e^{-t/p}x\text{ for all }t\in\R\right\} .
\]
The set $\Lp{\Aa}$ is a selfadjoint linear subspace of $\pres{\tau}{}{\mathbb{A}}$,
and it is spanned by $\Lp{\Aa}_{+}:=\Lp{\Aa}\cap{(\pres{\tau}{}{\mathbb{A}})}_{+}$.
Up to isomorphism, this construction does not depend on the n.s.f.~weight
$\varphi$. Several special cases are of particular importance. The
set $\Linfty{\Aa}$ coincides with $\Aa$ (that is, its image in $\mathbb{A}$),
while $\Lone{\Aa}=\left\{ k_{\psi}:\psi\in\Aa_{*}\right\} $. One
can thus define a functional $\tr$ on $\Lone{\Aa}$ by the formula $\tr(k_{\psi}):=\psi(\one)$,
$\psi\in\Aa_{*}$. For $p\in[1,\infty)$ and a closed, densely defined
linear operator $a$ affiliated with $\mathbb{A}$ with polar decomposition
$a=u\left|a\right|$, we have $a\in\Lp{\Aa}$ if and only if $u\in\Aa$
and $\left|a\right|^{p}\in\Lone{\Aa}$. This means one can define a norm
on $\Lp{\Aa}$, setting $\left\Vert a\right\Vert _{p}:=\tr(\left|a\right|^{p})^{1/p}$,
and thus making $\Lp{\Aa}$ a Banach space. The canonical linear map $\psi\mapsto k_{\psi}$
is an isometry from $\Aa_{*}$ onto $\Lone{\Aa}$. The functional
$\tr$ is `trace-like': for conjugate exponents $p,q\in\left[1,\infty\right]$,
$a\in\Lp{\Aa}$ and $b\in\Lq{\Aa}$, we have $a\cdot b,b\cdot a\in\Lone{\Aa}$
and $\tr(a\cdot b)=\tr(b\cdot a)$. The Banach space $\Ltwo{\Aa}$
is actually a Hilbert space with respect to the inner product $\left\langle a,b\right\rangle :=\tr(a\cdot b^{*})$,
$a,b\in\Ltwo{\Aa}$. Using the product of $\pres{\tau}{}{\mathbb{A}}$,
we see that $\Aa$ acts on $\Ltwo{\Aa}$ by left multiplication. With
respect to this representation of $\Aa$, the quadruple $(\Aa,\Ltwo{\Aa},*,\Ltwo{\Aa}_{+})$
is a standard representation for $\Aa$. 
More generally, for every $p\in[1,\infty)$, $\Linfty{\Aa}$ acts on $\Lp{\Aa}$ by left and right multiplication in $\pres{\tau}{}{\mathbb{A}}$. These actions are contractive: $\Vert x \cdot a \Vert_p, \Vert a \cdot x \Vert_p \leq \Vert x \Vert_\infty \Vert a \Vert_p$ for all $x\in\Linfty{\Aa}$ and $a\in\Lp{\Aa}$.

We proceed to describe some of \citep[Sections 1--2]{Goldstein_Lindsay__Markov_sgs_KMS_symm_weight}.
For $q\in[2,\infty)$, define a left ideal of $\Aa$ by
\[
\mathcal{N}^{(q)}:=\bigl\{ a\in\Aa:ah^{1/q}\text{ is closable and }\overline{ah^{1/q}}\in\Lq{\Aa}\bigr\}.
\]
We have $\mathcal{N}=\mathcal{N}^{(2)}\subseteq\mathcal{N}^{(q)}$
\citep[p.~46]{Goldstein_Lindsay__Markov_sgs_KMS_symm_weight}. Further define
a map $\mathfrak{j}^{(q)}:\mathcal{N}^{(q)}\to\Lq{\Aa}$ by $\mathcal{N}^{(q)}\ni a\mapsto\overline{ah^{1/q}}$.
It is linear and injective \citep[p.~48]{Goldstein_Lindsay__Markov_sgs_KMS_symm_weight}.
\begin{rem}
\label{rem:N_q}We have $\mathcal{N}^{(q)}=\bigl\{ a\in\Aa:h^{1/q}a^{*}\in\pres{\tau}{}{\mathbb{A}}\bigr\}$.
Indeed, for $a\in\Aa$, observe that `$ah^{1/q}$ is closable and
$\overline{ah^{1/q}}\in\pres{\tau}{}{\mathbb{A}}$' if and only if
`the adjoint of $ah^{1/q}$, namely $h^{1/q}a^{*}$, belongs to $\pres{\tau}{}{\mathbb{A}}$'.
This is because if $ah^{1/q}$ is closable and $\overline{ah^{1/q}}\in\pres{\tau}{}{\mathbb{A}}$,
then also $h^{1/q}a^{*}=\bigl(\overline{ah^{1/q}}\bigr)^{*}\in\pres{\tau}{}{\mathbb{A}}$;
on the other hand, if $(ah^{1/q})^{*}=h^{1/q}a^{*}$ belongs to $\pres{\tau}{}{\mathbb{A}}$,
then in particular it is densely defined, so that $ah^{1/q}$ is closable,
hence $\overline{ah^{1/q}}=(h^{1/q}a^{*})^{*}\in\pres{\tau}{}{\mathbb{A}}$.
Next, when this is the case, we have $h^{1/q}a^{*}\in\Lq{\Aa}$ (equivalently:
$\overline{ah^{1/q}}\in\Lq{\Aa}$) automatically: this is showed in
the second part of the proof of \citep[Proposition 2.2]{Goldstein_Lindsay__Markov_sgs_KMS_symm_weight}
(after `it remains only to show that').
\end{rem}

Given $p\in[1,\infty)$, define the following $*$-subalgebra of $\Aa$:
\[
\mathcal{M}^{(p)}:=\linspan\left\{ a\in\Aa_{+}:a^{1/2}\in\mathcal{N}^{(2p)}\right\} =\linspan\left\{ b^{*}c:b,c\in\mathcal{N}^{(2p)}\right\} .
\]
We have $\mathcal{M}=\mathcal{M}^{(1)}\subseteq\mathcal{M}^{(p)}$.
Also, define a linear map $\mathfrak{i}^{(p)}:\mathcal{M}^{(p)}\to\Lp{\Aa}$
as the unique linear extension of the map that takes $a\in\Aa_{+}$
such that $a^{1/2}\in\mathcal{N}^{(2p)}$ to $\mathfrak{j}^{(2p)}(a^{1/2})^{*}\mathfrak{j}^{(2p)}(a^{1/2})$.
Then $\mathfrak{i}^{(p)}$ is injective and \emph{positivity preserving}.
For all this see \citep[p.~49]{Goldstein_Lindsay__Markov_sgs_KMS_symm_weight}.
The notation $\mathfrak{i}^{(\infty)}:=\i_{\Aa}$ will be useful.
Let us list some key properties of these maps.
\begin{prop}
\label{prop:GL_i}\mbox{}
\begin{enumerate}
\item \label{enu:GL_i_Lem_2_9}For every $p\in[1,\infty)$ and $a,b\in\mathcal{N}^{(2p)}$
we have $\mathfrak{i}^{(p)}(a^{*}b)=\mathfrak{j}^{(2p)}(a)^{*}\cdot\mathfrak{j}^{(2p)}(b)$
\citep[Lemma 2.9]{Goldstein_Lindsay__Markov_sgs_KMS_symm_weight}.
\item \label{enu:GL_i_Prop_2_10}Let $a,b\in\Aa$. All scalars $\tr\left(\mathfrak{i}^{(p)}(a)\cdot\mathfrak{i}^{(q)}(b)\right)$,
for conjugate exponents $p,q\in\left[1,\infty\right]$ such that $a\in\mathcal{M}^{(p)}$
and $b\in\mathcal{M}^{(q)}$, are equal \citep[Proposition 2.10]{Goldstein_Lindsay__Markov_sgs_KMS_symm_weight}.
\item \label{enu:GL_i_Prop_2_11}For every $p\in[1,\infty)$, $\mathfrak{i}^{(p)}(\mathcal{M}_{\infty}\cap\Aa_{+})$
is dense in $\Lp{\Aa}_{+}$ \citep[Proposition 2.11 (b)]{Goldstein_Lindsay__Markov_sgs_KMS_symm_weight}.
\item \label{enu:GL_i_Prop_2_12}For every $p\in[1,\infty)$, the map $\mathfrak{i}^{(p)}:\mathcal{M}^{(p)}\to\Lp{\Aa}$,
viewed as a densely defined operator from $\Aa$ with the ultraweak
topology to $\Lp{\Aa}$ with the weak topology, is closable. The pertinent
closure, which we denote by $\overline{\mathfrak{i}^{(p)}}$, is injective
and positivity preserving \citep[Proposition 2.12]{Goldstein_Lindsay__Markov_sgs_KMS_symm_weight}.
\item \label{enu:GL_i_Prop_2_13}For every $a\in\mathcal{N}^{*}$ and $b\in\mathcal{M}_{\infty}$
we have $\varphi(a\sigma_{-i/2}(b))=\tr\left(a\cdot\mathfrak{i}^{(1)}(b)\right)$
\citep[Proposition 2.13 (c)]{Goldstein_Lindsay__Markov_sgs_KMS_symm_weight}.
\end{enumerate}
\end{prop}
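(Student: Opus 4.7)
My plan is to collect the five items from the relevant places in \citep{Goldstein_Lindsay__Markov_sgs_KMS_symm_weight} and indicate the main idea for each, with (d) being the principal obstacle.

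For (a), the route is polarization: the identity $a^{*}b=\tfrac{1}{4}\sum_{k=0}^{3}i^{k}(a+i^{k}b)^{*}(a+i^{k}b)$ reduces the claim to the positive case, which is the very definition of $\mathfrak{i}^{(p)}$. Since $\mathcal{N}^{(2p)}$ is a left ideal, each $a+i^{k}b$ lies in it; linearity of $\mathfrak{j}^{(2p)}$ and the bilinearity of the strong product on $\pres{\tau}{}{\mathbb{A}}$ (cf.~\prettyref{rem:tau_meas_poly}) then yield the formula. Item (b) follows by expressing $a\in\mathcal{M}^{(p)}$ and $b\in\mathcal{M}^{(q)}$ through (a) as sums of products of $\mathfrak{j}$-images, and using the trace-like property of $\tr$ on $\Lone{\Aa}$ together with the compatibility of the powers of $h$ --- here the condition $\tfrac{1}{2p}+\tfrac{1}{2p}+\tfrac{1}{2q}+\tfrac{1}{2q}=1$ is precisely what makes the various exponents' contributions match up. Item (e) I would handle by a direct computation, expanding the definition of $\mathfrak{i}^{(1)}$ on a positive $b=c^{*}c$ with $c\in\mathcal{N}^{(2)}$ and invoking the Pedersen\textendash Takesaki identification $k_{\varphi}=h$ together with the formula for the dual weight $\widetilde{\varphi}$ at elements of its GNS domain, then extending by linearity.

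For (c), the strategy is to spectrally truncate an arbitrary $c\in\Lp{\Aa}_{+}$ and then approximate on both sides by the net $(e_{j})$ from \prettyref{lem:Str_Terp_approx}: applying $e_{j}^{*}(\cdot)e_{j}$ to the truncation lands inside $\mathcal{M}_{\infty}\cap\Aa_{+}$, and $\Lp$-convergence follows from the contractive action of $\Aa$ on $\Lp{\Aa}$ combined with the $*$-strong convergence $e_{j}\to\one$.

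The main obstacle is (d). My plan is to use duality. Suppose $(a_{\alpha})\subseteq\mathcal{M}^{(p)}$ with $a_{\alpha}\to 0$ ultraweakly and $\mathfrak{i}^{(p)}(a_{\alpha})\to x$ weakly in $\Lp{\Aa}$. Let $q$ be the conjugate exponent; by (c) applied to $q$, the subspace $\mathfrak{i}^{(q)}(\mathcal{M}_{\infty}\cap\Aa_{+})$ is dense in $\Lq{\Aa}_{+}$, hence $\mathfrak{i}^{(q)}(\mathcal{M}_{\infty})$ is dense in $\Lq{\Aa}$. It therefore suffices to show that $\tr(\mathfrak{i}^{(p)}(a_{\alpha})\cdot\mathfrak{i}^{(q)}(b))\to 0$ for every $b\in\mathcal{M}_{\infty}$. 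Combining (b) with a suitable version of (e), this trace is expressible as $\varphi(a_{\alpha}\cdot\sigma_{-i/2}(b'))$ for a fixed $b'\in\mathcal{M}_{\infty}$, which is a normal linear functional of $a_{\alpha}$ and hence converges to $0$. The delicate point, which I would verify carefully, is rewriting the $\Lp$\textendash$\Lq$ trace pairing as a pairing realized by a normal functional on $\Aa$; once in place, ultraweak continuity finishes the argument and forces $x=0$, yielding the closability and, by construction, the injectivity and positivity preservation of $\overline{\mathfrak{i}^{(p)}}$.
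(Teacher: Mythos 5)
You should first note that the paper contains no proof of this proposition at all: each item is quoted, with a precise reference, from \citep{Goldstein_Lindsay__Markov_sgs_KMS_symm_weight} (Lemma 2.9 and Propositions 2.10, 2.11(b), 2.12, 2.13(c) there), so your reconstruction can only be compared with that source. Your sketches of (a), (b) and (e) follow the expected lines, with one caveat in (a): polarisation reduces the claim to $\mathfrak{i}^{(p)}(d^{*}d)=\mathfrak{j}^{(2p)}(d)^{*}\cdot\mathfrak{j}^{(2p)}(d)$ for an \emph{arbitrary} $d\in\mathcal{N}^{(2p)}$, which is not ``the very definition'' --- the definition only covers $d=(d^{*}d)^{1/2}$, and the bridge between the two is the polar decomposition $d=u(d^{*}d)^{1/2}$ together with $(d^{*}d)^{1/2}\in\mathcal{N}^{(2p)}$ and $u^{*}u\,\mathfrak{j}^{(2p)}((d^{*}d)^{1/2})=\mathfrak{j}^{(2p)}((d^{*}d)^{1/2})$. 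Your duality argument for (d) is essentially the right one (and the same pairing also yields injectivity and positivity of the closure, which do not come ``by construction''), but it consumes item (c) for the conjugate exponent, so it stands or falls with (c).

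The genuine gap is in (c). A spectral truncation of $c\in\Lp{\Aa}_{+}$ is a \emph{bounded} operator, and no nonzero element of $\Lp{\Aa}$ ($p<\infty$) is bounded: if $x\in\Lp{\Aa}\cap\mathbb{A}$ then $\left\Vert x\right\Vert =\left\Vert \theta_{t}(x)\right\Vert =e^{-t/p}\left\Vert x\right\Vert$ for all $t\in\R$, forcing $x=0$. So the truncations leave $\Lp{\Aa}$ entirely; worse, $c$ is affiliated with the crossed product $\mathbb{A}$ rather than with $\Aa$, so its truncations are elements of $\mathbb{A}$, not of $\Aa$, and cannot be fed into $\mathfrak{i}^{(p)}$ or land in $\mathcal{M}_{\infty}\cap\Aa_{+}$. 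Even without truncating, $e_{j}^{*}\,c\,e_{j}$ for $\left(e_{j}\right)$ as in \prettyref{lem:Str_Terp_approx} has no reason to belong to $\mathfrak{i}^{(p)}(\mathcal{M}^{(p)})$: by \prettyref{eq:key_convex_set} the positive part of that image is $\bigcup_{\lambda>0}\left[0,\lambda h^{1/p}\right]_{\Lp{\Aa}}$, and conjugating a general positive $L^{p}$ element by contractions from $\Aa$ does not produce something dominated by a multiple of $h^{1/p}$ (already in the commutative tracial picture, multiplying an unbounded positive $L^{p}$ function by functions of modulus at most one does not make it bounded). The mechanism that actually works, and the one used in \citep[Proposition 2.11]{Goldstein_Lindsay__Markov_sgs_KMS_symm_weight}, is factorisation through $L^{2p}(\Aa)$: one first proves that $\mathfrak{j}^{(2p)}(\mathcal{T})$ is norm-dense in $L^{2p}(\Aa)$ --- this is where the real work lies and it is entirely absent from your sketch --- then writes $c=d^{*}\cdot d$ with $d=c^{1/2}\in L^{2p}(\Aa)$, approximates $d$ by $\mathfrak{j}^{(2p)}(a_{n})$ with $a_{n}\in\mathcal{T}$, and uses the H\"{o}lder continuity of the product $L^{2p}(\Aa)\times L^{2p}(\Aa)\to\Lp{\Aa}$ together with item (a) to conclude that $\mathfrak{i}^{(p)}(a_{n}^{*}a_{n})\to c$ in norm, with $a_{n}^{*}a_{n}\in\mathcal{M}_{\infty}\cap\Aa_{+}$.
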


Remark that for $p,q\in\left[1,\infty\right]$ conjugate exponents,
$x\in\Lp{\Aa}$ is positive if and only if $\tr(x\cdot y)\ge0$ for
all $y\in\Lq{\Aa}_{+}$. Consequently, if $p\in[1,\infty)$ and $a\in\mathcal{M}^{(p)}$,
then $a\ge0$ if (and only if) $\mathfrak{i}^{(p)}(a)\ge0$, because
for every $b\in\mathcal{M}_{+}$, $0\le\tr\left(\mathfrak{i}^{(p)}(a)\cdot\mathfrak{i}^{(q)}(b)\right)=\tr\left(a\cdot\mathfrak{i}^{(1)}(b)\right)$,
thus $\om(a)\ge0$ for every $\om\in\Aa_{*}^{+}$ (use \prettyref{enu:GL_i_Prop_2_10},
\prettyref{enu:GL_i_Prop_2_11} above).
\begin{convention*}
From now on we will tacitly identify: 
\begin{itemize}
\item the $*$-algebra $\Linfty{\Aa}$ with $\Aa$ (that is, with its canonical
image in $\mathbb{A}$);
\item $\Lp{\Aa}^{*}$ with $\Lq{\Aa}$ for $p\in[1,\infty),q\in(1,\infty]$
conjugate exponents by the correspondence sending $b\in\Lq{\Aa}$
to the functional $\Lp{\Aa}\ni a\mapsto\tr(a\cdot b)$;
\item (in particular) the Banach space $\Lone{\Aa}$ with $\Aa_{*}$ by
the correspondence sending $x\in\Lone{\Aa}$ to the functional $\Linfty{\Aa}\ni y\mapsto\tr(xy)$;
\item the GNS representation $(\Ltwo{\Aa,\varphi},\text{the canonical left action of }\Aa,\gnsmap_{\varphi})$
with the semi-cyclic representation
\[
(\Ltwo{\Aa},\text{the action of }\Linfty{\Aa}\cong\Aa\text{ on }\Ltwo{\Aa}\text{ by multiplication on the left},\mathfrak{j}_{\varphi}^{(2)});
\]
indeed, we have $x\cdot\mathfrak{j}_{\varphi}^{(2)}(a)=\mathfrak{j}_{\varphi}^{(2)}(xa)$
for every $x\in\Aa\cong\Linfty{\Aa}$ and $a\in\mathcal{N}$, $\mathfrak{j}_{\varphi}^{(2)}$
has dense range \citep[Proposition 2.11 (a)]{Goldstein_Lindsay__Markov_sgs_KMS_symm_weight},
and it satisfies $\Vert\mathfrak{j}_{\varphi}^{(2)}(a)\Vert_{2}=\varphi(a^{*}a)^{1/2}$
for all $a\in\mathcal{N}$ by \citep[Lemma 2.1 (c)]{Goldstein_Lindsay__Markov_sgs_KMS_symm_weight}
and the invariance of the $\Ltwo{\Aa}$-norm under the adjoint map, essentially following from the trace-like property of the functional $\tr$.
\end{itemize}
\end{convention*}

\subsection{\label{sub:Markov__KMS_sym__Dirichlet}Markov operators, KMS-symmetry
and Dirichlet forms}
In this subsection we discuss several notions and results from \citep[Sections 3--5]{Goldstein_Lindsay__Markov_sgs_KMS_symm_weight} that will be fundamental for us.
We continue to use the same notation as in the previous subsection,
so in particular, $\Aa$ and $\varphi$ are fixed.

Let $p\in[1,\infty)$. Write $\left[0,h^{1/p}\right]_{\Lp{\Aa}}:=\left\{ x\in\Lp{\Aa}:0\le x\le h^{1/p}\right\} $,
the right inequality being in the sense of unbounded positive selfadjoint
operators (recall that generally $h^{1/p}$ does not belong to $\pres{\tau}{}{\mathbb{A}}$!).
By \citep[Lemma 3.1, Proposition 3.2 and Corollary 3.4]{Goldstein_Lindsay__Markov_sgs_KMS_symm_weight},
we have 
\begin{equation}
\mathfrak{i}^{(p)}(\left[0,\one\right]_{\mathcal{M}^{(p)}})=\left[0,h^{1/p}\right]_{\Lp{\Aa}},\label{eq:key_convex_set}
\end{equation}
and this convex set is norm-closed in $\Lp{\Aa}$. Notice that the
linear span of $\left[0,h^{1/p}\right]_{\Lp{\Aa}}$ equals $\mathfrak{i}^{(p)}(\mathcal{M}^{(p)})$.
\begin{defn}[{\citep[p.~57]{Goldstein_Lindsay__Markov_sgs_KMS_symm_weight}}] \label{def:Markov,KMS}
Let $p\in[1,\infty)$ and let $S,T$ be linear operators on, respectively,
$\Lp{\Aa},\Aa$, not necessarily everywhere defined. Then
\begin{itemize}
\item $S$ is \emph{Markov with respect to $\varphi$} if $\left[0,h^{1/p}\right]_{\Lp{\Aa}}$
is contained in $D(S)$ and is invariant under $S$;
\item $T$ is \emph{Markov with respect to $\varphi$} if $\mathcal{M}\subseteq D(T)$
and $T\left(\left[0,\one\right]_{\Aa}\cap D(T)\right)\subseteq\left[0,\one\right]_{\Aa}$;
\item $T$ is \emph{KMS-symmetric with respect to $\varphi$} if
$\mathcal{M}\subseteq D(T)$ and
\[
\tr\left(Ta\cdot\mathfrak{i}^{(1)}(b)\right)=\tr\left(\mathfrak{i}^{(1)}(a)\cdot Tb\right)\qquad(\forall_{a,b\in\mathcal{M}});
\]
\item $T$ is \emph{$p$-integrable with respect to $\varphi$}
if $\mathcal{M}\subseteq D(T)$, $T(\mathcal{M})\subseteq\mathcal{M}^{(p)}$
and the map $\mathfrak{i}^{(p)}(a)\mapsto\mathfrak{i}^{(p)}(Ta)$,
$a\in\mathcal{M}$, is continuous (with respect to the $\Lp{\Aa}$-norm).
When this is the case, we denote by $\widetilde{T}^{(p)}$ the unique
bounded extension of that map to $\Lp{\Aa}$.
\end{itemize}
\end{defn}

Remark that when $T$ is everywhere defined, it is Markov if and only
if it is positive (= positivity preserving) and contractive. Also,
for $p=2$, if $S$ is symmetric and Markov, then for every $a,b\in\mathcal{M}^{(2)}$
we have
\begin{equation}
\begin{split}\tr\left(S\mathfrak{i}^{(2)}(a)\cdot\mathfrak{i}^{(2)}(b)\right) & =\left\langle S\mathfrak{i}^{(2)}(a),\mathfrak{i}^{(2)}(b)^{*}\right\rangle =\left\langle \mathfrak{i}^{(2)}(a),S(\mathfrak{i}^{(2)}(b)^{*})\right\rangle \\
 & =\left\langle \mathfrak{i}^{(2)}(a),(S\mathfrak{i}^{(2)}(b))^{*}\right\rangle =\tr\left(\mathfrak{i}^{(2)}(a)\cdot S\mathfrak{i}^{(2)}(b)\right),
\end{split}
\label{eq:L2_symmetry_tr}
\end{equation}
where we used the fact that $S$ is positivity preserving, thus adjoint
preserving, on $\mathfrak{i}^{(2)}(\mathcal{M}^{(2)})$.

We make a short detour to discuss the general theory of quadratic
forms \citep{Davies__semigroups,Kato__perturb_lin_op}. Fix a Hilbert
space $\H$. A non-negative \emph{quadratic form} on $\H$ arises from a semi-inner
product $Q:D(Q)\times D(Q)\to\C$ for a linear subspace $D(Q)$ of
$\H$. Specifically, $Q$ is assumed to be linear in the first variable, hermitian: $Q(\eta,\z)=\overline{Q(\z,\eta)}$
for all $\z,\eta\in D(Q)$, and positive semi-definite: $Q(\z,\z)\ge0$
for all $\z\in D(Q)$. All quadratic forms in this paper will be non-negative.
By the polarisation identity, $Q$ is determined by the function $Q':D(Q)\to[0,\infty)$
given by $Q'\z:=Q(\z,\z)$, $\z\in D(Q)$ (usually it is $Q'$ which is called the quadratic form). It is also useful to consider
the function $Q'':\H\to\left[0,\infty\right]$ given by 
\[
Q''\z:=\begin{cases}
Q'(\z) & \z\in D(Q)\\
\infty & \text{else}
\end{cases}\qquad(\z\in\H).
\]
The form $Q$ is said to be \emph{densely defined} if $D(Q)$ is dense in $\H$, and \emph{closed} if for every sequence $\left(\z_{n}\right)_{n=1}^{\infty}$
in $D(Q)$ that converges to $\z\in\H$ and satisfies that $Q'(\z_{n}-\z_{m})\xrightarrow[n,m\to\infty]{}0$
we have $\z\in D(Q)$ and $Q'(\z_{n}-\z)\xrightarrow[n\to\infty]{}0$.
This is equivalent to $Q''$ being lower semicontinuous.

Using the $Q''$ `face' of quadratic forms, one can add and compare
them, and form limits of ascending sequences of them. For instance,
if $Q_{1},Q_{2}$ are quadratic forms on $\H$, then the quadratic
form $Q_{1}+Q_{2}$ on $\H$ is given by $(Q_{1}+Q_{2})''=Q_{1}''+Q_{2}''$.

There are $1-1$ correspondences between: (generally unbounded) positive
selfadjoint operators $A$ on $\H$; $C_{0}$-semigroups $\left(S_{t}\right)_{t\ge0}$
of selfadjoint contractions on $\H$; and closed, densely-defined
(non-negative) quadratic forms $Q$ on $\H$. They are given by $S_{t}=e^{-tA}$
($t\geq 0$) -- with the equality understood either via the functional calculus or in the standard semigroup theory sense, and $Q=Q_{A}$ where $Q_{A}:=\left\Vert A^{1/2}\cdot\right\Vert ^{2}$,
that is, $D(Q_{A})=D(A^{1/2})$ and $Q_{A}'(\z)=\left\Vert A^{1/2}\z\right\Vert ^{2}$
for all $\z\in D(A^{1/2})$.

In the sequel we will not distinguish between $Q,Q',Q''$, and denote
these three maps by $Q$.

Finally, having completed the detour, we introduce Dirichlet forms
using the previous notation; so again, $\Aa$ and $\varphi$ are fixed.
Let $\pi_{I}:\Ltwo{\Aa}\to\left[0,h^{1/2}\right]_{\Ltwo{\Aa}}$ be
the nearest-point projection onto the closed convex set $\left[0,h^{1/2}\right]_{\Ltwo{\Aa}}$
(see \prettyref{eq:key_convex_set}). A closed densely-defined quadratic form $Q$ on $\Ltwo{\Aa}$
is said to be \emph{Dirichlet with respect to $\varphi$} if $Q\circ\pi_{I}\le Q$
\citep[p.~62]{Goldstein_Lindsay__Markov_sgs_KMS_symm_weight}.

At this point we refer the reader to the Appendix, where we point
out a gap in the proof of \citep[Theorem 4.7]{Goldstein_Lindsay__Markov_sgs_KMS_symm_weight}
and propose two solutions under additional hypotheses. The first one
is more practical for our purposes. It is used to prove \prettyref{cor:corres_compl_Markov},
which plays an important role in our paper.

\subsection{New results concerning $L^p$-embeddings\label{sub:prelim_new_results}}

We now present several  new results pertaining to the material
of the previous two subsections. 

The first lemma is not used later on, but it complements Lemma \ref{lem:i_2_gns}.

\begin{lem}
\label{lem:Goldstein_Lindsay_i_1}Let $b,c\in\mathcal{N}$, and consider
$a:=b^{*}c\in\mathcal{M}$. Then $\mathfrak{i}^{(1)}(a)\in\Lone{\Aa}$,
viewed as an element of $\Aa_{*}$, equals $\om_{J\gnsmap(b),J\gnsmap(c)}$.
\end{lem}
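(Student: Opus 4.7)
The plan is to unravel the conventions surrounding Haagerup $L^p$-spaces; once the identifications are in place the proof is essentially a one-line computation. First I would apply \prettyref{prop:GL_i}(\ref{enu:GL_i_Lem_2_9}) with $p=1$ to rewrite $\mathfrak{i}^{(1)}(a) = \mathfrak{i}^{(1)}(b^{*}c) = \mathfrak{j}^{(2)}(b)^{*} \cdot \mathfrak{j}^{(2)}(c)$, the product being the strong product in $\pres{\tau}{}{\mathbb{A}}$. Under the standing identification $\mathfrak{j}^{(2)} = \gnsmap$ given in the convention, the right-hand side is $\gnsmap(b)^{*} \cdot \gnsmap(c)$.

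Next, the identification of $\Lone{\Aa}$ with $\Aa_{*}$ in the convention sends $\mathfrak{i}^{(1)}(a)$ to the functional $\Aa \ni y \mapsto \tr(y \cdot \mathfrak{i}^{(1)}(a)) = \tr(y \cdot \gnsmap(b)^{*} \cdot \gnsmap(c))$. On the other hand, the convention states that $(\Aa, \Ltwo{\Aa}, *, \Ltwo{\Aa}_{+})$ is a standard form for $\Aa$, so the modular conjugation $J$ acts on $\Ltwo{\Aa}$ as the involution $\xi \mapsto \xi^{*}$; in particular $J\gnsmap(b) = \gnsmap(b)^{*}$ and $J\gnsmap(c) = \gnsmap(c)^{*}$. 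Using the formula $\langle u, v \rangle = \tr(u \cdot v^{*})$ for the inner product on $\Ltwo{\Aa}$, I would compute
\[
\omega_{J\gnsmap(b), J\gnsmap(c)}(y) = \langle y \cdot J\gnsmap(b), J\gnsmap(c) \rangle = \tr\bigl(y \cdot J\gnsmap(b) \cdot (J\gnsmap(c))^{*}\bigr) = \tr(y \cdot \gnsmap(b)^{*} \cdot \gnsmap(c)),
\]
where in the last step I use $(J\gnsmap(c))^{*} = (\gnsmap(c)^{*})^{*} = \gnsmap(c)$. This matches the previous display and proves the claim.

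There is no serious obstacle in this argument: it amounts to a careful bookkeeping of the identifications baked into the preceding subsection (the GNS embedding with $\mathfrak{j}^{(2)}$, $\Lone{\Aa}$ with $\Aa_{*}$, and the modular conjugation with the $*$-operation on $\Ltwo{\Aa}$) combined with the multiplicative rule of \prettyref{prop:GL_i}(\ref{enu:GL_i_Lem_2_9}). The mildly delicate point is recognising that the statement that $(\Aa, \Ltwo{\Aa}, *, \Ltwo{\Aa}_{+})$ is a standard form pins down the modular conjugation as the $*$-operation on $\Ltwo{\Aa}$; everything else is a direct substitution.
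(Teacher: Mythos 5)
Your proof is correct, but it takes a genuinely different route from the paper's. The paper works in the GNS picture: for $b,c\in\mathcal{T}$ it pairs $\mathfrak{i}^{(1)}(a)$ against $x\in\mathcal{N}\cap\mathcal{N}^{*}$ via \prettyref{prop:GL_i} \prettyref{enu:GL_i_Prop_2_13}, unwinds $\varphi(x\sigma_{-i/2}(a))$ using $S=J\nabla^{1/2}$, and then passes to general $b,c\in\mathcal{N}$ by a bounded approximation in $\mathcal{T}$ combined with the closability of $\mathfrak{i}^{(1)}$ (\prettyref{prop:GL_i} \prettyref{enu:GL_i_Prop_2_12}). You instead stay entirely inside $\pres{\tau}{}{\mathbb{A}}$: the identity $\mathfrak{i}^{(1)}(b^{*}c)=\mathfrak{j}^{(2)}(b)^{*}\cdot\mathfrak{j}^{(2)}(c)$ of \prettyref{prop:GL_i} \prettyref{enu:GL_i_Lem_2_9} is valid for all $b,c\in\mathcal{N}$ at once, so no approximation or analyticity enters, and the lemma collapses to the formula $\left\langle u,v\right\rangle =\tr(u\cdot v^{*})$. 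This is shorter and arguably more transparent. The one step you should justify more carefully is the identification $J\gnsmap(b)=\gnsmap(b)^{*}$: the bare statement that $(\Aa,\Ltwo{\Aa},{}^{*},\Ltwo{\Aa}_{+})$ is a standard representation does not by itself pin down that the Convention's unitary $\gnsmap(a)\mapsto\mathfrak{j}^{(2)}(a)$ carries $J_{\varphi}$ to the adjoint map, since a unitary intertwining the representations need not intertwine the conjugations unless it also carries the natural cone $\overline{\nabla^{1/4}\gnsmap(\mathcal{M}_{+})}$ onto $\Ltwo{\Aa}_{+}$ (or, equivalently, unless one checks that the closure of $\mathfrak{j}^{(2)}(a)\mapsto\mathfrak{j}^{(2)}(a^{*})$ has polar decomposition ${}^{*}\circ\nabla^{1/2}$ with $\nabla\xi=h\xi h^{-1}$). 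Both facts are standard in Terp's development of the Haagerup $L^{p}$-spaces and also follow from \prettyref{lem:i_2_gns} \prettyref{enu:i_2_gns__2} together with the density statements of \prettyref{prop:GL_i} \prettyref{enu:GL_i_Prop_2_11}, so the issue is one of citation rather than substance; with that reference supplied, your argument is complete.
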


\begin{proof}
Denote by $\om$ the element of $\Aa_{*}$ that corresponds to $\mathfrak{i}^{(1)}(a)$.
Assume for the moment that $b,c\in\mathcal{T}$. Write $S:=J\nabla^{1/2}=\nabla^{-1/2}J$.
By \prettyref{prop:GL_i} \prettyref{enu:GL_i_Prop_2_13}, for every
$x\in\mathcal{N}\cap\mathcal{N}^{*}$,
\[
\begin{split}\om(x) & =\tr(x\cdot\mathfrak{i}^{(1)}(a))=\varphi(x\sigma_{-i/2}(a))=\left\langle \gnsmap(\sigma_{-i/2}(a)),\gnsmap(x^{*})\right\rangle =\left\langle \nabla^{1/2}\gnsmap(a),S\gnsmap(x)\right\rangle \\
 & =\left\langle \nabla^{1/2}\gnsmap(a),\nabla^{-1/2}J\gnsmap(x)\right\rangle =\left\langle \gnsmap(a),J\gnsmap(x)\right\rangle =\left\langle \gnsmap(x),J\gnsmap(a)\right\rangle \\
 & =\left\langle JbJ\gnsmap(x),J\gnsmap(c)\right\rangle =\left\langle xJ\gnsmap(b),J\gnsmap(c)\right\rangle .
\end{split}
\]
Since $\om$ is normal and $\mathcal{N}\cap\mathcal{N}^{*}$ is ultraweakly
dense in $\Aa$, we get $\om=\om_{J\gnsmap(b),J\gnsmap(c)}$.

For the general case of $b,c\in\mathcal{N}$, let $\left(b_{n}\right)_{n=1}^{\infty}$,
$\left(c_{n}\right)_{n=1}^{\infty}$ be bounded sequences in $\mathcal{T}$
such that $b_{n}\xrightarrow[n\to\infty]{}b$, $c_{n}\xrightarrow[n\to\infty]{}c$
in the strong operator topology and $\gnsmap(b_{n})\xrightarrow[n\to\infty]{}\gnsmap(b)$,
$\gnsmap(c_{n})\xrightarrow[n\to\infty]{}\gnsmap(c)$ (\citep[Theorem VI.1.26]{Takesaki__book_vol_2}
and \citep[Section 10.21, Corollary 2]{Stratila_Zsido__lectures_vN};
or \citep[p.~29, (16)]{Stratila__mod_thy}). Then $a_{n}:=b_{n}^{*}c_{n}\xrightarrow[n\to\infty]{}b^{*}c=a$
ultraweakly and $\mathfrak{i}^{(1)}(a_{n})=\om_{J\gnsmap(b_{n}),J\gnsmap(c_{n})}\to\om_{J\gnsmap(b),J\gnsmap(c)}$
in norm. Since $\mathfrak{i}^{(1)}$ is ultraweak\textendash $\sigma(\Aa_{*},\Aa)$-closable
(\prettyref{prop:GL_i} \prettyref{enu:GL_i_Prop_2_12}), $\mathfrak{i}^{(1)}(a)=\om_{J\gnsmap(b),J\gnsmap(c)}$.
\end{proof}

The next lemma's assertion is precisely that of \citep[Lemma 2.5]{Goldstein_Lindsay__Markov_sgs_KMS_symm_weight}
but it uses weaker assumptions.

\begin{lem}
\label{lem:h_impl_sigma}Let $\a\in\C$ be such that $s:=\Re\a\geq0$,
and let $a\in D(\sigma_{i\a})=D(\sigma_{is})\subseteq\Aa$ be such
that $a,\sigma_{i\a}(a)^{*}\in\mathcal{N}^{(1/s)}$. Then $\overline{ah^{\a}}=h^{\a}\sigma_{i\a}(a)$.
\end{lem}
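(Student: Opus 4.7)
The first step is to reduce to the case $\alpha = s$ real. Writing $\alpha = s + it$ and using $h^\alpha = h^s h^{it}$ (commuting spectral factors), $\sigma_{i\alpha}(a) = \sigma_{-t}(\sigma_{is}(a))$, and the identity $h^{it}\sigma_{-t}(b) = bh^{it}$ for bounded $b$, one obtains
\[
h^\alpha\sigma_{i\alpha}(a) = h^s\sigma_{is}(a)\,h^{it},\qquad \overline{ah^\alpha} = \overline{ah^s}\,h^{it},
\]
so right-cancelling the unitary $h^{it}$ reduces the claim to $\overline{ah^s} = h^s\sigma_{is}(a)$. The case $s=0$ is trivial, so I may assume $s>0$.

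For the main step I will work on the dense subspace $D_0 := \bigcup_{n\in\N} f_n\Ltwo{\R,\H}$, where $f_n$ denotes the spectral projection of $h$ onto $[1/n,n]$; for each $\xi \in D_0$, $z\mapsto h^z\xi$ is entire. For $\xi,\zeta\in D_0$, the key scalar-valued functions are
\[
F(z) := \langle \sigma_{iz}(a)\xi, h^{\bar z}\zeta\rangle,\qquad G(z) := \langle ah^z\xi, \zeta\rangle,\qquad 0\leq \Re z\leq s.
\]
Using $a\in D(\sigma_{is})$, both are continuous on the closed strip and holomorphic in the interior. On the imaginary axis, $\sigma_{-t}(a) = h^{-it}ah^{it}$ combined with unitarity of $h^{it}$ gives $F(it) = \langle ah^{it}\xi,\zeta\rangle = G(it)$. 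A Schwarz-reflection argument (extend $F-G$ by zero past the imaginary axis and invoke Morera's theorem) then forces $F \equiv G$ throughout. Evaluating at $z=s$ yields $\langle \sigma_{is}(a)\xi, h^s\zeta\rangle = \langle ah^s\xi, \zeta\rangle$ for all $\xi,\zeta\in D_0$; as $D_0$ is a core for the selfadjoint $h^s$, this shows $\sigma_{is}(a)\xi\in D(h^s)$ with $h^s\sigma_{is}(a)\xi = ah^s\xi$ for every $\xi\in D_0$.

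To pass from this pointwise identity to the operator equality, I observe that both sides lie in $L^{1/s}(\Aa)$: the operator $\overline{ah^s}$ by the hypothesis $a\in\mathcal{N}^{(1/s)}$, while $h^s\sigma_{is}(a)$ is closed (closed times bounded) and $\tau$-measurable by Remark~\ref{rem:N_q} applied to $\sigma_{is}(a)^*\in\mathcal{N}^{(1/s)}$, together with the appropriate $\theta$-homogeneity. Since $D_0$ is a core for $h^s$ and hence for $ah^s$, the closure of $ah^s|_{D_0}$ coincides with $\overline{ah^s}$, and this closure is contained in the closed operator $h^s\sigma_{is}(a)$. Finally, whenever $S\subseteq T$ in $\pres{\tau}{}{\mathbb{A}}$, Remark~\ref{rem:tau_meas_poly} identifies the strong difference $T-S$ with the closure of the zero operator on the $\tau$-dense subspace $D(S)$, which is zero in $\pres{\tau}{}{\mathbb{A}}$; hence $T=S$ as operators.

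The main obstacle I anticipate is the Schwarz reflection step: one must verify that extending $F-G$ by zero across $\Re z=0$ yields a function holomorphic on the enlarged strip, which follows from Morera applied to triangles crossing the imaginary axis (the integrals vanish on each side, and continuity across the line handles the interface). The scalar reduction via pairing with $\zeta\in D_0$ is essential here; defining $G$ as the vector $h^{-z}a\xi$ directly would fail, because $a\xi$ need not lie in $D(h^{-z})$ for $\Re z>0$, since $a$ does not preserve the spectral subspaces of $h$ in general.
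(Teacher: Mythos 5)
Your proof is correct, and it differs from the paper's mainly in being self-contained where the paper cites a reference. Both arguments have the same endgame: once one knows that $\overline{ah^{\a}}\subseteq h^{\a}\sigma_{i\a}(a)$ with both sides in $\pres{\tau}{}{\mathbb{A}}$ (the left-hand side because $a\in\mathcal{N}^{(1/s)}$, the right-hand side by \prettyref{rem:N_q} applied to $\sigma_{i\a}(a)^{*}$), equality follows because the strong difference is the closure of the zero operator on a $\tau$-dense domain \textendash{} exactly your last step. The difference lies in how the inclusion is obtained: the paper simply invokes \citep[9.24, Proposition]{Stratila_Zsido__lectures_vN}, which states that $a\in D(\sigma_{is})$ if and only if $ah^{s}\subseteq h^{s}b$ for some $b\in\Aa$, in which case $b=\sigma_{is}(a)$; you instead reprove this characterisation (on the core $D_{0}$, which suffices after taking closures, as you correctly argue) via the two scalar functions $F,G$ and a Morera-type gluing across $\Re z=0$. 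That argument is sound \textendash{} in particular your remark that one must pair against $\zeta\in D_{0}$ rather than manipulate $h^{-z}a\xi$ directly identifies the genuine obstruction \textendash{} and what it buys is independence from the cited proposition, at the cost of having to verify the regularity of $z\mapsto\sigma_{iz}(a)$ on the closed strip. Two small points you should make explicit: the reduction to real exponent uses $\sigma_{is}(a)^{*}\in\mathcal{N}^{(1/s)}$, which is not literally the hypothesis but follows from $\sigma_{is}(a)^{*}=\sigma_{t}(\sigma_{i\a}(a)^{*})$ together with the elementary $\sigma_{t}$-invariance of $\mathcal{N}^{(1/s)}$ (the computation $h^{s}\sigma_{t}(b)^{*}=h^{it}h^{s}b^{*}h^{-it}$, as in \prettyref{lem:k_psi_mod_aut_grp}); and the holomorphy of $F$ requires noting that $z\mapsto\sigma_{iz}(a)\xi$ is (weakly, hence strongly) analytic while $z\mapsto h^{\bar{z}}\zeta$ is anti-analytic for $\zeta\in D_{0}$, so that their inner product is analytic. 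Neither is a gap, just bookkeeping worth recording.
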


\begin{proof} By \citep[9.24, Proposition]{Stratila_Zsido__lectures_vN},
$a\in D(\sigma_{is})$ if and only if $ah^{s}\subseteq h^{s}b$ for
some $b\in\Aa$, in which case $b=\sigma_{is}(a)$, thus $ah^{\a}\subseteq h^{\a}\sigma_{i\a}(a)$.
Now, follow the proof of \citep[Lemma 2.5]{Goldstein_Lindsay__Markov_sgs_KMS_symm_weight}:
since $a,\sigma_{i\a}(a)^{*}\in\mathcal{N}^{(1/s)}$, $ah^{\a}$ is
closable, $\overline{ah^{\a}}\subseteq h^{\a}\sigma_{i\a}(a)$, and
both sides are in $\pres{\tau}{}{\mathbb{A}}$, hence they are equal.
\end{proof}

The next lemma connects the map $\mathfrak{i}^{(2)}$ introduced above with the GNS map $\gnsmap$.

\begin{lem}
\label{lem:i_2_gns}\mbox{}
\begin{enumerate}
\item \label{enu:i_2_gns__1}For every $a=b^{*}c$ with $b,c\in\mathcal{N}^{(4)}$
(so $a\in\mathcal{M}^{(2)}$) such that $a\in D(\sigma_{-i/4})$ and
$\sigma_{-i/4}(a)\in\mathcal{N}$ we have $\mathfrak{i}^{(2)}(a)=\gnsmap(\sigma_{-i/4}(a))$.
\item \label{enu:i_2_gns__2}For every $a\in\mathcal{M}$ we have $\mathfrak{i}^{(2)}(a)=\nabla^{1/4}\gnsmap(a)$.
In particular, if $a\in\mathcal{M}\cap\mathcal{T}$ then $\mathfrak{i}^{(2)}(a)=\gnsmap(\sigma_{-i/4}(a))$.
\end{enumerate}
\end{lem}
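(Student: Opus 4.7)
The plan is to prove part (1) directly by identifying $\mathfrak{i}^{(2)}(a)$ with a concrete $\tau$-measurable operator via \prettyref{prop:GL_i}\,(a), then using \prettyref{lem:h_impl_sigma} to ``commute'' $h^{1/4}$ past the analytic element $a$; part (2) will follow from part (1) in the analytic case plus a Tomita-smoothing approximation argument that invokes closedness of $\nabla^{1/4}$.

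For part (1), first rewrite $\mathfrak{i}^{(2)}(a) = \mathfrak{j}^{(4)}(b)^{*}\cdot\mathfrak{j}^{(4)}(c) = (h^{1/4}b^{*})\cdot\overline{ch^{1/4}}$ as a strong product in $\pres{\tau}{}{\mathbb{A}}$ using \prettyref{prop:GL_i}\,(a) and the definition of $\mathfrak{j}^{(4)}$. To apply \prettyref{lem:h_impl_sigma} to $a^{*}$ with $\alpha=1/4$ I need $a^{*},\sigma_{i/4}(a^{*})^{*}=\sigma_{-i/4}(a)\in\mathcal{N}^{(4)}$: the second containment is immediate from the hypothesis $\sigma_{-i/4}(a)\in\mathcal{N}\subseteq\mathcal{N}^{(4)}$, while the first follows from $a^{*}h^{1/4}=c^{*}(bh^{1/4})$, so by \prettyref{rem:tau_meas_poly} $\overline{a^{*}h^{1/4}}=c^{*}\cdot\overline{bh^{1/4}}\in L^{4}(\Aa)$ (using contractivity of the left $\Aa$-action on $L^{4}(\Aa)$). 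The lemma gives $\overline{a^{*}h^{1/4}}=h^{1/4}\sigma_{-i/4}(a)^{*}$, and taking adjoints yields the key equality $h^{1/4}a=\sigma_{-i/4}(a)h^{1/4}$ as closed operators with common domain $D(h^{1/4})$ (so in particular $a$ preserves $D(h^{1/4})$). Applying this on vectors $h^{1/4}\eta$ with $\eta\in D(h^{1/2})\subseteq D(h^{1/4})$, the polynomial $h^{1/4}ah^{1/4}$ agrees on $D(h^{1/2})$ with $\sigma_{-i/4}(a)h^{1/2}$, which is already closed (being bounded $\times\,h^{1/2}$) and equals $\mathfrak{j}^{(2)}(\sigma_{-i/4}(a))=\gnsmap(\sigma_{-i/4}(a))$ under the Convention. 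Since both $\mathfrak{i}^{(2)}(a)$ and $\gnsmap(\sigma_{-i/4}(a))$ are $\tau$-measurable elements of $\Ltwo{\Aa}$, and their strong difference, vanishing on the dense subspace $D(h^{1/2})$, must be the zero $\tau$-measurable operator, we conclude $\mathfrak{i}^{(2)}(a)=\gnsmap(\sigma_{-i/4}(a))$.

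For part (2), I first handle $a\in\mathcal{M}\cap\mathcal{T}$: decompose $a=b^{*}c$ with $b,c\in\mathcal{N}\subseteq\mathcal{N}^{(4)}$, note $\sigma_{-i/4}(a)\in\mathcal{T}\subseteq\mathcal{N}$, so part (1) gives $\mathfrak{i}^{(2)}(a)=\gnsmap(\sigma_{-i/4}(a))$; analytic continuation of $\nabla^{it}\gnsmap(a)=\gnsmap(\sigma_{t}(a))$ on $\mathcal{T}$ to $z=-i\cdot i/4=1/4$ gives $\gnsmap(a)\in D(\nabla^{1/4})$ with $\nabla^{1/4}\gnsmap(a)=\gnsmap(\sigma_{-i/4}(a))$, yielding the ``in particular'' assertion and treating all of $\mathcal{M}\cap\mathcal{T}$. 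For general $a\in\mathcal{M}$, by polarization reduce to the positive case $a=d^{*}d$ with $d\in\mathcal{N}\cap\mathcal{N}^{*}$ and set $d_{j}:=(\delta_{j}/\pi)^{1/2}\int_{\R}e^{-\delta_{j}t^{2}}\sigma_{t}(d)\d t$ with $\delta_{j}\to\infty$. A standard verification (using $\sigma$-invariance of $\varphi$ and Gaussian contour shifts) shows $d_{j}\in\mathcal{T}$, $\|d_{j}\|\le\|d\|$, $d_{j}\to d$ ultrastrongly$^{*}$, and both $\gnsmap(d_{j})\to\gnsmap(d)$ and $\gnsmap(d_{j}^{*})\to\gnsmap(d^{*})$ in $\Ltwo{\Aa}$. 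Setting $a_{j}:=d_{j}^{*}d_{j}\in\mathcal{M}_{\infty}\subseteq\mathcal{M}\cap\mathcal{T}$, one obtains $\gnsmap(a_{j})=d_{j}^{*}\gnsmap(d_{j})\to\gnsmap(a)$ in $\Ltwo{\Aa}$ and $\mathfrak{i}^{(2)}(a_{j})=\mathfrak{j}^{(4)}(d_{j})^{*}\mathfrak{j}^{(4)}(d_{j})\to\mathfrak{i}^{(2)}(a)$ in $\Ltwo{\Aa}$. Since the analytic case gives $\mathfrak{i}^{(2)}(a_{j})=\nabla^{1/4}\gnsmap(a_{j})$, closedness of $\nabla^{1/4}$ then yields $\gnsmap(a)\in D(\nabla^{1/4})$ with $\nabla^{1/4}\gnsmap(a)=\mathfrak{i}^{(2)}(a)$.

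The main obstacle I foresee is establishing the convergence $\mathfrak{j}^{(4)}(d_{j})\to\mathfrak{j}^{(4)}(d)$ in $L^{4}(\Aa)$ cleanly; the most natural route is an interpolation inequality of the form $\|\mathfrak{j}^{(4)}(x)\|_{4}\le\|x\|^{1/2}\|\gnsmap(x)\|_{2}^{1/2}$ applied to $x=d-d_{j}$, using the uniform $\|d_{j}\|\le\|d\|$ bound and the $\Ltwo{\Aa}$-convergence of $\gnsmap(d_{j})$. If such an inequality is not directly available, the fallback is to exploit instead the closability of $\mathfrak{i}^{(2)}$ (\prettyref{prop:GL_i}\,(d)): the ultrastrong convergence $a_{j}\to a$ combined with a uniform $\Ltwo{\Aa}$-bound on $\mathfrak{i}^{(2)}(a_{j})=\gnsmap(\sigma_{-i/4}(a_{j}))$ (obtained from the positive product structure $a_{j}=d_{j}^{*}d_{j}$ together with controlled growth of $\sigma_{z}$ on the smoothings) extracts a weak limit which, by closability, must coincide with $\mathfrak{i}^{(2)}(a)$, after which closedness of $\nabla^{1/4}$ closes the argument as above.
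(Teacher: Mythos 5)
Your part \prettyref{enu:i_2_gns__1} is essentially the paper's argument: both proofs reduce $\mathfrak{i}^{(2)}(a)$ to the polynomial $h^{1/4}ah^{1/4}$ via \prettyref{prop:GL_i} \prettyref{enu:GL_i_Lem_2_9} and then commute $h^{1/4}$ past $a$ using \prettyref{lem:h_impl_sigma} (you apply it to $a^{*}$, the paper to $\sigma_{-i/4}(a)$ --- an immaterial difference), concluding by $\tau$-measurability. One small slip: $\sigma_{-i/4}(a)h^{1/2}$ is \emph{not} automatically closed (a bounded operator composed on the left of a closed one need not be closed); you must take the closure, which is by definition $\mathfrak{j}^{(2)}(\sigma_{-i/4}(a))$, so nothing is lost.

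In part \prettyref{enu:i_2_gns__2} the overall strategy (Gaussian smoothing, the analytic case, then closability of $\mathfrak{i}^{(2)}$ and weak closedness of the graph of $\nabla^{1/4}$) is exactly the paper's, and the reduction to $a=d^{*}d$ is legitimate --- though it comes from $\mathcal{M}=\linspan\mathcal{M}^{+}$ with $d=a^{1/2}\in\mathcal{N}\cap\mathcal{N}^{*}$, not from polarization, which only yields $d\in\mathcal{N}$. However, the step you yourself flag as the main obstacle is genuinely unresolved on both of your proposed routes. The interpolation inequality $\Vert\mathfrak{j}^{(4)}(x)\Vert_{4}\le\Vert x\Vert^{1/2}\Vert\gnsmap(x)\Vert_{2}^{1/2}$ is true, but since $\Vert\mathfrak{j}^{(4)}(x)\Vert_{4}^{2}=\Vert\mathfrak{j}^{(4)}(x)^{*}\cdot\mathfrak{j}^{(4)}(x)\Vert_{2}=\Vert\mathfrak{i}^{(2)}(x^{*}x)\Vert_{2}$, proving it amounts to the very statement you are establishing, so this route is circular. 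The fallback founders on the phrase ``controlled growth of $\sigma_{z}$ on the smoothings'': for $d_{j}=(\delta_{j}/\pi)^{1/2}\int_{\R}e^{-\delta_{j}t^{2}}\sigma_{t}(d)\d t$ with $\delta_{j}\to\infty$ one only has $\Vert\sigma_{-i/4}(d_{j})\Vert\le e^{\delta_{j}/16}\Vert d\Vert$, which diverges, so no uniform bound on $\mathfrak{i}^{(2)}(a_{j})=\gnsmap(\sigma_{-i/4}(a_{j}))$ can be extracted from operator-norm control of the analytic continuations. The missing idea --- precisely the paper's Phragmen\textendash Lindel\"{o}f step --- is to interpolate in $\nabla$ rather than in $\sigma$: for $\xi\in D(\nabla^{1/2})$ one has $\Vert\nabla^{1/4}\xi\Vert^{2}=\langle\nabla^{1/2}\xi,\xi\rangle\le\Vert\nabla^{1/2}\xi\Vert\,\Vert\xi\Vert$, while $\Vert\nabla^{1/2}\gnsmap(y)\Vert=\Vert J\nabla^{1/2}\gnsmap(y)\Vert=\Vert\gnsmap(y^{*})\Vert$ for $y\in\mathcal{N}\cap\mathcal{N}^{*}$; applied to the selfadjoint elements $a_{j}=d_{j}^{*}d_{j}\in\mathcal{M}_{\infty}$ this gives $\Vert\nabla^{1/4}\gnsmap(a_{j})\Vert\le\Vert\gnsmap(a_{j})\Vert\le\Vert d_{j}\Vert\,\Vert\gnsmap(d_{j})\Vert$, which is uniformly bounded. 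With this bound inserted, your fallback argument closes correctly.
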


\begin{proof}
\prettyref{enu:i_2_gns__1} We have $\mathfrak{i}^{(2)}(a)\supseteq h^{1/4}ah^{1/4}$
because \prettyref{prop:GL_i} \prettyref{enu:GL_i_Lem_2_9} implies
the relations $\mathfrak{i}^{(2)}(b^{*}c)\supseteq\mathfrak{j}^{(4)}(b)^{*}\mathfrak{j}^{(4)}(c)=h^{1/4}b^{*}\overline{ch^{1/4}}$.
On the other hand, since $\sigma_{-i/4}(a)\in\mathcal{N}\subseteq\mathcal{N}^{(4)}$
and $a^{*}\in\mathcal{M}^{(2)}\subseteq\mathcal{N}^{(4)}$, \prettyref{lem:h_impl_sigma}
implies that $h^{1/4}a=\overline{\sigma_{-i/4}(a)h^{1/4}}$, thus
$\mathfrak{i}^{(2)}(a)\supseteq\sigma_{-i/4}(a)h^{1/2}$. Hence \prettyref{rem:tau_meas_poly}
entails that $\mathfrak{i}^{(2)}(a)=\overline{\sigma_{-i/4}(a)h^{1/2}}=\mathfrak{j}^{(2)}(\sigma_{-i/4}(a))=\gnsmap(\sigma_{-i/4}(a))$.

\prettyref{enu:i_2_gns__2} Let $a\in\mathcal{M}$, and assume that
$a=b^{*}c$ for $b,c\in\mathcal{N}$. As in the proof of \prettyref{lem:Goldstein_Lindsay_i_1},
find bounded sequences $\left(b_{n}\right)_{n=1}^{\infty}$, $\left(c_{n}\right)_{n=1}^{\infty}$
in $\mathcal{T}$ that satisfy $b_{n}\xrightarrow[n\to\infty]{}b$,
$c_{n}\xrightarrow[n\to\infty]{}c$ in the strong operator topology and $\gnsmap(b_{n})\xrightarrow[n\to\infty]{}\gnsmap(b)$,
$\gnsmap(c_{n})\xrightarrow[n\to\infty]{}\gnsmap(c)$. Then the sequence
$\left(b_{n}^{*}c_{n}\right)_{n=1}^{\infty}$ in $\mathcal{M}_{\infty}$
converges ultraweakly to $a$ and $\gnsmap(b_{n}^{*}c_{n})=b_{n}^{*}\gnsmap(c_{n})\xrightarrow[n\to\infty]{}b^{*}\gnsmap(c)=\gnsmap(a)$
weakly. Since $J\nabla^{1/2}\gnsmap(b_{n}^{*}c_{n})=\gnsmap(c_{n}^{*}b_{n})=c_{n}^{*}\gnsmap(b_{n})$
for all $n\in\N$, the sequence $\left(\nabla^{1/2}\gnsmap(b_{n}^{*}c_{n})\right)_{n=1}^{\infty}$
is bounded, as was $\left(\gnsmap(b_{n}^{*}c_{n})\right)_{n=1}^{\infty}$, hence so is $\left(\nabla^{1/4}\gnsmap(b_{n}^{*}c_{n})\right)_{n=1}^{\infty}$
by the Phragmen\textendash Lindel\"{o}f three lines theorem. Thus,
$\nabla^{1/4}\gnsmap(b_{n}^{*}c_{n})\xrightarrow[n\to\infty]{}\nabla^{1/4}\gnsmap(a)$
weakly. Since $\nabla^{1/4}\gnsmap(b_{n}^{*}c_{n})=\mathfrak{i}^{(2)}(b_{n}^{*}c_{n})$
for all $n\in\N$ by \prettyref{enu:i_2_gns__1}, \prettyref{prop:GL_i}
\prettyref{enu:GL_i_Prop_2_12} implies that $\mathfrak{i}^{(2)}(a)=\nabla^{1/4}\gnsmap(a)$.
\end{proof}

Recall the (generalised) operators $k_\psi$ defined in Subsection \ref{sub:prelim_Haagerup_Lp}. The last statement below can be also deduced from a general argument concerning the $L^p$-extensions of automorphisms preserving the fixed reference weight; for extending yet more general maps to Haagerup $L^p$-spaces in the state context we refer to \citep[Section 5]{HJXReduction} (the arguments there can be also adapted to the weight case).

\begin{lem}
\label{lem:k_psi_mod_aut_grp}
\begin{enumerate}
\item \label{enu:k_psi_mod_aut_grp_1}For every normal weight $\psi$ on
$\Aa$ and $t\in\R$ we have $k_{\psi\circ\sigma_{t}}=h^{-it}k_{\psi}h^{it}$
in $\hat{\mathbb{A}}_{+}$.
\item \label{enu:k_psi_mod_aut_grp_2}$\tr$ is $\Ad{h^{it}}$-invariant.
\item \label{enu:k_psi_mod_aut_grp_3}Let $p\in[2,\infty)$, $q\in[1,\infty)$
and $t\in\R$. The sets $\mathcal{N}^{(p)},\mathcal{M}^{(q)},D(\overline{\mathfrak{i}^{(q)}})$
are invariant under $\sigma_{t}$, and for every $a\in\mathcal{N}^{(p)}$
and $b\in D(\overline{\mathfrak{i}^{(q)}})$ we have $\mathfrak{j}^{(p)}(\sigma_{t}(a))=h^{it}\mathfrak{j}^{(p)}(a)h^{-it}$
and $\overline{\mathfrak{i}^{(q)}}(\sigma_{t}(b))=h^{it}\overline{\mathfrak{i}^{(q)}}(b)h^{-it}$.
\item \label{enu:k_psi_mod_aut_grp_4}Let $p\in[1,\infty)$. For $t\in\R$
define $U(t):\Lp{\Aa}\to\Lp{\Aa}$ by $x\mapsto h^{it}xh^{-it}$,
$x\in\Lp{\Aa}$. Then $\left(U(t)\right)_{t\in\R}$ is a (well-defined)
$C_{0}$-group of surjective isometries.
\end{enumerate}
\end{lem}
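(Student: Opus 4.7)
The approach is as follows. First, since $h^{it} = \lambda(t)$ lies in $\mathbb{A}$ as a unitary, $\Ad{h^{it}}$ is an inner automorphism of $\mathbb{A}$, and part \ref{enu:k_psi_mod_aut_grp_2} is immediate from the trace property of $\tau$ (extended to $\hat{\mathbb{A}}_{+}$ by normality). For part \ref{enu:k_psi_mod_aut_grp_1}, I would invoke the general identity $\widetilde{\psi \circ \sigma_{t}} = \widetilde{\psi} \circ \Ad{h^{it}}$, valid for every normal weight $\psi$ on $\Aa$; this follows from the factorisation of the dual weight through Haagerup's operator-valued weight $T_{\varphi} \colon \hat{\mathbb{A}}_{+} \to \hat{\Aa}_{+}$ combined with its covariance $T_{\varphi} \circ \Ad{h^{it}} = \sigma_{t} \circ T_{\varphi}$. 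Combining this with part \ref{enu:k_psi_mod_aut_grp_2} and the trace property gives, for $x \in \hat{\mathbb{A}}_{+}$,
\[
\tr(k_{\psi \circ \sigma_{t}} \cdot x) = \widetilde{\psi}(h^{it} x h^{-it}) = \tr(k_{\psi} \cdot h^{it} x h^{-it}) = \tr(h^{-it} k_{\psi} h^{it} \cdot x),
\]
whence $k_{\psi \circ \sigma_{t}} = h^{-it} k_{\psi} h^{it}$ by uniqueness of the Radon--Nikodym derivative.

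For part \ref{enu:k_psi_mod_aut_grp_3}, I would exploit the characterisation in \prettyref{rem:N_q}: $a \in \mathcal{N}^{(p)}$ iff $h^{1/p} a^{*} \in \pres{\tau}{}{\mathbb{A}}$. Since $h^{1/p}$ commutes with $h^{\pm it}$, $h^{1/p} \sigma_{t}(a)^{*} = h^{it} (h^{1/p} a^{*}) h^{-it}$, which lies in $\pres{\tau}{}{\mathbb{A}}$ by \prettyref{rem:tau_meas_poly}; hence $\sigma_{t}(\mathcal{N}^{(p)}) \subseteq \mathcal{N}^{(p)}$, and taking adjoints and closures yields $\mathfrak{j}^{(p)}(\sigma_{t}(a)) = h^{it} \mathfrak{j}^{(p)}(a) h^{-it}$. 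The assertions about $\mathcal{M}^{(q)}$ are immediate from its definition as the linear span of products $b^{*} c$ with $b, c \in \mathcal{N}^{(2q)}$, and the formula for $\mathfrak{i}^{(q)}$ on elements of $\mathcal{M}^{(q)}$ reduces to that for $\mathfrak{j}^{(2q)}$ via \prettyref{prop:GL_i} \ref{enu:GL_i_Lem_2_9}. For $D(\overline{\mathfrak{i}^{(q)}})$, one takes $b \in D(\overline{\mathfrak{i}^{(q)}})$ approximated in the ultraweak--weak sense by a net in $\mathcal{M}^{(q)}$, applies $\sigma_{t}$ (ultraweakly continuous) and $\Ad{h^{it}}$ (bounded hence weakly continuous on $\Lq{\Aa}$, once part \ref{enu:k_psi_mod_aut_grp_4} provides the isometry), and passes to the limit, using closedness of $\overline{\mathfrak{i}^{(q)}}$.

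For part \ref{enu:k_psi_mod_aut_grp_4}, well-definedness of $U(t)$ on $\Lp{\Aa}$ follows from \prettyref{rem:tau_meas_poly} together with the $\theta_{s}$-scaling $\theta_{s}(h^{it} x h^{-it}) = e^{-s/p} h^{it} x h^{-it}$; the group property and surjectivity are clear from $h^{i(t+s)} = h^{it} h^{is}$. The isometric property uses the polar decomposition: if $x = v |x|$ with $v \in \Aa$, then $U(t) x = \sigma_{t}(v) \cdot (h^{it} |x| h^{-it})$ is the polar decomposition of $U(t) x$, so that $\|U(t) x\|_{p}^{p} = \tr(h^{it} |x|^{p} h^{-it}) = \tr(|x|^{p}) = \|x\|_{p}^{p}$ by part \ref{enu:k_psi_mod_aut_grp_2}. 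The main obstacle I foresee is strong continuity at $0$, which I plan to handle as follows: for $\psi \in \Aa_{*}^{+}$, part \ref{enu:k_psi_mod_aut_grp_1} and the functional calculus give $U(t) k_{\psi}^{1/p} = (h^{it} k_{\psi} h^{-it})^{1/p} = k_{\psi \circ \sigma_{-t}}^{1/p}$; then the point-norm continuity of $\sigma^{\varphi}$ on $\Aa_{*}$ combined with continuity of the $p$-th root map from $\Lone{\Aa}_{+}$ to $\Lp{\Aa}_{+}$ yield strong continuity on $\Lp{\Aa}_{+}$. Since $\Lp{\Aa}$ is spanned by such positive elements and $(U(t))_{t \in \R}$ is uniformly bounded, the $C_{0}$-property extends to all of $\Lp{\Aa}$.
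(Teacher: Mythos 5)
Your overall architecture matches the paper's for parts \ref{enu:k_psi_mod_aut_grp_1} and \ref{enu:k_psi_mod_aut_grp_3}: the identity $\widetilde{\psi\circ\sigma_{t}}=\widetilde{\psi}\circ\Ad{h^{it}}$ via covariance of the canonical operator-valued weight, and the computation $h^{1/p}\sigma_{t}(a)^{*}=h^{it}(h^{1/p}a^{*})h^{-it}$ combined with \prettyref{rem:N_q}, are exactly the paper's arguments, and your treatment of $\mathcal{M}^{(q)}$, $D(\overline{\mathfrak{i}^{(q)}})$ and the algebraic parts of \ref{enu:k_psi_mod_aut_grp_4} is sound. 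There are, however, two genuine weak points. First, your justification of part \ref{enu:k_psi_mod_aut_grp_2} conflates the Haagerup trace $\tr$ on $\Lone{\Aa}$ with the n.s.f.\ trace $\tau$ on $\mathbb{A}$. These are different objects: $\tr(k_{\psi}):=\psi(\one)$, while elements of $\Lone{\Aa}$ are generally not $\tau$-integrable (already for $\Aa=\C$ one has $\tau(h)=\infty$ but $\tr(h)=1$), so ``immediate from the trace property of $\tau$'' does not prove anything about $\tr$. The statement is true, but the clean route --- the one the paper takes --- is to derive \ref{enu:k_psi_mod_aut_grp_2} \emph{from} \ref{enu:k_psi_mod_aut_grp_1}: $\tr(h^{-it}k_{\psi}h^{it})=\tr(k_{\psi\circ\sigma_{t}})=(\psi\circ\sigma_{t})(\one)=\psi(\one)=\tr(k_{\psi})$. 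Note that this reverses your logical order; your proof of \ref{enu:k_psi_mod_aut_grp_1} only needs the tracial property of $\tau$ on $\hat{\mathbb{A}}_{+}$ (not the statement about $\tr$), so no circularity results once the two functionals are kept apart.

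Second, your strong-continuity argument in part \ref{enu:k_psi_mod_aut_grp_4} hinges on norm-continuity of the $p$-th root map $\Lone{\Aa}_{+}\to\Lp{\Aa}_{+}$. This is a Powers--St{\o}rmer-type inequality in Haagerup $L^{p}$-spaces; it is true, but it is not elementary and you leave it entirely unjustified, so as written this is a gap. The paper sidesteps it: for $p>1$ it shows point--weak continuity by testing against the dense pairings, using $\tr\left(\mathfrak{i}^{(p)}(\sigma_{t}(a))\cdot\mathfrak{i}^{(q)}(b)\right)=\tr\left(\sigma_{t}(a)\cdot\mathfrak{i}^{(1)}(b)\right)$ (\prettyref{prop:GL_i} \prettyref{enu:GL_i_Prop_2_10}), whose continuity in $t$ is just normality of the functional $\mathfrak{i}^{(1)}(b)$; for $p=1$ it computes $\tr(h^{it}xh^{-it}b)=\tr(x\sigma_{-t}(b))$ directly. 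Point--weak continuity of a uniformly bounded one-parameter group then upgrades to point--norm continuity by the standard semigroup fact. If you want to keep your route, you must either prove or properly cite the inequality $\bigl\Vert a^{1/p}-b^{1/p}\bigr\Vert_{p}^{p}\le\left\Vert a-b\right\Vert _{1}$ for $a,b\in\Lone{\Aa}_{+}$; otherwise the weak-continuity route is both shorter and self-contained.
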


\begin{proof}
\prettyref{enu:k_psi_mod_aut_grp_1} Let $\psi$ be a normal weight
on $\Aa$ and $t\in\R$. We claim that $\widetilde{\psi\circ\sigma_{t}}=\widetilde{\psi}\circ\Ad{h^{it}}$.
Once this is established, we get the following equalities in $\hat{\mathbb{A}}_{+}$:
\[
k_{\psi\circ\sigma_{t}}=\frac{\mathrm{d}(\widetilde{\psi\circ\sigma_{t}})}{\mathrm{d}\tau}=\frac{\mathrm{d}(\widetilde{\psi}\circ\Ad{h^{it}})}{\mathrm{d}\tau}=\Ad{h^{-it}}(\frac{\mathrm{d}\widetilde{\psi}}{\mathrm{d}\tau})=\Ad{h^{-it}}(k_{\psi}).
\]

To prove the claim, write $\pi$ for the canonical normal injection
of $\Aa$ into $\mathbb{A}$. Recall that $\theta=\left(\theta_{s}\right)_{s\in\R}$
is the action of $\R$ on $\mathbb{A}$ dual to $\sigma$, and denote
by $T$ the canonical n.s.f.~operator-valued weight from $\mathbb{A}$
to $\pi(\Aa)$ given by $Tx=\int_{\R}\theta_{s}(x)\d s$, $x\in\mathbb{A}_{+}$.
Then for every $x\in\mathbb{A}_{+}$, since $\theta_{s}(h^{it})=e^{-ist}h^{it}$,
we have 
\begin{equation}
T(h^{it}xh^{-it})=\int_{\R}\theta_{s}(h^{it}xh^{-it})\d s=\int_{\R}(h^{it}\theta_{s}(x)h^{-it})\d s=h^{it}(Tx)h^{-it}\label{eq:k_psi_mod_aut_grp_1}
\end{equation}
(the equalities being in $\hat{\mathbb{A}}_{+}$, thus in $\widehat{\pi(\Aa)}_{+}$).
By definition, we have $\widetilde{\psi\circ\sigma_{t}}=\left(\psi\circ\sigma_{t}\circ\pi^{-1}\right)^{\wedge}\circ T$,
where the hat symbol denotes the canonical extension to $\widehat{\pi(\Aa)}_{+}$. But
$\sigma_{t}\circ\pi^{-1}=\pi^{-1}\circ\Ad{h^{it}}|_{\pi(\Aa)}$, so
\prettyref{eq:k_psi_mod_aut_grp_1} implies that 
\[
\widetilde{\psi\circ\sigma_{t}}=(\psi\circ\pi^{-1}\circ\Ad{h^{it}})^{\wedge}\circ T=\widehat{(\psi\circ\pi^{-1})}\circ T\circ\Ad{h^{it}}=\widetilde{\psi}\circ\Ad{h^{it}}.
\]

\prettyref{enu:k_psi_mod_aut_grp_2} For each $\psi\in\Aa_{*}^{+}$,
$\tr(\Ad{h^{-it}}(k_{\psi}))=\tr(k_{\psi\circ\sigma_{t}})=(\psi\circ\sigma_{t})(\one)=\psi(\one)=\tr(k_{\psi})$.

\prettyref{enu:k_psi_mod_aut_grp_3} Let $a\in\mathcal{N}^{(p)}$.
We have $\sigma_{t}(a)=h^{it}ah^{-it}$. Thus 
\begin{equation}
h^{\frac{1}{p}}\sigma_{t}(a)^{*}=h^{\frac{1}{p}}h^{it}a^{*}h^{-it}=h^{it}h^{\frac{1}{p}}a^{*}h^{-it}.\label{eq:k_psi_mod_aut_grp_2}
\end{equation}
Since $a\in\mathcal{N}^{(p)}$, namely $h^{\frac{1}{p}}a^{*}$ is
$\tau$-measurable, so is $h^{it}h^{\frac{1}{p}}a^{*}h^{-it}$, proving
that $\sigma_{t}(a)\in\mathcal{N}^{(p)}$. Now \prettyref{eq:k_psi_mod_aut_grp_2}
means that $\mathfrak{j}^{(p)}(\sigma_{t}(a))^{*}=h^{it}\mathfrak{j}^{(p)}(a)^{*}h^{-it}$,
and taking adjoints gives $\mathfrak{j}^{(p)}(\sigma_{t}(a))=h^{it}\mathfrak{j}^{(p)}(a)h^{-it}$. 

Take $b\in\mathcal{M}^{(q)}$. From the foregoing it is obvious that
$\sigma_{t}(b)\in\mathcal{M}^{(q)}$ and $\mathfrak{i}^{(q)}(\sigma_{t}(b))=h^{it}\mathfrak{i}^{(q)}(b)h^{-it}$.
More generally, if $b\in D(\overline{\mathfrak{i}^{(q)}})$, let $\left(b_{\l}\right)_{\l\in\mathcal{I}}$
be a net in $\mathcal{M}^{(q)}$ such that $b_{\l}\xrightarrow[\l\in\mathcal{I}]{}b$
ultraweakly and $\mathfrak{i}^{(q)}(b_{\l})\xrightarrow[\l\in\mathcal{I}]{}\overline{\mathfrak{i}^{(q)}}(b)$
weakly. Then $\sigma_{t}(b_{\l})\xrightarrow[\l\in\mathcal{I}]{}\sigma_{t}(b)$
ultraweakly and $\mathfrak{i}^{(q)}(\sigma_{t}(b_{\l}))=h^{it}\mathfrak{i}^{(q)}(b_{\l})h^{-it}\xrightarrow[\l\in\mathcal{I}]{}h^{it}\overline{\mathfrak{i}^{(q)}}(b)h^{-it}$
weakly by \prettyref{enu:k_psi_mod_aut_grp_2} (it is clear that $h^{it}\Lq{\Aa}h^{-it}=\Lq{\Aa}$).
Hence $\sigma_{t}(b)\in D(\overline{\mathfrak{i}^{(q)}})$ and $\overline{\mathfrak{i}^{(q)}}(\sigma_{t}(b))=h^{it}\overline{\mathfrak{i}^{(q)}}(b)h^{-it}$.

\prettyref{enu:k_psi_mod_aut_grp_4} Fix $p\in[1,\infty)$ and $x\in\Lp{\Aa}$.
For every $t\in\R$ we have $\left|h^{it}xh^{-it}\right|^{p}=h^{it}\left|x\right|^{p}h^{-it}$,
so \prettyref{enu:k_psi_mod_aut_grp_2} entails that $\left\Vert h^{it}xh^{-it}\right\Vert _{p}=\left\Vert x\right\Vert _{p}$.
For every $t\in\R$ define $U(t):\Lp{\Aa}\to\Lp{\Aa}$ by $x\mapsto h^{it}xh^{-it}$.
By the foregoing, $U(t)$ is an isometry from $\Lp{\Aa}$ onto itself.
Clearly, $\left(U(t)\right)_{t\in\R}$ is a group. It is also point\textendash weakly
continuous. Indeed, it $p=1$, then for every $b\in\Aa$ we have $\tr(h^{it}xh^{-it}b)=\tr(xh^{-it}bh^{it})=\tr(x\sigma_{-t}(b))$,
and the function $t\mapsto\om(\sigma_{-t}(b))$ is continuous for
all $\om\in\Aa_{*}$. If $p\in\left(1,\infty\right)$, write $q$
for the conjugate exponent of $p$. For every $a\in\mathcal{M}^{(p)}$
and $b\in\mathcal{M}\subseteq\mathcal{M}^{(q)}$, we have $\tr\left(\mathfrak{i}^{(p)}(\sigma_{t}(a))\cdot\mathfrak{i}^{(q)}(b)\right)=\tr\left(\sigma_{t}(a)\cdot\mathfrak{i}^{(1)}(b)\right)$
by \prettyref{prop:GL_i} \prettyref{enu:GL_i_Prop_2_10}, so that
$\R\ni t\mapsto\tr\left(\mathfrak{i}^{(p)}(\sigma_{t}(a))\cdot\mathfrak{i}^{(q)}(b)\right)$
is continuous. Since $\mathfrak{i}^{(p)}(\sigma_{t}(a))=U(t)(\mathfrak{i}^{(p)}(a))$
for all $t\in \mathbb{R}$ and $a\in \mathcal{M}^{(p)}$, $\mathfrak{i}^{(p)}(\mathcal{M}^{(p)})$ is
dense in $\Lp{\Aa}$ and $\mathfrak{i}^{(q)}(\mathcal{M})$ is dense
in $\Lq{\Aa}$ (\prettyref{prop:GL_i} \prettyref{enu:GL_i_Prop_2_11}),
the group $\left(U(t)\right)_{t\in\R}$ is point\textendash weakly
continuous, thus point\textendash norm continuous.
\end{proof}

We need one more approximation result, identifying a convenient core for the embedding map $\overline{\mathfrak{i}^{(p)}}$.

\begin{prop}
\label{prop:i_p_core}For every $p\in[1,\infty)$, the subspace $\mathcal{M}_{\infty}$
is an ultraweak\textendash weak core for $\overline{\mathfrak{i}^{(p)}}$.
To elaborate, for every $b\in D(\overline{\mathfrak{i}^{(p)}})$ and
$\e>0$ there exists a net $\left(b_{\l}\right)_{\l\in\mathcal{I}}$
in $\mathcal{M}_{\infty}$ that is bounded by $\left\Vert b\right\Vert $
such that $\left(\mathfrak{i}^{(p)}(b_{\l})\right)_{\l\in\mathcal{I}}$
is bounded by $\bigl\Vert\overline{\mathfrak{i}^{(p)}}(b)\bigr\Vert+\e$,
$b_{\l}\xrightarrow[\l\in\mathcal{I}]{}b$ in the $*$-strong operator topology and $\mathfrak{i}^{(p)}(b_{\l})\xrightarrow[\l\in\mathcal{I}]{}\overline{\mathfrak{i}^{(p)}}(b)$
in norm. If $b$ is selfadjoint or positive, $\left(b_{\l}\right)_{\l\in\mathcal{I}}$
can be chosen to consist of such operators as well.
\end{prop}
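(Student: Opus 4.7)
The plan is a two-step approximation---modular Gaussian smoothing followed by Tomita-algebra truncation---combined by a diagonal argument. Given $b\in D(\overline{\mathfrak{i}^{(p)}})$, set
\[
b_{n}:=\sqrt{n/\pi}\int_{\R}e^{-nt^{2}}\sigma_{t}(b)\d t.
\]
This is entire analytic with respect to $\sigma$, satisfies $\left\Vert b_{n}\right\Vert \le\left\Vert b\right\Vert $, and $b_{n}\to b$ in the $*$-strong operator topology (standard Gaussian averaging together with point-$*$-strong continuity of $\sigma_{t}$). By \prettyref{lem:k_psi_mod_aut_grp}\prettyref{enu:k_psi_mod_aut_grp_3}--\prettyref{enu:k_psi_mod_aut_grp_4}, $D(\overline{\mathfrak{i}^{(p)}})$ is $\sigma_{t}$-invariant with $\overline{\mathfrak{i}^{(p)}}\circ\sigma_{t}=U(t)\circ\overline{\mathfrak{i}^{(p)}}$ for the $C_{0}$-group $U(t)=\Ad{h^{it}}$ of isometries on $\Lp{\Aa}$; hence by closedness, $b_{n}\in D(\overline{\mathfrak{i}^{(p)}})$ with $\overline{\mathfrak{i}^{(p)}}(b_{n})$ equal to the Gaussian average of $U(t)\overline{\mathfrak{i}^{(p)}}(b)$, so $\bigl\Vert \overline{\mathfrak{i}^{(p)}}(b_{n})\bigr\Vert _{p}\le\bigl\Vert \overline{\mathfrak{i}^{(p)}}(b)\bigr\Vert _{p}$ and $\overline{\mathfrak{i}^{(p)}}(b_{n})\to\overline{\mathfrak{i}^{(p)}}(b)$ in $\Lp{\Aa}$-norm. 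Self-adjointness or positivity of $b$ transfers to $b_{n}$.

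Next, fix $\delta>0$ small and let $(e_{j})\subseteq\mathcal{T}$ be the net from \prettyref{lem:Str_Terp_approx}; in the self-adjoint or positive case replace $e_{j}$ by $\tilde{e}_{j}:=(e_{j}+e_{j}^{*})/2\in\mathcal{T}$, which inherits the analogous bounds and convergence. Put $c_{n,j}:=e_{j}b_{n}e_{j}$. By entireness, $\sigma_{z}(c_{n,j})=\sigma_{z}(e_{j})\sigma_{z}(b_{n})\sigma_{z}(e_{j})$; since $\sigma_{z}(e_{j})\in\mathcal{N}\cap\mathcal{N}^{*}$ and $\mathcal{N}$ is a left ideal, an adjoint argument places $c_{n,j}\in\mathcal{T}\subseteq\mathcal{M}_{\infty}$. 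The bound $\left\Vert e_{j}\right\Vert \le 1$ (from \prettyref{lem:Str_Terp_approx}\prettyref{enu:Str_Terp_approx__1} at $z=0$) gives $\left\Vert c_{n,j}\right\Vert \le\left\Vert b\right\Vert $, and $c_{n,j}\to b_{n}$ $*$-strongly as $j$ varies. The core technical ingredient is the identity
\[
\mathfrak{i}^{(p)}(e_{j}b_{n}e_{j})=\sigma_{-i/(2p)}(e_{j})\cdot\overline{\mathfrak{i}^{(p)}}(b_{n})\cdot\sigma_{i/(2p)}(e_{j}),
\]
which I would first prove for $b_{n}\in\mathcal{M}^{(p)}$ by polarising $b_{n}=\sum x_{k}^{*}y_{k}$ with $x_{k},y_{k}\in\mathcal{N}^{(2p)}$ and invoking \prettyref{lem:h_impl_sigma} with $\alpha=1/(2p)$ to derive $\mathfrak{j}^{(2p)}(y_{k}e_{j})=\mathfrak{j}^{(2p)}(y_{k})\sigma_{i/(2p)}(e_{j})$ (and its adjoint counterpart), and then extend to all $b_{n}\in D(\overline{\mathfrak{i}^{(p)}})$ by the closability provided by \prettyref{prop:GL_i}\prettyref{enu:GL_i_Prop_2_12}. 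Given this identity, $\mathfrak{i}^{(p)}(c_{n,j})\to\overline{\mathfrak{i}^{(p)}}(b_{n})$ in $\Lp{\Aa}$-norm because $\sigma_{\pm i/(2p)}(e_{j})\to\one$ $*$-strongly (\prettyref{lem:Str_Terp_approx}\prettyref{enu:Str_Terp_approx__2}) and $\Aa$ acts contractively on $\Lp{\Aa}$ by left and right multiplication. Moreover, the estimate $\left\Vert \sigma_{\pm i/(2p)}(e_{j})\right\Vert \le e^{\delta/(4p^{2})}$ from \prettyref{lem:Str_Terp_approx}\prettyref{enu:Str_Terp_approx__1} yields $\bigl\Vert \mathfrak{i}^{(p)}(c_{n,j})\bigr\Vert _{p}\le e^{\delta/(2p^{2})}\bigl\Vert \overline{\mathfrak{i}^{(p)}}(b)\bigr\Vert _{p}\le\bigl\Vert \overline{\mathfrak{i}^{(p)}}(b)\bigr\Vert _{p}+\e$ for $\delta$ chosen small enough.

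A standard diagonal argument indexed by finite subsets of the underlying Hilbert space, positive tolerances, and pairs $(n,j)$ then patches the two approximations into a single net in $\mathcal{M}_{\infty}$ with all the stated convergences and bounds. In the self-adjoint case, self-adjointness of $\tilde{e}_{j}$ transfers that of $b_{n}$ to $c_{n,j}$; in the positive case the sandwich $\tilde{e}_{j}b_{n}\tilde{e}_{j}\ge0$ when $b_{n}\ge0$. The main obstacle is the core identity above: without the precise modular-analytic control (entireness plus the uniform bound $\left\Vert \sigma_{z}(e_{j})\right\Vert \le e^{\delta(\Im z)^{2}}$) furnished by \prettyref{lem:Str_Terp_approx}, there is no evident route from $*$-strong convergence of $(e_{j})$ to $\Lp{\Aa}$-norm convergence of $\mathfrak{i}^{(p)}(e_{j}b_{n}e_{j})$ to $\overline{\mathfrak{i}^{(p)}}(b_{n})$.
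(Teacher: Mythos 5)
Your overall strategy (Gaussian smoothing in the modular group, then truncation by Tomita-algebra elements via the multiplication identity for $\mathfrak{i}^{(p)}$, then a diagonal argument) is exactly the paper's, and Step 1 together with the core identity $\mathfrak{i}^{(p)}(f^{*}be)=\sigma_{i/(2p)}(f)^{*}\cdot\overline{\mathfrak{i}^{(p)}}(b)\cdot\sigma_{i/(2p)}(e)$ is carried out correctly. However, there is a genuine error in the truncation step: you place $c_{n,j}:=e_{j}b_{n}e_{j}$ in $\mathcal{M}_{\infty}$ via the inclusion $\mathcal{T}\subseteq\mathcal{M}_{\infty}$, and that inclusion is false in general. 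Recall $\mathcal{M}_{\infty}=\mathcal{T}\mathcal{T}$ is the span of \emph{products} of two Tomita-algebra elements; already for $(\Aa,\varphi)=(\B{\H},\mathrm{Tr})$ one has $\mathcal{T}=$ Hilbert--Schmidt operators while $\mathcal{M}_{\infty}=$ trace-class operators, so $\mathcal{T}\not\subseteq\mathcal{M}_{\infty}$. Your sandwich $e_{j}b_{n}e_{j}$ does lie in $\mathcal{T}$ (and in $\mathcal{M}$), but neither $e_{j}b_{n}$ nor $b_{n}e_{j}$ lies in $\mathcal{T}$ in general ($\mathcal{N}$ is only a left ideal), so you cannot exhibit $c_{n,j}$ as a product of two elements of $\mathcal{T}$. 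This is precisely why the paper truncates with the \emph{fourfold} sandwich $b_{\l}=e_{\l}^{*}e_{\l}\,b\,e_{\l}^{*}e_{\l}=e_{\l}^{*}\cdot(e_{\l}be_{\l}^{*}e_{\l})$: the second factor is a two-sided sandwich, hence lies in $\mathcal{T}$, so the product is in $\mathcal{T}\mathcal{T}=\mathcal{M}_{\infty}$ (and $e_{\l}^{*}e_{\l}$ is automatically positive, which handles the selfadjoint/positive cases without modifying the net). As written, your argument only proves that $\mathcal{M}\cap\mathcal{T}$ is a core, which is weaker than the statement and insufficient for the way \prettyref{cor:i_p_core} is used.

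A second, smaller gap: you deduce $\mathfrak{i}^{(p)}(c_{n,j})\to\overline{\mathfrak{i}^{(p)}}(b_{n})$ in $\Lp{\Aa}$-norm ``because $\sigma_{\pm i/(2p)}(e_{j})\to\one$ $*$-strongly and $\Aa$ acts contractively on $\Lp{\Aa}$.'' Contractivity of the bimodule action does not by itself give point--norm continuity along bounded $*$-strongly convergent nets of multipliers; this is immediate for $p=2$ (standard form) and for $p=1$ (where $\Lone{\Aa}\cong\Aa_{*}$), but for general $p$ it requires a further argument (factorisation through $L^{2}$ for $p\le 2$, or uniform convexity plus weak convergence for $p>2$, the latter complicated here by the fact that $\Vert\sigma_{\pm i/(2p)}(e_{j})\Vert$ may slightly exceed $1$). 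The paper sidesteps this entirely: it first reduces the whole proposition, by convexity (Mazur), to \emph{weak} convergence of $\mathfrak{i}^{(p)}(b_{\l})$, which for $p>1$ follows from the trace pairing $\tr\left(\mathfrak{i}^{(p)}(b_{\l})\cdot\mathfrak{i}^{(q)}(c)\right)=\tr\left(b_{\l}\cdot\mathfrak{i}^{(1)}(c)\right)$ and density of $\mathfrak{i}^{(q)}(\mathcal{M})$ in $\Lq{\Aa}$, and treats $p=1$ separately by the genuine norm argument in $\Aa_{*}$. You should either adopt that reduction or supply a proof of the point--norm continuity you are invoking.
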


\begin{proof}
By convexity, it suffices to prove the assertion for weak convergence of $\left(\mathfrak{i}^{(p)}(b_{\l})\right)_{\l\in\mathcal{I}}$.

\emph{Step 1}: We claim that for every $a\in D(\overline{\mathfrak{i}^{(p)}})$
there is a sequence $\left(a_{n}\right)_{n=1}^{\infty}$ in $D(\overline{\mathfrak{i}^{(p)}})$,
all of whose elements are entire analytic with respect to $\sigma$, such
that $\left\Vert a_{n}\right\Vert \leq\left\Vert a\right\Vert $ and
$\bigl\Vert\overline{\mathfrak{i}^{(p)}}(a_{n})\bigr\Vert_{p}\leq\bigl\Vert\overline{\mathfrak{i}^{(p)}}(a)\bigr\Vert_{p}$
for every $n\in\N$, and such that $a_{n}\xrightarrow[n\to\infty]{}a$
in the $*$-strong operator topology and $\overline{\mathfrak{i}^{(p)}}(a_{n})\xrightarrow[n\to\infty]{}\overline{\mathfrak{i}^{(p)}}(a)$
in norm, and if $a$ selfadjoint or positive, so are $\left(a_{n}\right)_{n=1}^{\infty}$.

Indeed, fix $a\in D(\overline{\mathfrak{i}^{(p)}})$. Let $n\in\N$ and write
$a_{n}:=\frac{\sqrt{n}}{\sqrt{\pi}}\int_{\R}e^{-nt^{2}}\sigma_{t}(a)\d t$,
where the integral converges in the $*$-strong operator topology. Then $a_{n}$ is entire analytic
with respect to $\sigma$. Furthermore, by \prettyref{lem:k_psi_mod_aut_grp}
\prettyref{enu:k_psi_mod_aut_grp_3} and \prettyref{enu:k_psi_mod_aut_grp_4},
$\bigl\Vert\overline{\mathfrak{i}^{(p)}}(\sigma_{t}(a))\bigr\Vert_{p}=\bigl\Vert\overline{\mathfrak{i}^{(p)}}(a)\bigr\Vert_{p}$
for all $t\in\R$ and $t\mapsto\overline{\mathfrak{i}^{(p)}}(\sigma_{t}(a))\in\Lp{\Aa}$
is norm continuous. Thus $\int_{\R}e^{-nt^{2}}\overline{\mathfrak{i}^{(p)}}(\sigma_{t}(a))\d t$
converges in norm. Hence $a_{n}\in D(\overline{\mathfrak{i}^{(p)}})$
and $\overline{\mathfrak{i}^{(p)}}(a_{n})=\frac{\sqrt{n}}{\sqrt{\pi}}\int_{\R}e^{-nt^{2}}\overline{\mathfrak{i}^{(p)}}(\sigma_{t}(a))\d t$.
In particular, $\bigl\Vert\overline{\mathfrak{i}^{(p)}}(a_{n})\bigr\Vert_{p}\leq\bigl\Vert\overline{\mathfrak{i}^{(p)}}(a)\bigr\Vert_{p}$.
The sequence $\left(a_{n}\right)_{n=1}^{\infty}$ has the desired
properties.

Before proceeding we make the following simple observation. Let $b\in D(\overline{\mathfrak{i}^{(p)}})$
and $e,f\in\mathcal{T}$. Then $f^{*}be\in\mathcal{M}$ satisfies
$\mathfrak{i}^{(p)}(f^{*}be)=\sigma_{\frac{i}{2p}}(f)^{*}\cdot\overline{\mathfrak{i}^{(p)}}(b)\cdot\sigma_{\frac{i}{2p}}(e)$.
Indeed, for every $d\in\mathcal{N}^{(2p)}$ we have $de\in\mathcal{N}\subseteq\mathcal{N}^{(2p)}$
and $\mathfrak{j}^{(2p)}(de)^{*}=h^{\frac{1}{2p}}e^{*}d^{*}=\overline{\sigma_{-\frac{i}{2p}}(e^{*})h^{\frac{1}{2p}}}d^{*}\supseteq\sigma_{-\frac{i}{2p}}(e^{*})h^{\frac{1}{2p}}d^{*}=\sigma_{-\frac{i}{2p}}(e^{*})\mathfrak{j}^{(2p)}(d)^{*}$
by \citep[Lemma 2.5]{Goldstein_Lindsay__Markov_sgs_KMS_symm_weight}
(or the more general \prettyref{lem:h_impl_sigma} above), hence $\mathfrak{j}^{(2p)}(de)^{*}=\sigma_{-\frac{i}{2p}}(e^{*})\cdot\mathfrak{j}^{(2p)}(d)^{*}$
by \prettyref{rem:tau_meas_poly}. Using \prettyref{prop:GL_i} \prettyref{enu:GL_i_Lem_2_9},
this entails that for all $b\in\mathcal{M}^{(p)}=D(\mathfrak{i}^{(p)})$
we have $\mathfrak{i}^{(p)}(f^{*}be)=\sigma_{\frac{i}{2p}}(f)^{*}\cdot\mathfrak{i}^{(p)}(b)\cdot\sigma_{\frac{i}{2p}}(e)$.
The observation follows for all $b\in D(\overline{\mathfrak{i}^{(p)}})$
from the definition of $\overline{\mathfrak{i}^{(p)}}$. 

\emph{Step 2}: We claim that for every $b\in D(\overline{\mathfrak{i}^{(p)}})$
that is entire analytic with respect to $\sigma$ and every $\delta>0$ there
exists a net $\left(b_{\l}\right)_{\l\in\mathcal{I}}$ in $\mathcal{M}_{\infty}$
that is bounded by $\left\Vert b\right\Vert $ such that $\left(\mathfrak{i}^{(p)}(b_{\l})\right)_{\l\in\mathcal{I}}$
is bounded by $(1+\delta)^{4}\bigl\Vert \overline{\mathfrak{i}^{(p)}}(b)\bigr\Vert _{p}$,
$b_{\l}\xrightarrow[\l\in\mathcal{I}]{}b$ in the $*$-strong operator topology and $\left(\mathfrak{i}^{(p)}(b_{\l})\right)_{\l\in I}\xrightarrow[\l\in\mathcal{I}]{}\overline{\mathfrak{i}^{(p)}}(b)$
weakly, and if $b$ is selfadjoint or positive, so are $\left(b_{\l}\right)_{\l\in\mathcal{I}}$.

Fix $b,\delta$ as above. By \prettyref{lem:Str_Terp_approx} there is a net $\left(e_{\l}\right)_{\l\in\mathcal{I}}$
in $\mathcal{T}$ that is bounded by $1$ and that converges
to $\one$ in the $*$-strong operator topology such that the nets $\bigl(\sigma_{\pm\frac{i}{2p}}(e_{\l})\bigr)_{\l\in\mathcal{I}}$
are bounded by $1+\delta$.
Then $\left(b_{\l}\right)_{\l\in\mathcal{I}}:=\left(e_{\l}^{*}e_{\l}be_{\l}^{*}e_{\l}\right)_{\l\in\mathcal{I}}$
is a net in $\mathcal{M}_{\infty}$ that is bounded by $\left\Vert b\right\Vert $
and that converges  to $b$ in the $*$-strong operator topology, and by the preceding paragraph,
$\left(\mathfrak{i}^{(p)}(b_{\l})\right)_{\l\in\mathcal{I}}$ is bounded
by $(1+\delta)^{4}\bigl\Vert\overline{\mathfrak{i}^{(p)}}(b)\bigr\Vert$.
If $p > 1$ write $q\in(1,\infty)$ for the conjugate exponent of $p$. For every
$c\in\mathcal{M}$,
\[
\tr\left(\mathfrak{i}^{(p)}(b_{\l})\cdot\mathfrak{i}^{(q)}(c)\right)=\tr\left(b_{\l}\cdot\mathfrak{i}^{(1)}(c)\right)\xrightarrow[\l\in\mathcal{I}]{}\tr\left(b\cdot\mathfrak{i}^{(1)}(c)\right)=\tr\bigl(\overline{\mathfrak{i}^{(p)}}(b)\cdot\mathfrak{i}^{(q)}(c)\bigr)
\]
by \prettyref{prop:GL_i} \prettyref{enu:GL_i_Prop_2_10}. As before,
density (\prettyref{prop:GL_i} \prettyref{enu:GL_i_Prop_2_11}) implies that $\mathfrak{i}^{(p)}(b_{\l})\xrightarrow[\l\in\mathcal{I}]{}\overline{\mathfrak{i}^{(p)}}(b)$
weakly. 
If $p = 1$, assume also that $\sigma_{\pm\frac{i}{2}}(e_{\l}) \xrightarrow[\l\in\mathcal{I}]{} \one$ in the $*$-strong operator topology (\prettyref{lem:Str_Terp_approx}). 
For every $\l\in\mathcal{I}$ we have
$\mathfrak{i}^{(1)}(b_{\l}) = 
\sigma_{-\frac{i}{2}}(e_{\l}^* e_{\l})\cdot\overline{\mathfrak{i}^{(1)}}(b)\cdot\sigma_{\frac{i}{2}}(e_{\l}^* e_{\l})$ in $\Lone{\Aa}$; equivalently, $\mathfrak{i}^{(1)}(b_{\l}) = 
(\sigma_{-\frac{i}{2}}(e_{\l}^* e_{\l})) (\overline{\mathfrak{i}^{(1)}}(b)) (\sigma_{\frac{i}{2}}(e_{\l}^* e_{\l}))$ as elements of $\Aa_*$.
Consequently, $\mathfrak{i}^{(1)}(b_{\l})\xrightarrow[\l\in\mathcal{I}]{}\overline{\mathfrak{i}^{(1)}}(b)$ in norm because $\sigma_{-\frac{i}{2}}(e_{\l}^* e_{\l}) \xrightarrow[\l \to \mathcal{I}]{}\one$ ultrastrongly.
This completes the proof. (Remark: without needing $\left(b_{\l}\right)_{\l\in\mathcal{I}}$
to be selfadjoint or positive if $b$ is, taking $\left(e_{\l}^{2}be_{\l}\right)_{\l\in\mathcal{I}}$
would suffice.)
\end{proof}

The last proposition gives a neat description of the `interval' that plays a key role when Dirichlet forms are introduced.

\begin{cor}
\label{cor:i_p_core}
\begin{enumerate}
\item \label{enu:i_p_core__1}For every $p\in[1,\infty)$ we have $D(\overline{\mathfrak{i}^{(p)}})\cap\Aa_{+}\subseteq D(\mathfrak{i}^{(p)})=\mathcal{M}^{(p)}$.
\item \label{enu:i_p_core__2}The closed convex set $\left[0,h^{1/2}\right]_{\Ltwo{\Aa}}=\mathfrak{i}^{(2)}\left(\left[0,\one\right]_{\mathcal{M}^{(2)}}\right)$
is equal to \[\overline{\nabla^{1/4}\gnsmap\left(\left[0,\one\right]_{\mathcal{M}_{\infty}}\right)}=\overline{\nabla^{1/4}\gnsmap\left(\left[0,\one\right]_{\mathcal{M}}\right)}.\]
\end{enumerate}
\end{cor}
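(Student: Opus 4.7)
The plan is to deduce both parts from the approximation Proposition \prettyref{prop:i_p_core} combined with Lemma \prettyref{lem:i_2_gns} \prettyref{enu:i_2_gns__2} and the identity \prettyref{eq:key_convex_set}. The main observation is that the approximation Proposition gives positive norm-bounded approximants in $\mathcal{M}_\infty$ for any positive element of $D(\overline{\mathfrak{i}^{(p)}})$, and this is precisely what is needed to land inside the closed convex set $[0,h^{1/p}]_{\Lp{\Aa}}$.

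For \prettyref{enu:i_p_core__1}, take $b\in D(\overline{\mathfrak{i}^{(p)}})\cap\Aa_+$. After scaling (using linearity of $\mathcal{M}^{(p)}$) one may assume $\|b\|\le 1$. Apply \prettyref{prop:i_p_core} to obtain a net $(b_\l)_{\l\in\mathcal{I}}$ of positive elements in $\mathcal{M}_{\infty}$ with $\|b_\l\|\le 1$ such that $\mathfrak{i}^{(p)}(b_\l)\to\overline{\mathfrak{i}^{(p)}}(b)$ in norm. Since each $b_\l\in[0,\one]_{\mathcal{M}^{(p)}}$, the image $\mathfrak{i}^{(p)}(b_\l)$ lies in the norm-closed convex set $[0,h^{1/p}]_{\Lp{\Aa}}=\mathfrak{i}^{(p)}([0,\one]_{\mathcal{M}^{(p)}})$ (see \prettyref{eq:key_convex_set}), so the limit $\overline{\mathfrak{i}^{(p)}}(b)$ also lies there; hence $\overline{\mathfrak{i}^{(p)}}(b)=\mathfrak{i}^{(p)}(c)$ for some $c\in[0,\one]_{\mathcal{M}^{(p)}}\subseteq D(\overline{\mathfrak{i}^{(p)}})$. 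Injectivity of $\overline{\mathfrak{i}^{(p)}}$ from \prettyref{prop:GL_i} \prettyref{enu:GL_i_Prop_2_12} forces $b=c\in\mathcal{M}^{(p)}$.

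For \prettyref{enu:i_p_core__2}, the first equality is \prettyref{eq:key_convex_set} at $p=2$. For the rest, \prettyref{lem:i_2_gns} \prettyref{enu:i_2_gns__2} gives $\mathfrak{i}^{(2)}(a)=\nabla^{1/4}\gnsmap(a)$ for all $a\in\mathcal{M}$, so the inclusions $\mathcal{M}_{\infty}\subseteq\mathcal{M}\subseteq\mathcal{M}^{(2)}$ combined with norm-closedness of $[0,h^{1/2}]_{\Ltwo{\Aa}}$ yield
\[
\overline{\nabla^{1/4}\gnsmap([0,\one]_{\mathcal{M}_{\infty}})}\subseteq\overline{\nabla^{1/4}\gnsmap([0,\one]_{\mathcal{M}})}\subseteq[0,h^{1/2}]_{\Ltwo{\Aa}}.
\]
The reverse inclusion is the only nontrivial step: given $x\in[0,h^{1/2}]_{\Ltwo{\Aa}}$, write $x=\mathfrak{i}^{(2)}(b)$ with $b\in[0,\one]_{\mathcal{M}^{(2)}}\subseteq D(\overline{\mathfrak{i}^{(2)}})$, and apply \prettyref{prop:i_p_core} once more to produce a net $(b_\l)$ in $\mathcal{M}_{\infty}$ of positive elements bounded by $\|b\|\le 1$ with $\mathfrak{i}^{(2)}(b_\l)\to x$ in norm; then $\mathfrak{i}^{(2)}(b_\l)=\nabla^{1/4}\gnsmap(b_\l)\in\nabla^{1/4}\gnsmap([0,\one]_{\mathcal{M}_{\infty}})$ by \prettyref{lem:i_2_gns} \prettyref{enu:i_2_gns__2}, so $x$ lies in the desired closure.

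I expect the only subtle point to be the use of injectivity in \prettyref{enu:i_p_core__1}: one must be careful that the approximants witnessing $b\in D(\overline{\mathfrak{i}^{(p)}})$ are positive and norm-bounded by $\|b\|$, which is exactly what \prettyref{prop:i_p_core} provides. Everything else is a direct application of the two preceding results.
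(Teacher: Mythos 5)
Your proof is correct and follows essentially the same route as the paper's: part \prettyref{enu:i_p_core__1} via the positive, norm-bounded approximants from \prettyref{prop:i_p_core} landing in the closed convex set \prettyref{eq:key_convex_set} plus injectivity of $\overline{\mathfrak{i}^{(p)}}$, and part \prettyref{enu:i_p_core__2} via \prettyref{lem:i_2_gns} \prettyref{enu:i_2_gns__2} for the easy inclusions and \prettyref{prop:i_p_core} again for the reverse one. The only cosmetic difference is that the paper works with weak convergence of $\mathfrak{i}^{(p)}(b_{\l})$ and invokes the equivalence of weak and norm closure for the convex set, whereas you use the norm-convergence version of \prettyref{prop:i_p_core} directly; both are valid.
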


\begin{proof}
\prettyref{enu:i_p_core__1} Let $b\in D(\overline{\mathfrak{i}^{(p)}})$
be positive and of norm $1$. By \prettyref{prop:i_p_core}, there
is a net $\left(b_{\l}\right)_{\l\in\mathcal{I}}$ of positive elements
in $\mathcal{M}_{\infty}\subseteq\mathcal{M}^{(p)}$ that are bounded
by $1$ such that $b_{\l}\xrightarrow[\l\in\mathcal{I}]{}b$ in the $*$-strong operator topology
and $\left(\mathfrak{i}^{(p)}(b_{\l})\right)_{\l\in I}\xrightarrow[\l\in\mathcal{I}]{}\overline{\mathfrak{i}^{(p)}}(b)$
weakly. The convex set \prettyref{eq:key_convex_set} is closed in
$\Lp{\Aa}$ in norm, equivalently weakly. Therefore, $\overline{\mathfrak{i}^{(p)}}(b)\in\mathfrak{i}^{(p)}(\left[0,\one\right]_{\mathcal{M}^{(p)}})$.
From injectivity of $\overline{\mathfrak{i}^{(p)}}$ we get $b\in\left[0,\one\right]_{\mathcal{M}^{(p)}}$.

\prettyref{enu:i_p_core__2} By \prettyref{lem:i_2_gns} \prettyref{enu:i_2_gns__2},
$\mathfrak{i}^{(2)}(a)=\nabla^{1/4}\gnsmap(a)$ for every $a\in\mathcal{M}$.
Hence we obtain the inclusions $\overline{\nabla^{1/4}\gnsmap\left(\left[0,\one\right]_{\mathcal{M}_{\infty}}\right)}\subseteq\overline{\nabla^{1/4}\gnsmap\left(\left[0,\one\right]_{\mathcal{M}}\right)}\subseteq\mathfrak{i}^{(2)}\left(\left[0,\one\right]_{\mathcal{M}^{(2)}}\right)$.
The reverse inclusion follows from \prettyref{prop:i_p_core} by convexity. 
\end{proof}
\begin{rem}
\label{rem:normal_2_integ}Let $p \in [1,\infty)$. If $T:\Aa\to\Aa$ is a normal operator
that is $p$-integrable with respect to $\varphi$, then $T$ leaves
$D(\overline{\mathfrak{i}^{(p)}})$ invariant and $\overline{\mathfrak{i}^{(p)}}(Ta)=\widetilde{T}^{(p)}(\overline{\mathfrak{i}^{(p)}}(a))$
for every $a\in D(\overline{\mathfrak{i}^{(p)}})$. Indeed, given
such $a$, find a net $\left(a_{\l}\right)_{\l\in\mathcal{I}}$ in
$\mathcal{M}$ such that $a_{\l}\xrightarrow[\l\in\mathcal{I}]{}a$
ultraweakly and $\mathfrak{i}^{(p)}(a_{\l})\xrightarrow[\l\in\mathcal{I}]{}\overline{\mathfrak{i}^{(p)}}(a)$
weakly by \prettyref{prop:i_p_core}. Then $Ta_{\l}\xrightarrow[\l\in\mathcal{I}]{}Ta$
ultraweakly and $\mathfrak{i}^{(p)}(Ta_{\l})=\widetilde{T}^{(p)}(\mathfrak{i}^{(p)}(a_{\l}))\xrightarrow[\l\in\mathcal{I}]{}\widetilde{T}^{(p)}(\overline{\mathfrak{i}^{(p)}}(a))$
weakly, so \prettyref{prop:GL_i} \prettyref{enu:GL_i_Prop_2_12}
gives the assertion.
\end{rem}

\subsection{\label{sub:prelim_LCQGs}Locally compact quantum groups}

We introduce locally compact quantum groups in the von Neumann algebraic
setting (\citep{Kustermans_Vaes__LCQG_von_Neumann}, see also \citep{Kustermans_Vaes__LCQG_C_star,Van_Daele__LCQGs}),
and describe some of their features.
\begin{defn}
A \emph{locally compact quantum group} 
is a virtual object studied via a pair
$\left(\mathsf{M},\Delta\right)$, where:
\begin{itemize}
\item $\mathsf{M}$ is a von Neumann algebra;
\item $\Delta:\mathsf{M}\to\mathsf{M}\tensorn\mathsf{M}$ is a \emph{co-multiplication}
(or \emph{co-product}), that is, a normal unital $*$-homomorphism
that is co-associative: $(\Delta\tensor\i)\circ\Delta=(\i\tensor\Delta)\circ\Delta$;
\item there exist n.s.f.~weights $\varphi,\psi$ on $\mathsf{M}$, called
the \emph{left and right Haar weights}, respectively, that satisfy
\[
\begin{gathered}\varphi((\om\tensor\i)\Delta(x))=\varphi(x)\om(\one)\quad\text{for all }\om\in\mathsf{M}_{*}^{+},x\in\mathsf{M}^{+}\text{ with }\varphi(x)<\infty,\\
\psi((\i\tensor\om)\Delta(x))=\psi(x)\om(\one)\quad\text{for all }\om\in\mathsf{M}_{*}^{+},x\in\mathsf{M}^{+}\text{ with }\psi(x)<\infty.
\end{gathered}
\]
\end{itemize}
We set $\Linfty{\G}:=\mathsf{M}$ and $\Lone{\G}:=\Linfty{\G}_{*}$.
\end{defn}

The left and right Haar weights are unique up to scaling. We write
$\Ltwo{\G}$ for the GNS Hilbert space of $\Linfty{\G}$ with respect
to $\varphi$, and always view $\Linfty{\G}$ as acting on $\Ltwo{\G}$.
Set $\nabla:=\nabla_{\varphi}$, $J:=J_{\varphi}$ and $\gnsmap:=\gnsmap_{\varphi}$.
There is an injective, positive, selfadjoint operator $\delta$ affiliated
with $\Linfty{\G}$, called the \emph{modular element}, such that
$\psi=\varphi_{\delta}$ in the sense of \citep{Vaes__Radon_Nikodym}.

Every locally compact quantum group $\G$ has a \emph{dual} locally compact quantum group, denoted by $\hat{\G}$. The
objects pertaining to it will be adorned with a `hat', e.g.~$\hat{\Delta},\hat{\varphi},\hat{\psi},\hat{\nabla},\hat{J},\hat{\gnsmap}$.
We will not elaborate on this construction, but mention that the GNS
Hilbert space of $\Linfty{\hat{\G}}$ with respect to $\hat{\varphi}$
can and will be naturally identified with $\Ltwo{\G}$. So we shall
always view $\Linfty{\hat{\G}}$ too as acting on $\Ltwo{\G}$. Importantly,
the `Pontryagin double dual property' holds: the dual of $\hat{\G}$
is $\G$.

There exists a distinguished (multiplicative) unitary $W\in\Linfty{\G}\tensorn\Linfty{\hat{\G}}$,
called the \emph{left regular representation}, which implements the
co-multiplication by $\Delta(x)=W^{*}(\one\tensor x)W$ for all $x\in\Linfty{\G}$.
The subspace $\Cz{\G}:=\overline{\{(\i\tensor\om)(W):\om\in\Lone{\hat{\G}}\}}^{\left\Vert \cdot\right\Vert }$
is a C$^{*}$-algebra, which is ultraweakly dense in $\Linfty{\G}$. Furthermore,
$\Delta(\Cz{\G})\subseteq\M{\Cz{\G}\tensormin\Cz{\G}}$, $W\in\M{\Cz{\G}\tensormin\Cz{\hat{\G}}}$
and $\hat{W}=\sigma(W^{*})$. The \emph{right regular representation}
is a (multiplicative) unitary $V\in\Linfty{\hat{\G}}'\tensorn\Linfty{\G}$
with similar properties, and in particular $\Delta(x)=V(x\tensor\one)V^{*}$
for all $x\in\Linfty{\G}$. In fact $\Linfty{\QG}'$, the commutant of $\Linfty{\QG}$ in $B(\Ltwo{\QG})$, admits its own co-product, arising from the \emph{commutant quantum group} $\QG'$ (so that $\Linfty{\QG'}:=\Linfty{\QG}'$) -- see \citep[Section 4]{Kustermans_Vaes__LCQG_von_Neumann} for details. All the objects associated with $\QG'$, such as the co-product, Haar weights, etc., will be adorned with primes.

An object of fundamental importance is the \emph{antipode} of $\G$.
It is a generally unbounded, ultraweakly closed, linear map $\Sant$ on
$\Linfty{\G}$ that is characterised by $\{(\i\tensor\om)(W):\om\in\Lone{\hat{\G}}\}$
being a $*$-ultrastrong core for $\Sant$ and the formula $\Sant\left((\i\tensor\om)(W)\right)=(\i\tensor\om)(W^{*})$
for every $\om\in\Lone{\hat{\G}}$. It has a `polar decomposition'
as $\Sant=\Rant\circ\tau_{-i/2}$, where the \emph{unitary antipode} $\Rant$
is a $*$-anti-automorphism of $\Linfty{\G}$ and $\tau_{-i}$ is the `analytic generator' of the \emph{scaling
group} $\tau=\left(\tau_{t}\right)_{t\in\R}$ of automorphisms
of $\Linfty{\G}$. The relavant objects are given by the formulas $\Rant(x)=\hat{J}x^{*}\hat{J}$ and
$\tau_{t}(x)=\hat{\nabla}^{it}x\hat{\nabla}^{-it}$ ($x\in\Linfty{\G}$,
$t\in\R$). Additionally, $\Rant$ and $\tau$ reduce to a $*$-anti-automorphism
and an automorphism group, respectively, of the C$^{*}$-algebra $\Cz{\G}$.
\begin{defn}
\label{def:rep}A unitary \emph{representation} of $\G$ on a Hilbert
space $\H$ is a unitary $U\in\M{\Cz{\G}\tensormin\K(\H)}$ that satisfies
$(\Delta\tensor\i)(U)=U_{13}U_{23}$, where we use the standard leg
numbering notation. A little more generally, for a C$^{*}$-algebra
$\mathsf{B}$, one defines similarly unitary representations $U\in\M{\Cz{\G}\tensormin\mathsf{B}}$
of $\G$.
\end{defn}

For instance, the left regular representation $W$ is a unitary representation
of $\G$ on $\Ltwo{\G}$. The trivial representation of $\G$ is $\one\in\M{\Cz{\G}}=\M{\Cz{\G}\tensor\C}$.
The above-mentioned property of the antipode extends as follows: for
every unitary representation $U$ of $\G$ on a Hilbert space $\H$
and $\om\in B(\H)_{*}$ we have $(\i\tensor\om)(U)\in D(\Sant)$ and $\Sant\left((\i\tensor\om)(U)\right)=(\i\tensor\om)(U^{*})$.

Locally compact quantum groups can be also studied via  a \emph{universal} C$^{*}$-algebra  $\CzU{\G}$ \citep{Kustermans__LCQG_universal},
equipped with a co-multiplication
$\Delta_{\mathrm{u}}:\CzU{\G}\to\M{\CzU{\G}\tensormin\CzU{\G}}$.
There is a unitary representation $\wW\in\M{\Cz{\G}\tensormin\CzU{\hat{\G}}}$
of $\G$, the right \emph{semi-universal} version of $W$, satisfying
$\overline{\{(\om\tensor\i)(\wW):\om\in\Lone{\G}\}}^{\left\Vert \cdot\right\Vert }=\CzU{\hat{\G}}$, determined by the following 
property: there is a $1-1$ correspondence between
unitary representations of $\G$ and representations of $\CzU{\hat{\G}}$
given as follows: a unitary representation $U$ of $\G$ on a Hilbert
space $\H$ is associated to the (unique) representation $\Phi$ of
$\CzU{\hat{\G}}$ on $\H$ by $U=(\i\tensor\Phi)(\wW)$, where we
view $\Phi$ as taking values in $\M{\K(\H)}\cong B(\H)$. There is also
a unitary $\Ww\in\M{\CzU{\G}\tensormin\Cz{\hat{\G}}}$, the left
semi-universal version of $W$, with a similar universality property,
and we have $\hat{\Ww}=\sigma(\wW^{*})$. There are also $\Vv,\vV$
for $V$.

Applying the (dual of the) above correspondence to the representation
$W$ and to the trivial representation gives, respectively, the \emph{reducing
morphism}, which is a surjective $*$-homomorphism $\Lambda:\CzU{\G}\to\Cz{\G}$
satisfying $(\Lambda\tensor\i)(\Ww)=W$, and the \emph{co-unit}, which
is a character $\epsilon\in\CzU{\G}^{*}$ satisfying $(\epsilon\tensor\i)(\Ww)=\one$.
We have $(\Lambda\tensor\Lambda)\circ\Delta_{\mathrm{u}}=\Delta\circ\Lambda$
and 
\begin{equation}
(\i\tensor\Lambda)(\Delta_{\mathrm{u}}(x))=\Ww^{*}(\one\tensor\Lambda(x))\Ww\qquad(\forall_{x\in\CzU{\G}}),\label{eq:Delta_u_Ww}
\end{equation}
where the right-hand side makes sense when considering $\Cz{\G}, \Cz{\hat{\G}}\subseteq B(\Ltwo{\G})$.
In this point of view we also have $\Ww\in\M{\CzU{\G}\tensormin\K(\Ltwo{\G})}$.

The unitary antipode and scaling group of $\G$, and thus also its antipode,
lift to objects acting on $\CzU{\G}$ denoted by $\Sant^{\mathrm{u}},\Rant^{\mathrm{u}},\tau^{\mathrm{u}}={(\tau_{t}^{\mathrm{u}})}_{t\in\R}$,
respectively. Again, $\Sant^{\mathrm{u}}=\Rant^{\mathrm{u}}\circ\tau_{-i/2}^{\mathrm{u}}$,
we have $\Lambda\circ \Rant^{\mathrm{u}}=\Rant\circ\Lambda$ and $\Lambda\circ\tau_{t}^{\mathrm{u}}=\tau_{t}\circ\Lambda$ for all $t\in\R$,
and the fundamental property of the antipode has a universal version:
\begin{equation}
(\i\tensor\om)(\Ww)\in D(\Sant^{\mathrm{u}})\text{ and }\Sant^{\mathrm{u}}\left((\i\tensor\om)(\Ww)\right)=(\i\tensor\om)(\Ww^{*})\qquad(\forall_{\om\in\Lone{\hat{\G}}}).\label{eq:S_univ_prop}
\end{equation}

The co-multiplications $\Delta$, $\Delta|_{\Cz{\G}}$ and $\Delta_{\mathrm{u}}$
induce on $\Lone{\G}$, $\Cz{\G}^{*}$ and $\CzU{\G}^{*}$, respectively,
\emph{convolution} products, for instance by $\om_{1}\conv\om_{2}:=(\om_{1}\tensor\om_{2})\circ\Delta$
($\om_{1},\om_{2}\in\Lone{\G}$), turning these spaces into completely
contractive Banach algebras. The co-unit $\epsilon$ is the unit of $\CzU{\G}^{*}$. 
The restriction map and the map of composing
with the reducing morphism allow us to embed $\Lone{\G}\hookrightarrow\Cz{\G}^{*}\hookrightarrow\CzU{\G}^{*}$
as Banach algebras. What is more, each `small' set is an ideal in
every `larger' one (see for example \citep{Hu_Neufang_Ruan__cb_mult_LCQG} and references therein). 

For every Banach algebra $\mathsf{A}$, its dual $\mathsf{A}^{*}$
becomes an $\mathsf{A}$-bimodule as customary, namely, for $\theta\in\mathsf{A}^{*}$
and $a\in\mathsf{A}$, one defines $\theta a,a\theta\in\mathsf{A}^{*}$
by $(\theta a)(b):=\theta(ab)$ and $(a\theta)(b):=\theta(ba)$, $b\in\mathsf{A}$.
When specialising to one of the Banach algebras $\Lone{\G},\Cz{\G}^{*},\CzU{\G}^{*}$,
we denote these bimodule actions with a `$\cdot$' rather than just by
juxtaposition. For instance, by the definition of the convolution,
the actions of $\mu\in\CzU{\G}^{*}$ on $x\in\CzU{\G}\hookrightarrow\CzU{\G}^{**}$
are 
\[
\mu\cdot x=(\i\tensor\mu)(\Delta_{\mathrm{u}}(x)),x\cdot\mu=(\mu\tensor\i)(\Delta_{\mathrm{u}}(x))\in\M{\CzU{\G}}.
\]
In fact, we have $\CzU{\G}^{*} \cdot \CzU{\G} \subseteq \CzU{\G}$ and $\Lone{\G} \cdot \CzU{\G}$ is dense in $\CzU{\G}$; the same holds for the other module map and for $\Cz{\G}$ in place of $\CzU{\G}$. 
These assertions follow from the quantum cancellation rules, e.g.~$\clinspan{\Delta_{\mathrm{u}}(\CzU{\G})(\one \tensor \CzU{\G})} = \CzU{\G} \tensormin \CzU{\G}$.

\begin{example}
Commutative locally compact quantum groups $\G$, namely the ones whose $\Linfty{\G}$ algebra
is commutative, are in $1-1$ correspondence with locally compact
groups $G$. We have $\Linfty{\G}=\Linfty G$, $\CzU{\G}=\Cz{\G}=\Cz G$,
$(\Delta(f))(t,s)=f(ts)$ for $f\in\Linfty G$ and $t,s\in G$ (after
identifying $\Linfty G\tensorn\Linfty G$ with $\Linfty{G\times G}$)
and $\Ltwo{\G}=\Ltwo G$. The left and right Haar weights $\varphi,\psi$
are given by integration against the left and right Haar measures of $G$,
respectively. As $\sigma^{\varphi}$ is trivial, it follows immediately
from \prettyref{cor:i_p_core} \prettyref{enu:i_p_core__2} that
\[
\left[0,h_{\varphi}^{1/2}\right]_{\Ltwo G}=\mathfrak{i}_{\varphi}^{(2)}\left(\left[0,\one\right]_{\mathcal{M}^{(2,\varphi)}}\right)=\left\{ f\in\Ltwo G:0\le f\le1\text{ a.e.}\right\} .
\]
\end{example}

\begin{example}
Co-commutative locally compact quantum groups $\G$, namely the ones whose $\Lone{\G}$ algebra
is commutative, are in $1-1$ correspondence with duals $\hat{G}$
of locally compact groups $G$. We have $\Linfty{\hat{G}}=\VN G$,
the left von Neumann algebra of $G$ generated by the left translation
operators $\left\{ \l_{g}:g\in G\right\} $ on $\Ltwo G=\Ltwo{\hat{G}}$,
$\Cz{\hat{G}}$ is the reduced group C$^{*}$-algebra $\CStarR G$
of $G$, $\CzU{\hat{G}}$ is the full group C$^{*}$-algebra $\CStarF G$
of $G$, and $\Delta(\l_{g})=\l_{g}\tensor\l_{g}$ for all $g\in G$.
Denote the modular function of $G$ by $\delta$, and for $f:G\to\C$
define $f^{\#},f^{*}:G\to\C$ by $f^{\#}(s):=\delta(s)^{-1}\overline{f(s^{-1})}$
and $f^{*}(s):=\delta(s)^{-1/2}\overline{f(s^{-1})}$, $s\in G$.
Recall that the convolution of $f\in\Ltwo G$ and $g\in\Cc G$ is
given by $(f*g)(s):=\int_{G}f(t)g(t^{-1}s)\d t$ ($s\in G$). The
left and right Haar weights of $\hat{G}$ are both equal to the Plancherel
weight of $G$ \citep[Section VII.3]{Takesaki__book_vol_2}. It is
the n.s.f.~weight associated to the Tomita algebra $\Cc G\subseteq\Ltwo G$
with convolution as product and involution being $\Cc G\ni f\mapsto f^{\#}$.
The associated modular operator $\Delta$ is the operator of multiplication
by $\delta$ with maximal domain in $\Ltwo G$, and the modular conjugation $J$
is $\Ltwo G\ni f\mapsto f^{*}$. The left-bounded elements are all
$f\in\Ltwo G$ such that the map given by $\Cc G\ni g\mapsto f*g$
extends to a bounded operator $\pi_{l}(f)$ on $\Ltwo G$. With respect
to the Plancherel weight, from \prettyref{cor:i_p_core} \prettyref{enu:i_p_core__2}
we deduce that 
\[
\begin{split}\mathfrak{i}^{(2)}\left(\left[0,\one\right]_{\mathcal{M}^{(2)}}\right) & =\overline{\nabla^{1/4}\gnsmap\left(\left[0,\one\right]_{\mathcal{M}}\right)}^{\Ltwo G}\\
 & =\overline{\nabla^{1/4}\gnsmap\left(\left\{ \pi_{l}(f)^{*}\pi_{l}(f):f\in\Ltwo G\text{ is left bounded and }\left\Vert \pi_{l}(f)\right\Vert \le1\right\} \right)}^{\Ltwo G}.
\end{split}
\]
Let $f\in\Ltwo G$ be left bounded with $\left\Vert \pi_{l}(f)\right\Vert \le1$.
We can use \citep[Theorem VI.1.26]{Takesaki__book_vol_2} to find
a sequence $\left(f_{n}\right)_{n=1}^{\infty}$ in $\Cc G$ that converges
to $f$ in $\Ltwo G$ and such that $\left\Vert \pi_{l}(f_{n})\right\Vert \le1$
for all $n\in\N$ and $\pi_{l}(f_{n})\xrightarrow[n\to\infty]{}\pi_{l}(f)$
in the strong operator topology. As in the proof of \prettyref{lem:i_2_gns} \prettyref{enu:i_2_gns__2},
we then have $\nabla^{1/4}\gnsmap(\pi_{l}(f_{n})^{*}\pi_{l}(f_{n}))\xrightarrow[n\to\infty]{}\nabla^{1/4}\gnsmap(\pi_{l}(f)^{*}\pi_{l}(f))$
weakly. By convexity, 
\[
\mathfrak{i}^{(2)}\left(\left[0,\one\right]_{\mathcal{M}^{(2)}}\right)=\overline{\co \left\{ \delta^{1/4}(f^{\#}*f):f\in\Cc G,\left\Vert \pi_{l}(f)\right\Vert \le1\right\} }^{\Ltwo G}.
\]
\end{example}

We return to the general case. For $\mu\in\Cz{\hat{\G}}^{*}$, the
next lemma is just \citep[Lemma 3.2 and Remark 2]{Ng_Viselter__amenability_LCQGs_coreps}.
Let us give full details (using a different argument) for completeness.
\begin{lem}
\label{lem:antipode_W_extended}For every $\mu\in\CzU{\hat{\G}}^{*}$
we have 
\[
(\i\tensor\mu)(\wW)\in D(\Sant)\text{ and }\Sant((\i\tensor\mu)(\wW))=(\i\tensor\mu)(\wW^{*}).
\]
\end{lem}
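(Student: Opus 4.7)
The strategy is to reduce the claim to the extended antipode property for unitary representations of $\G$, which is already recorded in the excerpt (just before \prettyref{def:rep}): for any unitary representation $U \in \M{\Cz{\G} \tensormin \K(\H)}$ of $\G$ and any $\om \in B(\H)_*$, one has $(\i \tensor \om)(U) \in D(\Sant)$ and $\Sant((\i \tensor \om)(U)) = (\i \tensor \om)(U^*)$.

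I first treat the case of a state $\mu$. Applying the GNS construction to $\mu$ yields a triple $(\H_\mu, \pi_\mu, \xi_\mu)$, where $\pi_\mu : \CzU{\hat{\G}} \to B(\H_\mu)$ is a $*$-representation and $\mu = \om_{\xi_\mu} \circ \pi_\mu$. By the defining universal property of $\wW$ recalled in \prettyref{sub:prelim_LCQGs}, the element $U := (\i \tensor \pi_\mu)(\wW) \in \M{\Cz{\G} \tensormin \K(\H_\mu)}$ is a unitary representation of $\G$. The standard multiplicativity of slice maps then gives
$$
(\i \tensor \mu)(\wW) = (\i \tensor \om_{\xi_\mu})(U) \quad \text{and} \quad (\i \tensor \mu)(\wW^*) = (\i \tensor \om_{\xi_\mu})(U^*),
$$
and applying the extended antipode property to $U$ and $\om_{\xi_\mu} \in B(\H_\mu)_*$ yields both the domain assertion and the formula $\Sant((\i \tensor \mu)(\wW)) = (\i \tensor \mu)(\wW^*)$.

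For a general $\mu \in \CzU{\hat{\G}}^*$, I would decompose it as a complex linear combination of (at most four) states, using the Jordan decomposition of the real and imaginary parts of $\mu$. Since $D(\Sant)$ is a linear subspace, $\Sant$ is linear on its domain, and the slice maps $\nu \mapsto (\i \tensor \nu)(\wW)$ and $\nu \mapsto (\i \tensor \nu)(\wW^*)$ are linear in $\nu$, the identity extends by linearity. I do not anticipate a genuine obstacle: the argument is essentially mechanical once one observes that every bounded functional on $\CzU{\hat{\G}}$ is of the form $\om \circ \Phi$ for some representation $\Phi$ of $\CzU{\hat{\G}}$, so that $(\i \tensor \mu)(\wW)$ is a slice of a unitary representation of $\G$ to which the already-known extended antipode property applies.
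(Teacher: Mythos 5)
Your proof is correct and follows essentially the same route as the paper's: reduce to the case of a state by linearity (the paper simply writes ``we may suppose that $\mu$ is a state''), pass to the GNS representation $(\H_\mu,\pi_\mu,\xi_\mu)$ so that $(\i\tensor\pi_\mu)(\wW)$ becomes a unitary representation of $\G$, and apply the already-recorded extended antipode property to the normal functional $\om_{\xi_\mu}$. No gap here; your explicit Jordan-decomposition step merely spells out what the paper leaves implicit.
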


\begin{proof}
We may suppose that $\mu$ is a state. Writing $(\H,\pi,\xi)$ for
the GNS construction of $\mu$, the operator $\wW_{\mu}:=(\i\tensor\pi)(\wW)\in\M{\Cz{\G}\tensormin\K(\H)}$
is a unitary representation of $\G$ on $\H$ and $(\i\tensor\mu)(\wW)=(\i\tensor\om_{\xi})(\wW_{\mu})$.
Thus this operator belongs to $D(\Sant)$ and $\Sant((\i\tensor\om_{\xi})(\wW_{\mu}))=(\i\tensor\om_{\xi})(\wW_{\mu}^{*})=(\i\tensor\mu)(\wW^{*})$.
\end{proof}
The following construction is useful. For $\mu\in\CzU{\G}^{*}$, let
\begin{equation}
\mu_{n}:=\frac{\sqrt{n}}{\sqrt{\pi}}\int_{\R}e^{-nt^{2}}\mu\circ\tau_{t}^{\mathrm{u}}\d t\in\CzU{\G}^{*}\qquad(n\in\N),\label{eq:CzU_star_tau_smear}
\end{equation}
with the integrals converging in the $w^{*}$-topology. Then for all $n\in\N$,
$\mu_{n}$ is entire analytic with respect to (the adjoint of) $\tau^{\mathrm{u}}$
and $\left\Vert \mu_{n}\right\Vert \le\left\Vert \mu\right\Vert $.
In addition, $\mu_{n}\xrightarrow[n\to\infty]{}\mu$ in the $w^{*}$-topology,
thus $\left\Vert \mu_{n}\right\Vert \xrightarrow[n\to\infty]{}\left\Vert \mu\right\Vert $.

The next definition and the proposition which follows it belong to the quantum group folklore.

\begin{defn}
A locally compact quantum group $\G$ is \emph{second countable} if $\Cz{\G}$ is separable.
\end{defn}

\begin{prop}
\label{prop:second_countable_LCQG}For a locally compact quantum group $\G$ the following
conditions are equivalent:
\begin{enumerate}
\item \label{enu:second_countable_LCQG__1}$\G$ is second countable;
\item \label{enu:second_countable_LCQG__2}$\hat{\G}$ is second countable;
\item \label{enu:second_countable_LCQG__3}$\CzU{\G}$ is separable;
\item \label{enu:second_countable_LCQG__4}$\CzU{\hat{\G}}$ is separable;
\item \label{enu:second_countable_LCQG__5}$\Ltwo{\G}$ is separable.
\end{enumerate}
\end{prop}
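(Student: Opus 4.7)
The plan is to establish all the equivalences by showing that (5) is equivalent to each of the other conditions. Observing that (5) is Pontryagin self-dual (as $\Ltwo{\G}=\Ltwo{\hat{\G}}$), while (1)$\leftrightarrow$(2) and (3)$\leftrightarrow$(4) swap under the duality $\G\leftrightarrow\hat{\G}$, it suffices to prove (5)$\Leftrightarrow$(1) and (5)$\Leftrightarrow$(3); the remaining equivalences follow by duality.

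The implications (5)$\Rightarrow$(1) and (5)$\Rightarrow$(3) are straightforward: if $\Ltwo{\G}$ is separable, then $\Linfty{\hat{\G}}\subseteq\B{\Ltwo{\G}}$ has separable predual $\Lone{\hat{\G}}$ (since the predual of a von Neumann algebra acting on a separable Hilbert space is separable), and the contractive maps $\om\mapsto(\i\tensor\om)(W)$ and $\om\mapsto(\i\tensor\om)(\Ww)$ from $\Lone{\hat{\G}}$ have norm-dense range in $\Cz{\G}$ and $\CzU{\G}$ respectively, so both C$^{*}$-algebras are separable. The implication (3)$\Rightarrow$(1) is immediate from the surjectivity of the reducing morphism $\Lambda:\CzU{\G}\to\Cz{\G}$.

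The main step is (1)$\Rightarrow$(5), where I plan to construct a cyclic vector for $\Linfty{\G}$ in $\Ltwo{\G}$ and then invoke Kaplansky density. Since $\Cz{\G}$ is separable, hence $\sigma$-unital, and since $\mathcal{M}_{\varphi}^{+}\cap\Cz{\G}$ is a norm-dense hereditary cone in $\Cz{\G}_{+}$ (because $\varphi$ is a densely-defined KMS weight on $\Cz{\G}$), one can select a sequential positive approximate identity $(e_{n})_{n=1}^{\infty}$ of $\Cz{\G}$ inside $\mathcal{M}_{\varphi}^{+}$. Setting $h:=\sum_{n=1}^{\infty}\frac{2^{-n}}{1+\varphi(e_{n})}e_{n}$, lower semicontinuity of $\varphi$ yields $\varphi(h)\le 1$ and thus $h\in\mathcal{M}_{\varphi}^{+}\cap\Cz{\G}$, while $h$ is strictly positive in $\Cz{\G}$ because $(e_{n})$ is an approximate identity. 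Let $\xi_{0}:=\gnsmap(h^{1/2})\in\Ltwo{\G}$: the associated vector state on $\Linfty{\G}$ is $x\mapsto\varphi(h^{1/2}xh^{1/2})$, and its faithfulness reduces, via faithfulness of $\varphi$, to the injectivity of left multiplication by $h^{1/2}$ on $\Linfty{\G}$, which in turn follows from strict positivity of $h$ combined with nondegeneracy of the action of $\Cz{\G}$ on $\Ltwo{\G}$ forcing $h^{1/2}$ to have trivial kernel. Consequently $\xi_{0}$ is a separating vector for $\Linfty{\G}$ in standard form, and hence also cyclic; Kaplansky density then yields that $\Cz{\G}\cdot\xi_{0}$ is norm-dense in $\Ltwo{\G}$, so separability of $\Cz{\G}$ forces that of $\Ltwo{\G}$.

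The principal obstacle is the bookkeeping needed to produce the strictly positive element $h\in\mathcal{M}_{\varphi}^{+}\cap\Cz{\G}$: one must combine $\sigma$-unitality of $\Cz{\G}$, density of $\mathcal{M}_{\varphi}\cap\Cz{\G}$ in $\Cz{\G}$, and lower semicontinuity of $\varphi$ to keep the series inside $\mathcal{M}_{\varphi}$. Once $h$ and the resulting cyclic vector $\xi_{0}$ are in place, the passage to separability of $\Ltwo{\G}$ is routine, and (2)$\Rightarrow$(5), (4)$\Rightarrow$(5) follow by applying the same argument to $\hat{\G}$.
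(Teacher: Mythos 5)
The reduction to the equivalence of (1), (3), (5), the implication (5)$\Rightarrow$(3) via separability of $\Lone{\hat{\G}}$ and density of the slices of $\Ww$, and (3)$\Rightarrow$(1) via the reducing morphism all match the paper. Your construction of a strictly positive $h\in\mathcal{M}_{\varphi}^{+}\cap\Cz{\G}$ and the verification that $\xi_{0}=\gnsmap(h^{1/2})$ is \emph{separating} for $\Linfty{\G}$ are also correct. The problem is the single sentence ``$\xi_{0}$ is a separating vector for $\Linfty{\G}$ in standard form, and hence also cyclic'': this implication is false in general. In a standard form, ``cyclic $\Leftrightarrow$ separating'' holds for vectors in the natural positive cone, but $\gnsmap(h^{1/2})$ need not lie in that cone when $\varphi$ is non-tracial (the cone is, roughly, $\overline{\nabla^{1/4}\gnsmap(\mathcal{M}_{\varphi}^{+})}$, cf.\ \prettyref{cor:i_p_core}, not $\gnsmap$ of positive elements). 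For a general vector $\xi$, separating only yields that the projection $[\Linfty{\G}\xi]$ is Murray--von Neumann equivalent to $\one$ in the commutant, not equal to it. A concrete counterexample to your inference: $B(\ell^{2})$ acting by left multiplication on the Hilbert--Schmidt operators is in standard form, and $X=DS^{*}$ ($D$ an injective positive Hilbert--Schmidt diagonal, $S^{*}$ the backward shift) is separating (dense range) but not cyclic (nontrivial kernel). So the key step of (1)$\Rightarrow$(5) is unjustified as written; cyclicity of $\gnsmap(h^{1/2})$ would need a genuine argument (e.g.\ via the spectral projections $p_{\e}=\chi_{[\e,\infty)}(h)$, which lie in $\mathcal{N}_{\varphi}$ and increase to $\one$, or via modular theory).

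For comparison, the paper avoids this issue entirely: it writes $\Ltwo{\G}$ as the closure of $J_{\varphi}AJ_{\varphi}\,\gnsmap_{\varphi}(A)$ for a countable subset $A$ of $\Cz{\G}\cap\mathcal{N}_{\varphi}\cap\mathcal{N}_{\varphi}^{*}$, exploiting that $a\mapsto J_{\varphi}aJ_{\varphi}\eta$ is operator-norm continuous, so norm-separability of $\Cz{\G}$ can be transported to $\Ltwo{\G}$ without producing a single cyclic vector. If you want to keep your approach, you must either prove cyclicity of $\gnsmap(h^{1/2})$ directly or replace it by an argument of the paper's type.
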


\begin{proof}
It is obviously sufficient to prove that \prettyref{enu:second_countable_LCQG__1},
\prettyref{enu:second_countable_LCQG__3} and \prettyref{enu:second_countable_LCQG__5}
are equivalent.

\prettyref{enu:second_countable_LCQG__1}$\implies$\prettyref{enu:second_countable_LCQG__5}
Since $\varphi$ is the $W^{*}$-lift of its restriction to $\Cz{\G}$,
which is a KMS weight (in the sense of \citep{Kustermans_Vaes__LCQG_C_star}), it follows from modular theory that if $A$
is a countable dense subset of $\Cz{\G}\cap\mathcal{N}_{\varphi}\cap\mathcal{N}_{\varphi}^{*}$,
then $J_{\varphi}AJ_{\varphi}\gnsmap_{\varphi}(A)$ is dense in $\Ltwo{\G}=\overline{\gnsmap_{\varphi}(\Cz{\G}\cap\mathcal{N}_{\varphi}\cap\mathcal{N}_{\varphi}^{*})}$ and the latter is also equal to $\overline{J_\varphi(\Cz{\G}\cap\mathcal{N}_{\varphi}\cap\mathcal{N}_{\varphi}^{*})J_\varphi \gnsmap_{\varphi}(\Cz{\G}\cap\mathcal{N}_{\varphi}\cap\mathcal{N}_{\varphi}^{*})}$.

\prettyref{enu:second_countable_LCQG__5}$\implies$\prettyref{enu:second_countable_LCQG__3}
This follows from the density of $\left\{ (\i\tensor\om_{\z,\eta})(\Ww):\z,\eta\in\Ltwo{\G}\right\} $
in $\CzU{\G}$.

\prettyref{enu:second_countable_LCQG__3}$\implies$\prettyref{enu:second_countable_LCQG__1}
Simply use the canonical reducing surjection $\CzU{\G}\to\Cz{\G}$.
\end{proof}
We will refer several times to the following particular classes of
locally compact quantum groups: compact (or dually, discrete), amenable, and co-amenable. Since
their definitions are not necessary for understanding the chief part
of this paper, we will not give them here, but rather refer the reader
to \citep{Maes_van_Daele__notes_CQGs,Woronowicz__CMP,Woronowicz__symetries_quantiques}
and to \citep{Bedos_Tuset_2003}.

\section{The left and right convolution operators associated with locally compact quantum groups}\label{sec:L_R_oper}

In this section we discuss various incarnations of convolution operators associated to quantum measures on locally compact quantum groups (i.e.\ continuous functionals on $\CzU{\G}$). After recalling and sometimes rephrasing known results in Subsection \ref{subsect:Basics}, essentially following \citep{Junge_Neufang_Ruan__rep_thm_LCQG, Daws__CPM_LCQGs_2012, Daws_Fima_Skalski_White_Haagerup_LCQG, Salmi_Skalski__idemp_states_LCQG_II}, in Subsection \ref{subsect:Symmetry} we introduce the conditions on the functional in question guaranteeing that the corresponding convolution operator possesses natural symmetry properties. Finally in Subsection \ref{subsect:Hilbert} we analyse the GNS- and KMS-implementations of  convolution operators on $L^2(\G)$. 

\subsection{Basic facts} \label{subsect:Basics}

Let $\G$ be a locally compact quantum group and $\mu\in\CzU{\G}^{*}$. Since $\Lone{\G}$,
when viewed canonically as a subspace of the completely contractive
Banach algebra $\CzU{\G}^{*}$, is an ideal, the operators $\Lone{\G}\to\Lone{\G}$
given by $\Lone{\G}\ni\om\mapsto\om\conv\mu$ and $\Lone{\G}\ni\om\mapsto\mu\conv\om$
have cb-norms dominated by $\left\Vert \mu\right\Vert $. Thus their
adjoints, denoted by $L_{\mu},R_{\mu}:\Linfty{\G}\to\Linfty{\G}$,
respectively, are normal and satisfy $\left\Vert L_{\mu}\right\Vert _{cb},\left\Vert R_{\mu}\right\Vert _{cb}\le\left\Vert \mu\right\Vert $.
We also write $\mu\cdot x:=L_{\mu}(x)$ and $x\cdot\mu:=R_{\mu}(x)$.
These make $\Linfty{\G}$ into a $\CzU{\G}^{*}$-bimodule, extending
its canonical structure as an $\Lone{\G}$-bimodule. We also have
the operators $R_{\mu}^{\mathrm{u}},L_{\mu}^{\mathrm{u}}:\CzU{\G}\to\CzU{\G}$
given by the $\CzU{\G}^{*}$-bimodule structure of $\CzU{\G}$, admitting also a simple direct description: for example $L_\mu^{\mathrm{u}} = (\textup{id} \otimes \mu)\circ \Delta_\mathrm{u}$. In fact in the literature one can find varying left/right conventions here (so that for example \citep{Salmi_Skalski__idemp_states_LCQG_II} calls the latter map a right convolution operator). Finally note
that we can recover $\mu$ by $\epsilon\circ R_{\mu}^{\mathrm{u}}=\mu=\epsilon\circ L_{\mu}^{\mathrm{u}}$.

We gather some information from the literature about these operators
in the subsequent results. Notice that a completely bounded map $T$ on
$\Linfty{\G}$ satisfies $\Delta\circ T=(T\tensor\i)\circ\Delta$
(resp.~$\Delta\circ T=(\i\tensor T)\circ\Delta$) if and only if
it commutes with the operators $L_{\om}$ (resp.~$R_{\om}$), $\om\in\Lone{\G}$.
We will see below that a similar, and even stronger, statement holds
for the maps $R_{\mu}^{\mathrm{u}},L_{\mu}^{\mathrm{u}}$. A bounded
linear map between two C$^{*}$-algebras is called \emph{strict} if
it is strictly continuous on bounded sets.
\begin{thm}
\label{thm:L_R}Let $\G$ be a locally compact quantum group.
\begin{enumerate}
\item \label{enu:L_R_maps_inj} The maps $\CzU{\G}^{*}\ni\mu\mapsto R_{\mu}$
and $\CzU{\G}^{*}\ni\mu\mapsto L_{\mu}$ are injective.
\item \label{enu:L_R_Haar_weights} For every state $\mu\in\CzU{\G}^{*}$
the left, respectively right, Haar weight is invariant under $R_{\mu}$,
respectively $L_{\mu}$.
\item \label{enu:L_R_universal}We have 
\[
\begin{split}\left\{ R_{\mu}^{\mathrm{u}}:\mu\in\CzU{\G}^{*}\right\}  & =\left\{ T^{\mathrm{u}}\in CB(\CzU{\G}):T^{\mathrm{u}}\tensor\i\text{ is strict and }\Delta_{\mathrm{u}}\circ T^{\mathrm{u}}=(T^{\mathrm{u}}\tensor\i)\circ\Delta_{\mathrm{u}}\right\} \\
 & =\left\{ T^{\mathrm{u}}\in B(\CzU{\G}):T^{\mathrm{u}}\text{ commutes with all maps }L_{\nu}^{\mathrm{u}},\nu\in\CzU{\G}^{*}\right\} .
\end{split}
\]
Furthermore, $\left\Vert R_{\mu}^{\mathrm{u}}\right\Vert _{cb}=\left\Vert R_{\mu}^{\mathrm{u}}\right\Vert =\left\Vert \mu\right\Vert $
for all $\mu\in\CzU{\G}^{*}$. 
\item \label{enu:L_R_CB_mult}There exist $1-1$ completely isometric correspondences
between:
\begin{itemize}
\item completely bounded, normal maps $T$ on $\Linfty{\G}$ that satisfy
$\Delta\circ T=(T\tensor\i)\circ\Delta$;
\item completely bounded maps $T'$ on $\Cz{\G}$ that commute with the
operators $L_{\om}$, $\om\in\Lone{\G}$;
\item completely bounded right module maps $M$ on $\Lone{\G}$ (that is, $M(\om_{1}\conv\om_{2})=M(\om_{1}) \conv \om_{2}$
for all $\om_{1},\om_{2}\in \Lone{\G}$).
\end{itemize}
They are given by $T'=T|_{\Cz{\G}}$ and $M=T_{*}$.
\item \label{enu:L_R_CP_mult}There exist $1-1$ order-preserving correspondences
between:
\begin{itemize}
\item positive functionals $\mu\in\CzU{\G}^{*}$;
\item completely positive, normal maps $T$ on $\Linfty{\G}$ that satisfy
$\Delta\circ T=(T\tensor\i)\circ\Delta$;
\item completely positive maps $T'$ on $\Cz{\G}$ that commute with the
operators $L_{\om}$, $\om\in\Lone{\G}$;
\item completely positive maps $T^{\mathrm{u}}$ on $\CzU{\G}$ that commute
with the operators $L_{\nu}^{\mathrm{u}}$, $\nu\in\CzU{\G}^{*}$;
\item right module maps $M$ on $\Lone{\G}$ with completely positive adjoints.
\end{itemize}
They are given by $T=R_{\mu}$, $T'=T|_{\Cz{\G}}$, $T^{\mathrm{u}} = R_\mu^\mathrm{u}$ and $M=T_{*}$,
and satisfy $\left\Vert \mu\right\Vert =\left\Vert T\right\Vert =\left\Vert T\right\Vert _{cb}=\left\Vert T'\right\Vert =\left\Vert T'\right\Vert _{cb}=\left\Vert T^{\mathrm{u}}\right\Vert =\left\Vert T^{\mathrm{u}}\right\Vert _{cb}=\left\Vert M\right\Vert =\left\Vert M\right\Vert _{cb}$. 
\item \label{enu:L_R_CB_mult_amen}Assume that $\hat{\G}$ is amenable.
There exist $1-1$ correspondences between: 
\begin{itemize}
\item functionals $\mu\in\CzU{\G}^{*}$;
\item completely bounded, normal maps $T$ on $\Linfty{\G}$ that satisfy
$\Delta\circ T=(T\tensor\i)\circ\Delta$;
\item completely bounded maps $T'$ on $\Cz{\G}$ that commute with the
operators $L_{\om}$, $\om\in\Lone{\G}$;
\item completely bounded right module maps $M$ on $\Lone{\G}$.
\end{itemize}
They are given by $T=R_{\mu}$ , $T'=T|_{\Cz{\G}}$ and $M=T_{*}$.

\end{enumerate}
In each of \prettyref{enu:L_R_universal}\textendash \prettyref{enu:L_R_CB_mult_amen}
a similar statement holds for the left actions of $\CzU{\G}^{*}$.
\end{thm}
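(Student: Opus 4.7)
The plan is to assemble the six parts from results already essentially present in the literature, principally \citep{Junge_Neufang_Ruan__rep_thm_LCQG,Daws__CPM_LCQGs_2012,Daws_Fima_Skalski_White_Haagerup_LCQG,Salmi_Skalski__idemp_states_LCQG_II}, while adapting their formulations to our notation.

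For part (a), if $R_\mu = 0$ then its pre-adjoint on $\Lone{\G}$ vanishes, so $\mu \conv \om = 0$ for every $\om \in \Lone{\G}$; evaluating against $x \in \CzU{\G}$ one gets $0 = (\mu \conv \om)(x) = \mu(\om \cdot x)$, and as $\Lone{\G} \cdot \CzU{\G}$ is dense in $\CzU{\G}$ by the quantum cancellation rules recalled in \prettyref{sub:prelim_LCQGs}, this forces $\mu = 0$; the same reasoning, with the other module action, handles $L_\mu$. For part (b), the identity $\varphi \circ R_\om = \om(\one)\varphi$ for $\om \in \Lone{\G}^{+}$ is just left-invariance of $\varphi$, and for a general state $\mu \in \CzU{\G}^{*}$ I would pass through the GNS construction $(\H,\pi,\xi)$ of $\mu$: with $\wW_\mu := (\i \tensor \pi)(\wW)$, a direct computation gives $R_\mu x = (\i \tensor \om_\xi)(\wW_\mu^{*}(\one \tensor x)\wW_\mu)$ for $x \in \Linfty{\G}$, reducing the invariance claim to that of $\varphi$ tested against vector states on $\H$, which is again left-invariance of $\varphi$.

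For part (c), one direction is direct: writing $R_\mu^{\mathrm{u}} = (\mu \tensor \i)\circ \Delta_{\mathrm{u}}$, co-associativity gives $\Delta_{\mathrm{u}} \circ R_\mu^{\mathrm{u}} = (R_\mu^{\mathrm{u}} \tensor \i) \circ \Delta_{\mathrm{u}}$ and commutation with every $L_\nu^{\mathrm{u}}$, while strictness of $R_\mu^{\mathrm{u}} \tensor \i$ on bounded sets is inherited from that of $\Delta_{\mathrm{u}}$. Conversely, any $T^{\mathrm{u}}$ of either of the two specified kinds is recovered as $R_\mu^{\mathrm{u}}$ with $\mu := \epsilon \circ T^{\mathrm{u}}$: applying $\epsilon \tensor \i$ to the intertwining identity produces the required equality, which is in substance the universal refinement of the Junge--Neufang--Ruan representation theorem and can be carried out as in \citep[Theorem 4.5]{Salmi_Skalski__idemp_states_LCQG_II}. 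The cb-norm equality $\|R_\mu^{\mathrm{u}}\|_{cb} = \|\mu\|$ then follows by combining contractivity of the module action with the retraction $\mu = \epsilon \circ R_\mu^{\mathrm{u}}$.

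Parts (d)--(f) establish analogous correspondences at the $\Linfty{\G}$, $\Cz{\G}$ and $\Lone{\G}$ levels. Part (d) is the Junge--Neufang--Ruan multiplier theorem \citep{Junge_Neufang_Ruan__rep_thm_LCQG}; part (e) layers on positivity via the CP-convolution correspondence of \citep[Section 4]{Daws__CPM_LCQGs_2012} and \citep[Proposition 8.3]{Daws_Fima_Skalski_White_Haagerup_LCQG}, the coincidence of all the displayed norms with $\mu(\one) = \|\mu\|$ being automatic since a completely positive (normal) map on a unital C$^{*}$- or von Neumann algebra attains its cb-norm at $\one$; part (f) exploits the fact that amenability of $\hat{\G}$ is equivalent to co-amenability of $\G$ \citep{Bedos_Tuset_2003}, which makes $\Lambda$ an isomorphism and so identifies $\Cz{\G}^{*}$ with $\CzU{\G}^{*}$, allowing the extension from the CP case of (e) to general CB maps without any further ingredient. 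The main obstacle sits inside part (c): producing the pre-image functional $\mu$ from a general completely bounded $T^{\mathrm{u}}$, without recourse to positivity or amenability, is exactly what requires the universal framework of \citep{Salmi_Skalski__idemp_states_LCQG_II}. Once all of this is in place, the parallel statements for the left actions announced at the end follow verbatim by interchanging the two tensor slots, i.e.\ by substituting $\sigma \circ \Delta_{\mathrm{u}}$ for $\Delta_{\mathrm{u}}$ throughout.
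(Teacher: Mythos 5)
Most of your assembly coincides with the paper's: (d) and (e) are indeed handled by citing Junge--Neufang--Ruan and Daws, your density argument for (a) is the one underlying the paper's reference, and recovering $\mu=\epsilon\circ T^{\mathrm{u}}$ by applying $\epsilon\tensor\i$ to the intertwining identity is in substance the argument of \citep[Proposition 3.2]{Lindsay_Skalski__conv_semigrp_states} that the paper invokes for (c). There is, however, a genuine gap in part (f). You reduce it to the completely positive case of (e) by asserting that amenability of $\hat{\G}$ is equivalent to co-amenability of $\G$, citing \citep{Bedos_Tuset_2003}. Only one implication of that equivalence is a theorem: co-amenability of $\G$ implies amenability of $\hat{\G}$. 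The converse is a well-known open problem for general locally compact quantum groups (settled only in special cases, e.g.\ for compact $\G$), so your reduction does not go through. This is exactly why the paper proves (f) via the left analogue of \citep[Proposition 5.10]{Crann__amn_cov_inj_2}: Crann's covariant-injectivity results show directly that when $\hat{\G}$ is amenable, every completely bounded normal map on $\Linfty{\G}$ intertwining $\Delta$ (equivalently, by (d), every completely bounded right module map on $\Lone{\G}$) arises from some $\mu\in\CzU{\G}^{*}$. The remark following the theorem records that the co-amenable case was known earlier \citep{Hu_Neufang_Ruan__cb_mult_LCQG} and that the amenable case is the genuinely new input; your argument conflates the two. (Even granting co-amenability, ``$\Lambda$ is an isomorphism, hence no further ingredient is needed'' is not a proof that a general CB intertwiner comes from a functional -- that step is the content of the Hu--Neufang--Ruan result.)

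A secondary point concerns (b). The step ``reducing the invariance claim to that of $\varphi$ tested against vector states on $\H$, which is again left-invariance of $\varphi$'' is too quick. Left invariance is the identity $\varphi((\om\tensor\i)\Delta(x))=\om(\one)\varphi(x)$ for normal states $\om$ on $\Linfty{\G}$, i.e.\ it concerns conjugation by the regular representation $W$ implementing $\Delta$. What your formula requires is the analogous identity with $W$ replaced by the representation $(\pi\tensor\i)(\Ww)$ obtained from the GNS construction of $\mu$, where $\om_{\xi}$ is a vector state on $B(\H)$ rather than on $\Linfty{\G}$. That extension of invariance to arbitrary unitary representations is true but is not an instance of the defining property of $\varphi$; it is precisely the content of \citep[Lemma 3.4]{Kalantar_Neufang_Ruan__Poisson_bdry_LCQG}, which the paper cites for this part. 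As written, your argument assumes what needs to be proved.
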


\begin{proof}
\prettyref{enu:L_R_maps_inj} See \citep[Proposition 8.3 and its proof]{Daws__mult_self_ind_dual_B_alg}
(or \prettyref{lem:C_z_U_star_conv} below).

\prettyref{enu:L_R_Haar_weights} This is \citep[Lemma 3.4]{Kalantar_Neufang_Ruan__Poisson_bdry_LCQG}. 

\prettyref{enu:L_R_universal} The assertion is proved precisely as
\citep[Proposition 3.2]{Lindsay_Skalski__conv_semigrp_states}.

\prettyref{enu:L_R_CB_mult} Follows from \citep[Propositions 4.1 and 4.2]{Junge_Neufang_Ruan__rep_thm_LCQG},
see also \citep[Lemma 12]{Salmi_Skalski__idemp_states_LCQG_II} (and the preceding remark, replacing complete positivity by complete boundedness and ignoring the non-degeneracy condition).

\prettyref{enu:L_R_CP_mult} Follows from \prettyref{enu:L_R_universal},
\prettyref{enu:L_R_CB_mult} and \citep[Theorems 5.1 and 5.2]{Daws__CPM_LCQGs_2012}. 

\prettyref{enu:L_R_CB_mult_amen} Follows from \prettyref{enu:L_R_CB_mult}
and the left analogue of \citep[Proposition 5.10 and the discussion succeeding the definition of $\widetilde{\rho}$ in Section 3]{Crann__amn_cov_inj_2}.
\end{proof}
\begin{rem}
\mbox{}
\begin{enumerate}
\item By \citep[Theorem 7.2]{Crann__inner_amen_rel_hom}, the correspondence
in \prettyref{enu:L_R_CB_mult_amen} is isometric: $\left\Vert \mu\right\Vert =\left\Vert T\right\Vert _{cb}=\left\Vert M\right\Vert _{cb}$.
\item The result of \prettyref{enu:L_R_CB_mult_amen} was known before \citep{Crann__amn_cov_inj_2}
for co-amenable $\G$ \citep[Proposition 3.1 or Theorem 4.2]{Hu_Neufang_Ruan__cb_mult_LCQG}.
\end{enumerate}
\end{rem}

We now start discussing connections between various modes of convergence of different avatars of convolution operators (see also  \citep[Theorem 4.6]{Runde_Viselter_LCQGs_PosDef}).

\begin{lem}
\label{lem:C_z_U_star_conv}Let $\left(\mu_{i}\right)_{i\in\mathcal{I}}$
be a bounded net in $\CzU{\G}^{*}$, and let $\mu\in\CzU{\G}^{*}$.
Consider the following conditions:
\begin{enumerate}
\item \label{enu:C_z_U_star_conv__1}$\mu_{i}\xrightarrow[i\in\mathcal{I}]{}\mu$
in the $w^{*}$-topology;
\item \label{enu:C_z_U_star_conv__2}$R_{\mu_{i}}^{\mathrm{u}}(x)\xrightarrow[i\in\mathcal{I}]{}R_{\mu}^{\mathrm{u}}(x)$
weakly for every $x\in\CzU{\G}$;
\item \label{enu:C_z_U_star_conv__3}$R_{\mu_{i}}(x)\xrightarrow[i\in\mathcal{I}]{}R_{\mu}(x)$
ultraweakly for every $x\in\Cz{\G}$;
\item \label{enu:C_z_U_star_conv__4}$R_{\mu_{i}}(x)\xrightarrow[i\in\mathcal{I}]{}R_{\mu}(x)$
ultraweakly for every $x\in\Linfty{\G}$;
\item \label{enu:C_z_U_star_conv__5}$(R_{\mu_{i}})_{*}\xrightarrow[i\in\mathcal{I}]{}(R_{\mu})_{*}$
in the point\textendash norm topology.
\end{enumerate}
Then \prettyref{enu:C_z_U_star_conv__5}$\implies$\prettyref{enu:C_z_U_star_conv__4}$\implies$\prettyref{enu:C_z_U_star_conv__3}$\iff$\prettyref{enu:C_z_U_star_conv__2}$\iff$\prettyref{enu:C_z_U_star_conv__1},
and if $\left\Vert \mu_{i}\right\Vert =\left\Vert \mu\right\Vert $
for all $i\in\mathcal{I}$, then all these conditions are equivalent.
Similar statements hold for operators of the form $L_{\mu}$, $\mu\in\CzU{\G}^{*}$.
\end{lem}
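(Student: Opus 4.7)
The argument rests on two identities. The first is the intertwining relation $\Lambda \circ R_\mu^{\mathrm{u}} = R_\mu \circ \Lambda$ on $\CzU{\G}$: pairing with any $\om \in \Lone{\G}$, both sides take $x \in \CzU{\G}$ to $(\mu \conv \om)(x)$ computed in $\CzU{\G}^*$ via the embedding $\om \mapsto \om \circ \Lambda$, and $\Lone{\G}$ separates $\Linfty{\G}$, so the two resulting elements of $\Cz{\G}$ coincide. The second is the Fubini identity $\nu(R_\mu^{\mathrm{u}}(x)) = \mu(L_\nu^{\mathrm{u}}(x))$ for $\nu \in \CzU{\G}^*$ and $x \in \CzU{\G}$, immediate from $R_\mu^{\mathrm{u}} = (\mu \tensor \i) \circ \Delta_{\mathrm{u}}$ and $L_\nu^{\mathrm{u}} = (\i \tensor \nu) \circ \Delta_{\mathrm{u}}$.

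The implications \prettyref{enu:C_z_U_star_conv__5}$\Rightarrow$\prettyref{enu:C_z_U_star_conv__4}$\Rightarrow$\prettyref{enu:C_z_U_star_conv__3} are routine: duality gives $|\om(R_{\mu_i}(x) - R_\mu(x))| \leq \|(R_{\mu_i})_*\om - (R_\mu)_*\om\| \cdot \|x\|$, and the second reduces to restricting the domain from $\Linfty{\G}$ to $\Cz{\G}$.

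For the central loop, the Fubini identity gives \prettyref{enu:C_z_U_star_conv__1}$\Rightarrow$\prettyref{enu:C_z_U_star_conv__2} at once. For \prettyref{enu:C_z_U_star_conv__2}$\Rightarrow$\prettyref{enu:C_z_U_star_conv__3}, given $x \in \Cz{\G}$ pick $\tilde x \in \CzU{\G}$ with $\Lambda(\tilde x) = x$; then $\om(R_{\mu_i}(x)) = (\om \circ \Lambda)(R_{\mu_i}^{\mathrm{u}}(\tilde x))$ by the intertwining identity, which converges by \prettyref{enu:C_z_U_star_conv__2}. For \prettyref{enu:C_z_U_star_conv__3}$\Rightarrow$\prettyref{enu:C_z_U_star_conv__1}, running the same computation backwards yields $\mu_i(L_\om^{\mathrm{u}}(x)) \to \mu(L_\om^{\mathrm{u}}(x))$ for every $\om \in \Lone{\G}$ and $x \in \CzU{\G}$; that is, $\mu_i \to \mu$ pointwise on $\Lone{\G} \cdot \CzU{\G}$, which is norm-dense in $\CzU{\G}$, and combined with boundedness of $(\mu_i)$ this promotes to $w^*$-convergence.

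The main obstacle is \prettyref{enu:C_z_U_star_conv__1}$\Rightarrow$\prettyref{enu:C_z_U_star_conv__5} under $\|\mu_i\| = \|\mu\|$. From \prettyref{enu:C_z_U_star_conv__1} one gets \prettyref{enu:C_z_U_star_conv__4}, so $(R_{\mu_i})_*\om \to (R_\mu)_*\om$ weakly in $\Lone{\G}$ for every $\om$. Weak-to-norm upgrading in the predual of a von Neumann algebra fails in general, but it holds for positive functionals via the noncommutative Scheff\'e lemma (weak convergence together with convergence of norms implies norm convergence). For positive $\mu_i, \mu$ and positive $\om$ one checks $\|(R_{\mu_i})_*\om\|_{L^1} = \om(R_{\mu_i}(\one)) = \|\mu_i\|\,\om(\one)$, matching $\|(R_\mu)_*\om\|_{L^1}$ by hypothesis, so Scheff\'e applies. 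The general case should then be reduced to the positive one by polar decomposition of the functionals $\mu_i, \mu$ and $\om$; the technically delicate point is to control the polar decompositions of the net $(\mu_i)$ uniformly in $i$ so as to preserve both the weak convergence and the norm equality under the reduction.
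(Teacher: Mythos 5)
Your treatment of the cycle (a) $\Leftrightarrow$ (b) $\Leftrightarrow$ (c) and of the implications (e) $\Rightarrow$ (d) $\Rightarrow$ (c) is correct and uses the same ingredients as the paper: the identity $\nu(\om\cdot a)=\om(R_{\nu}(\Lambda(a)))$, the density of $\Lone{\G}\cdot\CzU{\G}$ and of $\CzU{\G}^{*}\cdot\CzU{\G}$ in $\CzU{\G}$, and boundedness of the net. (The paper proves (c) $\Leftrightarrow$ (a) directly and (a) $\Leftrightarrow$ (b) separately rather than running your cycle, but that difference is cosmetic.)

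The problem is the implication (a) $\Rightarrow$ (e) under $\Vert\mu_{i}\Vert=\Vert\mu\Vert$, which the paper simply delegates to \citep[Theorem 4.6, (i)$\implies$(vii)]{Runde_Viselter_LCQGs_PosDef}. Your replacement argument has two genuine gaps. First, you open with ``From (a) one gets (d)'', but (a) $\Rightarrow$ (d) is itself one of the nontrivial implications that holds only under the norm hypothesis (classically, take $\mu_{i}=\delta_{g_{i}}$ with $g_{i}\to\infty$ and test against $x=\one$ to see it fail without that hypothesis); it does not follow from (a) $\Rightarrow$ (c) by density, because pointwise ultraweak convergence of uniformly bounded normal maps does not pass from an ultraweakly dense C$^{*}$-subalgebra to the whole von Neumann algebra. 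The standard proof of (a) $\Rightarrow$ (d) for positive functionals is a tightness estimate (Cauchy--Schwarz against an approximate identity of $\Cz{\G}$, using that $(\mu_{i}\conv\om)(\one)=(\mu\conv\om)(\one)$ uniformly in $i$), and in the paper's scheme (d) is deduced from (e); so invoking (d) on the way to (e) is circular unless you supply that estimate independently. Second, even granting (d) and the noncommutative Scheff\'{e} lemma in the positive case, the reduction of general functionals to positive ones is exactly the part you leave open, and it is not routine: $\Vert\mu\Vert=\Vert\mu^{+}\Vert+\Vert\mu^{-}\Vert$ holds only for hermitian $\mu$, Jordan decompositions are not $w^{*}$-continuous, and one must pass to subnets of $(\mu_{i}^{\pm})_{i\in\mathcal{I}}$ and check that no norm is lost in the limit before Scheff\'{e} can be applied componentwise. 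As written, the hard half of the lemma is not proved.
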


\begin{proof}
The implications \prettyref{enu:C_z_U_star_conv__5}$\implies$\prettyref{enu:C_z_U_star_conv__4}$\implies$\prettyref{enu:C_z_U_star_conv__3}
are clear, and \prettyref{enu:C_z_U_star_conv__1} implies \prettyref{enu:C_z_U_star_conv__5}
under the additional norm assumption by \citep[Theorem 4.6, (i)$\implies$(vii)]{Runde_Viselter_LCQGs_PosDef}. 

Proving that \prettyref{enu:C_z_U_star_conv__3}$\iff$\prettyref{enu:C_z_U_star_conv__1}
can be done similarly to the proof of \citep[Corollary 4.5]{Runde_Viselter_LCQGs_PosDef}
as follows.
Let $\Lambda:\CzU{\G}\to\Cz{\G}$
be the reducing morphism. Then for every $a\in\CzU{\G}$, $\om\in\Lone{\G}$
and $\nu\in\CzU{\G}^{*}$ we have $\nu(\om\cdot a)=\om(R_{\nu}(\Lambda(a)))$,
where we used the action of $\Lone{\G}$ on $\CzU{\G}$. This implies
that \prettyref{enu:C_z_U_star_conv__1}$\implies$\prettyref{enu:C_z_U_star_conv__3}.
Conversely, assuming \prettyref{enu:C_z_U_star_conv__3}, we see
that $\mu_{i}(b)\xrightarrow[i\in\mathcal{I}]{}\mu(b)$ for all $b\in\Lone{\G}\cdot\CzU{\G}$.
Since $\Lone{\G}\cdot\CzU{\G}$ is dense in $\CzU{\G}$, the boundedness
of $\left(\mu_{i}\right)_{i\in\mathcal{I}}$ allows us to conclude
that $\mu_{i}\xrightarrow[i\in\mathcal{I}]{}\mu$ in the $w^{*}$-topology.

The implication \prettyref{enu:C_z_U_star_conv__1}$\implies$\prettyref{enu:C_z_U_star_conv__2}
is immediate, and the converse holds by the boundedness of $\left(\mu_{i}\right)_{i\in\mathcal{I}}$
because $\CzU{\G}^{*}\cdot\CzU{\G}\subseteq\CzU{\G}$ is dense in
$\CzU{\G}$.
\end{proof}

The next result follows from, and is in fact equivalent to, \citep[Theorem 2.6]{Daws_Fima_Skalski_White_Haagerup_LCQG}.
For clarity, we give its proof, whose second part is different from
that of \citep{Daws_Fima_Skalski_White_Haagerup_LCQG}. In the language of \cite{Caspers_Skalski__Haagerup_AP_Dirichlet_forms} we would say that we describe here the GNS-implementations of convolution operators.
\begin{prop}
\label{prop:R_L_phi_psi}For every $\mu\in\CzU{\G}^{*}$, $\mathcal{N}_{\varphi},\mathcal{M}_{\varphi}$
are invariant under $R_{\mu}$ and $\mathcal{N}_{\psi},\mathcal{M}_{\psi}$
are invariant under $L_{\mu}$. The maps $\gnsmap_{\varphi}(x)\mapsto\gnsmap_{\varphi}(R_{\mu}(x))$,
$x\in\mathcal{N}_{\varphi}$, and $\gnsmap_{\psi}(x)\mapsto\gnsmap_{\psi}(L_{\mu}(x))$,
$x\in\mathcal{N}_{\psi}$, are well defined and extend to bounded
operators on $\Ltwo{\G}$ denoted by $\widetilde{R}_{\mu}^{\varphi}$
and $\widetilde{L}_{\mu}^{\psi}$, respectively. In fact, we have
$\widetilde{R}_{\mu}^{\varphi}=(\mu\tensor\i)(\Ww^{*})\in\M{\Cz{\hat{\G}}}$
and $\widetilde{L}_{\mu}^{\psi}=(\i\tensor\mu)(\vV)\in\M{\Cz{\hat{\G}'}}$. Furthermore if $\mu \in L^1(\G)$, then $\widetilde{R}_{\mu}^{\varphi}\in \Cz{\hat{\G}}$,
and $\widetilde{L}_{\mu}^{\psi}\in\Cz{\hat{\G}'}$.
\end{prop}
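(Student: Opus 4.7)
The plan is to prove the statement in two stages: reduce the invariance and boundedness claims to the case of a state, then establish the identification $\widetilde{R}_{\mu}^{\varphi} = (\mu \tensor \i)(\Ww^{*})$ for arbitrary $\mu \in \CzU{\G}^{*}$. The claims about $L_{\mu}$ and $\widetilde{L}_{\mu}^{\psi}$ are entirely analogous, with $\psi$ in place of $\varphi$ and $\vV$ in place of $\Ww$, so I will focus on $R_{\mu}$ throughout.

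Every $\mu \in \CzU{\G}^{*}$ is a linear combination of four states via Jordan decomposition, so for the invariance of $\mathcal{N}_{\varphi}$ and $\mathcal{M}_{\varphi}$ it suffices to assume $\mu$ is a state. In that case, Theorem \ref{thm:L_R} \eqref{enu:L_R_CP_mult} gives that $R_{\mu}$ is a normal unital completely positive map on $\Linfty{\G}$, while Theorem \ref{thm:L_R} \eqref{enu:L_R_Haar_weights} gives $\varphi \circ R_{\mu} = \varphi$. Positivity and linearity immediately imply $R_{\mu}(\mathcal{M}_{\varphi}) \subseteq \mathcal{M}_{\varphi}$. For $\mathcal{N}_{\varphi}$ I invoke the Kadison--Schwarz inequality: for $x \in \mathcal{N}_{\varphi}$,
\[
\varphi(R_{\mu}(x)^{*} R_{\mu}(x)) \le \varphi(R_{\mu}(x^{*} x)) = \varphi(x^{*} x) < \infty,
\]
so $R_{\mu}(x) \in \mathcal{N}_{\varphi}$, and the same estimate shows that $\gnsmap_{\varphi}(x) \mapsto \gnsmap_{\varphi}(R_{\mu} x)$ is contractive on the dense subspace $\gnsmap_{\varphi}(\mathcal{N}_{\varphi}) \subseteq \Ltwo{\G}$, hence extends uniquely to $\widetilde{R}_{\mu}^{\varphi} \in B(\Ltwo{\G})$. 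Linear decomposition then yields the construction for arbitrary $\mu$.

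For the identification, the case $\mu = \om \in \Lone{\G}$, regarded in $\CzU{\G}^{*}$ via $\om \circ \Lambda$, follows at once from the classical Kustermans--Vaes identity
\[
\gnsmap_{\varphi}\bigl((\i \tensor \om)(\Delta(x))\bigr) = (\om \tensor \i)(W^{*}) \gnsmap_{\varphi}(x) \qquad (x \in \mathcal{N}_{\varphi}),
\]
combined with $(\Lambda \tensor \i)(\Ww) = W$: together these yield $\widetilde{R}_{\om}^{\varphi} = (\om \tensor \i)(\Ww^{*})$, and the density $\Cz{\hat{\G}} = \overline{\{(\om \tensor \i)(W^{*}) : \om \in \Lone{\G}\}}^{\left\Vert \cdot \right\Vert}$ settles the last sentence of the proposition. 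To extend to general $\mu$, I would pass to its GNS triple $(\H_{\mu}, \pi_{\mu}, \xi_{\mu})$ (assuming without loss of generality that $\mu$ is a state): the unitary corepresentation $V_{\mu} := (\pi_{\mu} \tensor \i)(\Ww)$ of $\hat{\G}$ on $\H_{\mu}$ satisfies $(\om_{\xi_{\mu}} \tensor \i)(V_{\mu}^{*}) = (\mu \tensor \i)(\Ww^{*})$. The identity $\widetilde{R}_{\mu}^{\varphi} \gnsmap_{\varphi}(x) = (\mu \tensor \i)(\Ww^{*}) \gnsmap_{\varphi}(x)$ should then follow from a direct vector-level computation using the corepresentation equation for $V_{\mu}$ and the intertwining relation $\Delta \circ R_{\mu} = (R_{\mu} \tensor \i) \circ \Delta$ from Theorem \ref{thm:L_R} \eqref{enu:L_R_CP_mult}, together with the defining property of the implementation $\widetilde{R}_{\mu}^{\varphi}$ on the dense range of $\gnsmap_{\varphi}$.

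The main technical obstacle is precisely this last step: $\Lone{\G}$ is neither norm- nor weak-star dense in $\CzU{\G}^{*}$, so direct approximation of $\mu$ by elements of $\Lone{\G}$ is unavailable. The GNS route above sidesteps this by packaging all computations into the single representation $\pi_{\mu}$, at the cost of working on an auxiliary Hilbert space. An alternative is to combine the $\tau^{\mathrm{u}}$-smearing \eqref{eq:CzU_star_tau_smear}, which supplies entire-analytic approximants of $\mu$, with the universal property of $\Ww$ encoded in the correspondence between representations of $\CzU{\G}$ and unitary representations of $\hat{\G}$, reducing the general case to the $\Lone{\G}$ situation after a suitable analytic regularisation.
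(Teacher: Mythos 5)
The first half of your argument (Jordan decomposition into states, invariance of $\varphi$ under $R_{\mu}$ from Theorem \ref{thm:L_R} \prettyref{enu:L_R_Haar_weights}, Kadison--Schwarz to get invariance of $\mathcal{N}_{\varphi}$ and contractivity of the GNS implementation) is exactly the paper's proof, and the $\Lone{\G}$ case of the identification is also handled the same way. The problem is the step you yourself flag as ``the main technical obstacle'': extending $\widetilde{R}_{\mu}^{\varphi}=(\mu\tensor\i)(\Ww^{*})$ from $\mu\in\Lone{\G}$ to general $\mu\in\CzU{\G}^{*}$. Neither of your two proposed fixes closes this gap as written. The $\tau^{\mathrm{u}}$-smearing \prettyref{eq:CzU_star_tau_smear} produces analytic approximants of $\mu$ that are still general elements of $\CzU{\G}^{*}$, not elements of $\Lone{\G}$, so it does nothing to repair the failure of $w^{*}$-density of $\Lone{\G}$ in $\CzU{\G}^{*}$ (which fails precisely when $\G$ is not co-amenable). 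The GNS route can be made to work, but the ``direct vector-level computation'' you defer to is not a formality: it amounts to the strong left invariance of $\varphi$ relative to an arbitrary unitary representation $(\i\tensor\pi_{\mu})(\wW)$, i.e.\ essentially \citep[Corollary 8.2, (1)]{Kustermans__LCQG_universal}, which is the external result the paper cites as the ``short'' proof. If you want to go that way you must invoke it explicitly; the corepresentation identity for $V_{\mu}$ together with $\Delta\circ R_{\mu}=(R_{\mu}\tensor\i)\circ\Delta$ does not by itself produce the formula.

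The idea you are missing is the paper's self-contained alternative, which bypasses density entirely. Both $\mu\mapsto\widetilde{R}_{\mu}^{\varphi}$ and $\mu\mapsto(\mu\tensor\i)(\Ww^{*})$ are \emph{anti-homomorphisms} from the Banach algebra $\CzU{\G}^{*}$ into $B(\Ltwo{\G})$ (for the second, use $(\Delta_{\mathrm{u}}\tensor\i)(\Ww)=\Ww_{13}\Ww_{23}$). They agree on $\Lone{\G}$, which is an \emph{ideal} in $\CzU{\G}^{*}$. Hence for $\mu\in\CzU{\G}^{*}$ and $\om\in\Lone{\G}$ one has $\om\conv\mu\in\Lone{\G}$ and
\[
\widetilde{R}_{\mu}^{\varphi}\,\widetilde{R}_{\om}^{\varphi}=\widetilde{R}_{\om\conv\mu}^{\varphi}=((\om\conv\mu)\tensor\i)(\Ww^{*})=(\mu\tensor\i)(\Ww^{*})\,(\om\tensor\i)(\Ww^{*})=(\mu\tensor\i)(\Ww^{*})\,\widetilde{R}_{\om}^{\varphi},
\]
and since $\{\widetilde{R}_{\om}^{\varphi}:\om\in\Lone{\G}\}$ acts non-degenerately on $\Ltwo{\G}$, the two operators coincide. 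This ideal-plus-nondegeneracy argument is the content of the extension step, and without it (or an explicit appeal to the universal strong left invariance) your proof is incomplete.
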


\begin{proof}
Suppose that $\mu$ is a state. That $R_{\mu}(\mathcal{M}_{\varphi}^{+})\subseteq\mathcal{M}_{\varphi}^{+}$
follows readily from \prettyref{thm:L_R} \prettyref{enu:L_R_Haar_weights}. For every
$x\in\mathcal{N}_{\varphi}$ we have $R_{\mu}(x)^{*}R_{\mu}(x)\leq R_{\mu}(x^{*}x)$
by Kadison's Cauchy\textendash Schwarz inequality, which applies because
$R_{\mu}$ is a completely positive contraction. From \prettyref{thm:L_R} \prettyref{enu:L_R_Haar_weights}
we hence get $\varphi(R_{\mu}(x)^{*}R_{\mu}(x))\leq\varphi(R_{\mu}(x^{*}x))=\varphi(x^{*}x)$.
For general $\mu\in\CzU{\G}^{*}$, decompose $\mu$ as a linear combination
of states. 

The equality $\widetilde{R}_{\mu}^{\varphi}=(\mu\tensor\i)(\Ww^{*})$
can be easily deduced from \citep[Corollary 8.2, (1)]{Kustermans__LCQG_universal}.
An alternative proof is the following. For every $\om\in\Lone{\G}$
we have $\widetilde{R}_{\om}^{\varphi}=(\om\tensor\i)(W^{*})$ by
the very definition of the operator $W$. Therefore, under the embedding
$\Lone{\G}\hookrightarrow\CzU{\G}^{*}$, we have $\widetilde{R}_{\om}^{\varphi}=(\om\tensor\i)(\Ww^{*})$
for all $\om\in\Lone{\G}$. Consider now the anti-homomorphisms $\CzU{\G}^{*}\to B(\Ltwo{\G})$
given by $\mu\mapsto\widetilde{R}_{\mu}^{\varphi}$ and $\mu\mapsto(\mu\tensor\i)(\Ww^{*})$.
Using the facts that they coincide on the ideal $\Lone{\G}$ of $\CzU{\G}^{*}$
and that the algebra $\{\widetilde{R}_{\om}^{\varphi}:\om\in\Lone{\G}\}$
acts non-degenerately on $\Ltwo{\G}$, we infer that they coincide on
$\CzU{\G}^{*}$.

Since $\Ww\in\M{\CzU{\G}\tensor\Cz{\hat{\G}}}$, we have $\widetilde{R}_{\mu}^{\varphi}\in\M{\Cz{\hat{\G}}}$. Then the fact that  for $\mu \in \Lone{\QG}$ we have $\widetilde{R}_{\mu}^{\varphi}\in\Cz{\hat{\G}}$ is clear.

The proof for $L_{\mu}$ is similar.
\end{proof}

We are now ready to start discussing GNS-implementations of convolution operators with respect to the `wrong-sided' weights (i.e.\ for example $\psi$ for the right convolution operator).

\begin{prop}
\label{prop:R_L_phi_psi_inverted}\mbox{}
\begin{enumerate}
\item \label{enu:R_L_phi_psi_inverted__1}Let $\z,\eta$ be in $D(\delta^{-\frac{1}{2}})$
(resp.~$D(\delta^{\frac{1}{2}})$) and $\om:=\om_{\z,\eta}$. Then
$\mathcal{M}_{\psi}$ (resp.~$\mathcal{M}_{\varphi}$) is invariant
under $R_{\om}$ (resp.~$L_{\om}$). 
\item \label{enu:R_L_phi_psi_inverted__2}Let $\z$ be in $D(\delta^{-\frac{1}{2}})$
(resp.~$D(\delta^{\frac{1}{2}})$), $\eta\in\Ltwo{\G}$ and $\om:=\om_{\z,\eta}$.
Then $\mathcal{N}_{\psi}$ (resp.~$\mathcal{N}_{\varphi}$) is invariant
under $R_{\om}$ (resp.~$L_{\om}$). The map $\gnsmap_{\psi}(x)\mapsto\gnsmap_{\psi}(R_{\om}(x))$,
$x\in\mathcal{N}_{\psi}$, (resp.~$\gnsmap_{\varphi}(x)\mapsto\gnsmap_{\varphi}(L_{\om}(x))$,
$x\in\mathcal{N}_{\varphi}$) is well defined and extends to a bounded
operator on $\Ltwo{\G}$ denoted by $\widetilde{R}_{\om}^{\psi}$
(resp.~$\widetilde{L}_{\om}^{\varphi}$), which is equal to $\widetilde{R}_{\om_{\delta^{-\frac{1}{2}}\z,\eta}}^{\varphi}$
(resp.~$\widetilde{L}_{\om_{\delta^{\frac{1}{2}}\z,\eta}}^{\psi}$).
\end{enumerate}
\end{prop}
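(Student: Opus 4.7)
I plan to focus on the $R_\om$ statements; the $L_\om$ statements then follow by passing to the commutant quantum group $\G'$, where $\varphi$ and $\psi$ swap roles and $\delta$ is replaced by $\delta^{-1}$. The core tools are two-fold. First, the Vaes identification $\psi=\varphi_\delta$, which, under the canonical unitary identification $\Ltwo{\G,\psi}\cong\Ltwo{\G}$, gives
\[
\gnsmap_\psi(x)=\gnsmap_\varphi(\overline{x\delta^{1/2}})
\]
for $x\in\mathcal{N}_\psi$ such that $\overline{x\delta^{1/2}}\in\mathcal{N}_\varphi$, a relation extending by isometry to all of $\mathcal{N}_\psi$. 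Second, the group-like property of the modular element, $\Delta(\delta^{it})=\delta^{it}\tensor\delta^{it}$ for $t\in\R$, and its formal analytic continuation $\Delta(\delta^{1/2})=\delta^{1/2}\tensor\delta^{1/2}$, to be justified rigorously on suitable cores via bounded approximants such as $\delta^{1/2}(\one+n^{-1}\delta)^{-1}$.

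For (b), on a dense subset of $\mathcal{N}_\psi$ on which all products involving $\delta^{1/2}$ are under control, I would establish the $\tau$-measurable operator identity
\[
\overline{R_{\om_{\z,\eta}}(x)\cdot\delta^{1/2}}=\overline{R_{\om_{\delta^{-1/2}\z,\eta}}(x\delta^{1/2})}
\]
by slicing the factorisation $\Delta(x\delta^{1/2})=\Delta(x)(\delta^{1/2}\tensor\delta^{1/2})$ against $\om_{\z,\eta}$ in the first leg and simplifying via $\z\in D(\delta^{-1/2})$, which is exactly the condition making $\om_{\z,\eta}(\,\cdot\,\delta^{1/2})=\om_{\delta^{-1/2}\z,\eta}(\,\cdot\,)$. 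Applying $\gnsmap_\varphi$ to both sides and invoking \prettyref{prop:R_L_phi_psi} with $\mu:=\om_{\delta^{-1/2}\z,\eta}\in\Lone{\G}$ then yields
\[
\gnsmap_\psi(R_{\om_{\z,\eta}}(x))=\widetilde{R}^\varphi_{\om_{\delta^{-1/2}\z,\eta}}\,\gnsmap_\psi(x)
\]
on the core. Since the right-hand side is defined and bounded on all of $\Ltwo{\G}$, this identity extends by continuity to $\mathcal{N}_\psi$, simultaneously proving $R_{\om_{\z,\eta}}(\mathcal{N}_\psi)\subseteq\mathcal{N}_\psi$ and identifying $\widetilde{R}^\psi_{\om_{\z,\eta}}=\widetilde{R}^\varphi_{\om_{\delta^{-1/2}\z,\eta}}$, settling (b).

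For (a), polarisation $\om_{\z,\eta}=\tfrac14\sum_{k=0}^{3}i^{-k}\om_{\z+i^k\eta}$ (using that $D(\delta^{-1/2})$ is a linear subspace) reduces the problem to a positive functional $\om=\om_\xi\in\Lone{\G}_+$ with $\xi\in D(\delta^{-1/2})$. For $x=a^*a\in\mathcal{M}_\psi^+$ with $a\in\mathcal{N}_\psi$, an analogous coproduct manipulation based on $\Delta(\delta^{1/2}y\delta^{1/2})=(\delta^{1/2}\tensor\delta^{1/2})\Delta(y)(\delta^{1/2}\tensor\delta^{1/2})$ yields $\delta^{1/2}R_\om(a^*a)\delta^{1/2}=R_{\om_{\delta^{-1/2}\xi}}(\delta^{1/2}a^*a\delta^{1/2})$. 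Using $\psi=\varphi_\delta$ together with the left-invariance of $\varphi$ under $R_{\om_{\delta^{-1/2}\xi}}$, I get
\[
\psi(R_\om(a^*a))=\|\delta^{-1/2}\xi\|^2\,\psi(a^*a)<\infty,
\]
establishing $R_\om(\mathcal{M}_\psi^+)\subseteq\mathcal{M}_\psi^+$ and hence $R_\om(\mathcal{M}_\psi)\subseteq\mathcal{M}_\psi$. The principal technical obstacle throughout is the rigorous handling of the unbounded operator $\delta^{1/2}$ — in particular the coproduct identity $\Delta(\delta^{1/2})=\delta^{1/2}\tensor\delta^{1/2}$, the algebraic factorisations, and the various closures — which requires careful bookkeeping with bounded approximants, an appropriate choice of core inside $\mathcal{N}_\psi$, and the $\tau$-measurable operator calculus recalled in \prettyref{sub:prelim_Haagerup_Lp}.
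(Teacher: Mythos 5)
Your argument for part (b) is essentially the paper's own: both work on the dense subset $\mathcal{N}_{\psi,0}=\{x\in\Linfty{\G}: x\delta^{1/2}\text{ bounded},\ \overline{x\delta^{1/2}}\in\mathcal{N}_{\varphi}\}$, use $\Delta(\delta)=\delta\tensor\delta$ to get the inclusion $(\om_{\z,\eta}\tensor\i)(\Delta(x))\delta^{1/2}\subseteq(\om_{\delta^{-1/2}\z,\eta}\tensor\i)(\Delta(\overline{x\delta^{1/2}}))$, invoke Proposition \ref{prop:R_L_phi_psi} for $\om_{\delta^{-1/2}\z,\eta}\in\Lone{\G}$, and then pass to all of $\mathcal{N}_{\psi}$ via the (weak operator--weak) closedness of $\gnsmap_{\psi}$. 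For part (a) the paper merely reduces to $\z=\eta$ and quotes the Kustermans--Vaes identity $\psi(R_{\om_{\z}}(x))=\psi(x)\,\om_{\delta^{-1/2}\z}(\one)$, which is exactly the formula you rederive; the only cosmetic point is that $\delta^{1/2}$ is affiliated with $\Linfty{\G}$ acting on $\Ltwo{\G}$, so the relevant framework is the calculus of closed affiliated operators there rather than the $\tau$-measurable calculus of the crossed product.
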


\begin{proof}
We prove only the first assertions.

\prettyref{enu:R_L_phi_psi_inverted__1} It is enough to assume $\z=\eta$.
By \citep[Result 7.6]{Kustermans_Vaes__LCQG_C_star}, in that case $\psi\left(R_{\om}(x)\right)=\psi(x)\om_{\delta^{-\frac{1}{2}}\z}(\one)$
for each $x\in\mathcal{M}_{\psi}^{+}$.

\prettyref{enu:R_L_phi_psi_inverted__2} Recall that $\psi=\varphi_{\delta}$.
Write $\mathcal{N}_{\psi,0}:=\{x\in\Linfty{\G}:x\delta^{\frac{1}{2}}\text{ is bounded and }\overline{x\delta^{\frac{1}{2}}}\in\mathcal{N}_{\varphi}\}$.
Then $\mathcal{N}_{\psi,0}\subseteq\mathcal{N}_{\psi}$, and for every
$x\in\mathcal{N}_{\psi,0}$ we have $\gnsmap_{\psi}(x)=\gnsmap_{\varphi}(\overline{x\delta^{\frac{1}{2}}})$.
Fix $\z\in D(\delta^{-\frac{1}{2}})$ and $\eta\in\Ltwo{\G}$. Let
$x\in\mathcal{N}_{\psi,0}$. The identity $\Delta(\delta)=\delta\tensor\delta$, which can be understood for example in the language of functional calculus for operators affiliated with a C$^*$-algebra, or directly as
 $W^{*}(\one\tensor\delta)W=\delta\tensor\delta$, yields $(\om_{\z,\eta}\tensor\i)(\Delta(x))\delta^{\frac{1}{2}}\subseteq(\om_{\delta^{-\frac{1}{2}}\z,\eta}\tensor\i)(\Delta(\overline{x\delta^{\frac{1}{2}}}))$.
Indeed, we have $\Delta(\overline{x\delta^{\frac{1}{2}}})=W^{*}(\one\tensor\overline{x\delta^{\frac{1}{2}}})W\supseteq W^{*}(\one\tensor x)WW^{*}(\one\tensor\delta^{\frac{1}{2}})W=\Delta(x)(\delta^{\frac{1}{2}}\tensor\delta^{\frac{1}{2}})$,
and for every $\a\in D(\delta^{\frac{1}{2}})$ and $\be\in\Ltwo{\G}$,
\[
\begin{split}\left\langle (\om_{\z,\eta}\tensor\i)(\Delta(x))\delta^{\frac{1}{2}}\a,\be\right\rangle  & =\left\langle \Delta(x)(\z\tensor\delta^{\frac{1}{2}}\a),\eta\tensor\be\right\rangle =\left\langle \Delta(x)(\delta^{\frac{1}{2}}\delta^{-\frac{1}{2}}\z\tensor\delta^{\frac{1}{2}}\a),\eta\tensor\be\right\rangle \\
 & =\left\langle \Delta(\overline{x\delta^{\frac{1}{2}}})(\delta^{-\frac{1}{2}}\z\tensor\a),\eta\tensor\be\right\rangle =\left\langle (\om_{\delta^{-\frac{1}{2}}\z,\eta}\tensor\i)(\Delta(\overline{x\delta^{\frac{1}{2}}}))\a,\be\right\rangle .
\end{split}
\]
By \prettyref{prop:R_L_phi_psi} we have $R_{\om_{\delta^{-\frac{1}{2}}\z,\eta}}(\overline{x\delta^{\frac{1}{2}}}) =  (\om_{\delta^{-\frac{1}{2}}\z,\eta}\tensor\i)(\Delta(\overline{x\delta^{\frac{1}{2}}}))\in\mathcal{N}_{\varphi}$,
thus $R_{\om_{\z,\eta}}(x)=(\om_{\z,\eta}\tensor\i)(\Delta(x))\in\mathcal{N}_{\psi,0}$.
We infer that
\[
\gnsmap_{\psi}(R_{\om_{\z,\eta}}(x))=\gnsmap_{\varphi}(R_{\om_{\delta^{-\frac{1}{2}}\z,\eta}}(\overline{x\delta^{\frac{1}{2}}}))=\widetilde{R}_{\om_{\delta^{-\frac{1}{2}}\z,\eta}}^{\varphi}\gnsmap_{\varphi}(\overline{x\delta^{\frac{1}{2}}})=\widetilde{R}_{\om_{\delta^{-\frac{1}{2}}\z,\eta}}^{\varphi}\gnsmap_{\psi}(x).
\]
From the definition of $\varphi_\delta$ \citep{Vaes__Radon_Nikodym}, \citep[Theorem VI.1.26]{Takesaki__book_vol_2}, and the ultrastrong\textendash norm
(in fact, weak operator\textendash weak) closedness of $\gnsmap_{\psi}$, for
each $x\in\mathcal{N}_{\psi}$ we have $R_{\om_{\z,\eta}}(x)\in\mathcal{N}_{\psi}$
and $\gnsmap_{\psi}(R_{\om_{\z,\eta}}(x))=\widetilde{R}_{\om_{\delta^{-\frac{1}{2}}\z,\eta}}^{\varphi}\gnsmap_{\psi}(x)$.
\prettyref{prop:R_L_phi_psi} ends the proof.
\end{proof}
Let $(\sigma_{t}^{\varphi,\mathrm{u}})_{t\in\R}$ and $(\sigma_{t}^{\psi,\mathrm{u}})_{t\in\R}$
be the universal modular automorphism groups of the universal left
and right Haar weights of $\G$, respectively \citep[Proposition 8.6 and Definition 8.1]{Kustermans__LCQG_universal}.
Recall that ${(\tau_{t}^{\mathrm{u}})}_{t\in\R}$ and $\Rant^{\mathrm{u}}$
are the universal scaling group and unitary antipode of $\G$.
By \citep[Propositions 7.2, 9.2, 9.3 and 10.4]{Kustermans__LCQG_universal},
for every $t\in\R$,
\begin{align}
\Delta_{\mathrm{u}}\circ\sigma_{t}^{\varphi,\mathrm{u}} & =(\tau_{t}^{\mathrm{u}}\tensor\sigma_{t}^{\varphi,\mathrm{u}})\circ\Delta_{\mathrm{u}}, & \Delta_{\mathrm{u}}\circ\sigma_{t}^{\psi,\mathrm{u}} & =(\sigma_{t}^{\psi,\mathrm{u}}\tensor\tau_{-t}^{\mathrm{u}})\circ\Delta_{\mathrm{u}},\nonumber \\
\Delta_{\mathrm{u}}\circ\tau_{t}^{\mathrm{u}} & =(\tau_{t}^{\mathrm{u}}\tensor\tau_{t}^{\mathrm{u}})\circ\Delta_{\mathrm{u}}=(\sigma_{t}^{\varphi,\mathrm{u}}\tensor\sigma_{-t}^{\psi,\mathrm{u}})\circ\Delta_{\mathrm{u}}, & \Delta_{\mathrm{u}}\circ \Rant^{\mathrm{u}} & =\sigma\circ(\Rant^{\mathrm{u}}\tensor \Rant^{\mathrm{u}})\circ\Delta_{\mathrm{u}}.\label{eq:universal_comult_idents}
\end{align}
From these identities we easily get the following lemma (arguing as in  \citep[proof of Proposition 4]{Salmi_Skalski__idemp_states_LCQG_II}).
\begin{lem}
\label{lem:L_R_mod_prop}For every $\mu\in\CzU{\G}^{*}$ and $t\in\R$
we have 
\begin{align*}
L_{\mu}\circ\sigma_{t}^{\psi} & =\sigma_{t}^{\psi}\circ L_{\mu\circ\tau_{-t}^{\mathrm{u}}}, & L_{\mu}\circ\sigma_{t}^{\varphi} & =\tau_{t}\circ L_{\mu\circ\sigma_{t}^{\varphi,\mathrm{u}}}, & L_{\mu}\circ\tau_{t} & =\tau_{t}\circ L_{\mu\circ\tau_{t}^{\mathrm{u}}}=\sigma_{t}^{\varphi}\circ L_{\mu\circ\sigma_{-t}^{\psi,\mathrm{u}}},\\
R_{\mu}\circ\sigma_{t}^{\varphi} & =\sigma_{t}^{\varphi}\circ R_{\mu\circ\tau_{t}^{\mathrm{u}}}, & R_{\mu}\circ\sigma_{t}^{\psi} & =\tau_{-t}\circ R_{\mu\circ\sigma_{t}^{\psi,\mathrm{u}}}, & R_{\mu}\circ\tau_{t} & =\tau_{t}\circ R_{\mu\circ\tau_{t}^{\mathrm{u}}}=\sigma_{-t}^{\psi}\circ R_{\mu\circ\sigma_{t}^{\varphi,\mathrm{u}}},\\
 &  & L_{\mu}\circ \Rant & =\Rant\circ R_{\mu\circ \Rant^{\mathrm{u}}}.
\end{align*}
\end{lem}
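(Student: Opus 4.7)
The plan is to derive each identity by first establishing its universal-level counterpart and then transferring to the reduced level via the reducing morphism $\Lambda$, following the template of the proof of \citep[Proposition 4]{Salmi_Skalski__idemp_states_LCQG_II}. The universal convolution operators admit the concrete descriptions $L_\mu^{\mathrm{u}} = (\i\tensor\mu)\circ\Delta_{\mathrm{u}}$ and $R_\mu^{\mathrm{u}} = (\mu\tensor\i)\circ\Delta_{\mathrm{u}}$, and each of the identities in the first two rows is a one-line consequence of the corresponding intertwining in \eqref{eq:universal_comult_idents}.

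For instance, to prove $L_\mu \circ \sigma_t^\psi = \sigma_t^\psi \circ L_{\mu\circ\tau_{-t}^{\mathrm{u}}}$, I would use the second identity of \eqref{eq:universal_comult_idents}: for $x\in\CzU{\G}$,
\[
L_\mu^{\mathrm{u}}(\sigma_t^{\psi,\mathrm{u}}(x)) = (\i\tensor\mu)\bigl((\sigma_t^{\psi,\mathrm{u}}\tensor\tau_{-t}^{\mathrm{u}})(\Delta_{\mathrm{u}}(x))\bigr) = \sigma_t^{\psi,\mathrm{u}}\bigl(L_{\mu\circ\tau_{-t}^{\mathrm{u}}}^{\mathrm{u}}(x)\bigr).
\]
The two alternative expressions on the right-hand sides of $L_\mu\circ\tau_t$ and $R_\mu\circ\tau_t$ arise from the two equivalent forms of $\Delta_{\mathrm{u}}\circ\tau_t^{\mathrm{u}}$ in the third line of \eqref{eq:universal_comult_idents}. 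The other identities among the first nine follow by identical one-line calculations from the appropriate clauses of \eqref{eq:universal_comult_idents}.

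To descend from $\CzU{\G}$ to $\Linfty{\G}$, I would appeal to the intertwinings $\Lambda\circ\sigma_t^{\varphi,\mathrm{u}} = \sigma_t^\varphi\circ\Lambda$, $\Lambda\circ\sigma_t^{\psi,\mathrm{u}} = \sigma_t^\psi\circ\Lambda$, and $\Lambda\circ\tau_t^{\mathrm{u}} = \tau_t\circ\Lambda$, together with $\Lambda\circ L_\mu^{\mathrm{u}} = L_\mu\circ\Lambda$ and $\Lambda\circ R_\mu^{\mathrm{u}} = R_\mu\circ\Lambda$, the latter being direct consequences of the fact that $(\Lambda\tensor\Lambda)\circ\Delta_{\mathrm{u}} = \Delta\circ\Lambda$ and the embedding $\Lone{\G}\hookrightarrow\CzU{\G}^*$ being an algebra homomorphism. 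Since all the maps involved are normal and $\Lambda(\CzU{\G}) = \Cz{\G}$ is ultraweakly dense in $\Linfty{\G}$, each identity extends from $\Cz{\G}$ to all of $\Linfty{\G}$.

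For the final identity $L_\mu\circ\Rant = \Rant\circ R_{\mu\circ\Rant^{\mathrm{u}}}$, the key observation is that the flip $\sigma$ appearing in $\Delta_{\mathrm{u}}\circ\Rant^{\mathrm{u}} = \sigma\circ(\Rant^{\mathrm{u}}\tensor\Rant^{\mathrm{u}})\circ\Delta_{\mathrm{u}}$ is precisely what converts the $L$ on the left into an $R$ on the right: for $x\in\CzU{\G}$,
\[
L_\mu^{\mathrm{u}}(\Rant^{\mathrm{u}}(x)) = (\i\tensor\mu)\bigl((\Rant^{\mathrm{u}}\tensor\Rant^{\mathrm{u}})(\sigma(\Delta_{\mathrm{u}}(x)))\bigr) = \Rant^{\mathrm{u}}\bigl(((\mu\circ\Rant^{\mathrm{u}})\tensor\i)(\Delta_{\mathrm{u}}(x))\bigr) = \Rant^{\mathrm{u}}\bigl(R_{\mu\circ\Rant^{\mathrm{u}}}^{\mathrm{u}}(x)\bigr),
\]
and the same transfer through $\Lambda\circ\Rant^{\mathrm{u}} = \Rant\circ\Lambda$ concludes. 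I do not anticipate any substantive obstacle; the only care needed is checking normality so that the universal identities propagate past the ultraweakly dense subalgebra $\Cz{\G}$.
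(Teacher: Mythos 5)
Your proposal is correct and follows essentially the same route as the paper, which derives the lemma directly from the universal co-multiplication identities \eqref{eq:universal_comult_idents} by slicing, arguing as in the cited proof of \citep[Proposition 4]{Salmi_Skalski__idemp_states_LCQG_II}, and then descends to $\Linfty{\G}$ via the reducing morphism together with normality and ultraweak density of $\Cz{\G}$. Your computations at the universal level and the transfer through $\Lambda$ (including the flip $\sigma$ turning $L$ into $R$ in the unitary-antipode identity) match the intended argument.
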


\subsection{Symmetry} \label{subsect:Symmetry}
In this short subsection we associate the symmetry properties of the Hilbert space incarnations of the convolution operators $L_\mu$ and $R_\mu$ with the behaviour of $\mu$ with respect to the universal unitary antipode.

\begin{prop}
\label{prop:L_psi_adjoint_pre}Let $\mu\in\CzU{\G}^{*}$. For all
$a,b\in\mathcal{M}_{\psi}$ we have 
\begin{equation}
(\mathfrak{i}_{\psi}^{(1)}(a))(L_{\mu}(b))=(\mathfrak{i}_{\psi}^{(1)}(b))(L_{\mu\circ \Rant^{\mathrm{u}}}(a)).\label{eq:L_psi_adjoint_pre}
\end{equation}
Similarly, for all $a,b\in\mathcal{M}_{\varphi}$ we have
$(\mathfrak{i}_{\varphi}^{(1)}(a))(R_{\mu}(b))=(\mathfrak{i}_{\varphi}^{(1)}(b))(R_{\mu\circ \Rant^{\mathrm{u}}}(a))$.
\end{prop}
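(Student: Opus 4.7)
The plan is to prove the first assertion \prettyref{eq:L_psi_adjoint_pre}; the second follows by an entirely analogous argument, with $\varphi$, $R_\mu$, and strong left invariance of $\varphi$ playing the roles of $\psi$, $L_\mu$, and strong right invariance of $\psi$. (The two statements are in fact equivalent: using $\psi = \varphi \circ \Rant$, the anti-multiplicativity of $\Rant$, the intertwining $\Rant \circ \sigma^\psi_z = \sigma^\varphi_{-z} \circ \Rant$ of the modular groups, and the relation $\Rant \circ L_\mu = R_{\mu \circ \Rant^\mathrm{u}} \circ \Rant$ from \prettyref{lem:L_R_mod_prop}, one can translate between them.)

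First I would reduce to a convenient case. By linearity, since $\CzU{\G}^{*}$ is spanned by states, it suffices to take $\mu$ a state. Using the smearing \prettyref{eq:CzU_star_tau_smear} together with the $w^*$-ultraweak continuity of $\mu \mapsto L_\mu(b)$ from \prettyref{lem:C_z_U_star_conv}, one further reduces to $\mu$ entire analytic with respect to $\tau^{\mathrm{u}}$. A $\sigma^\psi$-smearing analogous to \prettyref{lem:Str_Terp_approx} applied to the weight $\psi$, combined with the closability of $\mathfrak{i}^{(1)}_\psi$ granted by \prettyref{prop:GL_i} \prettyref{enu:GL_i_Prop_2_12}, allows one to assume that $a, b \in \mathcal{M}_{\psi,\infty}$. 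For such $a, b$ one has $\sigma^\psi_{-i/2}(a), \sigma^\psi_{-i/2}(b) \in \mathcal{N}_\psi \cap \mathcal{N}_\psi^*$, and by \prettyref{prop:R_L_phi_psi} both $L_\mu(b)$ and $L_{\mu \circ \Rant^\mathrm{u}}(a)$ lie in $\mathcal{N}_\psi^*$. Consequently, applying \prettyref{prop:GL_i} \prettyref{enu:GL_i_Prop_2_13} to $\psi$ turns \prettyref{eq:L_psi_adjoint_pre} into the scalar identity
\[
\psi\bigl(L_\mu(b)\, \sigma^\psi_{-i/2}(a)\bigr) = \psi\bigl(L_{\mu \circ \Rant^\mathrm{u}}(a)\, \sigma^\psi_{-i/2}(b)\bigr).
\]

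For $\mu \in \Lone{\G}$, I would use $L_\mu(b) = (\i \tensor \mu)(\Delta(b))$ to rewrite the left-hand side as $(\psi \tensor \mu)\bigl(\Delta(b)(\sigma^\psi_{-i/2}(a) \tensor \one)\bigr)$, apply the strong right invariance of $\psi$ to transfer the coproduct to the other tensor factor at the cost of pre-composing $\mu$ with $\Sant^{\mathrm{u},-1}$ (recall from the text that $\Sant = \Rant \circ \tau_{-i/2}$), invoke \prettyref{lem:L_R_mod_prop} to absorb the resulting $\tau^{\mathrm{u}}$-shift into the modular automorphism $\sigma^\psi$, and finally use the KMS condition for $\psi$ to convert the product into the symmetric form on the right-hand side. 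The extension from $\mu \in \Lone{\G}$ to a general $\mu \in \CzU{\G}^*$, which cannot be obtained by $w^*$-approximation in $\Lone{\G}$ outside the coamenable case, would be handled by realising a state $\mu$ as $\om_{\xi_\mu} \circ \pi_\mu$ via its GNS construction and recasting the strong-invariance computation in terms of the semi-universal multiplicative unitary $\wW$, as in the proof of \prettyref{lem:antipode_W_extended}. I expect this last transfer to the universal setting to be the main technical obstacle; the analytic reductions are routine and the coproduct/antipode manipulation is standard quantum-group bookkeeping once the domain conditions are in place.
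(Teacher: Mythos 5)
Your treatment of the case $\mu\in\Lone{\G}$ is essentially the paper's: reduce to $a,b\in\mathcal{M}_{\psi,\infty}$, rewrite both sides via \prettyref{prop:GL_i} \prettyref{enu:GL_i_Prop_2_13} as $\mu\left[(\psi\tensor\i)\left(\Delta(b)(\sigma_{-i/2}^{\psi}(a)\tensor\one)\right)\right]$ using strong right invariance, and then remove the analyticity assumptions on $a,b$ by ultraweak approximation and the closability of $\mathfrak{i}_{\psi}^{(1)}$. (The preliminary smearing of $\mu$ to make it entire analytic with respect to $\tau^{\mathrm{u}}$ is harmless but plays no role in this part of the argument.)

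The gap is exactly where you flag it: the passage from $\mu\in\Lone{\G}$ to general $\mu\in\CzU{\G}^{*}$. Your proposal --- realise a state $\mu$ as $\om_{\xi_{\mu}}\circ\pi_{\mu}$ via its GNS construction and redo the strong-invariance computation with the semi-universal unitary $\wW$ --- is not carried out, and making it work would require a semi-universal version of strong right invariance (i.e.\ a statement that the relevant elements $(\psi\tensor\i)\left(\Delta(a)(\sigma_{-i/2}^{\psi}(b)\tensor\one)\right)$ lift along $\pi_{\mu}$ into the domain of $\Sant^{\mathrm{u}}$ with the expected image), which is neither in the paper nor standard; as written this is a missing idea rather than a routine verification. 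The paper avoids the issue entirely by an algebraic device: since $\Lone{\G}$ is an ideal in $\CzU{\G}^{*}$, one writes $\mu\cdot(\om\cdot b)=(\mu\conv\om)\cdot b$ with $\mu\conv\om\in\Lone{\G}$ for $\om\in\Lone{\G}$, applies the already-proved case to $\mu\conv\om$, establishes the identity $(\mathfrak{i}_{\psi}^{(1)}(b))\conv(\om\circ\Rant)=\mathfrak{i}_{\psi}^{(1)}(\om\cdot b)$ (itself a consequence of the $\Lone{\G}$ case together with \prettyref{prop:GL_i} \prettyref{enu:GL_i_Prop_2_10}), and then lets $\om\cdot b$ run over a net in $\Lone{\G}\cdot\mathcal{M}_{\psi}$ converging ultraweakly to $b$. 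You should either supply this trick or actually prove the universal strong-invariance statement; without one of the two the proof is incomplete at its crucial step.
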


\begin{proof}We establish only the first assertion.
Assume first that $\mu\in\Lone{\G}$. Suppose that $a,b\in\mathcal{M}_{\psi,\infty}\subseteq\mathcal{T}_{\psi}$.
By \prettyref{prop:GL_i} \prettyref{enu:GL_i_Prop_2_13} and \prettyref{prop:R_L_phi_psi},
\begin{align*}
(\mathfrak{i}_{\psi}^{(1)}(a))(\mu\cdot b)&=\mathfrak{i}_{\psi}^{(1)}(a)\left((\i\tensor\mu)\Delta(b)\right)=\psi\left((\i\tensor\mu)(\Delta(b))\sigma_{-i/2}^{\psi}(a)\right)\\&=\mu\left[(\psi\tensor\i)\left(\Delta(b)(\sigma_{-i/2}^{\psi}(a)\tensor\one)\right)\right]
\end{align*}
($\psi\tensor\i$ is defined as an operator-valued weight). A similar
computation and the von Neumann algebraic analogue of \citep[Proposition 5.20]{Kustermans_Vaes__LCQG_C_star}
give 
\[
\begin{split}(\mathfrak{i}_{\psi}^{(1)}(b))((\mu\circ \Rant)\cdot a) & =(\mu\circ \Rant)\left[(\psi\tensor\i)\left(\Delta(a)(\sigma_{-i/2}^{\psi}(b)\tensor\one)\right)\right]=\mu\left[(\psi\tensor\i)\left((\sigma_{i/2}^{\psi}(a)\tensor\one)\Delta(b)\right)\right]\\
 & =\mu\left[(\psi\tensor\i)\left(\Delta(b)(\sigma_{-i/2}^{\psi}(a)\tensor\one)\right)\right]=(\mathfrak{i}_{\psi}^{(1)}(a))(\mu\cdot b)
\end{split}
\]
as in \prettyref{eq:L_psi_adjoint_pre}. Suppose that $a\in\mathcal{M}_{\psi}$
and $b\in\mathcal{M}_{\psi,\infty}$. Let $\left(a_{n}\right)_{n=1}^{\infty}$
be a sequence in $\mathcal{M}_{\psi,\infty}$ that converges ultraweakly
to $a$. Then $\mu\cdot b\in\mathcal{M}_{\psi}$ by \prettyref{prop:R_L_phi_psi}, and from the foregoing and \prettyref{prop:GL_i}
\prettyref{enu:GL_i_Prop_2_10},
\[
\begin{split}(\mathfrak{i}_{\psi}^{(1)}(a))(\mu\cdot b) & =(\mathfrak{i}_{\psi}^{(1)}(\mu\cdot b))(a)=\lim_{n\to\infty}(\mathfrak{i}_{\psi}^{(1)}(\mu\cdot b))(a_{n})=\lim_{n\to\infty}(\mathfrak{i}_{\psi}^{(1)}(a_{n}))(\mu\cdot b)\\
 & =\lim_{n\to\infty}(\mathfrak{i}_{\psi}^{(1)}(b))((\mu\circ \Rant)\cdot a_{n})=(\mathfrak{i}_{\psi}^{(1)}(b))((\mu\circ \Rant)\cdot a)
\end{split}
\]
by normality of $L_{\mu\circ \Rant}$. One can now prove \prettyref{eq:L_psi_adjoint_pre}
for all $a,b\in\mathcal{M}_{\psi}$ in a similar fashion.

For the general case, let $\mu\in\CzU{\G}^{*}$ and $a,b\in\mathcal{M}_{\psi}$.
For every $\om\in\Lone{\G}$ we have $\om\cdot b\in\mathcal{M}_{\psi}$,
and as $\Lone{\G}$ is an ideal in $\CzU{\G}^{*}$, 
\begin{equation}
\begin{split}(\mathfrak{i}_{\psi}^{(1)}(a))(\mu\cdot(\om\cdot b)) & =(\mathfrak{i}_{\psi}^{(1)}(a))((\mu\conv\om)\cdot b)=(\mathfrak{i}_{\psi}^{(1)}(b))[((\mu\conv\om)\circ \Rant)\cdot a]\\
 & =(\mathfrak{i}_{\psi}^{(1)}(b))[((\om\circ \Rant)\conv(\mu\circ \Rant^{\mathrm{u}}))\cdot a]=(\mathfrak{i}_{\psi}^{(1)}(b))[(\om\circ \Rant)\cdot((\mu\circ \Rant^{\mathrm{u}})\cdot a)].
\end{split}
\label{eq:L_psi_adjoint_pre_2}
\end{equation}
But for all $c\in\mathcal{M}_{\psi}$, by \prettyref{prop:GL_i} \prettyref{enu:GL_i_Prop_2_10} and by what we have already established above,
\[
(\mathfrak{i}_{\psi}^{(1)}(b))((\om\circ \Rant)\cdot c)=(\mathfrak{i}_{\psi}^{(1)}(c))(\om\cdot b)=(\mathfrak{i}_{\psi}^{(1)}(\om\cdot b))(c),
\]
so that $(\mathfrak{i}_{\psi}^{(1)}(b))\conv(\om\circ \Rant)=\mathfrak{i}_{\psi}^{(1)}(\om\cdot b)$.
Together with \prettyref{eq:L_psi_adjoint_pre_2} we get 
\begin{equation}
(\mathfrak{i}_{\psi}^{(1)}(a))(\mu\cdot(\om\cdot b))=(\mathfrak{i}_{\psi}^{(1)}(\om\cdot b))((\mu\circ \Rant^{\mathrm{u}})\cdot a).\label{eq:L_psi_adjoint_pre_3}
\end{equation}
Let $\left(b_{\iota}\right)_{\iota\in\mathcal{I}}$ be a net in $\Lone{\G}\cdot\mathcal{M}_{\psi}\subseteq\mathcal{M}_{\psi}$
that converges ultraweakly to $b$. Then $(\mu\circ \Rant^{\mathrm{u}})\cdot a\in\mathcal{M}_{\psi}$, and from
\prettyref{eq:L_psi_adjoint_pre_3} and \prettyref{prop:GL_i} \prettyref{enu:GL_i_Prop_2_10} we deduce that
\[
\begin{split}(\mathfrak{i}_{\psi}^{(1)}(a))(\mu\cdot b) & =\lim_{\iota\in\mathcal{I}}(\mathfrak{i}_{\psi}^{(1)}(a))(\mu\cdot b_{\iota})=\lim_{\iota\in\mathcal{I}}(\mathfrak{i}_{\psi}^{(1)}(b_{\iota}))((\mu\circ \Rant^{\mathrm{u}})\cdot a)\\
 & =\lim_{\iota\in\mathcal{I}}\mathfrak{i}_{\psi}^{(1)}\left((\mu\circ \Rant^{\mathrm{u}})\cdot a\right)(b_{\iota})=\mathfrak{i}_{\psi}^{(1)}\left((\mu\circ \Rant^{\mathrm{u}})\cdot a\right)(b)=\mathfrak{i}_{\psi}^{(1)}(b)\left((\mu\circ \Rant^{\mathrm{u}})\cdot a\right).\qedhere
\end{split}
\]
\end{proof}

Recall Definition \ref{def:Markov,KMS}.

\begin{cor}
\label{cor:R_L_KMS_sym}Let $\mu\in\CzU{\G}^{*}$. Then $L_{\mu}$
is KMS-symmetric with respect to $\psi$, namely 
\[
(\mathfrak{i}_{\psi}^{(1)}(b))(L_{\mu}(a))=(\mathfrak{i}_{\psi}^{(1)}(a))(L_{\mu}(b))\qquad(\forall_{a,b\in\mathcal{M}_{\psi}}),
\]
if and only if $\mu=\mu\circ \Rant^{\mathrm{u}}$, if and only if $R_{\mu}$ is KMS-symmetric with respect to $\varphi$.
\end{cor}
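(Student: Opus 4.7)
The plan is to leverage \prettyref{prop:L_psi_adjoint_pre}, together with density of the $\mathfrak{i}_\psi^{(1)}$-image and the injectivity statement from \prettyref{thm:L_R} \prettyref{enu:L_R_maps_inj}, to transfer the KMS-symmetry condition on the $\mathcal{M}_\psi$ level into the equality $\mu=\mu\circ \Rant^{\mathrm{u}}$.

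More precisely, I would begin by rewriting the KMS-symmetry condition using \prettyref{prop:L_psi_adjoint_pre}. For any $a,b\in\mathcal{M}_\psi$ that proposition gives
\[
(\mathfrak{i}_{\psi}^{(1)}(a))(L_{\mu}(b)) \;=\; (\mathfrak{i}_{\psi}^{(1)}(b))(L_{\mu\circ \Rant^{\mathrm{u}}}(a)),
\]
so the KMS-symmetry condition becomes
\[
(\mathfrak{i}_{\psi}^{(1)}(b))(L_{\mu}(a))\;=\;(\mathfrak{i}_{\psi}^{(1)}(b))(L_{\mu\circ \Rant^{\mathrm{u}}}(a))\qquad(\forall_{a,b\in\mathcal{M}_\psi}).
\]

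Next I would remove the $b$-quantifier by density. By \prettyref{prop:GL_i} \prettyref{enu:GL_i_Prop_2_11} (applied to $\psi$) together with the fact that $\mathcal{M}_\psi$ is linearly spanned by its positive part, the image $\mathfrak{i}_{\psi}^{(1)}(\mathcal{M}_\psi)$ is norm-dense in $\Lone{\Linfty{\G}}\cong \Linfty{\G}_*$. Hence the preceding identity is equivalent to $L_\mu(a)=L_{\mu\circ \Rant^{\mathrm{u}}}(a)$ for every $a\in\mathcal{M}_\psi$. Since $L_\mu$ and $L_{\mu\circ \Rant^{\mathrm{u}}}$ are both normal maps on $\Linfty{\G}$ and $\mathcal{M}_\psi$ is ultraweakly dense there, this forces $L_\mu=L_{\mu\circ \Rant^{\mathrm{u}}}$ as maps on $\Linfty{\G}$. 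Finally, the injectivity of $\nu\mapsto L_\nu$ from \prettyref{thm:L_R} \prettyref{enu:L_R_maps_inj} yields $\mu=\mu\circ \Rant^{\mathrm{u}}$. The converse is immediate: if $\mu=\mu\circ \Rant^{\mathrm{u}}$, then \prettyref{prop:L_psi_adjoint_pre} directly gives the KMS-symmetry.

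The argument for $R_\mu$ is entirely parallel, using the second equality in \prettyref{prop:L_psi_adjoint_pre}, the density of $\mathfrak{i}_{\varphi}^{(1)}(\mathcal{M}_\varphi)$ in $\Linfty{\G}_*$, ultraweak density of $\mathcal{M}_\varphi$, and the injectivity of $\nu\mapsto R_\nu$. Neither direction presents any real obstacle; the only point worth double-checking is that the density statement in \prettyref{prop:GL_i} \prettyref{enu:GL_i_Prop_2_11} does indeed upgrade from positive cones to full linear density on $\mathcal{M}_\psi$, which follows since $\mathcal{M}_\psi=\linspan(\mathcal{M}_\psi\cap \Linfty{\G}_+)$ and $\mathfrak{i}_\psi^{(1)}$ is linear.
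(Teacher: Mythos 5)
Your proof is correct and is exactly the argument the paper intends: the corollary is stated without proof as an immediate consequence of \prettyref{prop:L_psi_adjoint_pre}, and your route (substitute the identity from that proposition, remove the $b$-quantifier via density of $\mathfrak{i}_{\psi}^{(1)}(\mathcal{M}_{\psi})$ in $\Linfty{\G}_*$, pass to all of $\Linfty{\G}$ by normality and ultraweak density, and conclude with the injectivity of $\nu\mapsto L_{\nu}$ from \prettyref{thm:L_R} \prettyref{enu:L_R_maps_inj}) is the intended filling-in of that gap, with the $R_{\mu}$ case handled symmetrically.
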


The terminology introduced below follows the analogy between GNS- and KMS-implementations.

\begin{defn}
For a von Neumann algebra $\Aa$ and an n.s.f.~weight $\varphi$
on $\Aa$, an operator $P\in B(\Aa)$ is called \emph{GNS-symmetric
with respect to $\varphi$} if $P(\mathcal{N}_{\varphi})\subseteq\mathcal{N}_{\varphi}$
and 
\begin{equation}
\left\langle \gnsmap_{\varphi}(Pa),\gnsmap_{\varphi}(b)\right\rangle =\left\langle \gnsmap_{\varphi}(a),\gnsmap_{\varphi}(Pb)\right\rangle \qquad(\forall_{a,b\in\mathcal{N}_{\varphi}}).\label{eq:GNS_symmetry}
\end{equation}
Condition \prettyref{eq:GNS_symmetry} is equivalent to symmetry of
the respective GNS-implementation: the map over the pre-Hilbert space $\gnsmap_{\varphi}(\mathcal{N}_{\varphi})$ given by
$\gnsmap_{\varphi}(a)\mapsto\gnsmap_{\varphi}(Pa)$, $a\in\mathcal{N}_{\varphi}$.
\end{defn}

Analogously to the subalgebra $\LoneSharp{\G}\subseteq\Lone{\G}$
we define $\CzUSSharp{\G}\subseteq\CzU{\G}^{*}$ to be the set of
all $\mu\in\CzU{\G}^{*}$ such that there exists a (necessarily unique)
$\mu^{\sharp}\in\CzU{\G}^{*}$ satisfying $\mu^{\sharp}(x)=\overline{\mu}(\Sant^{\mathrm{u}}(x))$
for every $x\in D(\Sant^{\mathrm{u}})$. Then $\CzUSSharp{\G}$ is a subalgebra
of $\CzU{\G}^{*}$ with $\mu\mapsto\mu^{\sharp}$ being an involution. 
Note that $\CzUSSharp{\G}$ is $w^*$-dense in $\CzU{\G}^{*}$ by the construction \eqref{eq:CzU_star_tau_smear}, 
because if $\mu\in\CzU{\G}^{*}$ is entire analytic with respect to the adjoint of $\tau^\mathrm{u}$ then $\overline{\mu} \circ \Rant^\mathrm{u} \in \CzUSSharp{\G}$.

\begin{prop}
Let $\mu\in\CzU{\G}^{*}$. Then $R_{\mu}$, resp.~$L_{\mu}$, is
GNS-symmetric with respect to $\varphi$, resp.~$\psi$, if and only
if $\overline{\mu}\in\CzU{\G}_{\sharp}^{*}$ and $\left(\overline{\mu}\right)^{\sharp}=\overline{\mu}$,
resp.~$\mu\in\CzU{\G}_{\sharp}^{*}$ and $\mu^{\sharp}=\mu$. 
\end{prop}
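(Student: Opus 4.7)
The plan is to prove the $R_\mu$ statement in detail and note that the $L_\mu$ assertion follows by an entirely parallel argument. First, I would translate GNS-symmetry into an operator identity in $\M{\Cz{\hat{\G}}}$. By \prettyref{prop:R_L_phi_psi}, the GNS-implementation of $R_\mu$ is $\widetilde{R}_\mu^\varphi = (\mu\tensor\i)(\Ww^{*})\in\M{\Cz{\hat{\G}}}$, and the density of $\gnsmap_\varphi(\mathcal{N}_\varphi)$ in $\Ltwo{\G}$ shows that $R_\mu$ is GNS-symmetric with respect to $\varphi$ if and only if $\widetilde{R}_\mu^\varphi$ is selfadjoint on $\Ltwo{\G}$. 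Since $\bigl((\mu\tensor\i)(\Ww^{*})\bigr)^{*}=(\overline{\mu}\tensor\i)(\Ww)$, this selfadjointness amounts to the single operator identity
\[
(\mu\tensor\i)(\Ww^{*})=(\overline{\mu}\tensor\i)(\Ww)\qquad\text{in }\M{\Cz{\hat{\G}}}.
\]

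Next I would slice this identity against arbitrary $\om\in\Lone{\hat{\G}}$, which loses no information since such $\om$ separate points of $\M{\Cz{\hat{\G}}}\subseteq\Linfty{\hat{\G}}$, rewriting it as the family of scalar equalities $\mu((\i\tensor\om)(\Ww^{*}))=\overline{\mu}((\i\tensor\om)(\Ww))$ for all $\om\in\Lone{\hat{\G}}$. The universal antipode property \prettyref{eq:S_univ_prop} now converts this into the pointwise identity $\mu(\Sant^{\mathrm{u}}(x))=\overline{\mu}(x)$ on the subspace $\mathcal{A}:=\{(\i\tensor\om)(\Ww):\om\in\Lone{\hat{\G}}\}$ of $D(\Sant^{\mathrm{u}})$. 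Writing $\nu:=\overline{\mu}$, this is precisely the pointwise form of the relation $\overline{\nu}(\Sant^{\mathrm{u}}(x))=\nu(x)$ (i.e.~of $\nu^{\sharp}=\nu$), restricted to $\mathcal{A}$.

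The main obstacle is to upgrade this pointwise identity from $\mathcal{A}$ to all of $D(\Sant^{\mathrm{u}})$; once that is done, the densely defined functional $\overline{\nu}\circ\Sant^{\mathrm{u}}$ admits $\nu=\overline{\mu}$ as a bounded extension, whence $\overline{\mu}\in\CzUSSharp{\G}$ with $(\overline{\mu})^{\sharp}=\overline{\mu}$, and the converse implication is automatic by restriction back to $\mathcal{A}$. For the extension step I would invoke the fact from \citep{Kustermans__LCQG_universal} that $\mathcal{A}$ is a norm-dense core for the closed operator $\Sant^{\mathrm{u}}\colon\CzU{\G}\supseteq D(\Sant^{\mathrm{u}})\to\CzU{\G}$: given $x\in D(\Sant^{\mathrm{u}})$, pick a net $(x_j)\subseteq\mathcal{A}$ with $x_j\to x$ and $\Sant^{\mathrm{u}}(x_j)\to\Sant^{\mathrm{u}}(x)$ in norm, and then continuity of $\mu$ and $\overline{\mu}$ delivers the identity at $x$. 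An alternative would use the polar decomposition $\Sant^{\mathrm{u}}=\Rant^{\mathrm{u}}\circ\tau_{-i/2}^{\mathrm{u}}$, the $\tau^{\mathrm{u}}$-invariance of $\mathcal{A}$ coming from the covariance $(\tau^{\mathrm{u}}_t\tensor\hat{\tau}_t)(\Ww)=\Ww$, and the standard fact that entire analytic vectors form a core for $\tau_{-i/2}^{\mathrm{u}}$, together with the smearing construction \prettyref{eq:CzU_star_tau_smear}.

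The $L_\mu$ assertion is proved by the same scheme with $\vV$ in place of $\Ww$: by \prettyref{prop:R_L_phi_psi} one has $\widetilde{L}_{\mu}^{\psi}=(\i\tensor\mu)(\vV)$ with adjoint $(\i\tensor\overline{\mu})(\vV^{*})$, so selfadjointness is the operator identity $(\i\tensor\mu)(\vV)=(\i\tensor\overline{\mu})(\vV^{*})$ in $\M{\Cz{\hat{\G}'}}$. Slicing against $\om\in\Lone{\hat{\G}'}$ and applying the analogous universal antipode property to the slices $(\om\tensor\i)(\vV)\in\CzU{\G}$, which lie in (and form a core for) $\Sant^{\mathrm{u}}$ by the commutant-quantum-group version of \citep{Kustermans__LCQG_universal}, yields the pointwise identity $\mu(x)=\overline{\mu}(\Sant^{\mathrm{u}}(x))$; this is exactly the condition $\mu^{\sharp}=\mu$, completing the plan.
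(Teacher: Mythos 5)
Your proof is correct and follows essentially the same route as the paper's: both reduce GNS-symmetry to selfadjointness of $\widetilde{R}_{\mu}^{\varphi}=(\mu\tensor\i)(\Ww^{*})$ (resp.\ $\widetilde{L}_{\mu}^{\psi}=(\i\tensor\mu)(\vV)$) via \prettyref{prop:R_L_phi_psi}, and then exploit the formal identity $(\Sant^{\mathrm{u}}\tensor\i)(\Ww)=\Ww^{*}$ together with the fact that the slices $\{(\i\tensor\om)(\Ww):\om\in\Lone{\hat{\G}}\}$ (resp.\ $\{(\rho\tensor\i)(\vV):\rho\in\Lone{\hat{\G}'}\}$) form a core for $\Sant^{\mathrm{u}}$. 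The only difference is that you spell out the slicing and core-extension argument which the paper defers to ``mimic the proof of \citep[Proposition 3.1]{Kustermans__LCQG_universal}''.
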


\begin{proof}
By \prettyref{prop:R_L_phi_psi}, $R_{\mu}$, resp.~$L_{\mu}$, is
GNS-symmetric with respect to $\varphi$, resp.~$\psi$, if and only
if the operator $(\overline{\mu}\tensor\i)(\Ww)$, resp.~$(\i\tensor\mu)(\vV)$,
is selfadjoint. Using the (formal) identities $(\Sant^{\mathrm{u}}\tensor\i)(\Ww)=\Ww^{*}$
(see \prettyref{eq:S_univ_prop}) and $(\i\tensor \Sant^{\mathrm{u}})(\vV)=\vV^{*}$
and the fact that the subspaces $\{(\i\tensor\om)(\Ww):\om\in\Lone{\hat{\G}}\}$
and $\{(\rho\tensor\i)(\vV):\rho\in\Lone{\hat{\G}'}\}$ are cores
for $\Sant^{\mathrm{u}}$, one can mimic the proof of \citep[Proposition 3.1]{Kustermans__LCQG_universal}
to derive the assertion. 
\end{proof}
\begin{rem}
\label{rem:GNS_KMS_sym}If $\mu\in\CzU{\G}_{+}^{*}$, then $R_{\mu}$
is GNS-symmetric if and only if $L_{\mu}$ is. Furthermore, in this
case, both operators are KMS-symmetric (with respect to $\varphi,\psi$,
respectively). Indeed, under the GNS-symmetry assumption, for every $x\in D(\tau_{-i}^{\mathrm{u}})$
we have $\mu(x)=\mu(\Sant^{\mathrm{u}}(x))$ and $\Sant^{\mathrm{u}}(x)=(\Rant^{\mathrm{u}}\circ\tau_{-i/2}^{\mathrm{u}})(x)=(\tau_{-i/2}^{\mathrm{u}}\circ \Rant^{\mathrm{u}})(x)\in D(\tau_{-i/2}^{\mathrm{u}})$,
thus $\mu(x)=\mu((\Sant^{\mathrm{u}}\circ \Sant^{\mathrm{u}})(x))=\mu(\tau_{-i}^{\mathrm{u}}(x))$.
This implies by a standard argument that $\mu$ is invariant under
$\tau^{\mathrm{u}}$: every $x\in\CzU{\G}$ that is entire analytic with respect to $\tau^{\mathrm{u}}$ satisfies
$\mu (\tau_t^{\mathrm{u}}(x)) = \mu(\tau_{-i}^{\mathrm{u}}(\tau_t^{\mathrm{u}}(x))) = \mu(\tau_{t-i}^{\mathrm{u}}(x))$ for all $t \in \R$, thus 
$\mu (\tau_z^{\mathrm{u}}(x)) = \mu(\tau_{z-i}^{\mathrm{u}}(x))$
for all $z \in \C$ by analyticity; since the function $\C \ni z \mapsto \mu(\tau_z^{\mathrm{u}}(x))$ is bounded on $\{z \in \C : 0 \le \Im z \le 1\}$, it is bounded on all of $\C$,
thus it is constant; and the set of all $x$ as above is dense in $\CzU{\G}$ (see also \citep[Proposition 4.8]{Cipriani_Franz_Kula__sym_Levy_proc} or \citep[Proposition 3.5]{FranzSkalskiTomatsu}). Therefore, every $x\in D(\Sant^{\mathrm{u}})$ satisfies
$\mu(x)=\mu((\tau_{-i/2}^{\mathrm{u}}\circ \Rant^{\mathrm{u}})(x))=\mu(\Rant^{\mathrm{u}}(x))$,
yielding that $\mu=\mu\circ \Rant^{\mathrm{u}}$.
\end{rem}

\subsection{$L_{\mu}$ and $R_{\mu}$ at the Hilbert space level} \label{subsect:Hilbert}

In this subsection we characterise (\prettyref{prop:L_R_form_GNS}) the operators of the form $L_{\mu},R_{\mu}$
in terms of their GNS Hilbert space versions with respect to $\psi,\varphi$,
respectively, introduced in \prettyref{prop:R_L_phi_psi}. We then prove that $L_{\mu},R_{\mu}$
admit KMS Hilbert space versions with respect to $\psi,\varphi$ (\prettyref{lem:L_R_Hilbert_2}),
and characterise the operators of these forms using the KMS language
(\prettyref{prop:L_R_form_KMS}). This is done via the KMS Hilbert
space versions of $L_{\mu},R_{\mu}$ with respect to the `wrong' weight
for suitable functionals $\mu$. The subsection ends with two convergence
results, \prettyref{lem:C_z_U_star_conv_Hilbert} and \prettyref{cor:L_R_Hilbert_close_to_identity}.
\begin{prop}
\label{prop:L_R_form_GNS}Let $T:\Linfty{\G}\to\Linfty{\G}$ be a
completely positive, normal operator.
\begin{enumerate}
\item \label{enu:L_R_form_GNS__R}Suppose that $T(\mathcal{N}_{\varphi})\subseteq\mathcal{N}_{\varphi}$
and that the map $\gnsmap_{\varphi}(a)\mapsto\gnsmap_{\varphi}(Ta)$,
$a\in\mathcal{N}_{\varphi}$, is bounded. Denote its bounded extension
to $\Ltwo{\G}$ by $\widetilde{T}^{\varphi}$. Then $T$ has the form
$R_{\mu}$, $\mu\in\CzU{\G}_{+}^{*}$, if and only if $\widetilde{T}^{\varphi}\in\Linfty{\hat{\G}}$.
\item \label{enu:L_R_form_GNS__L}Suppose that $T(\mathcal{N}_{\psi})\subseteq\mathcal{N}_{\psi}$
and that the map $\gnsmap_{\psi}(a)\mapsto\gnsmap_{\psi}(Ta)$, $a\in\mathcal{N}_{\psi}$,
is bounded. Denote its bounded extension to $\Ltwo{\G}$ by $\widetilde{T}^{\psi}$.
Then $T$ has the form $L_{\mu}$, $\mu\in\CzU{\G}_{+}^{*}$, if and
only if $\widetilde{T}^{\psi}\in\Linfty{\hat{\G}}'$.
\end{enumerate}
If $\hat{\G}$ is amenable then the same assertions are true for normal, completely
bounded $T$ and $\mu\in\CzU{\G}^{*}$ (not necessarily positive).
\end{prop}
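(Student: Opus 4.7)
The forward implication in both (a) and (b) is immediate from \prettyref{prop:R_L_phi_psi}: if $T=R_\mu$ with $\mu\in\CzU{\G}_+^*$, then $\widetilde T^\varphi=(\mu\tensor\i)(\Ww^*)\in\M{\Cz{\hat\G}}\subseteq\Linfty{\hat\G}$; and if $T=L_\mu$, then $\widetilde T^\psi=(\i\tensor\mu)(\vV)\in\M{\Cz{\hat{\G}'}}\subseteq\Linfty{\hat\G'}=\Linfty{\hat\G}'$, via the identification of the commutant quantum group recalled in \prettyref{sub:prelim_LCQGs}. Under the amenability hypothesis on $\hat{\G}$ the same computation applies to general $\mu\in\CzU{\G}^*$ by decomposing $\mu$ as a linear combination of positive functionals, each contributing a GNS-implementable completely positive piece.

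For the reverse implication of (a) my plan is to apply \prettyref{thm:L_R}\prettyref{enu:L_R_CP_mult} (or \prettyref{enu:L_R_CB_mult_amen} under amenability), which reduces the problem to verifying the intertwining $\Delta\circ T=(T\tensor\i)\circ\Delta$. Slicing this equation with $\i\tensor\om$ on the second leg shows that it is equivalent to the family of commutation relations
\[
L_\om\circ T=T\circ L_\om \qquad(\om\in\Lone{\G}),
\]
so this is what I would actually verify.

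To establish the commutation, I restrict attention to $\om=\om_{\z,\eta}$ with $\z\in D(\delta^{1/2})$ and $\eta\in\Ltwo{\G}$. By \prettyref{prop:R_L_phi_psi_inverted}, $\mathcal N_\varphi$ is invariant under $L_\om$ with bounded $\varphi$-GNS-implementation $\widetilde L_\om^\varphi=\widetilde L_{\om_{\delta^{1/2}\z,\eta}}^\psi=(\i\tensor\om_{\delta^{1/2}\z,\eta})(\vV)$, which by \prettyref{prop:R_L_phi_psi} belongs to $\M{\Cz{\hat{\G}'}}\subseteq\Linfty{\hat\G}'$. Since by hypothesis $\widetilde T^\varphi\in\Linfty{\hat\G}$, the operators $\widetilde T^\varphi$ and $\widetilde L_\om^\varphi$ automatically commute on all of $\Ltwo{\G}$. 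Evaluating this commutation at $\gnsmap_\varphi(a)$ for $a\in\mathcal N_\varphi$ and using injectivity of $\gnsmap_\varphi$ gives $L_\om\circ T=T\circ L_\om$ on $\mathcal N_\varphi$, which extends to all of $\Linfty{\G}$ by normality and ultraweak density of $\mathcal N_\varphi$. The passage to arbitrary $\om\in\Lone{\G}$ is a routine approximation: since $D(\delta^{1/2})$ is dense in $\Ltwo{\G}$, every $\om_{\xi,\eta}\in\Lone{\G}$ is a norm limit of such $\om$, and $\om\mapsto L_\om$ together with normality of $T$ makes the commutation relation stable under such limits.

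Part (b) is handled symmetrically: for $\om=\om_{\z,\eta}$ with $\z\in D(\delta^{-1/2})$, \prettyref{prop:R_L_phi_psi_inverted} provides a bounded $\psi$-GNS-implementation $\widetilde R_\om^\psi\in\M{\Cz{\hat\G}}\subseteq\Linfty{\hat\G}$, which automatically commutes with $\widetilde T^\psi\in\Linfty{\hat\G}'$, yielding $T\circ R_\om=R_\om\circ T$ first for such $\om$, then for all $\om\in\Lone{\G}$ by density, and hence $T=L_\mu$ by the left-handed version of \prettyref{thm:L_R}\prettyref{enu:L_R_CP_mult}. The amenable case is handled identically, invoking \prettyref{thm:L_R}\prettyref{enu:L_R_CB_mult_amen} in place of \prettyref{enu:L_R_CP_mult} (the boundedness of the GNS-implementation is part of the standing hypothesis). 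I do not foresee any genuine obstacle here: the conceptual heart of the proof is the elementary orthogonality $\Linfty{\hat\G}\subseteq(\Linfty{\hat\G}')'$, which \prettyref{prop:R_L_phi_psi_inverted} makes available by producing \emph{$\varphi$}-GNS-implementations of the intertwiners $L_\om$ (and dually for $R_\om$) whose natural habitat is $\Linfty{\hat\G}'$.
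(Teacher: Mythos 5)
Your proof is correct and follows essentially the same route as the paper: reduce via Theorem \ref{thm:L_R} to commutation of $T$ with the operators $L_{\om}$ for $\om=\om_{\z,\eta}$, $\z\in D(\delta^{1/2})$, and then transfer this to the Hilbert space level using Propositions \ref{prop:R_L_phi_psi} and \ref{prop:R_L_phi_psi_inverted}, which place $\widetilde{L}_{\om}^{\varphi}$ in $\Linfty{\hat{\G}}'$. The only cosmetic difference is that the paper runs the whole argument as a single chain of equivalences, concluding with $\{\widetilde{L}_{\om}^{\varphi}:\om\in A\}'=\Linfty{\hat{\G}}''=\Linfty{\hat{\G}}$, whereas you treat the two implications separately (using the explicit formula $\widetilde{R}_{\mu}^{\varphi}=(\mu\tensor\i)(\Ww^{*})$ for the forward direction); both are sound.
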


\begin{proof}
We prove only \prettyref{enu:L_R_form_GNS__R}, as the proof of \prettyref {enu:L_R_form_GNS__L} is very similar. The assertion follows
easily from Propositions \ref{prop:R_L_phi_psi} and \ref{prop:R_L_phi_psi_inverted}.
Let $A:=\{\om_{\z,\eta}:\z\in D(\delta^{\frac{1}{2}}),\eta\in\Ltwo{\G}\}$.
Since $\delta^{\frac{1}{2}}$ is densely defined, $A$ is dense in
$\Lone{\G}$. By \prettyref{thm:L_R} \prettyref{enu:L_R_CP_mult}
and \prettyref{enu:L_R_CB_mult_amen}, proving that $T$ has the desired
form amounts to showing that it commutes with all operators $L_{\om}$,
$\om\in\Lone{\G}$. By density and continuity, this holds if and only
if for every $a\in\mathcal{N}_{\varphi}$ and $\om\in A$, we have
$(T\circ L_{\om})(a)=(L_{\om}\circ T)(a)$, which is the same as $\widetilde{T}^{\varphi}\widetilde{L}_{\om}^{\varphi}=\widetilde{L}_{\om}^{\varphi}\widetilde{T}^{\varphi}$
on $\gnsmap_{\varphi}(\mathcal{N}_{\varphi})$, equivalently everywhere.
So $T$ has the desired form if and only if $\widetilde{T}^{\varphi}$ belongs to $\{\widetilde{L}_{\om}^{\varphi}:\om\in A\}'=\{\widetilde{L}_{\om_{\delta^{\frac{1}{2}}\z,\eta}}^{\psi}:\z\in D(\delta^{\frac{1}{2}}),\eta\in\Ltwo{\G}\}'=\{(\i\tensor\om_{\delta^{\frac{1}{2}}\z,\eta})(V):\z\in D(\delta^{\frac{1}{2}}),\eta\in\Ltwo{\G}\}'=\Linfty{\hat{\G}}''=\Linfty{\hat{\G}}$.
\end{proof}
\begin{rem}
\label{rem:CFK_L_map_GNS}The last result is an analogue of \citep[Theorem 7.3, (1)$\iff$(4)]{Cipriani_Franz_Kula__sym_Levy_proc}
for general, not necessarily compact, locally compact quantum groups. Let us explain this fact,
using the conventions of \citep{Cipriani_Franz_Kula__sym_Levy_proc}
freely (in particular, sesquilinear forms will be linear in the right
variable and we use the Sweedler notation). Let $\QG$ be a compact quantum group,  $h$ the Haar state on $\mathrm{C}(\QG)$, and $\Pol{\G}\subseteq\mathrm{C}(\G)$ the
canonical Hopf $*$-algebra. Assume that the linear map $L:\Pol{\G}\to \Pol{\G}$
from \citep[Theorem 7.3]{Cipriani_Franz_Kula__sym_Levy_proc} extends to a normal map on $\Linfty{\QG}$,
still denoted by $L$, and to a bounded map on $\Ltwo{\QG}$,
denoted by $\widetilde{L}$. Consider condition (4) of that Theorem,
saying that 
\begin{equation}
\mathcal{Q}(a,b)\one=(m_{*}\tensor\mathcal{Q})(\Delta(a),\Delta(b))\qquad(\forall_{a,b\in\Pol{\G}}),\label{eq:CFK_7_1}
\end{equation}
where $m_{*},\mathcal{Q}$ are the sesquilinear maps from $\Pol{\G}$ to
$\Pol{\G},\C$, respectively, given by $m_{*}(a,b)=a^{*}b$ and $\mathcal{Q}(a,b):=-h(a^{*}L(b))$
($a,b\in\Pol{\G}$). For $\z,\eta\in\Ltwo{\G}$, apply $\om_{\z,\eta}$
to both sides of \prettyref{eq:CFK_7_1}. Then
\[
\begin{split}((\om_{\z,\eta}m_{*})\tensor\mathcal{Q})(\Delta(a),\Delta(b)) & =((\om_{\z,\eta}m_{*})\tensor\mathcal{Q})(a_{(1)}\tensor a_{(2)},b_{(1)}\tensor b_{(2)})\\
 & =-\left\langle a_{(1)}\z,b_{(1)}\eta\right\rangle h(a_{(2)}^{*}L(b_{(2)}))=-\left\langle a_{(1)}\z,b_{(1)}\eta\right\rangle \left\langle \gnsmap_{h}(a_{(2)}),\widetilde{L}\gnsmap_{h}(b_{(2)})\right\rangle \\
 & =-\left\langle a_{(1)}\z\tensor\gnsmap_{h}(a_{(2)}),(\one\tensor\widetilde{L})(b_{(1)}\eta\tensor\gnsmap_{h}(b_{(2)}))\right\rangle .
\end{split}
\]
Using the identity $W^{*}(\xi\tensor\gnsmap_{h}(c))=c_{(1)}\xi\tensor\gnsmap_{h}(c_{(2)})$
for $c\in \Pol{\G}$ and $\xi\in\Ltwo{\G}$, we get 
\[
((\om_{\z,\eta}m_{*})\tensor\mathcal{Q})(\Delta(a),\Delta(b))=-\left\langle W^{*}(\z\tensor\gnsmap_{h}(a)),(\one\tensor\widetilde{L})W^{*}(\eta\tensor\gnsmap_{h}(b))\right\rangle .
\]
On the other hand, $\mathcal{Q}(a,b)\om_{\z,\eta}(\one)=-\left\langle \z\tensor\gnsmap_{h}(a),(\one\tensor\widetilde{L})(\eta\tensor\gnsmap_{h}(b))\right\rangle $.
So it is evident that \prettyref{eq:CFK_7_1} holds in and only if
$\one\tensor\widetilde{L}$ commutes with $W$, that is to say, $\widetilde{L}$
belongs to $\Linfty{\hat{\G}}'$.
\end{rem}

We now turn to $2$-integrability of the maps $R_{\mu}$ and $L_{\mu}$.
From \prettyref{cor:R_L_KMS_sym} and \prettyref{thm:GL_4_7_forward}
we know that if $\mu$ is an $\Rant^{\mathrm{u}}$-invariant state of
$\CzU{\G}$ then these two Markov operators are KMS-symmetric, thus
$2$-integrable, with respect to $\varphi$ and $\psi$, respectively.
It turns out that these assumptions are not necessary. Recall the notation introduced in Definition \ref{def:Markov,KMS} and in Proposition \ref{prop:R_L_phi_psi}, together with the notion of the commutant quantum group. 
\begin{lem}
\label{lem:L_R_Hilbert_2}Let $\mu\in\CzU{\G}^{*}$. 
\begin{enumerate}
\item \label{enu:L_R_Hilbert_2__1}We have $\widetilde{R}_{\mu}^{\varphi}\in D(\hat{\tau}_{-i/2})$,
$\widetilde{L}_{\mu}^{\psi}\in D(\hat{\tau}_{-i/2}')$ and $\Vert\hat{\tau}_{-i/4}(\widetilde{R}_{\mu}^{\varphi})\Vert\leq\max\bigl(\bigl\Vert\widetilde{R}_{\mu}^{\varphi}\bigr\Vert,\bigl\Vert\widetilde{R}_{\overline{\mu}}^{\varphi}\bigr\Vert\bigr)\leq\left\Vert \mu\right\Vert $,
$\Vert\hat{\tau}_{-i/4}'(\widetilde{L}_{\mu}^{\psi})\Vert\leq\max\bigl(\bigl\Vert\widetilde{L}_{\mu}^{\psi}\bigr\Vert,\bigl\Vert\widetilde{L}_{\overline{\mu}}^{\psi}\bigr\Vert\bigr)\leq\left\Vert \mu\right\Vert $.
\item \label{enu:L_R_Hilbert_2__2}The operator $R_{\mu}$, resp.~$L_{\mu}$,
is $2$-integrable with respect to $\varphi$, resp.~$\psi$, and
the corresponding Hilbert space operators are $\widetilde{R}_{\mu}^{(2,\varphi)}=\hat{\tau}_{-i/4}(\widetilde{R}_{\mu}^{\varphi})$
and $\widetilde{L}_{\mu}^{(2,\psi)}=\hat{\tau}_{-i/4}'(\widetilde{L}_{\mu}^{\psi})$.
\item \label{enu:L_R_Hilbert_2__3}The operators $\widetilde{R}_{\mu}^{(2,\varphi)},\widetilde{L}_{\mu}^{(2,\psi)}$
belong to $\M{\Cz{\hat{\G}}},\M{\Cz{\hat{\G}'}}$, respectively; and
if $\mu\in\Lone{\G}$, they belong to $\Cz{\hat{\G}},\Cz{\hat{\G}'}$, respectively. 
\item \label{enu:L_R_Hilbert_2__4}The following conditions are equivalent:
\begin{itemize}
	\item $\mu$ is invariant under $\tau^{\mathrm{u}}$;
	\item  $\widetilde{R}_{\mu}^{(2,\varphi)}=\widetilde{R}_{\mu}^{\varphi}$;
	 \item $\widetilde{L}_{\mu}^{(2,\psi)}=\widetilde{L}_{\mu}^{\psi}$.
\end{itemize}
Thus, if $\mu$ is positive, and if $R_{\mu}$ is GNS-symmetric with
respect to $\varphi$ \textendash{} equivalently, $L_{\mu}$ is GNS-symmetric
with respect to $\psi$ \textendash{} then $\widetilde{R}_{\mu}^{(2,\varphi)}=\widetilde{R}_{\mu}^{\varphi}$
and $\widetilde{L}_{\mu}^{(2,\psi)}=\widetilde{L}_{\mu}^{\psi}$.
\end{enumerate}
\end{lem}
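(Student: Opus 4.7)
The plan is to prove the ``right'' assertions (concerning $R_\mu$ and $\widetilde{R}_{\mu}^{\varphi}$); the ``left'' assertions follow by symmetry, applying what has been proved to the commutant quantum group $\G'$ in which $\psi$ plays the role of the left Haar weight and $\vV$ plays the role of $\Ww$. The central input will be the semi-universal $\tau$-invariance of $\Ww$, namely the identity
\[
(\tau_t^{\mathrm{u}} \otimes \hat{\tau}_t)(\Ww) = \Ww \qquad (t \in \R),
\]
which lifts the reduced identity $(\tau_t \otimes \hat{\tau}_t)(W) = W$ via the universal property of $\Ww$ and the intertwining $\Lambda \circ \tau_t^{\mathrm{u}} = \tau_t \circ \Lambda$, together with Lemma \ref{lem:antipode_W_extended} for the passage between $\Ww$, $\hat{\wW}$ and the antipode.

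For (a), I first treat $\mu$ entire analytic with respect to $\tau^{\mathrm{u}}$. From the description $\widetilde{R}_{\mu}^{\varphi} = (\mu \otimes \iota)(\Ww^*)$ in \prettyref{prop:R_L_phi_psi} and the identity above, one gets for every $z \in \C$ the analytic extension $\hat{\tau}_z(\widetilde{R}_{\mu}^{\varphi}) = \widetilde{R}_{\mu \circ \tau_{-z}^{\mathrm{u}}}^{\varphi}$. To control the boundary value at $z = -i/2$, I use $\hat{\wW} = \sigma(\Ww^*)$ to write $\widetilde{R}_{\mu}^{\varphi} = (\iota \otimes \mu)(\hat{\wW})$ and apply Lemma \ref{lem:antipode_W_extended} (to $\hat{\G}$): this yields $\hat{S}(\widetilde{R}_{\mu}^{\varphi}) = (\mu \otimes \iota)(\Ww)$, and combining with $\hat{S} = \hat{R} \circ \hat{\tau}_{-i/2}$ and $\hat{R}(y) = JyJ$ on adjoints gives $\hat{\tau}_{-i/2}(\widetilde{R}_{\mu}^{\varphi}) = J\widetilde{R}_{\overline{\mu}}^{\varphi}J$. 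Phragmen--Lindelof applied to $w \mapsto \hat{\tau}_w(\widetilde{R}_{\mu}^{\varphi})$ on the strip $-1/2 \le \Im w \le 0$ then produces the norm bound in the statement. For general $\mu$ I apply this to the smearings $\mu_n$ from \eqref{eq:CzU_star_tau_smear}, which satisfy $\|\mu_n\| \le \|\mu\|$ and $\mu_n \to \mu$ in $w^*$; a bounded ultraweakly convergent subnet of $\hat{\tau}_{-i/4}(\widetilde{R}_{\mu_n}^{\varphi})$, combined with the ultraweak closedness of the analytic generator $\hat{\tau}_{-i/4}$, gives the full statement for $\mu$.

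For (b), by \prettyref{prop:R_L_phi_psi} (applied after decomposing $\mu$ into states) we have $R_\mu(\mathcal{M}) \subseteq \mathcal{M}$, so Lemma \ref{lem:i_2_gns}\ref{enu:i_2_gns__2} applies to both $a$ and $R_\mu a$: for $a \in \mathcal{M}$ I compute
\[
\mathfrak{i}^{(2)}(R_\mu a) = \nabla^{1/4}\gnsmap(R_\mu a) = \nabla^{1/4}\widetilde{R}_{\mu}^{\varphi}\gnsmap(a) = \bigl(\nabla^{1/4}\widetilde{R}_{\mu}^{\varphi}\nabla^{-1/4}\bigr)\mathfrak{i}^{(2)}(a) = \hat{\tau}_{-i/4}\bigl(\widetilde{R}_{\mu}^{\varphi}\bigr)\mathfrak{i}^{(2)}(a),
\]
using $\hat{\tau}_t = \Ad{\nabla^{it}}$ on $\Linfty{\hat{\G}}$ and part (a). The density of $\mathfrak{i}^{(2)}(\mathcal{M})$ in $\Ltwo{\G}$ (Proposition \ref{prop:GL_i}\ref{enu:GL_i_Prop_2_11}) then forces the map $\mathfrak{i}^{(2)}(a) \mapsto \mathfrak{i}^{(2)}(R_\mu a)$ to extend by the bounded operator $\hat{\tau}_{-i/4}(\widetilde{R}_{\mu}^{\varphi})$, giving (b). Part (c) is now immediate: $\hat{\tau}$ restricts to a point--norm continuous automorphism group of $\Cz{\hat{\G}}$, so its analytic extensions preserve $\M{\Cz{\hat{\G}}}$ on $\M{\Cz{\hat{\G}}}$-valued inputs and $\Cz{\hat{\G}}$ on $\Cz{\hat{\G}}$-valued inputs, which combined with the locations of $\widetilde{R}_{\mu}^{\varphi}$ supplied by \prettyref{prop:R_L_phi_psi} finishes the claim.

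For (d), the forward implication is immediate from (a): if $\mu \circ \tau_t^{\mathrm{u}} = \mu$ for all $t\in\R$ then $\mu \circ \tau_z^{\mathrm{u}} = \mu$ for all $z$ where defined by analyticity of $z \mapsto \mu_n \circ \tau_z^\mathrm{u}$ and passing to the limit, so $\widetilde{R}_{\mu}^{(2,\varphi)} = \hat{\tau}_{-i/4}(\widetilde{R}_{\mu}^{\varphi}) = \widetilde{R}_{\mu}^{\varphi}$. Conversely, assuming $\widetilde{R}_{\mu}^{(2,\varphi)} = \widetilde{R}_{\mu}^{\varphi}$, the computation in (b) forces $\widetilde{R}_{\mu}^{\varphi}\nabla^{1/4}\gnsmap(a) = \nabla^{1/4}\widetilde{R}_{\mu}^{\varphi}\gnsmap(a)$ for $a\in\mathcal{M}$, hence $\widetilde{R}_{\mu}^{\varphi}$ commutes with $\nabla^{1/4}$ as unbounded operators and so, by the spectral theorem, with every $\nabla^{it}$; equivalently $\hat{\tau}_t(\widetilde{R}_{\mu}^{\varphi}) = \widetilde{R}_{\mu}^{\varphi}$ for all $t\in\R$. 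By (a) this reads $\widetilde{R}_{\mu \circ \tau_{-t}^{\mathrm{u}}}^{\varphi} = \widetilde{R}_{\mu}^{\varphi}$, and since the map $\nu \mapsto \widetilde{R}_\nu^\varphi$ is injective (the identity $\gnsmap(R_\nu a) = \widetilde{R}_\nu^\varphi \gnsmap(a)$ together with ultraweak density of $\gnsmap(\mathcal{N})$ and \prettyref{thm:L_R}\ref{enu:L_R_maps_inj}) we get $\mu \circ \tau_{-t}^{\mathrm{u}} = \mu$. The final assertion follows from \prettyref{rem:GNS_KMS_sym}, which shows that positive GNS-symmetric $\mu$ are automatically $\tau^{\mathrm{u}}$-invariant. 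The main technical obstacle throughout is the computation of $\hat{\tau}_{-i/2}(\widetilde{R}_{\mu}^{\varphi})$ in terms of $\widetilde{R}_{\overline{\mu}}^{\varphi}$, which requires careful tracking of the semi-universal versions of $W$ and their behaviour under the antipode.
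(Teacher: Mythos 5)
Your parts (a) and (b) are sound. Part (a) is essentially the paper's argument; the smearing detour at the end is unnecessary, since \prettyref{lem:antipode_W_extended} applies to arbitrary $\mu$ and gives $\hat{\tau}_{-i/2}(\widetilde{R}_{\mu}^{\varphi})=\hat{\Rant}\bigl(\bigl(\widetilde{R}_{\overline{\mu}}^{\varphi}\bigr)^{*}\bigr)$ directly, after which Phragmen--Lindel\"{o}f finishes. Part (b) is a genuinely different and shorter route: the paper works with $L_{\mu}$, $\sigma^{\psi}$ and \prettyref{lem:L_R_mod_prop}, first for analytic $\mu$ and then by smearing, whereas you use $\mathfrak{i}^{(2)}=\nabla^{1/4}\gnsmap$ on $\mathcal{M}$ (\prettyref{lem:i_2_gns}) together with $\hat{\tau}_{t}=\Ad{\nabla^{it}}$ on $\Linfty{\hat{\G}}$, handling all $\mu$ in one stroke. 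The one point to make explicit there is that the equality $\bigl(\nabla^{1/4}\widetilde{R}_{\mu}^{\varphi}\nabla^{-1/4}\bigr)\mathfrak{i}^{(2)}(a)=\hat{\tau}_{-i/4}(\widetilde{R}_{\mu}^{\varphi})\mathfrak{i}^{(2)}(a)$ rests on the inclusion $\hat{\tau}_{-i/4}(x)\supseteq\nabla^{1/4}x\nabla^{-1/4}$ for $x\in D(\hat{\tau}_{-i/4})$ (the same fact from \citep{Stratila_Zsido__lectures_vN} used in \prettyref{lem:h_impl_sigma}), whose hypothesis is supplied by (a), and on $\widetilde{R}_{\mu}^{\varphi}\gnsmap(a)=\gnsmap(R_{\mu}a)\in D(\nabla^{1/4})$, which follows from $R_{\mu}(\mathcal{M})\subseteq\mathcal{M}$. (A small slip: the left statements follow by passing to $\G^{\op}$, whose left Haar weight is $\psi$ and whose dual is $\hat{\G}'$, not to the commutant $\G'$, whose left Haar weight is $\varphi'$.)

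Two steps have genuine gaps. In (c) you assert that the analytic extensions of $\hat{\tau}$ ``preserve $\M{\Cz{\hat{\G}}}$ on $\M{\Cz{\hat{\G}}}$-valued inputs''; as a general principle this is false --- the von Neumann algebraic analytic generator applied to a multiplier need not be a multiplier. What is needed (and what the paper does) is to observe that both $\widetilde{R}_{\mu}^{\varphi}$ and the endpoint $\hat{\tau}_{-i/2}(\widetilde{R}_{\mu}^{\varphi})=\hat{\Rant}\bigl(\bigl(\widetilde{R}_{\overline{\mu}}^{\varphi}\bigr)^{*}\bigr)$ lie in $\M{\Cz{\hat{\G}}}$ (resp.\ in $\Cz{\hat{\G}}$ when $\mu\in\Lone{\G}$), and then to invoke the compatibility of the strict and von Neumann algebraic analytic generators, \citep[Propositions 2.44 and 1.24]{Kustermans__one_param_rep}, to conclude $\hat{\tau}_{-i/4}(\widetilde{R}_{\mu}^{\varphi})\in\M{\Cz{\hat{\G}}}$ (resp.\ $\Cz{\hat{\G}}$). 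You have the endpoint formula from (a), so this is fixable, but the compatibility result must be cited. In the converse direction of (d), you infer that $\widetilde{R}_{\mu}^{\varphi}$ ``commutes with $\nabla^{1/4}$ as unbounded operators'' from agreement on $\gnsmap(\mathcal{M})$; agreement on a dense subspace of $D(\nabla^{1/4})$ does not yield the operator inclusion $\widetilde{R}_{\mu}^{\varphi}\nabla^{1/4}\subseteq\nabla^{1/4}\widetilde{R}_{\mu}^{\varphi}$ unless $\gnsmap(\mathcal{M})$ is a core for $\nabla^{1/4}$, which you have not established. The clean fix bypasses this: the hypothesis $\hat{\tau}_{-i/4}(\widetilde{R}_{\mu}^{\varphi})=\widetilde{R}_{\mu}^{\varphi}$ together with the characterisation of the analytic generator gives the genuine inclusion $\widetilde{R}_{\mu}^{\varphi}\nabla^{-1/4}\subseteq\nabla^{-1/4}\widetilde{R}_{\mu}^{\varphi}$, hence commutation with the spectral resolution of $\nabla$ and $\hat{\tau}$-invariance of $\widetilde{R}_{\mu}^{\varphi}$; your injectivity argument then concludes, in agreement with the paper's use of the identity $(\tau_{t}^{\mathrm{u}}\tensor\hat{\tau}_{t})(\Ww)=\Ww$.
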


\begin{proof}
\prettyref{enu:L_R_Hilbert_2__1} By \prettyref{prop:R_L_phi_psi},
$\widetilde{R}_{\mu}^{\varphi}=(\mu\tensor\i)(\Ww^{*})=(\i\tensor\mu)(\hat{\wW})$
and $\widetilde{L}_{\mu}^{\psi}=(\i\tensor\mu)(\vV)$. Hence, $\widetilde{R}_{\mu}^{\varphi}\in D(\hat{\Sant})$
and $\hat{\Sant}(\widetilde{R}_{\mu}^{\varphi})=(\i\tensor\mu)(\hat{\wW}^{*})=\bigl(\widetilde{R}_{\overline{\mu}}^{\varphi}\bigr)^{*}$
by \prettyref{lem:antipode_W_extended}. Using the equality $\hat{\Sant}=\hat{\Rant}\circ\hat{\tau}_{-i/2},$
we have $\widetilde{R}_{\mu}^{\varphi}\in D(\hat{\tau}_{-i/2})$ and
$\hat{\tau}_{-i/2}(\widetilde{R}_{\mu}^{\varphi})=\hat{\Rant}\bigl(\bigl(\widetilde{R}_{\overline{\mu}}^{\varphi}\bigr)^{*}\bigr)$.
Consider the strip $D:=\left\{ z\in\C:-\frac{1}{2}\le\Im z\le0\right\} $,
and define $f:D\to\Linfty{\hat{\G}}$ by $f(z):=\hat{\tau}_{z}(\widetilde{R}_{\mu}^{\varphi})$,
$z\in D$. Then $f$ is bounded and ultraweakly
continuous on $D$ and analytic on $D^{\circ}$. Furthermore, $\left\Vert f(t)\right\Vert =\bigl\Vert\widetilde{R}_{\mu}^{\varphi}\bigr\Vert$
and $\Vert f(t-\frac{i}{2})\Vert=\bigl\Vert\widetilde{R}_{\overline{\mu}}^{\varphi}\bigr\Vert$
for all $t\in\R$. The Phragmen\textendash Lindel\"{o}f three lines
theorem implies that $\left\Vert f(z)\right\Vert \leq\max\bigl(\bigl\Vert\widetilde{R}_{\mu}^{\varphi}\bigr\Vert,\bigl\Vert\widetilde{R}_{\overline{\mu}}^{\varphi}\bigr\Vert\bigr)\leq\left\Vert \mu\right\Vert $
for all $z\in D$, and in particular for $z=-\frac{i}{4}$. 

The assertion about $\widetilde{L}_{\mu}^{\psi}$ follows by using
the equalities $\vV=\wW_{\widehat{\G^{\op}}}$ and $\widehat{\G^{\op}}=\hat{\G}'$
\citep[Section 4]{Kustermans_Vaes__LCQG_von_Neumann} (indeed, it
is proved there that $V=W_{\widehat{\G^{\op}}}$, and equality for
the ambient semi-universal operators follows from \citep[Proposition 6.6]{Kustermans__LCQG_universal}).

\prettyref{enu:L_R_Hilbert_2__2} Denote by $\tau^{\mathrm{u}*}$
the automorphism group $\bigl((\tau_{t}^{\mathrm{u}})^{*}\bigr)_{t\in\R}$
of $\CzU{\G}^{*}$. We first prove the following claims: if $\mu\in\CzU{\G}^{*}$
is entire analytic with respect to $\tau^{\mathrm{u}*}$, then $R_{\mu}$
is $2$-integrable with respect to $\varphi$ and the corresponding
operator $\widetilde{R}_{\mu}^{(2,\varphi)}\in B(\Ltwo{\G})$ equals
$\widetilde{R}_{(\tau^{\mathrm{u}*})_{i/4}(\mu)}^{\varphi}=\hat{\tau}_{-i/4}(\widetilde{R}_{\mu}^{\varphi})$,
and $L_{\mu}$ is $2$-integrable with respect to $\psi$ and the
corresponding operator $\widetilde{L}_{\mu}^{(2,\psi)}\in B(\Ltwo{\G})$
equals $\widetilde{L}_{(\tau^{\mathrm{u}*})_{-i/4}(\mu)}^{\psi}=\hat{\tau}_{-i/4}'(\widetilde{L}_{\mu}^{\psi})$.

We verify only the claim about $L_{\mu}$, assuming that $\mu$ is as in the last paragraph. Write $\mu(\cdot):\C\to\CzU{\G}^{*}$
for the unique entire analytic function that satisfies $\mu(t)=\mu\circ\tau_{t}^{\mathrm{u}}$
for all $t\in\R$. By \prettyref{lem:L_R_mod_prop}, $L_{\mu(t)}\circ\sigma_{t}^{\psi}=\sigma_{t}^{\psi}\circ L_{\mu}$
for every $t\in\R$. Consequently, if $a\in \mathcal{T}_\psi \subseteq\Linfty{\G}$,
then also $L_{\mu}(a)\in \mathcal{T}_\psi$ and $(L_{\mu(z)}\circ\sigma_{z}^{\psi})(a)=(\sigma_{z}^{\psi}\circ L_{\mu})(a)$ for all $z\in \C$ (use \prettyref{prop:R_L_phi_psi}).
Thus, by \prettyref{prop:R_L_phi_psi} again and \prettyref{lem:i_2_gns} \prettyref{enu:i_2_gns__2},
for every $a\in\mathcal{M}_{\psi}\cap \mathcal{T}_\psi$ we have $L_{\mu}(a)\in\mathcal{M}_{\psi}\cap \mathcal{T}_\psi$ and
\begin{equation}
\begin{split}\mathfrak{i}_{\psi}^{(2)}(L_{\mu}(a)) & =\gnsmap_{\psi}((\sigma_{-i/4}^{\psi}\circ L_{\mu})(a))=\gnsmap_{\psi}((L_{\mu(-i/4)}\circ\sigma_{-i/4}^{\psi})(a))\\
 & =\widetilde{L}_{\mu(-i/4)}^{\psi}(\gnsmap_{\psi}(\sigma_{-i/4}^{\psi}(a)))=\widetilde{L}_{\mu(-i/4)}^{\psi}(\mathfrak{i}_{\psi}^{(2)}(a)).
\end{split}
\label{eq:L_R_Hilbert_2__L_psi}
\end{equation}
Let $a\in\mathcal{M}_{\psi}$ (or even $a\in\mathcal{M}_{\psi}^{(2)}$).
Pick a net $\left(a_{\l}\right)_{\l\in\mathcal{I}}$ in $\mathcal{M}_{\psi,\infty}\subseteq\mathcal{M}_{\psi}\cap\mathcal{T}_{\psi}$
such that $a_{\l}\xrightarrow[\l\in\mathcal{I}]{}a$ ultraweakly and
$\mathfrak{i}_{\psi}^{(2)}(a_{\l})\xrightarrow[\l\in\mathcal{I}]{}\mathfrak{i}_{\psi}^{(2)}(a)$
weakly using \prettyref{prop:i_p_core}. Then $L_{\mu}(a_{\l})\xrightarrow[\l\in\mathcal{I}]{}L_{\mu}(a)$
ultraweakly, and since $\widetilde{L}_{\mu(-i/4)}^{\psi}$ is bounded,
$\mathfrak{i}_{\psi}^{(2)}(L_{\mu}(a_{\l}))\xrightarrow[\l\in\mathcal{I}]{}\widetilde{L}_{\mu(-i/4)}^{\psi}(\mathfrak{i}_{\psi}^{(2)}(a))$
weakly by \prettyref{eq:L_R_Hilbert_2__L_psi}. Hence, \prettyref{prop:GL_i}
\prettyref{enu:GL_i_Prop_2_12} implies that $L_{\mu}(a)\in D(\overline{\mathfrak{i}_{\psi}^{(2)}})$
and $\overline{\mathfrak{i}_{\psi}^{(2)}}(L_{\mu}(a))=\widetilde{L}_{\mu(-i/4)}^{\psi}(\mathfrak{i}_{\psi}^{(2)}(a))$,
so that $L_{\mu}$ is $2$-integrable with respect to $\psi$ and
$\widetilde{L}_{\mu}^{(2,\psi)}=\widetilde{L}_{(\tau^{\mathrm{u}*})_{-i/4}(\mu)}^{\psi}$.
Using the formula $(\hat{\tau}_{t}'\tensor\tau_{-t}^{\mathrm{u}})(\vV)=\vV$
for every $t\in\R$ we deduce that $\widetilde{L}_{\mu}^{(2,\psi)}=\hat{\tau}_{-i/4}'((\i\tensor\mu)(\vV))=\hat{\tau}_{-i/4}'(\widetilde{L}_{\mu}^{\psi})$,
and the claim follows.

We now treat the general case.
Let $\mu\in\CzU{\G}^{*}$.
Consider the sequence $\left(\mu_{n}\right)_{n=1}^{\infty}$ in $\CzU{\G}^{*}$
defined by \prettyref{eq:CzU_star_tau_smear} and its properties listed in the succeeding
text. By the foregoing, for every $n\in\N$, $L_{\mu_{n}}$ is $2$-integrable
with respect to $\psi$ and $\widetilde{L}_{\mu_{n}}^{(2,\psi)}=\hat{\tau}_{-i/4}'(\widetilde{L}_{\mu_{n}}^{\psi})$.
Since $\widetilde{L}_{\mu_{n}}^{\psi}\xrightarrow[n\to\infty]{}\widetilde{L}_{\mu}^{\psi}$
ultraweakly, and by \prettyref{enu:L_R_Hilbert_2__1}, $(\hat{\tau}_{-i/4}'(\widetilde{L}_{\mu_{n}}^{\psi}))_{n\in\N}$
is bounded by $\left\Vert \mu\right\Vert $, we have $\hat{\tau}_{-i/4}'(\widetilde{L}_{\mu_{n}}^{\psi})\xrightarrow[n\to\infty]{}\hat{\tau}_{-i/4}'(\widetilde{L}_{\mu}^{\psi})$
ultraweakly. Let $a\in\mathcal{M}_{\psi}$. Then $L_{\mu_{n}}(a)\xrightarrow[n\to\infty]{}L_{\mu}(a)$
ultraweakly by \prettyref{lem:C_z_U_star_conv} (because $\left\Vert \mu_{n}\right\Vert \xrightarrow[n\to\infty]{}\left\Vert \mu\right\Vert $)
and $\mathfrak{i}_{\psi}^{(2)}(L_{\mu_{n}}(a))=\hat{\tau}_{-i/4}'(\widetilde{L}_{\mu_{n}}^{\psi})(\mathfrak{i}_{\psi}^{(2)}(a))\xrightarrow[n\to\infty]{}\hat{\tau}_{-i/4}'(\widetilde{L}_{\mu}^{\psi})(\mathfrak{i}_{\psi}^{(2)}(a))$
weakly. Thus, \prettyref{prop:GL_i} \prettyref{enu:GL_i_Prop_2_12}
implies that $\mathfrak{i}_{\psi}^{(2)}(L_{\mu}(a))=\hat{\tau}_{-i/4}'(\widetilde{L}_{\mu}^{\psi})(\mathfrak{i}_{\psi}^{(2)}(a))$.
This proves that $L_{\mu}$ is $2$-integrable with respect to $\psi$
and $\widetilde{L}_{\mu}^{(2,\psi)}=\hat{\tau}_{-i/4}'(\widetilde{L}_{\mu}^{\psi})$. 

\prettyref{enu:L_R_Hilbert_2__3} Recall that $\hat{\tau}$ restricts
to an automorphism group of the C$^{*}$-algebra $\Cz{\hat{\G}}$,
and also $\hat{\Rant}(\Cz{\hat{\G}})=\Cz{\hat{\G}}$. By \prettyref{prop:R_L_phi_psi} we have $\widetilde{R}_{\mu}^{\varphi}\in\M{\Cz{\hat{\G}}}$, so
also $\hat{\tau}_{-i/2}(\widetilde{R}_{\mu}^{\varphi})=\hat{\Rant}\bigl(\bigl(\widetilde{R}_{\overline{\mu}}^{\varphi}\bigr)^{*}\bigr)\in\M{\Cz{\hat{\G}}}$.
Therefore, \citep[Proposition 2.44]{Kustermans__one_param_rep} implies
that $\widetilde{R}_{\mu}^{\varphi}$ belongs to $D(\hat{\tau}_{-i/2})$
in the sense of the strict extension of $\hat{\tau}$ from $\Cz{\hat{\G}}$
to $\M{\Cz{\hat{\G}}}$, and the two meanings of $\hat{\tau}_{-i/2}(\widetilde{R}_{\mu}^{\varphi})$
coincide. As a result, $\widetilde{R}_{\mu}^{(2,\varphi)}=\hat{\tau}_{-i/4}(\widetilde{R}_{\mu}^{\varphi})\in\M{\Cz{\hat{\G}}}$.
Similarly, if $\mu\in\Lone{\G}$, then $\widetilde{R}_{\mu}^{\varphi},\hat{\Rant}\bigl(\bigl(\widetilde{R}_{\overline{\mu}}^{\varphi}\bigr)^{*}\bigr)\in\Cz{\hat{\G}}$,
and we deduce from \citep[Proposition 1.24]{Kustermans__one_param_rep}
the desired conclusion.

\prettyref{enu:L_R_Hilbert_2__4} If $\mu$ is invariant under $\tau^{\mathrm{u}}$,
then the claim in the proof of \prettyref{enu:L_R_Hilbert_2__2} implies
that $\widetilde{R}_{\mu}^{(2,\varphi)}=\widetilde{R}_{\mu}^{\varphi}$.
Conversely, by \prettyref{enu:L_R_Hilbert_2__2}, the assumption $\widetilde{R}_{\mu}^{(2,\varphi)}=\widetilde{R}_{\mu}^{\varphi}$
means that $\widetilde{R}_{\mu}^{\varphi}=(\mu\tensor\i)(\Ww^{*})$
is invariant under $\hat{\tau}$, and the identity $(\tau_{t}^{\mathrm{u}}\tensor\hat{\tau}_{t})(\Ww)=\Ww$
for every $t\in\R$ yields that $\mu$ is invariant under $\tau^{\mathrm{u}}$.
Lastly, by \prettyref{rem:GNS_KMS_sym}, this is the case when $\mu\in\CzU{\G}_{+}^{*}$
and $R_{\mu}$ is GNS-symmetric with respect to $\varphi$.
\end{proof}

For brevity we formulate the next result only for the left convolution operators; it also has a natural right version.
Recall that $\nu$ denotes the \emph{scaling constant} of $\G$. Define an injective, positive, selfadjoint operator $P$ on $\Ltwo{\G}$ by $P^{it}(\gnsmap_\varphi(a)) := \nu^{t/2} \gnsmap_\varphi(\tau_t(a))$ for all $t\in\R$ and $a\in \mathcal{N}_\varphi$. The operators $\delta,P$ commute and
$(\tau_t)_{t\in\R} = (\Ad{P^{it}})_{t\in\R}$.
\begin{lem}
\label{lem:L_R_Hilbert_2_inverted}Let $e(\cdot),f(\cdot)$ be the spectral measures of $P,\delta$, respectively. Define a subspace $Z$ of $\Ltwo{\G}$ by 
\[
Z:=\bigcup_{n,m=1}^\infty e\left(\left[1/n,n\right]\right) f\left(\left[1/m,m\right]\right) \Ltwo{\G}.
\]
\begin{enumerate}
\item \label{enu:i_2_phi_psi__1}$Z$ is dense in $\Ltwo{\G}$, and for all $z,w\in\C$, $Z \subseteq D(P^z \delta^w)$ and $P^z \delta^w Z$ is dense in $\Ltwo{\G}$.
\item \label{enu:i_2_phi_psi__2}For each $\z,\eta\in Z$, $L_{\om_{\z,\eta}}$ is $2$-integrable
with respect to $\varphi$ and $\widetilde{L}_{\om_{\z,\eta}}^{(2,\varphi)}=\widetilde{L}_{\om_{P^{-\frac{1}{4}}\delta^{\frac{1}{4}}\z,P^{\frac{1}{4}}\delta^{\frac{1}{4}}\eta}}^{\psi}$.
\end{enumerate}
\end{lem}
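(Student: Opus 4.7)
This is purely spectral-theoretic and uses what was noted just before the lemma, namely that $P,\delta$ commute and are injective, positive, selfadjoint. The joint spectral projections $p_{n,m}:=e([1/n,n])f([1/m,m])$ commute, increase to $\one$ strongly (by injectivity of $P$ and $\delta$), and satisfy $p_{n,m}\Ltwo{\G}\subseteq D(P^z\delta^w)$ with $P^z\delta^w$ acting on $p_{n,m}\Ltwo{\G}$ as a bounded invertible operator (via joint bounded functional calculus in $P,\delta$). Hence $Z$ is dense and $P^z\delta^w(Z)=Z$ is also dense.

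\textbf{Plan for part (b).} Fix $\z,\eta\in Z$ and put $\mu:=\om_{\z,\eta}$, $\mu':=\om_{P^{-1/4}\delta^{1/4}\z,P^{1/4}\delta^{1/4}\eta}$ (well defined by (a)). The candidate Hilbert-space operator is the \emph{bounded} operator $T:=\widetilde{L}^{\psi}_{\mu'}$, already produced by \prettyref{prop:R_L_phi_psi}. By the density of $\mathfrak{i}^{(2)}_{\varphi}(\mathcal{M}_{\varphi,\infty})$ in $\Ltwo{\G}$ from \prettyref{prop:i_p_core}, it suffices to verify
\[
\mathfrak{i}^{(2)}_{\varphi}(L_{\mu}(a)) = T\bigl(\mathfrak{i}^{(2)}_{\varphi}(a)\bigr)\qquad(a\in\mathcal{M}_{\varphi,\infty}).
\]
Once this is shown on this dense set, boundedness of $T$ forces $L_{\mu}$ to be $2$-integrable with respect to $\varphi$ with $\widetilde{L}^{(2,\varphi)}_{\mu}=T$, proving the lemma.

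\textbf{Key manipulation.} The identities of \prettyref{lem:L_R_mod_prop}, upon rearrangement, yield $\sigma_{-t}^{\varphi}\circ L_{\nu}=L_{\nu\circ\sigma_{-t}^{\psi,\mathrm{u}}}\circ\tau_{-t}$ for $t\in\R$. Using the Gaussian smearing construction \prettyref{eq:CzU_star_tau_smear} (applied to $\sigma^{\psi,\mathrm{u}}$ in place of $\tau^{\mathrm{u}}$) and the $P$- and $\delta$-analyticity of $\z,\eta\in Z$, one justifies the analytic extension to $t=i/4$ and applies it to $\nu=\mu$. Then, using $\mathfrak{i}^{(2)}_{\varphi}(a)=\gnsmap_{\varphi}(\sigma_{-i/4}^{\varphi}(a))$ from \prettyref{lem:i_2_gns}, one rewrites $\mathfrak{i}^{(2)}_{\varphi}(L_{\mu}(a))$ as $\widetilde{L}^{\varphi}_{\mu\circ\sigma_{-i/4}^{\psi,\mathrm{u}}}\gnsmap_{\varphi}(\tau_{-i/4}(a))$, where $\gnsmap_{\varphi}(\tau_{-i/4}(a))$ is computed from the definition of $P$ and equals a scalar times $P^{1/4}\gnsmap_{\varphi}(a)$. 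The operator $\widetilde{L}^{\varphi}$ is converted into $\widetilde{L}^{\psi}$ via \prettyref{prop:R_L_phi_psi_inverted} (absorbing the $\delta^{1/2}$-twist), and then the standard LCQG identity $\nabla_{\psi}^{it}=\delta^{it}\nabla_{\varphi}^{it}\nu^{it^{2}/2}$, analytically extended, is used to translate the $\nabla_{\psi}^{\pm 1/4}$ factors appearing through $\sigma_{-i/4}^{\psi}$ into $\delta^{1/4}$ and $\nabla_{\varphi}^{\pm 1/4}$ factors; the latter combine with the remaining $P^{1/4}$ and $\nabla_{\varphi}^{-1/4}$ via the relation $\Ad(\nabla_{\varphi}^{it})|_{\Linfty{\hat{\G}}'}=\Ad(P^{it})|_{\Linfty{\hat{\G}}'}$ (valid because the `extra' part of $\nabla_{\varphi}$ is affiliated with $\Linfty{\hat\G}$ and hence commutes with $T\in\Linfty{\hat\G}'$). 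All $\nu$-scalars collapse and one is left exactly with $T\mathfrak{i}^{(2)}_{\varphi}(a)$.

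\textbf{Main obstacle.} The heart of the difficulty is the tight bookkeeping of the operators $\nabla_{\varphi},\nabla_{\psi},P,\delta$ and the scaling constant $\nu$, none of which pairwise commute except for the pair $P,\delta$. One must show that, after the analytic continuation in $t$ and the conversion between $\varphi$- and $\psi$-implementations, the asymmetric $P^{\pm 1/4}$ powers and the symmetric $\delta^{1/4}$ powers land on the correct sides of the $\om$-pairing. The role of the subspace $Z$ is precisely to guarantee that all the unbounded operators involved act boundedly and bijectively on the test vectors $\z,\eta$, removing every domain issue along the way.
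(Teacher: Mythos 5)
Your part (a) and the overall architecture of part (b) (identify the bounded candidate $\widetilde{L}^{\psi}_{\mu'}$, verify the intertwining identity on a dense set of analytic elements by continuing modular covariance relations to $t=i/4$, then extend via \prettyref{prop:i_p_core} and closability of $\mathfrak{i}^{(2)}_{\varphi}$) agree with the paper. However, the specific route you take in the key manipulation has two genuine gaps. First, the rearrangement $\sigma^{\varphi}_{-t}\circ L_{\nu}=L_{\nu\circ\sigma^{\psi,\mathrm{u}}_{-t}}\circ\tau_{-t}$ puts the $\psi$-modular twist on the \emph{functional}: continued to $t=i/4$ with $\nu=\om_{\z,\eta}$ this reads $\om_{\nabla_{\psi}^{1/4}\z,\nabla_{\psi}^{-1/4}\eta}$, so you need $\z,\eta$ to be analytic for $\nabla_{\psi}$. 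Membership in $Z$ only controls the spectral calculus of $P$ and $\delta$ and gives no information about domains of powers of $\nabla_{\psi}$ (whose formula involves $\nabla_{\varphi}$); smearing $\mu$ with respect to $\sigma^{\psi,\mathrm{u}}$ destroys the vector-functional form and would force an additional limiting argument you do not supply. For the same reason $\tau_{-i/4}(a)$ is problematic: elements of $\mathcal{M}_{\varphi,\infty}$ are entire analytic for $\sigma^{\varphi}$, not for $\tau$.

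Second, the relation $\Ad{\nabla_{\varphi}^{it}}|_{\Linfty{\hat\G}'}=\Ad{P^{it}}|_{\Linfty{\hat\G}'}$ that you invoke at the end is false in general, and the justification is backwards. Both unitary groups implement $\hat\tau_{t}$ on $\Linfty{\hat\G}$, so $u_{t}:=P^{it}\nabla_{\varphi}^{-it}$ commutes with $\Linfty{\hat\G}$, i.e.\ $u_{t}\in\Linfty{\hat\G}'$ (explicitly $u_{t}=\hat{J}\hat{\delta}^{it}\hat{J}$ up to a scalar). Thus the ``extra part'' is affiliated with $\Linfty{\hat\G}'$, not with $\Linfty{\hat\G}$, and it does \emph{not} commute with $T\in\Linfty{\hat\G}'$ unless $\hat{\delta}$ is central; whenever $\hat\G$ is non-unimodular and $\Linfty{\hat\G}$ is a factor the two restrictions genuinely differ. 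The paper's computation avoids both problems by a different bookkeeping: writing $\sigma_{t}^{\varphi}=\Ad{\delta^{-it}}\circ\sigma_{t}^{\psi}$, pushing $\Ad{\delta^{-it}}$ through $L_{\om_{\z,\eta}}$ via $\Delta(\delta^{it})=\delta^{it}\tensor\delta^{it}$, and then applying $L_{\om}\circ\sigma_{t}^{\psi}=\sigma_{t}^{\psi}\circ L_{\om\circ\tau_{-t}}$ from \prettyref{lem:L_R_mod_prop}, one obtains $\sigma_{t}^{\varphi}(L_{\om_{\z,\eta}}(a))=L_{\om_{P^{-it}\delta^{-it}\z,P^{-it}\delta^{-it}\eta}}(\sigma_{t}^{\varphi}(a))$, so the functional is twisted only by $P$ and $\delta$ (exactly what $Z$ controls) and the argument only by $\sigma^{\varphi}$ (controlled by $a\in\mathcal{T}_{\varphi}$); no commutation of $\nabla_{\varphi}$ past $\Linfty{\hat\G}'$ is ever needed. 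You should redo the key manipulation along these lines.
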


\begin{proof}
\prettyref{enu:i_2_phi_psi__1} Injectivity of $P$ and $\delta$ implies the first assertion.
Since, in addition, $P$ and $\delta$ also commute, the second assertion follows. Notice that 
for every $\xi\in Z$, the function $\C^{2}\ni(z,w)\mapsto P^{z}\delta^{w}\xi=\delta^{w}P^{z}\xi$
is (well defined and) entire analytic. 

\prettyref{enu:i_2_phi_psi__2} We first show that for every 
$a\in\mathcal{M}_{\varphi}\cap\mathcal{T}_{\varphi}$ we have 
\begin{equation}
\mathfrak{i}_{\varphi}^{(2)}(L_{\om_{\z,\eta}}(a))=\widetilde{L}_{\om_{P^{-\frac{1}{4}}\delta^{\frac{1}{4}}\z,P^{\frac{1}{4}}\delta^{\frac{1}{4}}\eta}}^{\psi}(\mathfrak{i}_{\varphi}^{(2)}(a)).\label{eq:L_R_Hilbert_2_inverted}
\end{equation}
So let $a\in\mathcal{M}_{\varphi}\cap\mathcal{T}_{\varphi}$. For every $t\in\R$, using that $\sigma_{t}^{\psi}(\cdot)=\delta^{it}\sigma_{t}^{\varphi}(\cdot)\delta^{-it}$,
$\Delta(\delta^{it})=\delta^{it}\tensor\delta^{it}$, 
$\sigma_s^\varphi(\delta^{it}) = \sigma_s^\psi(\delta^{it}) =\nu^{ist}\delta^{it}$ for $s\in\R$,
and $L_{\om}\circ\sigma_{t}^{\psi}=\sigma_{t}^{\psi}\circ L_{\om\circ\tau_{-t}}$
for $\om\in\Lone{\G}$ (\prettyref{lem:L_R_mod_prop}), we have 
\[
\begin{split}\sigma_{t}^{\varphi}(L_{\om_{\z,\eta}}(a)) & =\delta^{-it}\sigma_{t}^{\psi}(L_{\om_{\z,\eta}}(a))\delta^{it}=\sigma_{t}^{\psi}(\delta^{-it}L_{\om_{\z,\eta}}(a)\delta^{it})\\
 & =\sigma_{t}^{\psi}(L_{\om_{\delta^{-it}\z,\delta^{-it}\eta}}(\delta^{-it}a\delta^{it}))=L_{\om_{P^{-it}\delta^{-it}\z,P^{-it}\delta^{-it}\eta}}(\sigma_{t}^{\psi}(\delta^{-it}a\delta^{it})) \\
 &
 = L_{\om_{P^{-it}\delta^{-it}\z,P^{-it}\delta^{-it}\eta}}(\sigma_{t}^{\varphi}(a)).
\end{split}
\]
Hence, $L_{\om_{\z,\eta}}(a)$ is entire analytic with respect
to $\sigma^{\varphi}$, and for all $z\in\C$ 
\[
\sigma_{z}^{\varphi}(L_{\om_{\z,\eta}}(a))=L_{\om_{P^{-iz}\delta^{-iz}\z,P^{-i\overline{z}}\delta^{-i\overline{z}}\eta}}(\sigma_{z}^{\varphi}(a)).
\]
Therefore, using \prettyref{prop:R_L_phi_psi_inverted} \prettyref{enu:R_L_phi_psi_inverted__1} and \prettyref{enu:R_L_phi_psi_inverted__2} and \prettyref{lem:i_2_gns} \prettyref{enu:i_2_gns__2} we obtain
$L_{\om_{\z,\eta}}(a)\in\mathcal{M}_{\varphi}\cap\mathcal{T}_{\varphi}$
and
\[
\begin{split}
\mathfrak{i}_{\varphi}^{(2)}(L_{\om_{\z,\eta}}(a)) & =\gnsmap_{\varphi}(\sigma_{-i/4}^{\varphi}(L_{\om_{\z,\eta}}(a)))=\gnsmap_{\varphi}(L_{\om_{P^{-\frac{1}{4}}\delta^{-\frac{1}{4}}\z,P^{\frac{1}{4}}\delta^{\frac{1}{4}}\eta}}(\sigma_{-i/4}^{\varphi}(a))) \\
 &
 = \widetilde{L}_{\om_{P^{-\frac{1}{4}}\delta^{\frac{1}{4}}\z,P^{\frac{1}{4}}\delta^{\frac{1}{4}}\eta}}^\psi
 (\gnsmap_{\varphi}(\sigma_{-i/4}^{\varphi}(a)))
 = \widetilde{L}_{\om_{P^{-\frac{1}{4}}\delta^{\frac{1}{4}}\z,P^{\frac{1}{4}}\delta^{\frac{1}{4}}\eta}}^\psi (\mathfrak{i}_{\varphi}^{(2)}(a)),
\end{split}
\]
proving \prettyref{eq:L_R_Hilbert_2_inverted}. Relying on this, on \prettyref{prop:i_p_core} and on \prettyref{prop:GL_i}
\prettyref{enu:GL_i_Prop_2_12} again we conclude that \prettyref{eq:L_R_Hilbert_2_inverted}
holds for all $a\in\mathcal{M}_{\varphi}$.
\end{proof}

The following result provides a key characterisation of the KMS-implementations of the convolution operators.

\begin{prop}
\label{prop:L_R_form_KMS}Let $T:\Linfty{\G}\to\Linfty{\G}$ be a
completely positive, normal operator that is $2$-integrable with
respect to $\varphi$, resp.~$\psi$, and let $\widetilde{T}^{(2,\varphi)}$,
resp.~$\widetilde{T}^{(2,\psi)}$, be the associated bounded operator
on $\Ltwo{\G}$. Then $T=R_{\mu}$, resp.~$T=L_{\mu}$,
for some $\mu\in\CzU{\G}_{+}^{*}$, if and only if $\widetilde{T}^{(2,\varphi)}\in\Linfty{\hat{\G}}$,
resp.~$\widetilde{T}^{(2,\psi)}\in\Linfty{\hat{\G}}'$. If $\hat{\G}$
is amenable then the same assertions are true for normal, completely bounded
$T$ and $\mu\in\CzU{\G}^{*}$ (not necessarily positive).
\end{prop}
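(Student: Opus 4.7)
My plan is to reduce both implications to \prettyref{thm:L_R} \prettyref{enu:L_R_CP_mult} (or \prettyref{enu:L_R_CB_mult_amen} in the amenable case) by showing that $\widetilde{T}^{(2,\varphi)}\in\Linfty{\hat{\G}}$ is equivalent to $T$ commuting with every $L_{\om}$, $\om\in\Lone{\G}$. The forward direction is immediate from \prettyref{lem:L_R_Hilbert_2} \prettyref{enu:L_R_Hilbert_2__2}: if $T=R_{\mu}$, then $\widetilde{R}_{\mu}^{(2,\varphi)}=\hat{\tau}_{-i/4}(\widetilde{R}_{\mu}^{\varphi})$ lies in $\Linfty{\hat{\G}}$, since $\widetilde{R}_{\mu}^{\varphi}\in\M{\Cz{\hat{\G}}}\subseteq\Linfty{\hat{\G}}$ by \prettyref{prop:R_L_phi_psi} and $\hat{\tau}$ restricts to an automorphism group of $\Linfty{\hat{\G}}$.

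For the converse, the crucial input will be \prettyref{lem:L_R_Hilbert_2_inverted} \prettyref{enu:i_2_phi_psi__2}: for all $\z,\eta\in Z$, the operator $L_{\om_{\z,\eta}}$ is $2$-integrable with respect to $\varphi$ and $\widetilde{L}_{\om_{\z,\eta}}^{(2,\varphi)}$ equals a GNS-implementation with respect to $\psi$ of a left convolution operator, hence lies in $\M{\Cz{\hat{\G}'}}\subseteq\Linfty{\hat{\G}}'$ by \prettyref{prop:R_L_phi_psi}. Assuming $\widetilde{T}^{(2,\varphi)}\in\Linfty{\hat{\G}}$, these two bounded operators on $\Ltwo{\G}$ then commute. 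Pick $a\in\mathcal{M}_{\varphi}$: \prettyref{rem:normal_2_integ}, together with $2$-integrability and normality of both $T$ and $L_{\om_{\z,\eta}}$, ensures that $T(L_{\om_{\z,\eta}}(a))$ and $L_{\om_{\z,\eta}}(T(a))$ lie in $D(\overline{\mathfrak{i}_{\varphi}^{(2)}})$, and a short calculation gives
\[
\overline{\mathfrak{i}_{\varphi}^{(2)}}(T(L_{\om_{\z,\eta}}(a)))=\widetilde{T}^{(2,\varphi)}\widetilde{L}_{\om_{\z,\eta}}^{(2,\varphi)}\mathfrak{i}_{\varphi}^{(2)}(a)=\widetilde{L}_{\om_{\z,\eta}}^{(2,\varphi)}\widetilde{T}^{(2,\varphi)}\mathfrak{i}_{\varphi}^{(2)}(a)=\overline{\mathfrak{i}_{\varphi}^{(2)}}(L_{\om_{\z,\eta}}(T(a))).
\]
Injectivity of $\overline{\mathfrak{i}_{\varphi}^{(2)}}$ from \prettyref{prop:GL_i} \prettyref{enu:GL_i_Prop_2_12} then yields $T\circ L_{\om_{\z,\eta}}=L_{\om_{\z,\eta}}\circ T$ on $\mathcal{M}_{\varphi}$, and joint normality extends this identity to all of $\Linfty{\G}$.

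To conclude, I use density of $Z$ in $\Ltwo{\G}$ from \prettyref{lem:L_R_Hilbert_2_inverted} \prettyref{enu:i_2_phi_psi__1}, which makes $\{\om_{\z,\eta}:\z,\eta\in Z\}$ span a norm-dense subspace of $\Lone{\G}$; since $\Lone{\G}\ni\om\mapsto L_{\om}\in B(\Linfty{\G})$ is contractive, the commutation passes to every $\om\in\Lone{\G}$, and \prettyref{thm:L_R} \prettyref{enu:L_R_CP_mult} supplies the desired $\mu\in\CzU{\G}_{+}^{*}$ with $T=R_{\mu}$. The $L_{\mu}$ case is symmetric, via the right-handed analogue of \prettyref{lem:L_R_Hilbert_2_inverted} alluded to in the text just before its statement, and via replacing $\Linfty{\hat{\G}}$ by $\Linfty{\hat{\G}}'=\Linfty{\hat{\G}'}$. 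The amenable case is handled identically with $\CzU{\G}_{+}^{*}$ replaced by $\CzU{\G}^{*}$ using \prettyref{thm:L_R} \prettyref{enu:L_R_CB_mult_amen}. The principal technical obstacle will be the bookkeeping needed to lift $L^{2}$-level commutation to the von Neumann algebra level: since $T(a)$ and $L_{\om_{\z,\eta}}(a)$ typically lie only in $\mathcal{M}_{\varphi}^{(2)}$, not $\mathcal{M}_{\varphi}$, one must operate with the closure $\overline{\mathfrak{i}_{\varphi}^{(2)}}$ rather than the bare $\mathfrak{i}_{\varphi}^{(2)}$, and this is exactly what \prettyref{rem:normal_2_integ} enables.
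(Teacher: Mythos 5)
Your proof is correct and follows essentially the same route as the paper's: both reduce the statement to commutation of $T$ with the operators $L_{\om_{\z,\eta}}$, $\z,\eta\in Z$, and transfer this to commutation of $\widetilde{T}^{(2,\varphi)}$ with $\widetilde{L}_{\om_{\z,\eta}}^{(2,\varphi)}=\widetilde{L}_{\om_{P^{-1/4}\delta^{1/4}\z,P^{1/4}\delta^{1/4}\eta}}^{\psi}$ via \prettyref{lem:L_R_Hilbert_2_inverted}, \prettyref{rem:normal_2_integ} and the injectivity and density statements of \prettyref{prop:GL_i}. The only cosmetic difference is that you handle the forward implication directly through \prettyref{lem:L_R_Hilbert_2}, whereas the paper runs a single chain of equivalences ending with the weak-operator density of $\{\widetilde{L}_{\om_{P^{-1/4}\delta^{1/4}\z,P^{1/4}\delta^{1/4}\eta}}^{\psi}:\z,\eta\in Z\}$ in $\Linfty{\hat{\G}}'$; both are valid.
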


\begin{proof}
We prove the assertions concerning $R_{\mu}$. As in the proof
of \prettyref{prop:L_R_form_GNS}, by \prettyref{thm:L_R}, $T$ being of the desired form is
equivalent to $T$ commuting with the operators $L_{\om}$, $\om\in\Lone{\G}$.
We use \prettyref{lem:L_R_Hilbert_2_inverted}
and its notation. Put $A:=\left\{ \om_{\z,\eta}:\z,\eta\in Z\right\} $.
The set $A$ dense in $\Lone{\G}$ because $Z$ is dense in $\Ltwo{\G}$.
Hence, the operator $T$ has the desired form if and only if it commutes
with the operators $L_{\om}$, $\om\in A$. Let $\z,\eta\in Z$. Then
$T$ commutes with $L_{\om_{\z,\eta}}$ if and only if $TL_{\om_{\z,\eta}}=L_{\om_{\z,\eta}}T$
on $\mathcal{M}_{\varphi}^{(2)}$ by ultraweak density of $\mathcal{M}_{\varphi}^{(2)}$
in $\Linfty{\G}$. By \prettyref{rem:normal_2_integ}, injectivity
of $\overline{\mathfrak{i}^{(2)}}$ (\prettyref{prop:GL_i} \prettyref{enu:GL_i_Prop_2_12})
and continuity, this is equivalent to $\widetilde{T}^{(2,\varphi)}\widetilde{L}_{\om_{\z,\eta}}^{(2,\varphi)}=\widetilde{L}_{\om_{\z,\eta}}^{(2,\varphi)}\widetilde{T}^{(2,\varphi)}$
on $\mathfrak{i}^{(2)}(\mathcal{M}_{\varphi}^{(2)})$, thus everywhere on $\Ltwo{\QG}$
by density of this subspace (\prettyref{prop:GL_i} \prettyref{enu:GL_i_Prop_2_11}).
So $T$ has the desired form if and only if $\widetilde{T}^{(2,\varphi)}$
commutes with $B:=\{\widetilde{L}_{\om_{P^{-\frac{1}{4}}\delta^{\frac{1}{4}}\z,P^{\frac{1}{4}}\delta^{\frac{1}{4}}\eta}}^{\psi}:\z,\eta\in Z\}$.
Density of $P^{-\frac{1}{4}}\delta^{\frac{1}{4}}Z$
and of $P^{\frac{1}{4}}\delta^{\frac{1}{4}}Z$ in $\Ltwo{\G}$
and \prettyref{prop:R_L_phi_psi} imply that $B$ is dense in $\Linfty{\hat{\G}}'$ in the weak operator topology,
and the assertion follows.
\end{proof}
We now supplement \prettyref{lem:C_z_U_star_conv} by adding a few
more conditions.
\begin{lem}
\label{lem:C_z_U_star_conv_Hilbert}Let $\left(\mu_{i}\right)_{i\in\mathcal{I}}$
be a bounded net in $\CzU{\G}^{*}$, and let $\mu\in\CzU{\G}^{*}$.
Consider the following conditions:\renewcommand{\theenumi}{(a)}
\begin{enumerate}
\item \label{enu:C_z_U_star_conv_Hilbert__1}$\mu_{i}\xrightarrow[i\in\mathcal{I}]{}\mu$
in the $w^{*}$-topology;\renewcommand{\theenumi}{(b)}
\item \label{enu:C_z_U_star_conv_Hilbert__2}$\widetilde{R}_{\mu_{i}}^{\varphi}\xrightarrow[i\in\mathcal{I}]{}\widetilde{R}_{\mu}^{\varphi}$
ultraweakly in $\Linfty{\hat{\G}}$;\renewcommand{\theenumi}{(b')}
\item \label{enu:C_z_U_star_conv_Hilbert__2_strict}$\widetilde{R}_{\mu_{i}}^{\varphi}\xrightarrow[i\in\mathcal{I}]{}\widetilde{R}_{\mu}^{\varphi}$
strictly in $\M{\Cz{\hat{\G}}}$;\renewcommand{\theenumi}{(c)}
\item \label{enu:C_z_U_star_conv_Hilbert__3}$\widetilde{R}_{\mu_{i}}^{(2,\varphi)}\xrightarrow[i\in\mathcal{I}]{}\widetilde{R}_{\mu}^{(2,\varphi)}$
ultraweakly in $\Linfty{\hat{\G}}$;\renewcommand{\theenumi}{(c')}
\item \label{enu:C_z_U_star_conv_Hilbert__3_strict}$\widetilde{R}_{\mu_{i}}^{(2,\varphi)}\xrightarrow[i\in\mathcal{I}]{}\widetilde{R}_{\mu}^{(2,\varphi)}$
strictly in $\M{\Cz{\hat{\G}}}$.
\end{enumerate}
Then \prettyref{enu:C_z_U_star_conv_Hilbert__1}$\iff$\prettyref{enu:C_z_U_star_conv_Hilbert__2}$\iff$\prettyref{enu:C_z_U_star_conv_Hilbert__3},
and they are implied by either of \prettyref{enu:C_z_U_star_conv_Hilbert__2_strict},
\prettyref{enu:C_z_U_star_conv_Hilbert__3_strict}. If $\left\Vert \mu_{i}\right\Vert =\left\Vert \mu\right\Vert $
for all $i\in\mathcal{I}$, then all these conditions are equivalent.
Similar results hold for operators of the form $L_{\mu}$.

\end{lem}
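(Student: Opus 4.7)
The plan is to leverage two identities established earlier in the paper: \prettyref{prop:R_L_phi_psi} gives $\widetilde{R}_\mu^\varphi = (\mu \tensor \i)(\Ww^*) \in \M{\Cz{\hat{\G}}}$, and \prettyref{lem:L_R_Hilbert_2}\prettyref{enu:L_R_Hilbert_2__1}--\prettyref{enu:L_R_Hilbert_2__2} give $\widetilde{R}_\mu^{(2,\varphi)} = \hat{\tau}_{-i/4}(\widetilde{R}_\mu^\varphi)$ with the uniform bound $\bigl\Vert\widetilde{R}_\mu^{(2,\varphi)}\bigr\Vert \le \left\Vert \mu \right\Vert$. I shall also use the symmetric version of the universality of the semi-universal unitary $\Ww$, namely that the subspace $\{(\i \tensor \omega)(\Ww^*) : \omega \in \Lone{\hat{\G}}\}$ is norm-dense in $\CzU{\G}$.

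The implications \prettyref{enu:C_z_U_star_conv_Hilbert__2_strict}$\implies$\prettyref{enu:C_z_U_star_conv_Hilbert__2} and \prettyref{enu:C_z_U_star_conv_Hilbert__3_strict}$\implies$\prettyref{enu:C_z_U_star_conv_Hilbert__3} are routine: non-degeneracy of the action of $\Cz{\hat{\G}}$ on $\Ltwo{\G}$ together with Cohen factorisation allows us to write any $\z \in \Ltwo{\G}$ as $\z = x\z'$ with $x \in \Cz{\hat{\G}}$, $\z' \in \Ltwo{\G}$, so the strict convergence gives $\om_{\z,\eta}\bigl(\widetilde{R}_{\mu_i}^\varphi\bigr) = \bigl\langle \widetilde{R}_{\mu_i}^\varphi x \z', \eta \bigr\rangle \to \bigl\langle \widetilde{R}_\mu^\varphi x \z', \eta \bigr\rangle$; boundedness of the net plus norm density of such functionals in $\Lone{\hat{\G}}$ yields the ultraweak convergence.

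For \prettyref{enu:C_z_U_star_conv_Hilbert__1}$\iff$\prettyref{enu:C_z_U_star_conv_Hilbert__2}, the slice identity $\omega\bigl(\widetilde{R}_{\mu_i}^\varphi\bigr) = \mu_i\bigl((\i \tensor \omega)(\Ww^*)\bigr)$ for $\omega \in \Lone{\hat{\G}}$ directly converts ultraweak convergence of the Hilbert-space operators into $w^*$-convergence of $(\mu_i)$ against the dense subspace $\{(\i \tensor \omega)(\Ww^*) : \omega \in \Lone{\hat{\G}}\}$ of $\CzU{\G}$, and conversely; boundedness of $(\mu_i)$ closes each direction. For \prettyref{enu:C_z_U_star_conv_Hilbert__2}$\iff$\prettyref{enu:C_z_U_star_conv_Hilbert__3}, I would smear $\omega \in \Lone{\hat{\G}}$ against the predual action of $\hat{\tau}$ to obtain a norm-dense subspace of functionals entire analytic for this action; for such $\omega$ the identity $\omega\bigl(\widetilde{R}_\mu^{(2,\varphi)}\bigr) = (\omega \circ \hat{\tau}_{-i/4})\bigl(\widetilde{R}_\mu^\varphi\bigr)$ holds with $\omega \circ \hat{\tau}_{\pm i/4} \in \Lone{\hat{\G}}$, so ultraweak convergence at analytic functionals transfers between the two nets. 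The uniform bounds $\bigl\Vert\widetilde{R}_{\mu_i}^\varphi\bigr\Vert, \bigl\Vert\widetilde{R}_{\mu_i}^{(2,\varphi)}\bigr\Vert \le \left\Vert \mu_i\right\Vert$ and the norm density of the analytic functionals then upgrade this to full ultraweak convergence.

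In the norm-equality case $\left\Vert \mu_i\right\Vert = \left\Vert \mu\right\Vert$, \prettyref{lem:C_z_U_star_conv} promotes (a) to point-norm convergence $(R_{\mu_i})_*(\om) \to (R_\mu)_*(\om)$ for $\om \in \Lone{\G}$. Using that $\mu \mapsto \widetilde{R}_\mu^\varphi$ is an anti-homomorphism we have $\widetilde{R}_{\mu_i}^\varphi \widetilde{R}_\om^\varphi = \widetilde{R}_{\om\conv\mu_i}^\varphi$, and the contractive dependence $\om \mapsto \widetilde{R}_\om^\varphi$ yields $\widetilde{R}_{\mu_i}^\varphi \widetilde{R}_\om^\varphi \to \widetilde{R}_\mu^\varphi \widetilde{R}_\om^\varphi$ in norm for each $\om \in \Lone{\G}$. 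Combined with the norm density of $\{\widetilde{R}_\om^\varphi : \om \in \Lone{\G}\}$ in $\Cz{\hat{\G}}$ (see \prettyref{prop:R_L_phi_psi}) and the analogous treatment of the left action (using convergence of $(L_{\mu_i})_*$), this yields (b'). The same argument applied at the KMS level, using $\widetilde{R}_{\mu_i}^{(2,\varphi)} \widetilde{R}_\om^{(2,\varphi)} = \widetilde{R}_{\om\conv\mu_i}^{(2,\varphi)}$ (from the multiplicativity of $\hat{\tau}_{-i/4}$ on the joint analytic domain, which contains both factors by \prettyref{lem:L_R_Hilbert_2}\prettyref{enu:L_R_Hilbert_2__1}) and the norm density in $\Cz{\hat{\G}}$ of $\{\widetilde{R}_\om^{(2,\varphi)} : \om \in \Lone{\G}\}$ (obtained by letting $\om$ range over analytic elements of $\Lone{\G}$, since then $\widetilde{R}_\om^{(2,\varphi)} = \widetilde{R}_{(\tau^{\mathrm{u}*})_{i/4}(\om)}^\varphi$ and $(\tau^{\mathrm{u}*})_{i/4}$ is a bijection on analytic functionals), delivers (c'). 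The main subtlety lies in managing the analytic domains for $\hat{\tau}_{-i/4}$ throughout and verifying that the multiplicativity and density statements interact correctly with the closed (and unbounded) nature of $\hat{\tau}_{-i/4}$.
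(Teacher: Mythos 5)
Your proposal is correct, and while the overall skeleton (slice identities against $\Ww$, boundedness, and the relation $\widetilde{R}_{\mu}^{(2,\varphi)}=\hat{\tau}_{-i/4}(\widetilde{R}_{\mu}^{\varphi})$) matches the paper's, you close the cycle of equivalences differently in two places. First, you prove \prettyref{enu:C_z_U_star_conv_Hilbert__2}$\iff$\prettyref{enu:C_z_U_star_conv_Hilbert__3} symmetrically by pairing against $\hat{\tau}$-analytic elements of $\Lone{\hat{\G}}$ obtained by smearing, using that $\om(\hat{\tau}_{-i/4}(x))=(\om\circ\hat{\tau}_{-i/4})(x)$ for such $\om$ and $x\in D(\hat{\tau}_{-i/4})$; the paper instead only proves \prettyref{enu:C_z_U_star_conv_Hilbert__2}$\implies$\prettyref{enu:C_z_U_star_conv_Hilbert__3} (via boundedness and closedness of $\hat{\tau}_{-i/4}$) and returns via \prettyref{enu:C_z_U_star_conv_Hilbert__3}$\implies$\prettyref{enu:C_z_U_star_conv_Hilbert__1}, which it establishes through the Haagerup $L^{1}$-pairing $\tr\bigl(\mathfrak{i}_{\varphi}^{(2)}(R_{\nu}(x))\cdot\mathfrak{i}_{\varphi}^{(2)}(y)\bigr)=\tr\bigl(R_{\nu}(x)\cdot\mathfrak{i}_{\varphi}^{(1)}(y^{*})\bigr)$ of \prettyref{prop:GL_i} \prettyref{enu:GL_i_Prop_2_10} and \prettyref{lem:C_z_U_star_conv}. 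Your route stays entirely at the level of the one-parameter group and its predual and avoids the $L^{p}$-machinery, at the cost of having to manage the two-sided invertibility $\hat{\tau}_{i/4}\circ\hat{\tau}_{-i/4}=\i$ on the relevant domain. Second, in the norm-equality case the paper obtains (b') by citing \citep[Theorem 4.6]{Runde_Viselter_LCQGs_PosDef} and obtains the norm-density of $\bigl\{\widetilde{R}_{\om}^{(2,\varphi)}:\om\in\Lone{\G}\bigr\}$ in $\Cz{\hat{\G}}$ via a core argument for $\hat{\tau}_{-i/4}$; your self-contained anti-homomorphism argument for (b') and your density argument via the bijection $(\tau^{\mathrm{u}*})_{i/4}$ on analytic functionals are both valid and, if anything, more transparent. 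The one step you should make fully explicit is the multiplicativity of the analytic generator, $\hat{\tau}_{-i/4}(xy)=\hat{\tau}_{-i/4}(x)\hat{\tau}_{-i/4}(y)$ for $x,y\in D(\hat{\tau}_{-i/4})$, which underlies $\widetilde{R}_{\mu_{i}}^{(2,\varphi)}\widetilde{R}_{\om}^{(2,\varphi)}=\widetilde{R}_{\om\conv\mu_{i}}^{(2,\varphi)}$; this is standard (see e.g.\ \citep{Kustermans__one_param_rep}) but is exactly the point where a citation is needed, as you correctly flag.
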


\begin{proof}
The equivalence \prettyref{enu:C_z_U_star_conv_Hilbert__1}$\iff$\prettyref{enu:C_z_U_star_conv_Hilbert__2}
is trivial from \prettyref{prop:R_L_phi_psi} because $\bigl\{(\i\tensor\om)(\Ww):\om\in\Lone{\hat{\G}}\bigr\}$
is dense in $\CzU{\G}$. 

Since $\left(\mu_{i}\right)_{i\in\mathcal{I}}$ is bounded, so is
$\bigl(\widetilde{R}_{\mu_{i}}^{\varphi}\bigr)_{i\in\mathcal{I}}$,
and the implication \prettyref{enu:C_z_U_star_conv_Hilbert__2_strict}$\implies$\prettyref{enu:C_z_U_star_conv_Hilbert__2}
follows. Similarly, $\bigl(\widetilde{R}_{\mu_{i}}^{(2,\varphi)}\bigr)_{i\in\mathcal{I}}$
is bounded by \prettyref{lem:L_R_Hilbert_2}, and so \prettyref{enu:C_z_U_star_conv_Hilbert__3_strict}$\implies$\prettyref{enu:C_z_U_star_conv_Hilbert__3}.

\prettyref{enu:C_z_U_star_conv_Hilbert__3}$\implies$\prettyref{enu:C_z_U_star_conv_Hilbert__1}
Assume that $\widetilde{R}_{\mu_{i}}^{(2,\varphi)}\xrightarrow[i\in\mathcal{I}]{}\widetilde{R}_{\mu}^{(2,\varphi)}$
ultraweakly. By \prettyref{prop:GL_i} \prettyref{enu:GL_i_Prop_2_10},
for all $\nu\in\CzU{\G}^{*}$ and $x,y\in\mathcal{M}_{\varphi}$,
\begin{equation}
\bigl\langle\widetilde{R}_{\nu}^{(2,\varphi)}\mathfrak{i}_{\varphi}^{(2)}(x),\mathfrak{i}_{\varphi}^{(2)}(y)\bigr\rangle=\bigl\langle\mathfrak{i}_{\varphi}^{(2)}(R_{\nu}(x)),\mathfrak{i}_{\varphi}^{(2)}(y)\bigr\rangle=\tr\left(\mathfrak{i}_{\varphi}^{(2)}(R_{\nu}(x))\cdot\mathfrak{i}_{\varphi}^{(2)}(y^{*})\right)=\tr\left(R_{\nu}(x)\cdot\mathfrak{i}_{\varphi}^{(1)}(y^{*})\right).\label{eq:C_z_U_star_conv_Hilbert}
\end{equation}
Since $\bigl(R_{\mu_{i}}\bigr)_{i\in\mathcal{I}}$ is bounded and
$\mathfrak{i}_{\varphi}^{(1)}(\mathcal{M}_{\varphi})$ is dense in
$\Lone{\G}$ by \prettyref{prop:GL_i} \prettyref{enu:GL_i_Prop_2_11},
we deduce from \prettyref{eq:C_z_U_star_conv_Hilbert} that $R_{\mu_{i}}(x)\xrightarrow[i\in\mathcal{I}]{}R_{\mu}(x)$
ultraweakly for every $x\in\mathcal{M}_{\varphi}$. This implies that
$\mu_{i}\xrightarrow[i\in\mathcal{I}]{}\mu$ in the $w^{*}$-topology
by \prettyref{lem:C_z_U_star_conv} implication \prettyref{enu:C_z_U_star_conv__3}$\implies$\prettyref{enu:C_z_U_star_conv__1},
because $\mathcal{M}_{\varphi}\cap\Cz{\G}$ is dense in $\Cz{\G}$.

\prettyref{enu:C_z_U_star_conv_Hilbert__2}$\implies$\prettyref{enu:C_z_U_star_conv_Hilbert__3}
If $\widetilde{R}_{\mu_{i}}^{\varphi}\xrightarrow[i\in\mathcal{I}]{}\widetilde{R}_{\mu}^{\varphi}$
ultraweakly in $\Linfty{\hat{\G}}$, then the boundedness of $\bigl(\widetilde{R}_{\mu_{i}}^{(2,\varphi)}\bigr)_{i\in\mathcal{I}}$
implies that $\widetilde{R}_{\mu_{i}}^{(2,\varphi)}=\hat{\tau}_{-i/4}(\widetilde{R}_{\mu_{i}}^{\varphi})\xrightarrow[i\in\mathcal{I}]{}\hat{\tau}_{-i/4}(\widetilde{R}_{\mu}^{\varphi})=\widetilde{R}_{\mu}^{(2,\varphi)}$
ultraweakly (\prettyref{lem:L_R_Hilbert_2}).

Assume that $\left\Vert \mu_{i}\right\Vert =\left\Vert \mu\right\Vert $
for all $i\in\mathcal{I}$ and that $\mu_{i}\xrightarrow[i\in\mathcal{I}]{}\mu$,
equivalently $\overline{\mu_{i}}\xrightarrow[i\in\mathcal{I}]{}\overline{\mu}$,
in the $w^{*}$-topology. Since $\widetilde{R}_{\nu}^{\varphi}=((\overline{\nu}\tensor\i)(\Ww))^{*}$
for all $\nu\in\CzU{\G}^{*}$, it follows from \citep[Theorem 4.6, (i)$\implies$(ii)]{Runde_Viselter_LCQGs_PosDef}
that $\widetilde{R}_{\mu_{i}}^{\varphi}\xrightarrow[i\in\mathcal{I}]{}\widetilde{R}_{\mu}^{\varphi}$
strictly in $\M{\Cz{\hat{\G}}}$, showing that \prettyref{enu:C_z_U_star_conv_Hilbert__1}$\implies$\prettyref{enu:C_z_U_star_conv_Hilbert__2_strict}.
We now prove that \prettyref{enu:C_z_U_star_conv_Hilbert__1}$\implies$\prettyref{enu:C_z_U_star_conv_Hilbert__3_strict}.
For every $\om\in\Lone{\G}$ we have $\bigl\Vert\bigl(\widetilde{R}_{\mu_{i}}^{(2,\varphi)}-\widetilde{R}_{\mu}^{(2,\varphi)}\bigr)\widetilde{R}_{\om}^{(2,\varphi)}\bigr\Vert=\bigl\Vert\widetilde{R}_{\om\conv(\mu_{i}-\mu)}^{(2,\varphi)}\bigr\Vert\le\left\Vert \om\conv(\mu_{i}-\mu)\right\Vert \xrightarrow[i\in\mathcal{I}]{}0$
by \prettyref{lem:L_R_Hilbert_2} and \prettyref{lem:C_z_U_star_conv}
implication \prettyref{enu:C_z_U_star_conv__1}$\implies$\prettyref{enu:C_z_U_star_conv__5}
in its $L_{\nu}$-version, and similarly for the opposite product.
Writing $\hat{\tau}_{-i/4}$ for the generator in the C$^{*}$-algebraic
sense, the subspace $\bigl\{\widetilde{R}_{\om}^{\varphi} = (\om\tensor\i)(W^{*}):\om\in\Lone{\G}\bigr\}\subseteq D(\hat{\tau}_{-i/4})$,
being norm-dense in $\Cz{\hat{\G}}$ and invariant under the automorphism
group $\hat{\tau}$ of $\Cz{\hat{\G}}$, is a core for $\hat{\tau}_{-i/4}$.
As a result, $\bigl\{\widetilde{R}_{\om}^{(2,\varphi)}=\hat{\tau}_{-i/4}(\widetilde{R}_{\om}^{\varphi}):\om\in\Lone{\G}\bigr\}$
is norm-dense in $\Cz{\hat{\G}}$. Boundedness of $\bigl(\widetilde{R}_{\mu_{i}}^{(2,\varphi)}\bigr)_{i\in\mathcal{I}}$
implies that it converges strictly to $\widetilde{R}_{\mu}^{(2,\varphi)}$
in $\M{\Cz{\hat{\G}}}$.
\end{proof}
We now establish an estimate connecting the distance of the KMS-version of the convolution operator from the identity to the distance between the underlying state and the co-unit. The proof follows that of the main part of \citep[Theorem 6.1, (c)$\implies$(a)]{Daws_Skalski_Viselter__prop_T}, where a corresponding result was shown for maps of the forms $\widetilde{R}_{\mu}^{\varphi},\widetilde{L}_{\mu}^{\psi}$ in place of 
$\widetilde{R}_{\mu}^{(2,\varphi)},\widetilde{L}_{\mu}^{(2,\psi)}$. Roughly speaking, we work with `deformed representations'.
\begin{prop}
\label{prop:L_R_Hilbert_2_close_to_identity}Let $\mu$ be a state
of $\CzU{\G}$ and $0<\delta<\frac{1}{2}$ be such that $\bigl\Vert\widetilde{R}_{\mu}^{(2,\varphi)}-\one\bigr\Vert\le\delta$
or $\bigl\Vert\widetilde{L}_{\mu}^{(2,\psi)}-\one\bigr\Vert\le\delta$.
Then $\left\Vert \mu-\epsilon\right\Vert \le2\frac{\sqrt{2\delta}}{1-\sqrt{2\delta}}$.
\end{prop}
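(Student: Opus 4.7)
The plan is to adapt the proof of \citep[Theorem 6.1, (c)$\implies$(a)]{Daws_Skalski_Viselter__prop_T}, which establishes the analogous statement with the GNS-implementations $\widetilde{R}_{\mu}^{\varphi}$ and $\widetilde{L}_{\mu}^{\psi}$ in place of the KMS-implementations $\widetilde{R}_{\mu}^{(2,\varphi)}$ and $\widetilde{L}_{\mu}^{(2,\psi)}$. I will treat the right version; the left one follows by passing to the commutant quantum group $\G'$, for which the roles of the left and right Haar weights interchange.

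First I would form the GNS triple $(\H_{\mu},\pi_{\mu},\xi_{\mu})$ of the state $\mu$ and consider the unitary representation $V:=(\pi_{\mu}\tensor\i)(\Ww)\in\M{\K(\H_{\mu})\tensormin\Cz{\hat{\G}}}$ of $\G$ on $\H_{\mu}$, so that by \prettyref{prop:R_L_phi_psi} one has $\widetilde{R}_{\mu}^{\varphi}=(\om_{\xi_{\mu}}\tensor\i)(V^{*})$. In the DSV argument the hypothesis $\Vert\widetilde{R}_{\mu}^{\varphi}-\one\Vert\le\delta$ is translated into approximate invariance of $\xi_{\mu}\tensor\z$ under $V^{*}$ for all $\z\in\Ltwo{\G}$; a Cauchy--Schwarz/polar-decomposition estimate then produces an honest $V^{*}$-invariant unit vector $\eta\in\H_{\mu}$ with $\Vert\xi_{\mu}-\eta\Vert\le\sqrt{2\delta}$, after which one observes that the associated vector state $\om_{\eta}\circ\pi_{\mu}$ must coincide with the co-unit $\epsilon$, and a renormalisation from $\eta$ to a unit vector yields the stated bound on $\Vert\mu-\epsilon\Vert$.

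To transfer this argument to the KMS-implementation I would use the identity $\widetilde{R}_{\mu}^{(2,\varphi)}=\hat{\tau}_{-i/4}(\widetilde{R}_{\mu}^{\varphi})$ from \prettyref{lem:L_R_Hilbert_2} together with the scaling-group covariance $(\tau_{t}^{\mathrm{u}}\tensor\hat{\tau}_{t})(\Ww)=\Ww$ for $t\in\R$. Analytically continued, this recasts $\widetilde{R}_{\mu}^{(2,\varphi)}$ as a matrix coefficient of a \emph{deformed representation} built from $\pi_{\mu}\circ\tau_{-i/4}^{\mathrm{u}}$: concretely, for $\mu$ entire analytic with respect to the adjoint of $\tau^{\mathrm{u}}$ one has $\widetilde{R}_{\mu}^{(2,\varphi)}=\widetilde{R}_{\mu\circ\tau_{i/4}^{\mathrm{u}}}^{\varphi}$, so the cyclic vector of the GNS picture becomes the analytically continued vector $\nabla^{1/4}\xi_{\mu}$ (interpreted at the Hilbert space level on a suitable dense domain). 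With this deformed picture in place, the DSV computations go through essentially verbatim: the norm hypothesis gives approximate invariance of the deformed vector under $V^{*}$, one extracts an invariant $\eta\in\H_{\mu}$ with $\Vert\xi_{\mu}-\eta\Vert\le\sqrt{2\delta}$, and the same normalisation argument produces the bound $\Vert\mu-\epsilon\Vert\le 2\sqrt{2\delta}/(1-\sqrt{2\delta})$.

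The hard part will be justifying the analytic continuation rigorously, because $\mu$ is not assumed to be analytic with respect to $\tau^{\mathrm{u}*}$ and the smearing approximants $(\mu_{n})_{n=1}^{\infty}$ of \prettyref{eq:CzU_star_tau_smear} only converge $w^{*}$ to $\mu$; correspondingly, by \prettyref{lem:C_z_U_star_conv_Hilbert}, the operators $\widetilde{R}_{\mu_{n}}^{(2,\varphi)}$ converge only ultraweakly to $\widetilde{R}_{\mu}^{(2,\varphi)}$, so the norm bound $\Vert\widetilde{R}_{\mu}^{(2,\varphi)}-\one\Vert\le\delta$ does not immediately transfer to the approximants. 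I intend to bypass this by keeping the GNS representation $\pi_{\mu}$ of the original $\mu$ fixed throughout and carrying out the analytic continuation at the Hilbert space level directly, using \prettyref{lem:L_R_Hilbert_2} \prettyref{enu:L_R_Hilbert_2__1} (and the Phragmen--Lindelöf bounds implicit there) to keep all the deformed matrix coefficients under norm control uniformly in $\delta$.
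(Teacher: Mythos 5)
Your starting point (adapting the argument of \citep[Theorem 6.1, (c)$\implies$(a)]{Daws_Skalski_Viselter__prop_T}) and your slogan of ``deformed representations'' are both the right ones, but the concrete mechanism you propose is exactly the step that fails, and your final paragraph names the obstruction without actually removing it. Neither $\mu\circ\tau_{i/4}^{\mathrm{u}}$ nor a vector ``$\nabla^{1/4}\xi_{\mu}$'' exists: $\mu$ is not assumed analytic with respect to $\tau^{\mathrm{u}*}$, and the GNS space $\H_{\mu}$ of the state $\mu$ carries no modular operator implementing $\tau^{\mathrm{u}}$ (the $\nabla$ of the paper acts on $\Ltwo{\G}$, not on $\H_{\mu}$; and $\pi_{\mu}\circ\tau_{-i/4}^{\mathrm{u}}$ is an unbounded, densely defined map, not a representation). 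Smearing $\mu$ as in \prettyref{eq:CzU_star_tau_smear} does not help, because, as you note, the hypothesis $\bigl\Vert\widetilde{R}_{\mu}^{(2,\varphi)}-\one\bigr\Vert\le\delta$ only passes to the approximants ultraweakly, and the deformed functionals $\mu_{n}\circ\tau_{i/4}^{\mathrm{u}}$ need not be positive, so the GNS/approximate-invariance machinery cannot be applied to them. ``Carrying out the analytic continuation at the Hilbert space level directly'' is therefore not a proof step but a restatement of the problem.

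The missing idea is to apply the deformation to the \emph{other} leg. Keep $\pi_{\mu}$ and $\xi_{\mu}$ untouched and, for each $\hat{\om}\in\Lone{\hat{\G}}$, use the uniform bound $\Vert\widetilde{R}_{\nu}^{(2,\varphi)}\Vert\le\Vert\nu\Vert$ of \prettyref{lem:L_R_Hilbert_2} \prettyref{enu:L_R_Hilbert_2__1} to define a bounded operator $x_{\hat{\om}}\in B(\H_{\mu})$ with $\Vert x_{\hat{\om}}\Vert\le\Vert\hat{\om}\Vert$ by $\rho(x_{\hat{\om}}):=(\hat{\om}\circ\hat{\tau}_{-i/4})\bigl[(\i\tensor(\rho\circ\pi_{\mu}))(\hat{\wW})\bigr]$ for $\rho\in B(\H_{\mu})_{*}$. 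The substantive computation — entirely absent from your plan — is that $x_{\hat{\om}_{1}}x_{\hat{\om}_{2}}=x_{\hat{\om}_{1}\conv\hat{\om}_{2}}$, proved first for $\hat{\om}_{j}$ analytic with respect to $\hat{\tau}_{*}$ using $(\hat{\Delta}\tensor\i)(\hat{\wW})=\hat{\wW}_{13}\hat{\wW}_{23}$ and $(\hat{\tau}_{t}\tensor\hat{\tau}_{t})\circ\hat{\Delta}=\hat{\Delta}\circ\hat{\tau}_{t}$, then by density. Without this homomorphism property one has no semigroup of contractions acting on $\H_{\mu}$, hence no invariant closed convex set and no minimal-norm fixed vector, and the DSV endgame (extracting $\xi$ with $\Vert\xi-\xi_{\mu}\Vert\le\sqrt{2\delta}$, showing $\om_{\xi/\Vert\xi\Vert}\circ\pi_{\mu}=\epsilon$, and normalising) cannot be run. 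So the proposal as written has a genuine gap at its central step.
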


\begin{proof}
Let $(\H_{\mu},\pi_{\mu},\xi_{\mu})$ be the GNS construction for
$\mu$. Recall from \prettyref{lem:L_R_Hilbert_2} that for every
$\nu\in\CzU{\G}^{*}$ we have $\Vert\widetilde{R}_{\nu}^{(2,\varphi)}\Vert\le\left\Vert \nu\right\Vert $.
Let $\hat{\om}\in\Lone{\hat{\G}}$. The linear functional given by 
\[
B(\H_{\mu})_{*}\ni\rho\mapsto\hat{\om}(\widetilde{R}_{\rho\circ\pi_{\mu}}^{(2,\varphi)})=\hat{\om}(\hat{\tau}_{-i/4}(\widetilde{R}_{\rho\circ\pi_{\mu}}^{\varphi}))=(\hat{\om}\circ\hat{\tau}_{-i/4})[(\i\tensor(\rho\circ\pi_{\mu}))(\hat{\wW})]
\]
is of norm at most $\left\Vert \hat{\om}\right\Vert $, and thus defines
an element $x_{\hat{\om}}\in B(\H_{\mu})$ of norm at most $\left\Vert \hat{\om}\right\Vert $.
The map $\Lone{\hat{\G}}\ni\hat{\om}\mapsto x_{\hat{\om}}$ is evidently
linear. Let $\hat{\om}_{1},\hat{\om}_{2}\in\Lone{\hat{\G}}$. Assuming
that $\hat{\om}_{j}\in D((\hat{\tau}_{*})_{-i/4})$ and writing $\hat{\omega}_{j}'$
for the closure of $\hat{\om}_{j}\circ\hat{\tau}_{-i/4}$ ($j=1,2$),
for all $\rho\in B(\H_{\mu})_{*}$ we have 
\[
\begin{split}\rho(x_{\hat{\om}_{1}}x_{\hat{\om}_{2}}) & =(\rho\cdot x_{\hat{\om}_{1}})(x_{\hat{\om}_{2}})=(\hat{\om}_{2}\circ\hat{\tau}_{-i/4})\{(\i\tensor\rho)[(\one\tensor x_{\hat{\om}_{1}})(\i\tensor\pi_{\mu})(\hat{\wW})]\}\\
 & =\rho[x_{\hat{\om}_{1}}(\hat{\omega}_{2}'\tensor\pi_{\mu})(\hat{\wW})]=[(\hat{\omega}_{2}'\tensor\pi_{\mu})(\hat{\wW})\cdot\rho](x_{\hat{\om}_{1}})\\
 & =\hat{\om}_{1}'\{(\i\tensor\rho)[(\i\tensor\pi_{\mu})(\hat{\wW})(\one\tensor(\hat{\omega}_{2}'\tensor\pi_{\mu})(\hat{\wW}))]\}
\end{split}
\]
(unconventionally using `$\cdot$' instead of juxtaposition). The identity $(\hat{\Delta}\tensor\i)(\hat{\wW})=\hat{\wW}_{13}\hat{\wW}_{23}$
implies that
\[
(\i\tensor\pi_{\mu})(\hat{\wW})(\one\tensor(\hat{\omega}_{2}'\tensor\pi_{\mu})(\hat{\wW}))=(\i\tensor\hat{\omega}_{2}'\tensor\pi_{\mu})(\hat{\wW}_{13}\hat{\wW}_{23})=(\i\tensor\hat{\omega}_{2}'\tensor\pi_{\mu})(\hat{\Delta}\tensor\i)(\hat{\wW}).
\]
From the last two formulas we get 
\[
\begin{split}\rho(x_{\hat{\om}_{1}}x_{\hat{\om}_{2}}) & =\hat{\om}_{1}'[(\i\tensor\rho)(\i\tensor\hat{\omega}_{2}'\tensor\pi_{\mu})(\hat{\Delta}\tensor\i)(\hat{\wW})]\\
 & =(\hat{\om}_{1}'\conv\hat{\om}_{2}')[(\i\tensor(\rho\circ\pi_{\mu}))(\hat{\wW})].
\end{split}
\]
Using the equality $(\hat{\tau}_{t}\tensor\hat{\tau}_{t})\circ\hat{\Delta}=\hat{\Delta}\circ\hat{\tau}_{t}$
valid for all $t\in\R$, we obtain
\[
\rho(x_{\hat{\om}_{1}}x_{\hat{\om}_{2}})=((\hat{\om}_{1}\conv\hat{\om}_{2})\circ\hat{\tau}_{-i/4})[(\i\tensor(\rho\circ\pi_{\mu}))(\hat{\wW})]=\rho(x_{\hat{\om}_{1}\conv\hat{\om}_{2}}),
\]
proving that $x_{\hat{\om}_{1}}x_{\hat{\om}_{2}}=x_{\hat{\om}_{1}\conv\hat{\om}_{2}}$.
Since $\Lone{\hat{\G}}\ni\hat{\om}\mapsto x_{\hat{\om}}$ is continuous
and $D((\hat{\tau}_{*})_{-i/4})$ is dense in $\Lone{\hat{\G}}$,
we infer that $x_{\hat{\om}_{1}}x_{\hat{\om}_{2}}=x_{\hat{\om}_{1}\conv\hat{\om}_{2}}$
for every $\hat{\om}_{1},\hat{\om}_{2}\in\Lone{\hat{\G}}$.

By assumption, for every state $\hat{\om}\in\Lone{\hat{\G}}$,
\[
\left\Vert x_{\hat{\om}}\xi_{\mu}-\xi_{\mu}\right\Vert ^{2}\leq2(1-\Re\left\langle x_{\hat{\om}}\xi_{\mu},\xi_{\mu}\right\rangle )=2\Re\hat{\om}(\one-\widetilde{R}_{\mu}^{(2,\varphi)})\leq2\delta.
\]
As a result, the unique vector $\xi\in\H_{\mu}$ of minimal norm in
the (convex) closure $C$ of $\{x_{\hat{\om}}\xi_{\mu}:\hat{\om}\in\Lone{\hat{\G}}\text{ is a state}\}$
satisfies $\left\Vert \xi-\xi_{\mu}\right\Vert \leq\sqrt{2\delta}$.
By the foregoing, $C$ is globally invariant under each of the contractions
$x_{\hat{\om}}$ for a state $\hat{\om}\in\Lone{\hat{\G}}$, and so
$\xi$ is invariant under each of these operators. That is, for every
state $\hat{\om}\in\Lone{\hat{\G}}$, 
\[
\hat{\om}(\left\Vert \xi\right\Vert ^{2}\one)=\left\Vert \xi\right\Vert ^{2}=\om_{\xi}(x_{\hat{\om}})=(\hat{\om}\circ\hat{\tau}_{-i/4})[(\i\tensor(\om_{\xi}\circ\pi_{\mu}))(\hat{\wW})].
\]
Consequently, $\hat{\tau}_{-i/4}[(\i\tensor(\om_{\xi}\circ\pi_{\mu}))(\hat{\wW})]=\left\Vert \xi\right\Vert ^{2}\one$,
that is, $(\i\tensor(\om_{\xi}\circ\pi_{\mu}))(\hat{\wW})=\left\Vert \xi\right\Vert ^{2}\one$.
Hence $\om_{\frac{1}{\left\Vert \xi\right\Vert }\xi}\circ\pi_{\mu}=\epsilon$,
and as $\left\Vert \mu-\epsilon\right\Vert =\Vert(\om_{\frac{1}{\left\Vert \xi\right\Vert }\xi}-\om_{\xi_{\mu}})\circ\pi_{\mu}\Vert\leq
\| \frac{1}{\left\Vert \xi\right\Vert }\xi + \xi_{\mu}\| \| \frac{1}{\left\Vert \xi\right\Vert }\xi - \xi_{\mu}\|
 \leq 2\frac{\sqrt{2\delta}}{1-\sqrt{2\delta}}$,
the proof is complete.
\end{proof}
The following result is the norm analogue of Lemmas \ref{lem:C_z_U_star_conv}
and \ref{lem:C_z_U_star_conv_Hilbert}.
\begin{cor}
\label{cor:L_R_Hilbert_close_to_identity}Let $\left(\mu_{i}\right)_{i\in\mathcal{I}}$
be a net of states of $\CzU{\G}$. Then the following conditions are
equivalent:
\begin{enumerate}
\item \label{enu:L_R_Hilbert_close_to_identity__1}$\mu_{i}\xrightarrow[i\in\mathcal{I}]{}\epsilon$
in $\CzU{\G}^{*}$-norm;
\item \label{enu:L_R_Hilbert_close_to_identity__2}$R_{\mu_{i}}^{\mathrm{u}}\xrightarrow[i\in\mathcal{I}]{}\i$
in $B(\CzU{\G})$-norm;
\item \label{enu:L_R_Hilbert_close_to_identity__3}$\widetilde{R}_{\mu_{i}}^{\varphi}\xrightarrow[i\in\mathcal{I}]{}\one$
in $\Linfty{\hat{\G}}$-norm;
\item \label{enu:L_R_Hilbert_close_to_identity__4}$\widetilde{R}_{\mu_{i}}^{(2,\varphi)}\xrightarrow[i\in\mathcal{I}]{}\one$
in $\Linfty{\hat{\G}}$-norm.
\end{enumerate}
A similar statement holds true for maps of the form $L_{\mu}$.

\end{cor}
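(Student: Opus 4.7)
The plan is to show that \prettyref{enu:L_R_Hilbert_close_to_identity__1} implies each of the other three conditions, and then to establish each reverse implication separately.

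For the forward direction, I would observe that each of the three assignments $\mu \mapsto R_\mu^{\mathrm{u}}$, $\mu \mapsto \widetilde{R}_\mu^{\varphi}$ and $\mu \mapsto \widetilde{R}_\mu^{(2,\varphi)}$ is linear and norm-contractive: the first by \prettyref{thm:L_R} \prettyref{enu:L_R_universal}, the second since $\widetilde{R}_\mu^{\varphi} = (\mu \tensor \i)(\Ww^*)$ with $\Ww$ unitary, and the third by \prettyref{lem:L_R_Hilbert_2} \prettyref{enu:L_R_Hilbert_2__1}. Evaluated at the co-unit $\epsilon$ each of these maps produces the identity operator: $R_\epsilon^{\mathrm{u}} = \i$ by the co-unit axiom, $\widetilde{R}_\epsilon^{\varphi} = \one$ because $(\epsilon \tensor \i)(\Ww) = \one$ together with $\overline{\epsilon} = \epsilon$ (as $\epsilon$ is a character of a C$^*$-algebra), and consequently $\widetilde{R}_\epsilon^{(2,\varphi)} = \hat{\tau}_{-i/4}(\widetilde{R}_\epsilon^{\varphi}) = \hat{\tau}_{-i/4}(\one) = \one$. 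Applying each contractive linear map to the difference $\mu_i - \epsilon$ therefore delivers the estimates
\[
\|R_{\mu_i}^{\mathrm{u}} - \i\|,\ \bigl\|\widetilde{R}_{\mu_i}^{\varphi} - \one\bigr\|,\ \bigl\|\widetilde{R}_{\mu_i}^{(2,\varphi)} - \one\bigr\| \le \|\mu_i - \epsilon\|,
\]
giving \prettyref{enu:L_R_Hilbert_close_to_identity__1}$\implies$\prettyref{enu:L_R_Hilbert_close_to_identity__2}, \prettyref{enu:L_R_Hilbert_close_to_identity__3} and \prettyref{enu:L_R_Hilbert_close_to_identity__4}.

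For \prettyref{enu:L_R_Hilbert_close_to_identity__2}$\implies$\prettyref{enu:L_R_Hilbert_close_to_identity__1} I would precompose with $\epsilon \in \CzU{\G}^*$: using the identity $\epsilon \circ R_\mu^{\mathrm{u}} = \mu$ already recorded just after the introduction of the reducing morphism, together with $\epsilon \circ \i = \epsilon$, we get $\|\mu_i - \epsilon\| = \|\epsilon \circ (R_{\mu_i}^{\mathrm{u}} - \i)\| \le \|R_{\mu_i}^{\mathrm{u}} - \i\| \to 0$. The implication \prettyref{enu:L_R_Hilbert_close_to_identity__4}$\implies$\prettyref{enu:L_R_Hilbert_close_to_identity__1} is then an immediate invocation of \prettyref{prop:L_R_Hilbert_2_close_to_identity}: once $\delta_i := \bigl\|\widetilde{R}_{\mu_i}^{(2,\varphi)} - \one\bigr\| < 1/2$, the proposition gives $\|\mu_i - \epsilon\| \le 2\sqrt{2\delta_i}/(1 - \sqrt{2\delta_i}) \to 0$.

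Finally, \prettyref{enu:L_R_Hilbert_close_to_identity__3}$\implies$\prettyref{enu:L_R_Hilbert_close_to_identity__1} is the content of the original \citep[Theorem 6.1, (c)$\implies$(a)]{Daws_Skalski_Viselter__prop_T}, which served as the template for \prettyref{prop:L_R_Hilbert_2_close_to_identity}: the same deformed-representation computation goes through verbatim after using $\widetilde{R}_\mu^{\varphi} = (\i \tensor \mu)(\hat{\wW})$ in place of $\widetilde{R}_\mu^{(2,\varphi)}$ and suppressing the twist by $\hat{\tau}_{-i/4}$ throughout. The main conceptual obstacle worth flagging is that one cannot short-cut from \prettyref{enu:L_R_Hilbert_close_to_identity__3} to \prettyref{enu:L_R_Hilbert_close_to_identity__4} (or conversely) by directly applying $\hat{\tau}_{-i/4}$ to $\widetilde{R}_{\mu_i}^{\varphi} - \one$, because $\hat{\tau}_{-i/4}$ is unbounded and not norm-continuous; this is precisely why the two implications \prettyref{enu:L_R_Hilbert_close_to_identity__3}$\implies$\prettyref{enu:L_R_Hilbert_close_to_identity__1} and \prettyref{enu:L_R_Hilbert_close_to_identity__4}$\implies$\prettyref{enu:L_R_Hilbert_close_to_identity__1} must be treated by independent appeals to the deformation-representation technique rather than chained. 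The $L_\mu$-versions follow by the mirror arguments on the $\psi$-side, using \prettyref{prop:R_L_phi_psi} and \prettyref{lem:L_R_Hilbert_2}.
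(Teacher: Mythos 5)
Your proof is correct and follows essentially the same route as the paper: the forward implications come from linearity and contractivity of the three assignments (with the $\widetilde{R}_{\mu}^{(2,\varphi)}$ bound supplied by \prettyref{lem:L_R_Hilbert_2}), \prettyref{enu:L_R_Hilbert_close_to_identity__2}$\implies$\prettyref{enu:L_R_Hilbert_close_to_identity__1} from $\epsilon\circ R_{\mu}^{\mathrm{u}}=\mu$, \prettyref{enu:L_R_Hilbert_close_to_identity__4}$\implies$\prettyref{enu:L_R_Hilbert_close_to_identity__1} from \prettyref{prop:L_R_Hilbert_2_close_to_identity}, and \prettyref{enu:L_R_Hilbert_close_to_identity__3}$\implies$\prettyref{enu:L_R_Hilbert_close_to_identity__1} from the argument of \citep[Theorem 6.1]{Daws_Skalski_Viselter__prop_T} with $\hat{\tau}$ suppressed. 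Your closing remark about why one cannot chain \prettyref{enu:L_R_Hilbert_close_to_identity__3} and \prettyref{enu:L_R_Hilbert_close_to_identity__4} through the unbounded map $\hat{\tau}_{-i/4}$ is a correct observation that the paper leaves implicit.
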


\begin{proof}
It is immediate that \prettyref{enu:L_R_Hilbert_close_to_identity__1}$\implies$\prettyref{enu:L_R_Hilbert_close_to_identity__2},
\prettyref{enu:L_R_Hilbert_close_to_identity__3}. From \prettyref{lem:L_R_Hilbert_2}
we have \prettyref{enu:L_R_Hilbert_close_to_identity__1}$\implies$\prettyref{enu:L_R_Hilbert_close_to_identity__4}. 

\prettyref{enu:L_R_Hilbert_close_to_identity__2}$\implies$\prettyref{enu:L_R_Hilbert_close_to_identity__1}
Use that $\epsilon\circ R_{\nu}^{\mathrm{u}}=\nu$ for every $\nu\in\CzU{\G}^{*}$. 

\prettyref{enu:L_R_Hilbert_close_to_identity__3}$\implies$\prettyref{enu:L_R_Hilbert_close_to_identity__1}
This follows from the proof of \citep[Theorem 6.1, (c)$\implies$(a)]{Daws_Skalski_Viselter__prop_T}
(cf.~the proof of \prettyref{prop:L_R_Hilbert_2_close_to_identity}
and ignore $\hat{\tau}$).

\prettyref{enu:L_R_Hilbert_close_to_identity__4}$\implies$\prettyref{enu:L_R_Hilbert_close_to_identity__1}
Use \prettyref{prop:L_R_Hilbert_2_close_to_identity}.
\end{proof}
\begin{question}
While it is obvious that the equivalent conditions of \prettyref{cor:L_R_Hilbert_close_to_identity}
imply that $R_{\mu_{i}}\xrightarrow[i\in\mathcal{I}]{}\i$ in $B(\Linfty{\G})$-norm,
we ask whether the converse is true. The answer is clearly affirmative
when $\G$ is co-amenable.
\end{question}

\section{Semigroups of convolution operators}\label{sec:semigroups}

In this section we discuss the key notion for our paper, namely that of convolution semigroups, and use the results of the last section and of the Appendix to obtain our central result, Theorem \ref{thm:corres_conv_smgrps_Dirichlet_forms}.

\begin{defn}[\citep{Lindsay_Skalski__conv_semigrp_states}]
Let $\G$ be a locally compact quantum group. A \emph{convolution semigroup of positive functionals
on $\CzU{\G}$} (or \emph{on $\G$}) is a family $\left(\mu_{t}\right)_{t\geq0}$
in $\CzU{\G}_{+}^{*}$ such that $\mu_{t}\conv\mu_{s}=\mu_{t+s}$
for all $t,s\geq0$ and $\mu_{0}=\epsilon$. It is called \emph{$w^{*}$-continuous} (or 
\emph{pointwise continuous})
if $\mu_{t}\xrightarrow[t\to0^{+}]{}\epsilon$ in the $w^{*}$-topology
of $\CzU{\G}^{*}$, and \emph{norm continuous} if $\mu_{t}\xrightarrow[t\to0^{+}]{}\epsilon$
in norm.
\end{defn}

The first result is essentially an immediate consequence of the facts established in the last section.

\begin{thm}
\label{thm:corres_conv_smgrps}Let $\G$ be a locally compact quantum group. There exist $1-1$
correspondences between:
\begin{enumerate}
\item \label{enu:corres_conv_smgrps__1}$w^{*}$-continuous convolution
semigroups $\left(\mu_{t}\right)_{t\geq0}$ of states of $\CzU{\G}$;
\item \label{enu:corres_conv_smgrps__2}$C_{0}$-semigroups $\left(T_{t}^{\mathrm{u}}\right)_{t\geq0}$
of completely positive maps of norm $1$ on $\CzU{\G}$ that commute
with the operators $L_{\nu}^{\mathrm{u}}$, $\nu\in\CzU{\G}^{*}$;
\item \label{enu:corres_conv_smgrps__3}semigroups $\left(T_{t}\right)_{t\geq0}$
of normal, unital, completely positive maps on $\Linfty{\G}$ that
are point--ultraweakly continuous at $0^{+}$, and that
satisfy $\Delta\circ T_{t}=(T_{t}\tensor\i)\circ\Delta$ for every
$t\geq0$;
\item \label{enu:corres_conv_smgrps__4}$C_{0}$-semigroups $\left(M_{t}\right)_{t\geq0}$
of norm $1$ right module maps on $\Lone{\G}$ with completely positive
adjoints.
\end{enumerate}
They are given by $T_{t}=R_{\mu_{t}}$, $T_{t}^{\mathrm{u}}=R_{\mu_{t}}^{\mathrm{u}}$
and $M_{t}=(T_{t})_{*}$. All semigroups are continuous in the respective
topologies on all of $\R_{+}$ (for instance, $\left(T_{t}\right)_{t\geq0}$
is a $C_{0}^{*}$-semigroup). Norm continuity of the semigroups in
\prettyref{enu:corres_conv_smgrps__1} and \prettyref{enu:corres_conv_smgrps__2}
are equivalent, and the same is true for the semigroups in \prettyref{enu:corres_conv_smgrps__3}
and \prettyref{enu:corres_conv_smgrps__4}, and the former implies
the latter. Similar statements hold for maps of the form $L_{\mu}$.
\end{thm}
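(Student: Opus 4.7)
The plan is to use \prettyref{thm:L_R} to obtain pointwise correspondences at each fixed $t\ge 0$, and to use \prettyref{lem:C_z_U_star_conv} together with \prettyref{cor:L_R_Hilbert_close_to_identity} to transfer the various forms of continuity across the four pictures. The cornerstone observation is that if $(\mu_t)_{t\ge 0}$ consists of states then $\|\mu_t\|=1=\|\epsilon\|$, and similarly every map appearing in (2)--(4) has norm one, so the bounded-net equivalences in \prettyref{lem:C_z_U_star_conv} apply in full strength throughout the argument.

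Starting from (1), set $T_t:=R_{\mu_t}$, $T_t^{\mathrm{u}}:=R_{\mu_t}^{\mathrm{u}}$ and $M_t:=(T_t)_*$. A direct Sweedler-type computation using co-associativity yields $R_{\mu}\circ R_{\nu}=R_{\nu\conv\mu}$, and analogously on the universal and predual sides; because the elements of a convolution semigroup commute under $\conv$, the three semigroup identities $T_tT_s=T_{t+s}$, $T_t^{\mathrm{u}}T_s^{\mathrm{u}}=T_{t+s}^{\mathrm{u}}$ and $M_tM_s=M_{t+s}$ all follow. The structural properties listed in (2), (3), (4)---complete positivity, unitality or norm one, normality, intertwining with $\Delta$ or $\Delta_{\mathrm{u}}$, the right-module property of $M_t$, and commutation with the left universal actions---are exactly what \prettyref{thm:L_R} \prettyref{enu:L_R_universal} and \prettyref{enu:L_R_CP_mult} deliver at each fixed $t$. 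Continuity at $0^+$ is then the content of \prettyref{lem:C_z_U_star_conv}: $w^*$-convergence $\mu_t\to\epsilon$ triggers ultraweak convergence $T_t(x)\to x$ for every $x\in\Linfty{\G}$, weak convergence $T_t^{\mathrm{u}}(x)\to x$ for every $x\in\CzU{\G}$, and point--norm convergence $M_t\to\i$ on $\Lone{\G}$. For (2), the weak-to-strong upgrade for bounded semigroups recorded in the preliminaries promotes the weak statement to the $C_0$ property on $\CzU{\G}$.

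For the converse direction, given any semigroup from (2), (3) or (4), injectivity and the characterisations in \prettyref{thm:L_R} \prettyref{enu:L_R_maps_inj} and \prettyref{enu:L_R_CP_mult} let us extract a unique family $(\mu_t)_{t\ge 0}\subseteq\CzU{\G}_+^*$ implementing each operator; the initial condition $\mu_0=\epsilon$ is read off from $T_0=\i$ via the identity $\epsilon\circ R_{\mu}^{\mathrm{u}}=\mu$, and the semigroup law $\mu_t\conv\mu_s=\mu_{t+s}$ follows from injectivity applied to the semigroup law of $(T_t)$. The continuity hypothesis in each of (2), (3), (4) then forces, again via \prettyref{lem:C_z_U_star_conv}, the $w^*$-continuity of $(\mu_t)$ at $0^+$.

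Continuity on all of $\R_+$ for (2) and (4) is automatic from the $C_0$ property. For (3), the right limit $t\to t_0^+$ follows from the factorisation $T_t=T_{t_0}T_{t-t_0}$ combined with the ultraweak continuity of the normal map $T_{t_0}$; the left limit $t\to t_0^-$ is, by duality, the dual statement of the strong continuity of $M_t$ at $t_0$, which is by definition point--ultraweak convergence $T_t\to T_{t_0}$. Pointwise $w^*$-continuity of $\mu_t=\epsilon\circ T_t^{\mathrm{u}}$ throughout $\R_+$ is immediate from the $C_0$ property of $T^{\mathrm{u}}$. Finally, the norm-continuity equivalence (1)$\Leftrightarrow$(2) at $0^+$ is exactly \prettyref{cor:L_R_Hilbert_close_to_identity}; the equivalence (3)$\Leftrightarrow$(4) is duality between $\Linfty{\G}$ and $\Lone{\G}$; and the implication from the former pair to the latter follows from the elementary bound $\|T_t-\i\|_{B(\Linfty{\G})}\le\|\mu_t-\epsilon\|$. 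The only mildly delicate point in the whole argument is the predual--dual manoeuvre that propagates point--ultraweak continuity of $T_t$ away from $0^+$; every other step is a direct invocation of the results assembled in \prettyref{sec:L_R_oper}.
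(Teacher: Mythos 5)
Your proposal is correct and follows essentially the same route as the paper's proof: the pointwise correspondences come from \prettyref{thm:L_R}, the transfer of continuity at $0^{+}$ (in both directions, using that all functionals are states so the norm condition $\left\Vert \mu_{t}\right\Vert =\left\Vert \epsilon\right\Vert$ makes all conditions of \prettyref{lem:C_z_U_star_conv} equivalent) is the same, continuity on all of $\R_{+}$ is deduced from the $C_{0}$-semigroup $\left(M_{t}\right)_{t\ge0}$ by duality, and the norm-continuity statements rest on \prettyref{cor:L_R_Hilbert_close_to_identity}. You merely spell out a few steps the paper leaves implicit (the semigroup laws via $R_{\mu}\circ R_{\nu}=R_{\nu\conv\mu}$, and the one-sided limits for the semigroup on $\Linfty{\G}$, where the duality argument alone already suffices).
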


\begin{proof}
The correspondences exist by \prettyref{thm:L_R} \prettyref{enu:L_R_CP_mult}
and \prettyref{lem:C_z_U_star_conv}. Since $C_{0}$-semigroups (such
as $\left(M_{t}\right)_{t\geq0}$) are point\textendash norm continuous
on all of $\R_{+}$, all semigroups considered in the theorem are
continuous in the respective topologies on all of $\R_{+}$. By \prettyref{cor:L_R_Hilbert_close_to_identity}
equivalence \prettyref{enu:L_R_Hilbert_close_to_identity__1}$\iff$\prettyref{enu:L_R_Hilbert_close_to_identity__2},
norm continuity of \prettyref{enu:corres_conv_smgrps__1} and \prettyref{enu:corres_conv_smgrps__2}
are the equivalent, and this obviously implies that of \prettyref{enu:corres_conv_smgrps__3},
equivalently \prettyref{enu:corres_conv_smgrps__4}. 
\end{proof}
\begin{rem}
\label{rem:corres_conv_smgrps}The above \prettyref{thm:corres_conv_smgrps}
could be stated slightly more generally, namely, replace `states'
by `contractive positive functionals' in \prettyref{enu:corres_conv_smgrps__1},
`unital' by `contractive' in \prettyref{enu:corres_conv_smgrps__3},
and `of norm $1$' by `contractive' in \prettyref{enu:corres_conv_smgrps__2}
and \prettyref{enu:corres_conv_smgrps__4} (where `contractive' means `of norm at most $1$' throughout). This statement easily
reduces to the one we used, because by \prettyref{lem:C_z_U_star_conv},
all assumptions imply that $\left(\mu_{t}\right)_{t\geq0}$ is $w^{*}$-continuous;
and since $\left(\mu_{t}\right)_{t\geq0}$ consists of contractions,
the semigroup $\left(\mu_{t}(\one)\right)_{t\geq0}=\left(\left\Vert \mu_{t}\right\Vert \right)_{t\geq0}=\left(\left\Vert T_{t}^{\mathrm{u}}\right\Vert \right)_{t\geq0}=\left(\left\Vert T_{t}\right\Vert \right)_{t\geq0}=\left(\left\Vert M_{t}\right\Vert \right)_{t\geq0}$
(having extended $\mu_t$ strictly to the multiplier algebra on the left) is continuous at $0^{+}$, hence it has the form $\left(e^{tc}\right)_{t\geq0}$,
$c\leq 0$. Thus, by normalising, we may assume that $\left(\mu_{t}\right)_{t\geq0}$
consists of states, so that \prettyref{lem:C_z_U_star_conv} implication
\prettyref{enu:C_z_U_star_conv__1}$\implies$\prettyref{enu:C_z_U_star_conv__5}
applies.
\end{rem}

We now prove \prettyref{thm:intro__corres_conv_smgrps_Dirichlet_forms}
presented in the Introduction.
\begin{thm}
\label{thm:corres_conv_smgrps_Dirichlet_forms}Let $\G$ be a locally compact quantum group.
There exist $1-1$ correspondences between:
\begin{enumerate}
\item \label{enu:corres_conv_smgrps_Dirichlet_forms__1}$w^{*}$-continuous
convolution semigroups $\left(\mu_{t}\right)_{t\geq0}$ of $\Rant^{\mathrm{u}}$-invariant
contractive positive functionals on $\CzU{\G}$;
\item \label{enu:corres_conv_smgrps_Dirichlet_forms__2}$C_{0}^{*}$-semigroups
$\left(T_{t}\right)_{t\geq0}$ of normal, contractive, completely positive
maps on $\Linfty{\G}$ that are KMS-symmetric with respect to $\varphi$
and satisfy $\Delta\circ T_{t}=(T_{t}\tensor\i)\circ\Delta$ for every
$t\geq0$;
\item \label{enu:corres_conv_smgrps_Dirichlet_forms__3}$C_{0}$-semigroups
$\left(S_{t}\right)_{t\ge0}$ of selfadjoint completely Markov operators
on $\Ltwo{\G}$ with respect to $\varphi$ that belong to $\Linfty{\hat{\G}}$;
\item \label{enu:corres_conv_smgrps_Dirichlet_forms__4}completely Dirichlet
forms $Q$ with respect to $\varphi$ that are invariant under $\mathcal{U}(\Linfty{\hat{\G}}')$.
\end{enumerate}
The correspondences are given by $T_{t}=R_{\mu_{t}}$, $S_{t}=\widetilde{R}_{\mu_{t}}^{(2,\varphi)}$
and \prettyref{cor:corres_compl_Markov}. Also, norm continuity of $\left(\mu_{t}\right)_{t\geq0}$,
of $\left(T_{t}\right)_{t\geq0}$ and of $\left(S_{t}\right)_{t\ge0}$
are equivalent, and these are also equivalent to boundedness of $Q$.
Similar statements hold for maps of the form $L_{\mu}$ with invariance
under $\mathcal{U}(\Linfty{\hat{\G}})$ in \prettyref{enu:corres_conv_smgrps_Dirichlet_forms__4}.
\end{thm}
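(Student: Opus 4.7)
The plan is to assemble the four equivalences from the material developed earlier, treating (1)$\iff$(2) as an immediate consequence of \prettyref{thm:corres_conv_smgrps} and \prettyref{cor:R_L_KMS_sym}, and then bridging (2)$\iff$(3)$\iff$(4) by invoking the corrected Goldstein--Lindsay correspondence \prettyref{cor:corres_compl_Markov} (from the Appendix) together with the characterisation of right convolution operators at the $\Ltwo{\G}$-level given by \prettyref{prop:L_R_form_KMS}. For (1)$\iff$(2), I would start with \prettyref{thm:corres_conv_smgrps} (together with the extension described in \prettyref{rem:corres_conv_smgrps}, which handles contractive positive functionals) and then note that, by \prettyref{cor:R_L_KMS_sym}, the KMS-symmetry of $R_{\mu_t}$ with respect to $\varphi$ is equivalent to $\mu_t = \mu_t\circ\Rant^{\mathrm{u}}$. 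The fact that $C_0^*$-continuity of $(R_{\mu_t})_{t\geq 0}$ on $\Linfty{\G}$ matches $w^*$-continuity of $(\mu_t)_{t\geq 0}$ has already been recorded in \prettyref{thm:corres_conv_smgrps}.

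For (2)$\iff$(3), apply \prettyref{cor:corres_compl_Markov} to the semigroup $(T_t)_{t\geq 0}$ from (2): since each $T_t$ is KMS-symmetric, it is in particular $2$-integrable with respect to $\varphi$, and \prettyref{lem:L_R_Hilbert_2} gives $\widetilde{T}_t^{(2,\varphi)} = \widetilde{R}_{\mu_t}^{(2,\varphi)}$, which is an operator on $\Ltwo{\G}$. The Appendix result ensures that $(\widetilde{T}_t^{(2,\varphi)})_{t\geq 0}$ is a $C_0$-semigroup of selfadjoint completely Markov operators. The additional condition that $S_t\in\Linfty{\hat{\G}}$ corresponds precisely, via \prettyref{prop:L_R_form_KMS}, to $T_t$ having the form $R_{\mu_t}$ for some $\mu_t\in\CzU{\G}_+^*$; hence starting instead from such a semigroup of operators on $\Ltwo{\G}$ and applying \prettyref{cor:corres_compl_Markov} in the reverse direction produces the desired $(T_t)_{t\geq 0}$. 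A routine check shows that the semigroup property at all three levels transfers correctly and that $w^*$-continuity of $(\mu_t)_{t\geq 0}$ matches $C_0$-continuity of $(S_t)_{t\geq 0}$ (use \prettyref{lem:C_z_U_star_conv_Hilbert}).

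For (3)$\iff$(4), use the standard correspondence between $C_0$-semigroups of selfadjoint contractions on $\Ltwo{\G}$ and closed densely-defined non-negative quadratic forms described in \prettyref{sub:Markov__KMS_sym__Dirichlet}; the Markov condition on $(S_t)_{t\geq 0}$ translates into the Dirichlet condition on $Q$ via \prettyref{cor:corres_compl_Markov}, while the complete versions match by the same token applied to matrix amplifications. The condition $S_t\in\Linfty{\hat{\G}}$ for all $t\geq 0$ is, by the double commutant theorem, equivalent to $S_t$ commuting with every unitary $u\in\mathcal{U}(\Linfty{\hat{\G}}')$. Writing $S_t = e^{-tA}$ for the positive selfadjoint generator $A$ of $Q$, this is equivalent to $A$ commuting with every such $u$, which in turn is equivalent to $u D(Q)\subseteq D(Q)$ and $Q(u\zeta)=Q(\zeta)$ for every $\zeta\in D(Q)$ and $u\in\mathcal{U}(\Linfty{\hat{\G}}')$ (this follows from the characterisation $Q(\zeta)=\lim_{t\to 0^+}t^{-1}\langle (I-S_t)\zeta,\zeta\rangle$ and lower semicontinuity). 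This yields the $\mathcal{U}(\Linfty{\hat{\G}}')$-invariance of $Q$, modulo scaling. Finally, the norm-continuity statements follow from \prettyref{cor:L_R_Hilbert_close_to_identity}, which shows that the norm-continuity at $0^+$ of $(\mu_t)_{t\geq 0}$, $(T_t)_{t\geq 0}$ and $(S_t)_{t\geq 0}$ are mutually equivalent; and these are equivalent to boundedness of the generator $A$, i.e., to boundedness of $Q$ by standard $C_0$-semigroup theory.

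The main obstacle is the passage (4)$\implies$(3)$\implies$(2): extracting a Markov semigroup on $\Linfty{\G}$ from a completely Dirichlet form is precisely where the gap in \citep[Theorem 4.7]{Goldstein_Lindsay__Markov_sgs_KMS_symm_weight} appears, and relies crucially on the corrected version proved in the Appendix under the mild hypotheses which are automatic in the quantum group setting (e.g.\ the existence of the modular structure on $\Linfty{\G}$ and the standard form realisation via Haagerup $L^2$-spaces). The translation between the commutant condition $S_t\in\Linfty{\hat{\G}}$ and the unitary invariance of $Q$ is clean once the functional-calculus argument is in place, but one needs to verify that the left statement and the right statement (involving $\Rant$-type symmetries and $\mathcal{U}(\Linfty{\hat{\G}})$) are obtained from the $L_\mu$-analogues of \prettyref{prop:L_R_form_KMS} and \prettyref{cor:R_L_KMS_sym} symmetrically.
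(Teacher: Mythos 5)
Your decomposition and all the key ingredients for the correspondences \prettyref{enu:corres_conv_smgrps_Dirichlet_forms__1}$\iff$\prettyref{enu:corres_conv_smgrps_Dirichlet_forms__2}$\iff$\prettyref{enu:corres_conv_smgrps_Dirichlet_forms__3}$\iff$\prettyref{enu:corres_conv_smgrps_Dirichlet_forms__4} coincide with the paper's: \prettyref{thm:corres_conv_smgrps} with \prettyref{rem:corres_conv_smgrps} and \prettyref{cor:R_L_KMS_sym} for the first step, \prettyref{cor:corres_compl_Markov} for the Dirichlet-form machinery, and \prettyref{prop:L_R_form_KMS} to translate $S_{t}\in\Linfty{\hat{\G}}$ into the intertwining property, with the commutant/generator argument for the invariance of $Q$ exactly as in the paper. (Minor point: the ``modulo scaling'' qualifier belongs to the Introduction's version of the statement, not to this one.)

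There is, however, a genuine gap in your treatment of the norm-continuity equivalences. You attribute to \prettyref{cor:L_R_Hilbert_close_to_identity} the statement that norm continuity at $0^{+}$ of $\left(\mu_{t}\right)_{t\geq0}$, of $\left(T_{t}\right)_{t\geq0}$ and of $\left(S_{t}\right)_{t\ge0}$ are mutually equivalent. That corollary relates $\left\Vert \mu_{i}-\epsilon\right\Vert $, the norm of $R_{\mu_{i}}^{\mathrm{u}}-\i$ on $\CzU{\G}$, and the norms $\bigl\Vert\widetilde{R}_{\mu_{i}}^{\varphi}-\one\bigr\Vert$ and $\bigl\Vert\widetilde{R}_{\mu_{i}}^{(2,\varphi)}-\one\bigr\Vert$; it says nothing about $\left\Vert R_{\mu_{i}}-\i\right\Vert $ in $B(\Linfty{\G})$, i.e.\ about $\left(T_{t}\right)_{t\geq0}$. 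Indeed, the paper explicitly records as an open question (immediately after \prettyref{cor:L_R_Hilbert_close_to_identity}) whether $R_{\mu_{i}}\to\i$ in $B(\Linfty{\G})$-norm forces $\mu_{i}\to\epsilon$ in norm for general nets. The implication you need, namely that norm continuity of $\left(T_{t}\right)_{t\geq0}$ implies that of $\left(S_{t}\right)_{t\ge0}$, requires a separate argument: the paper observes that $R_{\mu_{t}}$ is $1$-integrable with $\widetilde{R}_{\mu_{t}}^{(1,\varphi)*}=R_{\mu_{t}}$, so that $R_{\mu_{t}}-\i$ is adjoint preserving and $1$-integrable with $\widetilde{(R_{\mu_{t}}-\i)}^{(1,\varphi)*}=R_{\mu_{t}}-\i$, whence the interpolation estimate of \citep[Proposition 4.1]{Goldstein_Lindsay__Markov_sgs_KMS_symm_weight} yields $\bigl\Vert\widetilde{R}_{\mu_{t}}^{(2,\varphi)}-\one\bigr\Vert\le\left\Vert R_{\mu_{t}}-\i\right\Vert $. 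Without this (or some substitute) your chain of equivalences is not closed. The easy direction, that norm continuity of $\left(\mu_{t}\right)_{t\geq0}$ implies that of $\left(T_{t}\right)_{t\geq0}$ via $\left\Vert R_{\nu}\right\Vert \le\left\Vert \nu\right\Vert $, is fine, as is the equivalence of boundedness of $Q$ with norm continuity of $\left(S_{t}\right)_{t\ge0}$ and of $\left(\mu_{t}\right)_{t\geq0}$.
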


\begin{proof}
The correspondence between \prettyref{enu:corres_conv_smgrps_Dirichlet_forms__1}
and \prettyref{enu:corres_conv_smgrps_Dirichlet_forms__2} is given
by \prettyref{thm:corres_conv_smgrps}, \prettyref{rem:corres_conv_smgrps} and \prettyref{cor:R_L_KMS_sym}.

The correspondence between \prettyref{enu:corres_conv_smgrps_Dirichlet_forms__2},
\prettyref{enu:corres_conv_smgrps_Dirichlet_forms__3} and \prettyref{enu:corres_conv_smgrps_Dirichlet_forms__4}
relies on \prettyref{cor:corres_compl_Markov}: completely Dirichlet
forms $Q$ with respect to $\varphi$ correspond to semigroups $(T_{t})_{t\ge0}$
of normal, completely positive contractions on $\Linfty{\G}$ that
are KMS-symmetric with respect to $\varphi$ and satisfy that $\R_{+}\ni t\mapsto T_{t}a$
is ultraweakly continuous for all $a\in\mathcal{M}_{\varphi}$, and
to $C_{0}$-semigroups $\left(S_{t}\right)_{t\ge0}$ of selfadjoint
completely Markov operators on $\Ltwo{\G}$ with respect to $\varphi$.
Denote the generator of $\left(S_{t}\right)_{t\ge0}=\bigl(\widetilde{T}_{t}^{(2,\varphi)}\bigr)_{t\ge0}$
by $-A$; then $A$ is a positive selfadjoint operator on $\Ltwo{\G}$
and $Q=\left\Vert A^{1/2}\cdot\right\Vert ^{2}$. By \prettyref{prop:L_R_form_KMS},
for each $t\ge0$, $T_{t}$ is of the form $R_{\mu_{t}}$ for $\mu_{t}\in\CzU{\G}_{+}^{*}$
(namely, $\Delta\circ T_{t}=(T_{t}\tensor\i)\circ\Delta$) if and
only if $\widetilde{T}_{t}^{(2,\varphi)}\in\Linfty{\hat{\G}}$. Thus,
the semigroup $(T_{t})_{t\ge0}$ has the form $\bigl(R_{\mu_{t}}\bigr)_{t\ge0}$
with $\left(\mu_{t}\right)_{t\geq0}$ a convolution semigroup of positive
functionals on $\CzU{\G}$ if and only if each of the elements of
the $C_{0}$-semigroup $\bigl(\widetilde{T}_{t}^{(2,\varphi)}\bigr)_{t\ge0}$
commutes with the unitaries of $\Linfty{\hat{\G}}'$, if and only
if $A$ (equivalently, $A^{1/2}$) commutes with the unitaries of
$\Linfty{\hat{\G}}'$, that is, $Q$ is invariant (as a quadratic
form) under all unitaries of $\Linfty{\hat{\G}}'$. When this is the
case, for each $t\ge0$, $T_{t}=R_{\mu_{t}}$ is contractive, thus
so is $\mu_{t}$ by \prettyref{thm:L_R} \prettyref{enu:L_R_CP_mult}. By \prettyref{lem:C_z_U_star_conv}
implication \prettyref{enu:C_z_U_star_conv__3}$\implies$\prettyref{enu:C_z_U_star_conv__1},
which applies because $\mathcal{M}_{\varphi}\cap\Cz{\G}$ is dense
in $\Cz{\G}$, the semigroup $\left(\mu_{t}\right)_{t\geq0}$ is $w^{*}$-continuous.
So by \prettyref{rem:corres_conv_smgrps}, one deduces that ${(T_{t})}_{t\ge0} = {(R_{\mu_{t}})}_{t\ge0}$ is a $C_{0}^{*}$-semigroup; and furthermore, we can and shall assume below that $\left(\mu_{t}\right)_{t\geq0}$ consists of states.

As for norm continuity, the $C_{0}$-semigroup $\left(S_{t}\right)_{t\ge0}=\bigl(\widetilde{T}_{t}^{(2,\varphi)}\bigr)_{t\ge0}=\bigl(\widetilde{R}_{\mu_{t}}^{(2,\varphi)}\bigr)_{t\ge0}$
is norm continuous if and only if $-A$, equivalently $Q$, is bounded,
and by \prettyref{cor:L_R_Hilbert_close_to_identity}, this is the same as $\left(\mu_{t}\right)_{t\geq0}$
being norm continuous. In this case,
${(R_{\mu_{t}})}_{t\ge0}$ is clearly norm continuous. Conversely, for
every $t\ge0$, by \prettyref{prop:R_L_phi_psi} and the fact that $R_{\mu_{t}}$
is a (bounded) normal KMS-symmetric Markov operator, $R_{\mu_{t}}$ is $1$-integrable
and $\widetilde{R}_{\mu_{t}}^{(1,\varphi)*}=R_{\mu_{t}}$ (see also
\prettyref{thm:GL_4_7_forward}).
Hence, $R_{\mu_{t}}-\i$ is adjoint preserving and $1$-integrable, and
$\widetilde{(R_{\mu_{t}}-\i)}^{(1,\varphi)*}=R_{\mu_{t}}-\i$. As
a result, \citep[Proposition 4.1]{Goldstein_Lindsay__Markov_sgs_KMS_symm_weight}
yields that $\bigl\Vert\widetilde{R}_{\mu_{t}}^{(2,\varphi)}-\one_{B(\Ltwo{\QG})}\bigr\Vert=\bigl\Vert\widetilde{(R_{\mu_{t}}-\i)}^{(2,\varphi)}\bigr\Vert\le\bigl\Vert R_{\mu_{t}}-\i\bigr\Vert$.
So norm continuity of ${(R_{\mu_{t}})}_{t\ge0}$ implies that of ${(\widetilde{R}_{\mu_{t}}^{(2,\varphi)})}_{t\ge0}$.
\end{proof}
We recall the notion of the generating functional.
\begin{defn}[{\citep[p.~333, Definition]{Lindsay_Skalski__conv_semigrp_states}}]
\label{def:gen_func}Let $\left(\mu_{t}\right)_{t\geq0}$ be a $w^{*}$-continuous
convolution semigroup of positive functionals on $\CzU{\G}$. Its \emph{generating
functional} is the (generally unbounded) functional $\gamma$ over
$\CzU{\G}$ given by 
\[
\gamma(x):=\lim_{t\to0^{+}}\frac{\mu_{t}(x)-\epsilon(x)}{t}
\]
with maximal domain $D(\gamma)$ consisting of all $x\in\CzU{\G}$
for which this limit exists. 
\end{defn}

By \citep[p.~333, Remarks]{Lindsay_Skalski__conv_semigrp_states},
$D(\gamma)$ is dense in $\CzU{\G}$, and by \citep[Theorem 3.7]{Lindsay_Skalski__conv_semigrp_states},
$\left(\mu_{t}\right)_{t\geq0}$ is uniquely determined by $\gamma$.
Observe that $\gamma$ is hermitian: for every $x\in D(\gamma)$
we have $x^{*}\in D(\gamma)$ and $\gamma(x^{*})=\overline{\gamma(x)}$,
and it is conditionally positive: $\gamma(x)\ge0$ for all
$x\in D(\gamma)\cap\CzU{\G}_{+}\cap\ker\epsilon$. 
If $\G$ is compact and $\left(\mu_{t}\right)_{t\geq0}$ consists of states, 
then additionally $\one\in D(\gamma)$ and $\gamma(\one)=0$.
\begin{lem}
\label{lem:gen_func_bounded}In the notation of \prettyref{def:gen_func},
$\gamma$ is bounded $\iff$ $\gamma$ is everywhere defined $\iff$
$\left(\mu_{t}\right)_{t\geq0}$ is norm continuous.
\end{lem}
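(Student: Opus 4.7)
My plan is to verify the chain of implications: bounded $\Rightarrow$ everywhere defined $\Rightarrow$ norm continuous $\Rightarrow$ bounded. The first of these is trivial (by definition), so the work lies in the other two. Throughout, I will use the fact that $(\CzU{\G}^*, \conv)$ is a unital Banach algebra with unit $\epsilon$ and that $\left\Vert \mu_{t}\right\Vert \le 1$ for all $t$ since each $\mu_t$ is a contractive positive functional.

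For ``everywhere defined $\Rightarrow$ norm continuous'', I would exploit the Banach--Steinhaus theorem. Under the hypothesis $D(\gamma)=\CzU{\G}$, for every $x \in \CzU{\G}$ the scalars $\{t^{-1}(\mu_t - \epsilon)(x)\}_{t \in (0,1]}$ form a bounded set, since they converge to $\gamma(x)$ as $t \to 0^+$ and are obviously bounded on $[\delta, 1]$ for any $\delta > 0$. Applying the uniform boundedness principle to the family $\{t^{-1}(\mu_t - \epsilon)\}_{t \in (0,1]} \subseteq \CzU{\G}^*$ produces $M > 0$ with $\|t^{-1}(\mu_t - \epsilon)\| \le M$ for all $t \in (0, 1]$, hence $\|\mu_t - \epsilon\| \le Mt \xrightarrow[t \to 0^+]{} 0$. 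The submultiplicativity of the norm together with the semigroup identity upgrades this to norm continuity on all of $\R_+$: for $t, h \ge 0$,
\[
\|\mu_{t+h} - \mu_t\| = \|\mu_t \conv (\mu_h - \epsilon)\| \leq \|\mu_h - \epsilon\|,
\]
and an analogous argument handles the left limit.

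For ``norm continuous $\Rightarrow$ bounded'', I would employ a standard Banach algebra argument. By norm continuity of $u \mapsto \mu_u$, the Bochner integrals $\nu_s := s^{-1} \int_0^s \mu_u \d u$ exist in $\CzU{\G}^*$ for each $s > 0$ and $\nu_s \xrightarrow[s \to 0^+]{} \epsilon$ in norm. Fix $s > 0$ small enough that $\|\nu_s - \epsilon\| < 1$, so that $\nu_s$ is invertible in $\CzU{\G}^*$. The semigroup identity then yields, for each $h > 0$,
\[
(\mu_h - \epsilon) \conv \nu_s = s^{-1} \int_0^s (\mu_{u+h} - \mu_u) \d u = s^{-1} \Bigl[\int_s^{s+h} \mu_u \d u - \int_0^h \mu_u \d u\Bigr].
\]
Dividing by $h$ and passing to the limit as $h \to 0^+$ using the fundamental theorem of calculus for Bochner integrals (valid thanks to the norm continuity of $u \mapsto \mu_u$), the right-hand side converges in norm to $s^{-1}(\mu_s - \epsilon)$. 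Consequently,
\[
h^{-1}(\mu_h - \epsilon) = \bigl(h^{-1}(\mu_h - \epsilon) \conv \nu_s\bigr) \conv \nu_s^{-1}
\]
converges in norm as $h \to 0^+$ to some $\alpha \in \CzU{\G}^*$. Evaluating at $x \in \CzU{\G}$ identifies $\alpha(x) = \gamma(x)$, so $\gamma = \alpha \in \CzU{\G}^*$ is bounded. I expect the only mildly technical step to be the verification that one may apply the fundamental theorem of calculus for Bochner integrals; the rest is a routine combination of Banach--Steinhaus and a standard trick for norm-continuous semigroups in Banach algebras.
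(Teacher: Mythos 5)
Your arguments for ``everywhere defined $\Rightarrow$ norm continuous'' (Banach--Steinhaus applied to the family $\{t^{-1}(\mu_{t}-\epsilon)\}_{t\in(0,1]}$) and for ``norm continuous $\Rightarrow$ bounded'' (the standard regularisation $\nu_{s}=s^{-1}\int_{0}^{s}\mu_{u}\d u$ in the unital Banach algebra $\CzU{\G}^{*}$) are both correct, and they take a genuinely different route from the paper, which imports ``bounded $\iff$ norm continuous'' wholesale from \citep[Theorem 3.7]{Lindsay_Skalski__conv_semigrp_states} and only supplies ``everywhere defined $\Rightarrow$ bounded'', deduced from conditional positivity ($\gamma$ restricted to the C$^{*}$-algebra $\ker\epsilon$ is an everywhere defined positive functional, hence bounded). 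Your second argument in fact gives more than you claim: the norm convergence of $t^{-1}(\mu_{t}-\epsilon)$ shows that norm continuity implies \emph{both} everywhere-definedness and boundedness.

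The gap is the arrow you dismiss as ``trivial (by definition)''. Here ``bounded'' means bounded on the maximal domain $D(\gamma)$, which is merely dense; the Introduction's phrase ``unbounded (equivalently, not everywhere defined)'' signals that the equivalence of these two conditions is part of what the lemma asserts, not a matter of definition. A densely defined functional that is bounded on its domain extends to an element of $\CzU{\G}^{*}$, but that does not force the limit $\lim_{t\to0^{+}}t^{-1}(\mu_{t}(x)-\epsilon(x))$ to exist for \emph{every} $x$: without an a priori uniform bound on $\left\Vert t^{-1}(\mu_{t}-\epsilon)\right\Vert$ -- which is exactly what you do not yet have in this direction -- a family of bounded functionals can converge on a dense subspace to a bounded limit and still fail to converge elsewhere. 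So your cycle bounded $\Rightarrow$ everywhere defined $\Rightarrow$ norm continuous $\Rightarrow$ bounded breaks at its first arrow. The implication ``bounded $\Rightarrow$ norm continuous'' is precisely the nontrivial direction of the cited \citep[Theorem 3.7]{Lindsay_Skalski__conv_semigrp_states}, whose proof identifies $(\mu_{t})_{t\ge0}$ with the convolution exponential of the bounded extension of $\gamma$ via a uniqueness theorem for semigroups with a given generating functional; some such argument must be supplied to close the equivalence.
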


\begin{proof}
This is mostly \citep[Theorem 3.7]{Lindsay_Skalski__conv_semigrp_states},
the only missing part is that $\gamma$ being everywhere defined implies
the other conditions. But when this happens, the restriction of $\gamma$
to $\ker\epsilon$ is positive, thus bounded. Therefore, $\gamma$
is bounded on $\CzU{\G}$.
\end{proof}
To describe the relation between the generators appearing in \prettyref{thm:corres_conv_smgrps_Dirichlet_forms}
we need the following elementary lemma.
\begin{lem}
\label{lem:A_sqroot_sgrp_description}Let $A$ be a (generally unbounded)
positive selfadjoint operator on a Hilbert space $\H$. Consider the
$C_{0}$-semigroup ${(e^{-tA})}_{t\ge0}$. Then 
\[
D(A^{1/2})=\bigl\{\z\in\H:\lim_{t\to0^{+}}\om_{\z}\bigl(\frac{1}{t}(\one-e^{-tA})\bigr)\text{ exists}\bigr\}
\]
and 
\[
\left\Vert A^{1/2}\z\right\Vert ^{2}=\lim_{t\to0^{+}}\om_{\z}\bigl(\frac{1}{t}(\one-e^{-tA})\bigr)\qquad(\forall_{\z\in D(A^{1/2})}).
\]
\end{lem}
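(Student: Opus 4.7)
The plan is to reduce everything to a scalar computation via the spectral theorem for $A$. Write $A = \int_{0}^{\infty} \lambda \, dE(\lambda)$ where $E$ is the resolution of the identity, and for $\z \in \H$ denote by $\mu_\z$ the finite positive Borel measure on $[0,\infty)$ defined by $\mu_\z(B) := \langle E(B)\z, \z\rangle$. Recall that $\z \in D(A^{1/2})$ if and only if $\int_0^\infty \lambda \, d\mu_\z(\lambda) < \infty$, in which case $\|A^{1/2}\z\|^2$ equals this integral.

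By the Borel functional calculus, $e^{-tA} = \int_0^\infty e^{-t\lambda}\, dE(\lambda)$ for every $t \ge 0$, so
\[
\om_\z\bigl(\tfrac{1}{t}(\one - e^{-tA})\bigr) = \int_0^\infty \frac{1-e^{-t\lambda}}{t}\, d\mu_\z(\lambda).
\]
The next step is the elementary observation that for each fixed $\lambda \ge 0$, the function $t \mapsto \frac{1-e^{-t\lambda}}{t}$ is nonnegative and \emph{monotonically decreasing} in $t > 0$, with limit $\lambda$ as $t \to 0^+$. This is a straightforward calculus check: after the substitution $u := t\lambda$, the derivative has the sign of $e^{-u}(u+1) - 1$, which vanishes at $u=0$ and whose derivative $-ue^{-u}$ is nonpositive, so $e^{-u}(u+1) - 1 \le 0$ for $u \ge 0$.

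Consequently, $\frac{1-e^{-t\lambda}}{t} \nearrow \lambda$ pointwise as $t \to 0^+$, and the monotone convergence theorem gives
\[
\lim_{t\to 0^+}\om_\z\bigl(\tfrac{1}{t}(\one - e^{-tA})\bigr) = \int_0^\infty \lambda\, d\mu_\z(\lambda) \in [0,\infty],
\]
with the limit being a supremum. The limit is finite precisely when $\int_0^\infty \lambda\, d\mu_\z(\lambda) < \infty$, i.e.\ when $\z \in D(A^{1/2})$, and in that case equals $\|A^{1/2}\z\|^2$. There is no real obstacle here; the only subtlety worth flagging is the direction of monotonicity, which is what legitimises passing to the limit under the integral.
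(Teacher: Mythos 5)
Your proof is correct and follows essentially the same route as the paper's: both reduce the statement to a scalar integral against the spectral measure $\mu_\z$ and exploit the pointwise behaviour of $\lambda\mapsto\frac{1}{t}(1-e^{-t\lambda})$. The only (harmless) difference is that you verify monotonicity in $t$ and invoke the monotone convergence theorem once, whereas the paper splits the argument into two directions, using Fatou's lemma for one and the bound $\frac{1}{t}(1-e^{-t\lambda})\le\lambda$ together with dominated convergence for the other; your variant has the mild advantage of showing that the limit always exists in $\left[0,\infty\right]$.
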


\begin{proof}
Write $e(\cdot)$ for the spectral measure
of $A$. Let $\z\in\H$. For every $t>0$,
\[
\om_{\z}\bigl(\frac{1}{t}(\one-e^{-tA})\bigr)=\int_{[0,\infty)}\frac{1}{t}(1-e^{-t\l})\left\langle e(\mathrm{d}\l)\z,\z\right\rangle .
\]
If $\lim_{t\to0^{+}}\om_{\z}(\frac{1}{t}(\one-e^{-tA}))$ exists,
then by Fatou's lemma, $\int_{[0,\infty)}\l\left\langle e(\mathrm{d}\l)\z,\z\right\rangle $
is dominated by this limit, thus $\z\in D(A^{1/2})$. On the other
hand, if $\z\in D(A^{1/2})$, namely $\int_{[0,\infty)}\l\left\langle e(\mathrm{d}\l)\z,\z\right\rangle <\infty$,
then relying on the inequality $\frac{1}{t}(1-e^{-t\l})\le\l$
for $t>0$ and $\l\ge0$, Lebesgue's dominated convergence theorem
yields that $\lim_{t\to0^{+}}\om_{\z}(\frac{1}{t}(\one-e^{-tA}))=\int_{[0,\infty)}\l\left\langle e(\mathrm{d}\l)\z,\z\right\rangle =\left\Vert A^{1/2}\z\right\Vert ^{2}$.
\end{proof}

We are now ready to provide the connection mentioned before the last lemma.

\begin{prop}
\label{prop:conv_smgrps_two_generators}In the context of \prettyref{thm:corres_conv_smgrps_Dirichlet_forms},
let $\gamma$ denote the generating functional of $\left(\mu_{t}\right)_{t\geq0}$.
Then 
\[
D(Q)=\bigl\{\z\in\Ltwo{\G}:\tau_{i/4}^{\mathrm{u}}\bigl((\om_{\z}\tensor\i)(\hat{\wW})\bigr)\in D(\gamma)\bigr\}
\]
and 
\[
Q\z=-\gamma\bigl[\tau_{i/4}^{\mathrm{u}}\bigl((\om_{\z}\tensor\i)(\hat{\wW})\bigr)\bigr]\qquad(\forall_{\z\in D(Q)}).
\]
\end{prop}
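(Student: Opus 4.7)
The plan is to apply \prettyref{lem:A_sqroot_sgrp_description} to the $C_{0}$-semigroup $(S_{t})_{t\ge0}$ from \prettyref{thm:corres_conv_smgrps_Dirichlet_forms} with generator $-A$ and form $Q=\|A^{1/2}\cdot\|^{2}$; this reduces the proposition to analysing $\lim_{t\to 0^{+}}\om_{\z}\bigl(\tfrac{1}{t}(\one-S_{t})\bigr)$. Since $S_{t}=\widetilde{R}_{\mu_{t}}^{(2,\varphi)}=\hat{\tau}_{-i/4}(\widetilde{R}_{\mu_{t}}^{\varphi})$ with $\widetilde{R}_{\mu_{t}}^{\varphi}=(\i\tensor\mu_{t})(\hat{\wW})$ (\prettyref{prop:R_L_phi_psi}, \prettyref{lem:L_R_Hilbert_2}), the first and main task is to identify an element $y_{\z}\in\CzU{\G}$, independent of $t$, satisfying $\nu(y_{\z})=\om_{\z}(\widetilde{R}_{\nu}^{(2,\varphi)})$ for every $\nu\in\CzU{\G}^{*}$, and then to recognise it as $\tau_{i/4}^{\mathrm{u}}((\om_{\z}\tensor\i)(\hat{\wW}))$.

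For this, I fix $\z$ and define $F_{\nu}(w):=\om_{\z}(\hat{\tau}_{-w}(\widetilde{R}_{\nu}^{\varphi}))$ on the strip $D:=\{w\in\C:0\le\Im w\le1/4\}$. By \prettyref{lem:L_R_Hilbert_2} \prettyref{enu:L_R_Hilbert_2__1} together with a Phragmen--Lindel\"of argument, $\widetilde{R}_{\nu}^{\varphi}\in D(\hat{\tau}_{-w})$ with $\|\hat{\tau}_{-w}(\widetilde{R}_{\nu}^{\varphi})\|\le\|\nu\|$ on $D$, so $F_{\nu}$ is holomorphic on the interior of $D$, continuous on $D$, and bounded by $\|\z\|^{2}\|\nu\|$. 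Using $(\hat{\tau}_{t}\tensor\tau_{t}^{\mathrm{u}})(\hat{\wW})=\hat{\wW}$ (a consequence of $(\tau_{t}^{\mathrm{u}}\tensor\hat{\tau}_{t})(\Ww)=\Ww$ combined with $\hat{\wW}=\sigma(\Ww^{*})$), one obtains $(\hat{\tau}_{-t}\tensor\i)(\hat{\wW})=(\i\tensor\tau_{t}^{\mathrm{u}})(\hat{\wW})$, whence a direct slice computation yields $F_{\nu}(t)=\nu\bigl(\tau_{t}^{\mathrm{u}}((\om_{\z}\tensor\i)(\hat{\wW}))\bigr)$ for real $t$.

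Next I would show that for each $w\in D$, $\nu\mapsto F_{\nu}(w)$ is $w^{*}$-continuous on $\CzU{\G}^{*}$: it is $w^{*}$-continuous on bounded sets because \prettyref{lem:C_z_U_star_conv_Hilbert} gives $\widetilde{R}_{\nu_{\l}}^{\varphi}\to\widetilde{R}_{\nu}^{\varphi}$ ultraweakly along any bounded $w^{*}$-convergent net $(\nu_{\l})$, and the uniform bound $\|\hat{\tau}_{-w}(\widetilde{R}_{\nu_{\l}}^{\varphi})\|\le\sup_{\l}\|\nu_{\l}\|$ lets one pass this convergence through $\hat{\tau}_{-w}$ before applying $\om_{\z}$; Krein--Smulian then upgrades to $w^{*}$-continuity globally. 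Hence there exists $f(w)\in\CzU{\G}$, of norm at most $\|\z\|^{2}$, with $\nu(f(w))=F_{\nu}(w)$ for every $\nu$. Weak analyticity of $f$ (together with the uniform bound) gives norm-analyticity on the interior of $D$ and norm-continuity on $D$ by the standard vector-valued Cauchy machinery, and the real-line computation forces $f(t)=\tau_{t}^{\mathrm{u}}((\om_{\z}\tensor\i)(\hat{\wW}))$. Thus $(\om_{\z}\tensor\i)(\hat{\wW})\in D(\tau_{i/4}^{\mathrm{u}})$ and $y_{\z}:=f(i/4)=\tau_{i/4}^{\mathrm{u}}((\om_{\z}\tensor\i)(\hat{\wW}))$ satisfies $\nu(y_{\z})=F_{\nu}(i/4)=\om_{\z}(\widetilde{R}_{\nu}^{(2,\varphi)})$ for all $\nu\in\CzU{\G}^{*}$.

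To conclude, I would note that $\epsilon(y_{\z})=\epsilon((\om_{\z}\tensor\i)(\hat{\wW}))=\om_{\z}((\i\tensor\epsilon)(\hat{\wW}))=\om_{\z}(\one)=\|\z\|^{2}$, using the invariance of the co-unit under $\tau^{\mathrm{u}}$ (which extends to $\tau_{i/4}^{\mathrm{u}}$ by analytic continuation) and $(\i\tensor\epsilon)(\hat{\wW})=(\epsilon\tensor\i)(\Ww^{*})=\one$. Specialising the identity $\nu(y_{\z})=\om_{\z}(\widetilde{R}_{\nu}^{(2,\varphi)})$ to $\nu=\mu_{t}$ then gives
\[
\om_{\z}\bigl(\tfrac{1}{t}(\one-S_{t})\bigr)=-\tfrac{\mu_{t}(y_{\z})-\epsilon(y_{\z})}{t},
\]
which by \prettyref{def:gen_func} converges as $t\to 0^{+}$ precisely when $y_{\z}\in D(\gamma)$, with limit $-\gamma(y_{\z})$. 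Combined with \prettyref{lem:A_sqroot_sgrp_description}, this yields both the description of $D(Q)$ and the formula $Q\z=-\gamma(y_{\z})$. The main obstacle is the third paragraph: upgrading the family of pointwise analytic extensions $\{F_{\nu}\}$ to a genuine $\CzU{\G}$-valued analytic function, so that $(\om_{\z}\tensor\i)(\hat{\wW})$ lies in $D(\tau_{i/4}^{\mathrm{u}})$ for \emph{every} $\z\in\Ltwo{\G}$, rather than only for vectors that are entire analytic with respect to $\nabla$ (for which the formula $\tau_{i/4}^{\mathrm{u}}((\om_{\z}\tensor\i)(\hat{\wW}))=(\om_{\nabla^{-1/4}\z,\nabla^{1/4}\z}\tensor\i)(\hat{\wW})$ would be immediate).
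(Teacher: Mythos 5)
Your overall strategy coincides with the paper's: reduce via \prettyref{lem:A_sqroot_sgrp_description} to the limit $\lim_{t\to0^{+}}\om_{\z}\bigl(\tfrac{1}{t}(\one-S_{t})\bigr)$, and then establish the duality identity $\om_{\z}(\widetilde{R}_{\nu}^{(2,\varphi)})=\nu\bigl[\tau_{i/4}^{\mathrm{u}}\bigl((\om_{\z}\tensor\i)(\hat{\wW})\bigr)\bigr]$ for all $\nu\in\CzU{\G}^{*}$; the concluding limit computation is also the paper's. The one divergence is in how you reach that identity, and there you leave a hole which you yourself flag: you never prove that $(\om_{\z}\tensor\i)(\hat{\wW})\in D(\tau_{i/4}^{\mathrm{u}})$ for an arbitrary $\z\in\Ltwo{\G}$, and without this the element $y_{\z}$ is undefined and the statement cannot even be read.

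The good news is that this ``main obstacle'' is not one. By \prettyref{eq:S_univ_prop}, $(\i\tensor\om_{\z})(\Ww)\in D(\Sant^{\mathrm{u}})$ for every $\z\in\Ltwo{\G}$ (here $\om_{\z}\in\Lone{\hat{\G}}$), and since $\Sant^{\mathrm{u}}=\Rant^{\mathrm{u}}\circ\tau_{-i/2}^{\mathrm{u}}$ with $\Rant^{\mathrm{u}}$ bounded, $(\i\tensor\om_{\z})(\Ww)\in D(\tau_{-i/2}^{\mathrm{u}})$. Taking adjoints and using $\hat{\Ww}=\sigma(\wW^{*})$, i.e.\ $\hat{\wW}=\sigma(\Ww^{*})$, together with $\overline{\om_{\z}}=\om_{\z}$, gives $(\om_{\z}\tensor\i)(\hat{\wW})=\bigl((\i\tensor\om_{\z})(\Ww)\bigr)^{*}\in D(\tau_{i/2}^{\mathrm{u}})\subseteq D(\tau_{i/4}^{\mathrm{u}})$. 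Once this is in hand your entire third paragraph (Krein--Smulian, upgrading weak to norm analyticity, identifying $f$) can be discarded: for fixed $\nu$ the two functions $z\mapsto\om_{\z}\bigl(\hat{\tau}_{-z}(\widetilde{R}_{\nu}^{\varphi})\bigr)$ and $z\mapsto\nu\bigl(\tau_{z}^{\mathrm{u}}\bigl((\om_{\z}\tensor\i)(\hat{\wW})\bigr)\bigr)$ are continuous on the closed strip $0\le\Im z\le\tfrac{1}{4}$, analytic in its interior, and agree on $\R$ by your slice computation from $(\hat{\tau}_{t}\tensor\tau_{t}^{\mathrm{u}})(\hat{\wW})=\hat{\wW}$; hence they agree at $z=i/4$, which is exactly the required identity since $\widetilde{R}_{\nu}^{(2,\varphi)}=\hat{\tau}_{-i/4}(\widetilde{R}_{\nu}^{\varphi})$ by \prettyref{lem:L_R_Hilbert_2}. (Your separate computation of $\epsilon(y_{\z})$ is likewise unnecessary: apply the identity directly to $\nu=\tfrac{1}{t}(\epsilon-\mu_{t})$, noting $\widetilde{R}_{\epsilon}^{(2,\varphi)}=\one$.)
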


\begin{proof}
Denote again the generator of $\bigl(\widetilde{R}_{\mu_{t}}^{(2,\varphi)}\bigr)_{t\ge0}$
by $-A$. By \prettyref{lem:A_sqroot_sgrp_description}, 
\[
D(Q)=D(A^{1/2})=\bigl\{\z\in\Ltwo{\G}:\lim_{t\to0^{+}}\om_{\z}\bigl(\frac{1}{t}(\one-e^{-tA})\bigr)\text{ exists}\bigr\}=\bigl\{\z\in\Ltwo{\G}:\lim_{t\to0^{+}}\om_{\z}\bigl(\widetilde{R}_{\frac{1}{t}(\epsilon-\mu_{t})}^{(2,\varphi)}\bigr)\text{ exists}\bigr\}
\]
and 
\[
Q\z=\left\Vert A^{1/2}\z\right\Vert ^{2}=\lim_{t\to0^{+}}\om_{\z}\bigl(\frac{1}{t}(\one-e^{-tA})\bigr)=\lim_{t\to0^{+}}\om_{\z}\bigl(\widetilde{R}_{\frac{1}{t}(\epsilon-\mu_{t})}^{(2,\varphi)}\bigr)\qquad(\forall_{\z\in D(Q)}).
\]
By \prettyref{def:gen_func}, it suffices to observe that for each
$\nu\in\CzU{\G}^{*}$ and $\hat{\om}\in\Lone{\hat{\G}}$ we have $\hat{\om}(\widetilde{R}_{\nu}^{(2,\varphi)})=\nu\bigl[\tau_{i/4}^{\mathrm{u}}\bigl((\hat{\om}\tensor\i)(\hat{\wW})\bigr)\bigr]$;
and this is indeed true because $\widetilde{R}_{\nu}^{(2,\varphi)}=\hat{\tau}_{-i/4}(\widetilde{R}_{\nu}^{\varphi})=\hat{\tau}_{-i/4}((\i\tensor\nu)(\hat{\wW}))$
by \prettyref{prop:R_L_phi_psi} and \prettyref{lem:L_R_Hilbert_2}
\prettyref{enu:L_R_Hilbert_2__2}, so the desired statement is a consequence of the fact
that $(\hat{\om}\tensor\i)(\hat{\wW})\in D(\tau_{i/4}^{\mathrm{u}})$
and the identity $(\hat{\tau}_{t}\tensor\tau_{t}^{\mathrm{u}})(\hat{\wW})=\hat{\wW}$
for every $t\in\R$.
\end{proof}
\begin{rem}[compare \prettyref{rem:CFK_L_map_GNS}]
The last result generalises the analysis in \citep[Theorem 7.1 and the preceding text]{Cipriani_Franz_Kula__sym_Levy_proc},
which treats the case when $\G$ is compact, and only gives a formula
for the values of $Q$ at $\mathfrak{i}^{(2)}\left(\Pol{\G}\right)$,
where $\Pol{\G}$ is the canonical Hopf $*$-algebra that is dense
in $\CC{\G}$.
\end{rem}

\section{Property (T) and the Haagerup Property}\label{sec:prop_T_Haagerup}

In this section we employ the results of the previous sections to
characterise, for \emph{arbitrary} second countable locally compact quantum groups, Property
(T) and the Haagerup Property in terms of $w^{*}$-continuous convolution
semigroups of states. For locally compact groups, these characterisations
are well-known and of fundamental importance, but in the quantum group
setting they were hitherto established only for discrete quantum groups.

Lack of Property (T) is characterised in \prettyref{sub:prop_T} by
the existence of certain unbounded generating functionals, that is:
certain $w^{*}$-continuous, but not norm continuous, convolution
semigroups of states (\prettyref{thm:prop_T}). This generalises \citep[Theorem 3]{Akemann_Walter__unb_neg_def_func},
\citep[Theorem 1.2]{Jolissaint__prop_T_pairs} and \citep[Theorems 2.10.4 and 2.12.4]{Bekka_de_la_Harpe_Valette__book}
for locally compact groups, and \citep[Theorem 5.1]{Kyed__cohom_prop_T_QG}
for unimodular discrete quantum groups (see also \citep[Theorem 6.8]{Daws_Skalski_Viselter__prop_T}).
In order to achieve this `algebra level' result, we first `go down'
to the Hilbert space level, where more techniques are available, and
then `go back up' to the algebra level. These transitions are possible
thanks to the Dirichlet forms machinery. Our approach is very different
from that of \citep{Kyed__cohom_prop_T_QG}, and is closer to those
of \citep{Akemann_Walter__unb_neg_def_func,Jolissaint__prop_T_pairs,Daws_Skalski_Viselter__prop_T}.

In the main result of \prettyref{sub:Haagerup}, \prettyref{thm:Haagerup},
we characterise the Haagerup Property by the existence of certain
$w^{*}$-continuous convolution semigroups whose corresponding KMS
Hilbert space level counterparts `vanish at infinity' in positive
time. This condition should be viewed as \emph{properness} of the
associated generating functional. Our result extends \citep[Theorem 10]{Akemann_Walter__unb_neg_def_func}
for locally compact groups and \citep[Theorem 7.18]{Daws_Fima_Skalski_White_Haagerup_LCQG}
for discrete quantum groups. The approach of our proof is different
from that of \citep{Akemann_Walter__unb_neg_def_func,Daws_Fima_Skalski_White_Haagerup_LCQG},
and instead relies on \prettyref{prop:Haagerup_semigroup}: roughly
speaking, instead of seeking a generator with the desirable properties,
we construct the semigroup directly, following the technique of \citep{Sauvageot__str_Feller_smgrps}, applied later for example in \cite{Caspers_Skalski__Haagerup_AP_Dirichlet_forms}. 
We note that \prettyref{thm:Haagerup} compares with the characterisation of
the Haagerup Property of von Neumann algebras obtained recently in \cite[Theorem 6.7]{Caspers_Skalski__Haagerup_AP_Dirichlet_forms}.

We need first to recall some terminology concerning quantum group representations.

\begin{defn}[{\citep[Definitions 3.3 and 4.1]{Daws_Fima_Skalski_White_Haagerup_LCQG}}]
Let $\G$ be a locally compact quantum group and $U\in\M{\Cz{\G}\tensormin\K(\H)}$ be a unitary
representation of $\G$ on a Hilbert space $\H$.
\begin{enumerate}
\item A vector $\z\in\H$ is \emph{invariant} under $U$ if $U(\eta\tensor\z)=\eta\tensor\z$
for all $\eta\in\Ltwo{\G}$.
\item $U$ \emph{has almost-invariant vectors} if there is a net $\left(\z_{i}\right)_{i\in\mathcal{I}}$
of unit vectors in $\H$ such that for all $\eta\in\Ltwo{\G}$, $\left\Vert U(\eta\tensor\z_{i})-\eta\tensor\z_{i}\right\Vert \xrightarrow[i\in\mathcal{I}]{}0$.
\item $U$ is \emph{mixing} if `it has $C_{0}$-coefficients', namely, if
for every $\z,\xi\in\H$, we have $(\i\tensor\om_{\z,\xi})(U)\in\Cz{\G}$.
\end{enumerate}
\end{defn}

\subsection{\label{sub:prop_T}Property (T)}

Property (T) for discrete quantum groups was first introduced in \citep{Fima__prop_T}; for locally compact quantum groups it was formally defined in \citep{Daws_Fima_Skalski_White_Haagerup_LCQG}. For recent developments we refer the reader to \citep{Daws_Skalski_Viselter__prop_T, BrannanKerr}.

\begin{defn}
A locally compact quantum group has \emph{Property (T)} if each of its unitary representations
that has almost-invariant vectors admits a non-zero invariant vector.
\end{defn}

The following lemma is elementary.
\begin{lem}
\label{lem:a_n_b_n_to_one}Let $\left(a_{i}\right)_{i\in\mathcal{I}}$
and $\left(b_{i}\right)_{i\in\mathcal{I}}$ be nets of contractive
operators on a uniformly convex Banach space (e.g., a Hilbert space).
If $\frac{1}{2}(a_{i}+b_{i})\xrightarrow[i\in\mathcal{I}]{}\one$, then
$a_{i}\xrightarrow[i\in\mathcal{I}]{}\one$ and $b_{i}\xrightarrow[i\in\mathcal{I}]{}\one$.
\end{lem}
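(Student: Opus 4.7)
The convergence $\frac{1}{2}(a_i + b_i) \to \one$ should be interpreted pointwise (strong operator topology), as this is the only interpretation that is both nontrivial and consistent with how the lemma is almost certainly used later (namely in the context of Hilbert-space convergence of convex combinations of convolution operators). My plan is therefore to fix $\z$ in the space and prove $a_i \z \to \z$ and $b_i \z \to \z$ in norm.

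First I would set $x_i := a_i \z$ and $y_i := b_i \z$, so $\|x_i\|, \|y_i\| \le \|\z\|$ and $\frac{1}{2}(x_i + y_i) \to \z$ in norm. From $\|(x_i + y_i)/2\| \to \|\z\|$ and $\|x_i\|, \|y_i\| \le \|\z\|$, the triangle inequality $\|\z\| \le \tfrac{1}{2}(\|x_i\| + \|y_i\|) + \|\z - \tfrac{1}{2}(x_i+y_i)\|$ forces $\|x_i\|, \|y_i\| \to \|\z\|$. If $\z = 0$ the conclusion is immediate, so assume $\|\z\| = 1$ after rescaling.

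The heart of the argument is then the standard uniform convexity fact: whenever $(x_i), (y_i)$ lie in the closed unit ball and $\|(x_i + y_i)/2\| \to 1$, one has $\|x_i - y_i\| \to 0$. Indeed, given $\e > 0$, uniform convexity supplies $\delta > 0$ such that $\|u - v\| \ge \e$, $\|u\|, \|v\| \le 1$ imply $\|(u+v)/2\| \le 1 - \delta$; taking $i$ so large that $\|(x_i + y_i)/2\| > 1 - \delta$ yields $\|x_i - y_i\| < \e$. Applying this after normalising by $\max(\|x_i\|, \|y_i\|, \|\z\|)$ (which tends to $1$) gives $\|x_i - y_i\| \to 0$.

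Combining $\|x_i - y_i\| \to 0$ with $(x_i + y_i)/2 \to \z$ yields $x_i \to \z$ and $y_i \to \z$ via $x_i = \tfrac{1}{2}(x_i + y_i) + \tfrac{1}{2}(x_i - y_i)$ and similarly for $y_i$. There is no real obstacle here; the only mild subtlety is the passage to the normalised vectors in order to apply the uniform convexity modulus on the unit ball, and the verification that $\|x_i\|, \|y_i\| \to \|\z\|$, both of which are routine.
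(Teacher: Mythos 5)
Your core argument --- that uniform convexity forces $\|a_i\z-b_i\z\|\to 0$ once the midpoint has norm tending to $\|\z\|$, after which $a_i\z=\tfrac12(a_i+b_i)\z+\tfrac12(a_i-b_i)\z$ finishes the job --- is exactly the ``elementary'' argument the paper has in mind (it offers no proof at all), and each step you carry out is correct. However, your opening premise about the mode of convergence is wrong: the lemma is invoked in the proof of Theorem \ref{thm:prop_T_states}, implication \prettyref{enu:prop_T_states__3_R_inv}$\implies$\prettyref{enu:prop_T_states__3}, to pass from \emph{operator-norm} convergence of $\tfrac12\bigl(\widetilde{R}_{\mu_{i}}^{\varphi}+\hat{\Rant}(\widetilde{R}_{\mu_{i}}^{\varphi})\bigr)$ to $\one$ to operator-norm convergence of $\widetilde{R}_{\mu_{i}}^{\varphi}$ to $\one$, so the pointwise (strong operator topology) version you prove is not literally the statement being used. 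Fortunately your proof upgrades for free: the $\delta=\delta(\e)$ furnished by uniform convexity depends only on $\e$ and not on the vector, so once $\bigl\Vert\tfrac12(a_i+b_i)-\one\bigr\Vert<\delta$ you get $\|a_i\z-b_i\z\|<\e$ \emph{uniformly} over unit vectors $\z$, i.e.\ $\|a_i-b_i\|\le\e$, and hence $a_i,b_i\to\one$ in operator norm. You should state this uniformity explicitly rather than fixing $\z$ at the outset. Two smaller remarks: the normalisation by $\max(\|x_i\|,\|y_i\|,\|\z\|)$ is unnecessary if you use the (equivalent) form of uniform convexity for vectors in the closed unit ball; and in the Hilbert-space case the parallelogram law gives $\|a_i\z-b_i\z\|^{2}\le 4-\|a_i\z+b_i\z\|^{2}$ directly, which is the quickest route to the uniform estimate.
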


We are ready to provide the following strengthening of \citep[Theorem 6.1]{Daws_Skalski_Viselter__prop_T}, involving the KMS-implementations.

\begin{thm}
\label{thm:prop_T_states}For a locally compact quantum group $\G$ the following conditions
are equivalent:\renewcommand{\theenumi}{(a)}
\begin{enumerate}
\item \label{enu:prop_T_states__1}$\hat{\G}$ has Property (T);\renewcommand{\theenumi}{(b)}
\item \label{enu:prop_T_states__2}for every net $\left(\mu_{i}\right)_{i\in\mathcal{I}}$
of states of $\CzU{\G}$, if $\left(\mu_{i}\right)_{i\in\mathcal{I}}$
converges to $\epsilon$ in the $w^{*}$-topology, then it converges
to $\epsilon$ in norm;\renewcommand{\theenumi}{(b')}
\item \label{enu:prop_T_states__2_R_inv}same as \prettyref{enu:prop_T_states__2}
but only for nets $\left(\mu_{i}\right)_{i\in\mathcal{I}}$ of $\Rant^{\mathrm{u}}$-invariant
states of $\CzU{\G}$;\renewcommand{\theenumi}{(c)}
\item \label{enu:prop_T_states__3}for every net $\left(\mu_{i}\right)_{i\in\mathcal{I}}$
of states of $\CzU{\G}$, if ${(\widetilde{R}_{\mu_{i}}^{\varphi})}_{i\in\mathcal{I}}$
converges to $\one$ in the strict topology of $\M{\Cz{\hat{\G}}}$,
then it converges to $\one$ in norm;\renewcommand{\theenumi}{(c')}
\item \label{enu:prop_T_states__3_R_inv}same as \prettyref{enu:prop_T_states__3}
but only for nets $\left(\mu_{i}\right)_{i\in\mathcal{I}}$ of $\Rant^{\mathrm{u}}$-invariant
states of $\CzU{\G}$;\renewcommand{\theenumi}{(d)}
\item \label{enu:prop_T_states__4}for every net $\left(\mu_{i}\right)_{i\in\mathcal{I}}$
of states of $\CzU{\G}$, if the net ${(\widetilde{R}_{\mu_{i}}^{(2,\varphi)})}_{i\in\mathcal{I}}$
in $\Linfty{\hat{\G}}$ converges in the weak (equivalently, strong) operator topology
to $\one$, then it converges to $\one$ in norm;\renewcommand{\theenumi}{(d')}
\item \label{enu:prop_T_states__4_R_inv}same as \prettyref{enu:prop_T_states__4}
but only for nets $\left(\mu_{i}\right)_{i\in\mathcal{I}}$ of $\Rant^{\mathrm{u}}$-invariant
states of $\CzU{\G}$.
\end{enumerate}
Moreover, if $\G$ is second countable, one can replace `net' by `sequence'
throughout. 
Similar assertions hold for operators of the form $L_{\mu}$, $\mu\in\CzU{\G}^{*}$.
\end{thm}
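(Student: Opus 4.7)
The plan is to build on \citep[Theorem 6.1]{Daws_Skalski_Viselter__prop_T}, which already establishes the equivalences \prettyref{enu:prop_T_states__1}$\iff$\prettyref{enu:prop_T_states__2}$\iff$\prettyref{enu:prop_T_states__3}, and to extend it in two directions: by adjoining the KMS-implementation condition \prettyref{enu:prop_T_states__4}, and by reducing each unprimed condition to its $\Rant^{\mathrm{u}}$-invariant variant.

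For \prettyref{enu:prop_T_states__2}$\iff$\prettyref{enu:prop_T_states__4}, note that since every $\mu_{i}$ is a state of norm $1$, \prettyref{lem:C_z_U_star_conv_Hilbert} gives $\mu_{i}\xrightarrow[i\in\mathcal{I}]{w^{*}}\epsilon$ if and only if $\widetilde{R}_{\mu_{i}}^{(2,\varphi)}\to\one$ in the weak operator topology. The WOT/SOT equivalence for a contractive net converging to the unitary $\one$ is elementary (use $\|T_{i}\xi-\xi\|^{2}\le2\|\xi\|^{2}-2\Re\langle T_{i}\xi,\xi\rangle$). Norm convergence $\mu_{i}\to\epsilon$ implies norm convergence $\widetilde{R}_{\mu_{i}}^{(2,\varphi)}\to\one$ via the contractivity bound $\Vert\widetilde{R}_{\nu}^{(2,\varphi)}\Vert\le\|\nu\|$ of \prettyref{lem:L_R_Hilbert_2}, applied to $\nu=\mu_{i}-\epsilon$. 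The converse is exactly the content of \prettyref{prop:L_R_Hilbert_2_close_to_identity}, since once $\|\widetilde{R}_{\mu_{i}}^{(2,\varphi)}-\one\|<\tfrac{1}{2}$ the quantitative estimate $\|\mu_{i}-\epsilon\|\le2\sqrt{2\delta_{i}}/(1-\sqrt{2\delta_{i}})$ with $\delta_{i}:=\|\widetilde{R}_{\mu_{i}}^{(2,\varphi)}-\one\|$ drives norm convergence.

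The primed versions follow from their unprimed counterparts via an averaging trick (the converse direction being trivial). Given a net $(\mu_{i})_{i\in\mathcal{I}}$ of states satisfying the hypothesis of \prettyref{enu:prop_T_states__2}, set $\tilde{\mu}_{i}:=\tfrac{1}{2}(\mu_{i}+\mu_{i}\circ \Rant^{\mathrm{u}})$; each $\tilde{\mu}_{i}$ is a state of $\CzU{\G}$ that is invariant under $\Rant^{\mathrm{u}}$, and since $\epsilon\circ \Rant^{\mathrm{u}}=\epsilon$ (as follows from $(\epsilon\tensor\i)(\Ww^{*})=\one$ and \prettyref{lem:antipode_W_extended}), we still have $\tilde{\mu}_{i}\xrightarrow{w^{*}}\epsilon$. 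Applying the hypothesis \prettyref{enu:prop_T_states__2_R_inv} gives $\tilde{\mu}_{i}\to\epsilon$ in norm, whence \prettyref{cor:L_R_Hilbert_close_to_identity} yields that $\widetilde{R}_{\tilde{\mu}_{i}}^{\varphi}=\tfrac{1}{2}\bigl(\widetilde{R}_{\mu_{i}}^{\varphi}+\widetilde{R}_{\mu_{i}\circ \Rant^{\mathrm{u}}}^{\varphi}\bigr)\to\one$ in operator norm. Both summands are contractions on the Hilbert space $\Ltwo{\G}$, so a uniform-convexity estimate -- writing $\eta:=\|\tfrac{1}{2}(a+b)-\one\|$ with $\|a\|,\|b\|\le1$ and using $\|a\xi-\xi\|^{2}+\|b\xi-\xi\|^{2}\le-2\Re\langle(a+b)\xi-2\xi,\xi\rangle\le4\eta\|\xi\|^{2}$ -- forces $\|a-\one\|,\|b-\one\|\le2\sqrt{\eta}$, hence $\widetilde{R}_{\mu_{i}}^{\varphi}\to\one$ in norm. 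A second application of \prettyref{cor:L_R_Hilbert_close_to_identity} gives $\mu_{i}\to\epsilon$ in norm, establishing \prettyref{enu:prop_T_states__2}. The implications \prettyref{enu:prop_T_states__3_R_inv}$\implies$\prettyref{enu:prop_T_states__3} and \prettyref{enu:prop_T_states__4_R_inv}$\implies$\prettyref{enu:prop_T_states__4} follow by the same averaging scheme.

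For the second countable case, \prettyref{prop:second_countable_LCQG} yields separability of both $\CzU{\G}$ and $\Ltwo{\G}$, so that the unit balls of $\CzU{\G}^{*}$ and of $B(\Ltwo{\G})$ are metrizable in the $w^{*}$- and weak operator topologies, respectively; nets can thus be replaced by sequences. The assertions for operators of the form $L_{\mu}$ are proved identically, using the right Haar weight $\psi$ and the $\Rant^{\mathrm{u}}$-symmetry. The main technical input throughout is the quantitative estimate \prettyref{prop:L_R_Hilbert_2_close_to_identity} that bridges the KMS-implementation level and the algebra level; once that is in hand, the remainder is an exercise in assembling the correspondences recorded in Section \ref{sec:L_R_oper} and exploiting the $\Rant^{\mathrm{u}}$-symmetry via averaging.
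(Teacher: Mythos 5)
Your proof is correct and follows essentially the same route as the paper: it cites \citep[Theorem 6.1]{Daws_Skalski_Viselter__prop_T} for \prettyref{enu:prop_T_states__1}$\iff$\prettyref{enu:prop_T_states__2}$\iff$\prettyref{enu:prop_T_states__3}, uses \prettyref{lem:C_z_U_star_conv_Hilbert}, \prettyref{cor:L_R_Hilbert_close_to_identity} and \prettyref{prop:L_R_Hilbert_2_close_to_identity} to adjoin the KMS-level condition, and passes from the primed to the unprimed conditions by averaging $\mu_{i}\mapsto\frac{1}{2}(\mu_{i}+\mu_{i}\circ\Rant^{\mathrm{u}})$ together with a uniform-convexity estimate, which is precisely the content of the paper's \prettyref{lem:a_n_b_n_to_one} (the paper happens to perform the averaging at the level of \prettyref{enu:prop_T_states__3_R_inv} rather than \prettyref{enu:prop_T_states__2_R_inv}, but the two are interchangeable once the cross-equivalences are in place). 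The only genuine divergence is the second countable case, where you invoke $w^{*}$- and weak-operator metrizability of the relevant unit balls to replace nets by sequences, whereas the paper extracts a sequence of almost-invariant vectors from a net using separability of $\Ltwo{\G}$; both arguments are valid.
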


\begin{proof}
The equivalence of \prettyref{enu:prop_T_states__1}, \prettyref{enu:prop_T_states__2}
and \prettyref{enu:prop_T_states__3} was proved in \citep[Theorem 6.1]{Daws_Skalski_Viselter__prop_T}.
The equivalences \prettyref{enu:prop_T_states__2}$\iff$\prettyref{enu:prop_T_states__3}$\iff$\prettyref{enu:prop_T_states__4}
and \prettyref{enu:prop_T_states__2_R_inv}$\iff$\prettyref{enu:prop_T_states__3_R_inv}$\iff$\prettyref{enu:prop_T_states__4_R_inv}
are consequences of \prettyref{lem:C_z_U_star_conv_Hilbert} and \prettyref{cor:L_R_Hilbert_close_to_identity}.
The implications \prettyref{enu:prop_T_states__2}$\implies$\prettyref{enu:prop_T_states__2_R_inv},
\prettyref{enu:prop_T_states__3}$\implies$\prettyref{enu:prop_T_states__3_R_inv}
and \prettyref{enu:prop_T_states__4}$\implies$\prettyref{enu:prop_T_states__4_R_inv}
are trivial.

\prettyref{enu:prop_T_states__3_R_inv}$\implies$\prettyref{enu:prop_T_states__3}
Notice that $\hat{\Rant}(\widetilde{R}_{\nu}^{\varphi})=\widetilde{R}_{\nu\circ \Rant^{\mathrm{u}}}^{\varphi}$
for every $\nu\in\CzU{\G}^{*}$.
Let $\left(\mu_{i}\right)_{i\in\mathcal{I}}$ be a net of states of
$\CzU{\G}$ such that $(\widetilde{R}_{\mu_{i}}^{\varphi})_{i\in\mathcal{I}}$
converges to $\one$ strictly (i.e., in the strict topology of $\M{\Cz{\hat{\G}}}$).
Then $\Big(\widetilde{R}_{\frac{1}{2}(\mu_{i}+\mu_{i}\circ \Rant^{\mathrm{u}})}^{\varphi}\Big)_{i\in\mathcal{I}}={(\frac{1}{2}(\widetilde{R}_{\mu_{i}}^{\varphi}+\hat{\Rant}(\widetilde{R}_{\mu_{i}}^{\varphi})))}_{i\in\mathcal{I}}$
converges to $\one$ strictly because $\hat{\Rant}$ is unital and maps
$\Cz{\hat{\G}}$ into itself. Applying \prettyref{enu:prop_T_states__3_R_inv}
to the net ${(\frac{1}{2}(\mu_{i}+\mu_{i}\circ \Rant^{\mathrm{u}}))}_{i\in\mathcal{I}}$
of $\Rant^{\mathrm{u}}$-invariant states of $\CzU{\G}$, we deduce that
${(\frac{1}{2}(\widetilde{R}_{\mu_{i}}^{\varphi}+\hat{\Rant}(\widetilde{R}_{\mu_{i}}^{\varphi})))}_{i\in\mathcal{I}}$
converges to $\one$ in norm. It follows from \prettyref{lem:a_n_b_n_to_one}
that ${(\widetilde{R}_{\mu_{i}}^{\varphi})}_{i\in\mathcal{I}}$ converges
to $\one$ in norm, as desired.

The last assertion under the assumption that $\G$ is second countable
follows from the easy observation that if $U$ is a unitary representation
of $\hat{\G}$ on a Hilbert space $\H$ with (a net of) almost-invariant
vectors, then since $\Ltwo{\G}$ is separable (\prettyref{prop:second_countable_LCQG}),
one can find a \emph{sequence} of almost-invariant vectors.
\end{proof}
\begin{defn}
Let $\left(\mu_{t}\right)_{t\geq0}$ be a $w^{*}$-continuous convolution
semigroup of positive functionals on $\CzU{\G}$. We say that $\left(\mu_{t}\right)_{t\geq0}$
has\emph{ unbounded generator} if its generating functional (\prettyref{def:gen_func})
is unbounded.
\end{defn}

By \prettyref{lem:gen_func_bounded} and \prettyref{thm:corres_conv_smgrps_Dirichlet_forms},
when the convolution semigroup consists of \emph{$\Rant^{\mathrm{u}}$-invariant contractive}
positive functionals, it has unbounded generator if and only if the associated completely
Dirichlet form is unbounded. 

The following is the main result of this subsection.

\begin{thm}
\label{thm:prop_T}Let $\G$ be a second countable locally compact quantum group. Then $\hat{\G}$
does not have Property (T) $\iff$ there exists a $w^{*}$-continuous
convolution semigroup of $\Rant^{\mathrm{u}}$-invariant states of $\CzU{\G}$
that has unbounded generator (equivalently, whose associated completely
Dirichlet form is unbounded).
\end{thm}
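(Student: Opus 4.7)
We proceed by translating through the $L^{2}$-level picture of \prettyref{thm:corres_conv_smgrps_Dirichlet_forms}. For the direction $(\Leftarrow)$: given such a semigroup $(\mu_{t})_{t\geq0}$, that theorem produces a $C_{0}$-semigroup $(S_{t} := \widetilde{R}_{\mu_{t}}^{(2,\varphi)})_{t \ge 0}$ of selfadjoint completely Markov operators belonging to $\Linfty{\hat{\G}}$ whose generator is unbounded (because the associated Dirichlet form is unbounded), so $(S_{t})$ is not norm-continuous at $0^{+}$. Picking $t_{n} \to 0^{+}$ with $\|S_{t_{n}} - \one\| \geq \delta > 0$ while still $S_{t_{n}} \to \one$ strongly (by the $C_{0}$-property) yields a sequence of $\Rant^{\mathrm{u}}$-invariant states to which \prettyref{thm:prop_T_states} (d')$\Rightarrow$(a) applies, forcing $\hat{\G}$ to lack Property (T).

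For the harder direction $(\Rightarrow)$: the failure of Property (T) for $\hat{\G}$, combined with second countability of $\G$, allows us to invoke the sequential form of \prettyref{thm:prop_T_states} (a)$\Rightarrow$(d') to produce a sequence $(\mu_{n})$ of $\Rant^{\mathrm{u}}$-invariant states with $S_{n} := \widetilde{R}_{\mu_{n}}^{(2,\varphi)} \to \one$ strongly but not in norm; after thinning we may assume $\|S_{n} - \one\| \geq 1/2$. Set $B_{n} := \one - S_{n}$. Considering the compound Poisson semigroup $\nu_{t}^{(n)} := e^{-t}\sum_{k \geq 0} t^{k}\mu_{n}^{*k}/k!$ of $\Rant^{\mathrm{u}}$-invariant states (the $\Rant^{\mathrm{u}}$-invariance of $\mu_{n}^{*k}$ following from the identity $(\alpha \conv \beta)\circ \Rant^{\mathrm{u}} = (\beta\circ \Rant^{\mathrm{u}})\conv(\alpha\circ \Rant^{\mathrm{u}})$), whose $L^{2}$-realisation is $\widetilde{R}_{\nu_{t}^{(n)}}^{(2,\varphi)} = e^{-tB_{n}}$, \prettyref{thm:corres_conv_smgrps_Dirichlet_forms} identifies $B_{n}$ as a positive selfadjoint element of $\Linfty{\hat{\G}}$ and $Q_{n}(\xi) := \langle B_{n}\xi,\xi\rangle$ as a bounded completely Dirichlet form invariant under $\mathcal{U}(\Linfty{\hat{\G}}')$.

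The task is then to assemble an unbounded invariant completely Dirichlet form as a weighted superposition of the $Q_{n}$'s. Separability of $\Ltwo{\G}$ (\prettyref{prop:second_countable_LCQG}) together with $B_{n} \to 0$ strongly permits a diagonal extraction achieving $Q_{n}(\xi_{k}) \le 4^{-n}$ whenever $k \le n$, where $(\xi_{k})_{k=1}^{\infty}$ is a fixed countable dense sequence in $\Ltwo{\G}$. Setting $Q := \sum_{n=1}^{\infty} n\,Q_{n}$ with maximal domain, the summability $\sum_{n} n \cdot 4^{-n} < \infty$ ensures $\linspan\{\xi_{k}:k\in\N\} \subseteq D(Q)$, so $Q$ is densely defined, and $Q$ is closed, completely Dirichlet and $\mathcal{U}(\Linfty{\hat{\G}}')$-invariant as the increasing supremum of finite sums of bounded forms with these properties (summation preserves the Dirichlet inequality $Q\circ\pi_{I} \le Q$ and its matrix amplifications, as well as commutation with $\mathcal{U}(\Linfty{\hat{\G}}')$). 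Unboundedness of $Q$ follows by picking unit vectors $\eta_{n}$ with $\langle B_{n}\eta_{n},\eta_{n}\rangle \geq 1/4$ (available since $B_{n} \geq 0$ and $\|B_{n}\| \geq 1/2$): any bound $Q \le C\|\cdot\|^{2}$ would force $n/4 \leq n\,Q_{n}(\eta_{n}) \le Q(\eta_{n}) \le C$ for every $n$, contradicting $n \to \infty$. Applying \prettyref{thm:corres_conv_smgrps_Dirichlet_forms} in reverse yields a $w^{*}$-continuous convolution semigroup of $\Rant^{\mathrm{u}}$-invariant contractive positive functionals with unbounded generator, and \prettyref{rem:corres_conv_smgrps} lets us normalise it to a semigroup of states at the cost of a bounded scalar shift of the generator. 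The main obstacle is balancing density of $D(Q)$ against unboundedness of $Q$, resolved by the diagonal/weight choice above.
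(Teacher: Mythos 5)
Your proof is correct and follows essentially the same route as the paper: both directions reduce to Theorem \ref{thm:prop_T_states} (using second countability to get sequences), and the unbounded invariant completely Dirichlet form is built as a weighted sum $\sum_n n\,Q_{\one-S_n}$ over a diagonally thinned sequence, with density of the domain secured on a fixed dense sequence and unboundedness from $\Vert \one-S_n\Vert$ being bounded below. The only cosmetic differences are that you certify the complete Dirichlet property of each summand via the compound Poisson semigroup $\bigl(e^{-t(\one-S_n)}\bigr)_{t\ge0}$ and Theorem \ref{thm:corres_conv_smgrps_Dirichlet_forms} rather than quoting \citep[Lemma 5.2]{Goldstein_Lindsay__Markov_sgs_KMS_symm_weight} directly, and that the normalisation constant $1/2$ should be an arbitrary $\e_0>0$ (thinning only guarantees some positive lower bound), which changes nothing.
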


\begin{proof}
$(\implies)$ Assume that $\hat{\G}$ does not have Property (T).
By \prettyref{thm:prop_T_states} implication \prettyref{enu:prop_T_states__4_R_inv}$\implies$\prettyref{enu:prop_T_states__1}, \prettyref{cor:R_L_KMS_sym} and \prettyref{thm:GL_4_7_forward},
there is a sequence $\left(\mu_{n}\right)_{n=1}^{\infty}$ of $\Rant^{\mathrm{u}}$-invariant
states of $\CzU{\G}$ such that the sequence $(a_{n})_{n=1}^{\infty}:=(\widetilde{R}_{\mu_{n}}^{(2,\varphi)})_{n=1}^{\infty}$
of selfadjoint contractions in $\Linfty{\hat{\G}}$ converges to $\one$
in the strong operator topology but not in norm. Clearly $\one-a_{n}\ge0$ for all $n\in\N$.
Let $\left(\z_{j}\right)_{j=1}^{\infty}$ be a dense sequence in $\Ltwo{\G}$.
By passing to a subsequence, one may assume that there exists $\e_{0}>0$
such that for every $n\in\N$,
\begin{equation}
\left\Vert (\one-a_{n})\z_{j}\right\Vert \le\frac{1}{2^{n}}\qquad(\forall_{1\leq j\leq n})\label{eq:prop_T_gen_func__1}
\end{equation}
and

\begin{equation}
\left\Vert \one-a_{n}\right\Vert \ge\e_{0}.\label{eq:prop_T_gen_func__2}
\end{equation}

Whenever $H$ is a (generally unbounded) positive selfadjoint operator
on a Hilbert space, write $Q_{H}$ for the associated closed densely-defined (non-negative)
quadratic form. For $n\in\N$, consider the positive operator $\Delta_{n}:=\sum_{k=1}^{n}k(\one-a_{k})\in B(\Ltwo{\G})$.
Then $\left(\Delta_{n}\right)_{n=1}^{\infty}$, equivalently $\left(Q_{\Delta_{n}}\right)_{n=1}^{\infty}$,
is increasing. Let $Q$ be the closed (non-negative) quadratic form
that is the pointwise limit of $\left(Q_{\Delta_{n}}\right)_{n=1}^{\infty}$
(see \citep[Theorem 4.32]{Davies__semigroups}). It follows from \prettyref{eq:prop_T_gen_func__1}
that the dense subset $\left\{ \z_{j}:j\in\N\right\} $ of $\Ltwo{\G}$
is contained in the domain of $Q$. For every $k\in\N$, $a_{k}$ being
selfadjoint, contractive, and Markov with respect to $\varphi$ implies that $Q_{\one-a_{k}}$
is Dirichlet with respect to $\varphi$ by \citep[Lemma 5.2]{Goldstein_Lindsay__Markov_sgs_KMS_symm_weight}.
Hence, for all $n\in\N$, $Q_{\Delta_{n}}$ is Dirichlet with respect
to $\varphi$, and therefore so is $Q$. More generally 
the operators $R_{\mu_{k}}$, $k\in\N$, are (KMS-symmetric) completely positive contractions, thus
the selfadjoint operators $a_{k}$ are completely Markov with respect to
$\varphi$ (see the terminology introduced in the Appendix and \prettyref{thm:GL_4_7_forward}), so as above, $Q$ is completely Dirichlet with respect to $\varphi$.
Additionally, for all $n\in\N$, $Q_{\Delta_{n}}$ is invariant under
$\mathcal{U}(\Linfty{\hat{\G}}')$ for $\left\{ a_{k}:k\in\N\right\} \subseteq\Linfty{\hat{\G}}$,
so $Q$ also has this property. Finally, it is clear from \prettyref{eq:prop_T_gen_func__2}
that $Q$ is unbounded. \prettyref{thm:corres_conv_smgrps_Dirichlet_forms} (and \prettyref{rem:corres_conv_smgrps})
produce the $w^{*}$-continuous convolution semigroup
of $\Rant^{\mathrm{u}}$-invariant states of $\CzU{\G}$ that is associated
with $Q$, which, by the remark before the theorem, has unbounded generator.

$(\impliedby)$ Assume that $\hat{\G}$ has Property (T), and let
$\left(\mu_{t}\right)_{t\geq0}$ be a $w^{*}$-continuous convolution
semigroup of (even not necessarily $\Rant^{\mathrm{u}}$-invariant) states
of $\CzU{\G}$. By \prettyref{thm:prop_T_states} implication \prettyref{enu:prop_T_states__1}$\implies$\prettyref{enu:prop_T_states__2},
$\mu_{t}\xrightarrow[t\to0^{+}]{}\epsilon$ in norm; that is, the
generator of $\left(\mu_{t}\right)_{t\geq0}$ is bounded (\prettyref{lem:gen_func_bounded}).
\end{proof}
\begin{rem}
Comparing the above proof to those of the classical results that we
extend, see e.g.~\citep[Theorem 3]{Akemann_Walter__unb_neg_def_func},
something seems to be `missing', namely picking an increasing
sequence $\left(K_{n}\right)_{n=1}^{\infty}$ of compact neighbourhoods
of the identity with $G=\bigcup_{n=1}^{\infty}K_{n}$ and choosing
the sequence of continuous positive-definite functions in a way that
makes the series converge uniformly on each of these compact sets,
so as to make the sum $\psi$ continuous. We, however, chose the dense
sequence $\left(\z_{n}\right)_{n=1}^{\infty}$ \emph{arbitrarily}.
The reason is that $\psi$ is conditionally negative definite, and
also measurable as the limit of a sequence of continuous functions;
therefore, it is equal in $\Linfty G$ to a \emph{continuous} conditionally
negative-definite function. This is because for every $t\ge0$, the
measurable function $e^{-t\psi}$ is positive definite by Sch\"onberg's
theorem, thus it is equal in $\Linfty G$ to a continuous positive-definite
function (see \citep{Devinatz__meas_pos_def_oper_func} for a nice
survey on this).
\end{rem}

\subsection{\label{sub:Haagerup}The Haagerup Property}

The following definition was introduced in \citep{Daws_Fima_Skalski_White_Haagerup_LCQG}.

\begin{defn}
A locally compact quantum group $\G$ has the \emph{Haagerup Property} if it admits a unitary
representation that is mixing and has almost-invariant vectors.
\end{defn}

In the following result, which extends \citep[Theorem 6.5 (i)$\iff$(iii)$\iff$(iv) and Proposition 6.12]{Daws_Fima_Skalski_White_Haagerup_LCQG}
by adding two more equivalent conditions, all approximate identities
are understood in the Banach algebraic sense.
\begin{prop}
\label{prop:Haagerup_prop}Let $\G$ be a locally compact quantum group. Then the following
conditions are equivalent:\renewcommand{\theenumi}{(a)}
\begin{enumerate}
\item \label{enu:Haagerup_prop__1}$\hat{\G}$ has the Haagerup Property;\renewcommand{\theenumi}{(b)}
\item \label{enu:Haagerup_prop__2}there is a net $\left(\mu_{i}\right)_{i\in\mathcal{I}}$
of states of $\CzU{\G}$ such that $\bigl(\widetilde{R}_{\mu_{i}}^{\varphi}\bigr)_{i\in\mathcal{I}}$
is an approximate identity for $\Cz{\hat{\G}}$;\renewcommand{\theenumi}{(b')}
\item \label{enu:Haagerup_prop__2_R_inv}same as \prettyref{enu:Haagerup_prop__2}
for a net of $\Rant^{\mathrm{u}}$-invariant states;\renewcommand{\theenumi}{(c)}
\item \label{enu:Haagerup_prop__3}there is a net $\left(\mu_{i}\right)_{i\in\mathcal{I}}$
of states of $\CzU{\G}$ such that $\bigl(\widetilde{R}_{\mu_{i}}^{(2,\varphi)}\bigr)_{i\in\mathcal{I}}$
is an approximate identity for $\Cz{\hat{\G}}$;\renewcommand{\theenumi}{(c')}
\item \label{enu:Haagerup_prop__3_R_inv}same as \prettyref{enu:Haagerup_prop__3}
for a net of $\Rant^{\mathrm{u}}$-invariant states.
\end{enumerate}
Moreover, if $\G$ is second countable, one can replace `net' by `sequence'
throughout. 
Similar assertions hold for operators of the form $L_{\mu}$, $\mu\in\CzU{\G}^{*}$.
\end{prop}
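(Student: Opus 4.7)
The plan is to leverage the fact that \prettyref{enu:Haagerup_prop__1}$\iff$\prettyref{enu:Haagerup_prop__2}$\iff$\prettyref{enu:Haagerup_prop__2_R_inv} is already known from \citep[Theorem 6.5 and Proposition 6.12]{Daws_Fima_Skalski_White_Haagerup_LCQG}, so what I really need to do is splice in the `KMS-implementation' conditions \prettyref{enu:Haagerup_prop__3} and \prettyref{enu:Haagerup_prop__3_R_inv}. My strategy is to reformulate both `approximate identity' conditions as strict convergence conditions in $\M{\Cz{\hat{\G}}}$ and then invoke \prettyref{lem:C_z_U_star_conv_Hilbert} directly, bypassing any attempt to relate the two types of operators on the Hilbert space side.

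First I would observe that $\widetilde{R}_\epsilon^\varphi=(\epsilon\tensor\i)(\Ww^*)=\one$ from the defining property $(\epsilon\tensor\i)(\Ww)=\one$ of the co-unit, and hence by \prettyref{lem:L_R_Hilbert_2} also $\widetilde{R}_\epsilon^{(2,\varphi)}=\hat\tau_{-i/4}(\one)=\one$. Next I would note the standard fact that for a norm-bounded net in $\M{\Cz{\hat{\G}}}$, the (two-sided) approximate identity property for $\Cz{\hat{\G}}$ is exactly strict convergence to $\one$ in $\M{\Cz{\hat{\G}}}$. Since every $\widetilde{R}_{\mu_i}^\varphi$ and $\widetilde{R}_{\mu_i}^{(2,\varphi)}$ is a contraction in $\M{\Cz{\hat{\G}}}$ by \prettyref{prop:R_L_phi_psi} and \prettyref{lem:L_R_Hilbert_2}, this recasts \prettyref{enu:Haagerup_prop__2} and \prettyref{enu:Haagerup_prop__3} as the strict convergences $\widetilde{R}_{\mu_i}^\varphi\xrightarrow[i\in\mathcal{I}]{}\widetilde{R}_\epsilon^\varphi$ and $\widetilde{R}_{\mu_i}^{(2,\varphi)}\xrightarrow[i\in\mathcal{I}]{}\widetilde{R}_\epsilon^{(2,\varphi)}$ in $\M{\Cz{\hat{\G}}}$, respectively.

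Because states have norm $1$, we land in the `equal-norm' regime of \prettyref{lem:C_z_U_star_conv_Hilbert}, in which both of these strict convergences are equivalent to the $w^*$-convergence $\mu_i\xrightarrow[i\in\mathcal{I}]{}\epsilon$ in $\CzU{\G}^*$. This gives \prettyref{enu:Haagerup_prop__2}$\iff$\prettyref{enu:Haagerup_prop__3}, and the same argument yields \prettyref{enu:Haagerup_prop__2_R_inv}$\iff$\prettyref{enu:Haagerup_prop__3_R_inv} since $\Rant^{\mathrm{u}}$-invariance of each $\mu_i$ is preserved through the reformulation. For the $L_\mu$ version one works with the commutant quantum group $\QG'$ and $\M{\Cz{\hat{\G}'}}$ in place of $\M{\Cz{\hat{\G}}}$, and everything proceeds analogously.

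For the sequential strengthening under second countability, \prettyref{prop:second_countable_LCQG} ensures separability of $\Cz{\hat{\G}}$, and a standard diagonal argument applied to a dense sequence extracts a sequential approximate identity from any given one. The main (really, only) place where I would expect pitfalls is the reformulation step itself, since a direct comparison between approximate identities of $\widetilde{R}^\varphi$-type and $\widetilde{R}^{(2,\varphi)}$-type would force me to contend with the analytic generator $\hat\tau_{-i/4}$, which is unbounded on $\Cz{\hat{\G}}$; going through $w^*$-convergence of the states $\mu_i$ sidesteps this difficulty entirely.
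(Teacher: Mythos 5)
There is a genuine gap, and it is located exactly at the step you flag as the ``reformulation''. An approximate identity for $\Cz{\hat{\G}}$ is by definition a net of elements \emph{of} $\Cz{\hat{\G}}$, not merely of $\M{\Cz{\hat{\G}}}$, that converges strictly to $\one$. Your reformulation keeps only the strict-convergence half and discards the membership requirement $\widetilde{R}_{\mu_i}^{\varphi}\in\Cz{\hat{\G}}$ (respectively $\widetilde{R}_{\mu_i}^{(2,\varphi)}\in\Cz{\hat{\G}}$). That membership is the entire content of the Haagerup Property here \textendash{} it encodes the mixing/$C_0$-coefficients condition \textendash{} and without it your argument proves an equivalence between two conditions that are each equivalent to $\mu_{i}\xrightarrow[i\in\mathcal{I}]{}\epsilon$ in the $w^{*}$-topology, which holds for \emph{every} locally compact quantum group (take $\mu_{i}=\epsilon$ constantly, so that $\widetilde{R}_{\mu_i}^{\varphi}=\one$, which lies in $\M{\Cz{\hat{\G}}}$ but not in $\Cz{\hat{\G}}$ unless $\hat{\G}$ is compact). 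So the argument as written would show that every quantum group has the Haagerup Property.

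Consequently the difficulty you claim to sidestep \textendash{} the unbounded analytic generator $\hat{\tau}_{-i/4}$ on $\Cz{\hat{\G}}$ \textendash{} is unavoidable; it is precisely what relates membership of $\widetilde{R}_{\mu}^{\varphi}$ in $\Cz{\hat{\G}}$ to membership of $\widetilde{R}_{\mu}^{(2,\varphi)}=\hat{\tau}_{-i/4}(\widetilde{R}_{\mu}^{\varphi})$ in $\Cz{\hat{\G}}$. The paper's proof isolates exactly this issue. For \prettyref{enu:Haagerup_prop__2}$\implies$\prettyref{enu:Haagerup_prop__3} it uses that $\mu$ hermitian with $\widetilde{R}_{\mu}^{\varphi}\in\Cz{\hat{\G}}$ forces $\hat{\tau}_{-i/2}(\widetilde{R}_{\mu}^{\varphi})=\hat{\Rant}\bigl(\bigl(\widetilde{R}_{\mu}^{\varphi}\bigr)^{*}\bigr)\in\Cz{\hat{\G}}$, whence $\widetilde{R}_{\mu}^{\varphi}$ lies in the domain of the C$^{*}$-algebraic analytic generator and $\hat{\tau}_{-i/4}(\widetilde{R}_{\mu}^{\varphi})\in\Cz{\hat{\G}}$. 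The converse direction is harder still: $\widetilde{R}_{\mu}^{(2,\varphi)}\in\Cz{\hat{\G}}$ does not directly give $\widetilde{R}_{\mu}^{\varphi}\in\Cz{\hat{\G}}$, and the paper must smear $\mu$ via \prettyref{eq:CzU_star_tau_smear}, show $\widetilde{R}_{\mu_{n}}^{\varphi}=\hat{\tau}_{i/4}(x_{n})\in\Cz{\hat{\G}}$ for the regularised functionals, renormalise to states, and extract a subnet of the doubly indexed family. None of this is recoverable from your argument, so the proposal needs to be substantially reworked around the membership issue.
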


\begin{proof}
The equivalence \prettyref{enu:Haagerup_prop__1}$\iff$\prettyref{enu:Haagerup_prop__2}$\iff$\prettyref{enu:Haagerup_prop__2_R_inv}
is \citep[Theorem 6.5 (i)$\iff$(iii) and Proposition 6.12]{Daws_Fima_Skalski_White_Haagerup_LCQG}.

By \prettyref{lem:C_z_U_star_conv_Hilbert}, for a net $\left(\mu_{i}\right)_{i\in\mathcal{I}}$
of states of $\CzU{\G}$, $\widetilde{R}_{\mu_{i}}^{\varphi}\xrightarrow[i\in\mathcal{I}]{}\one$
strictly in $\M{\Cz{\hat{\G}}}$ if and only if $\widetilde{R}_{\mu_{i}}^{(2,\varphi)}\xrightarrow[i\in\mathcal{I}]{}\one$
strictly in $\M{\Cz{\hat{\G}}}$, if and only if $\mu_{i}\xrightarrow[i\in\mathcal{I}]{}\epsilon$
in the $w^{*}$-topology. Thus, to prove that \prettyref{enu:Haagerup_prop__2}$\iff$\prettyref{enu:Haagerup_prop__3}
and \prettyref{enu:Haagerup_prop__2_R_inv}$\iff$\prettyref{enu:Haagerup_prop__3_R_inv},
we only need to take care of the issue of belonging to $\Cz{\hat{\G}}$.

\prettyref{enu:Haagerup_prop__2}$\implies$\prettyref{enu:Haagerup_prop__3}
and \prettyref{enu:Haagerup_prop__2_R_inv}$\implies$\prettyref{enu:Haagerup_prop__3_R_inv}.
Let $\mu\in\CzU{\G}^{*}$ be hermitian and satisfy $\widetilde{R}_{\mu}^{\varphi}\in\Cz{\hat{\G}}$.
Then as in the proof of \prettyref{lem:L_R_Hilbert_2} \prettyref{enu:L_R_Hilbert_2__3},
we have $\hat{\tau}_{-i/2}(\widetilde{R}_{\mu}^{\varphi})=\hat{\Rant}\bigl(\bigl(\widetilde{R}_{\overline{\mu}}^{\varphi}\bigr)^{*}\bigr)=\hat{\Rant}\bigl(\bigl(\widetilde{R}_{\mu}^{\varphi}\bigr)^{*}\bigr)\in\Cz{\hat{\G}}$,
thus $\widetilde{R}_{\mu}^{\varphi}\in D(\hat{\tau}_{-i/2})$ in the
C$^{*}$-algebraic sense \citep[Proposition 1.24]{Kustermans__one_param_rep}.
Hence $\widetilde{R}_{\mu}^{(2,\varphi)}=\hat{\tau}_{-i/4}(\widetilde{R}_{\mu}^{\varphi})\in\Cz{\hat{\G}}$.
The desired implication follows.

\prettyref{enu:Haagerup_prop__3}$\implies$\prettyref{enu:Haagerup_prop__2}
and \prettyref{enu:Haagerup_prop__3_R_inv}$\implies$\prettyref{enu:Haagerup_prop__2_R_inv}.
Let $\mu\in\CzU{\G}^{*}$ be such that $\widetilde{R}_{\mu}^{(2,\varphi)}\in\Cz{\hat{\G}}$.
Then for $n\in\N$, the operator $x_{n}:=\frac{\sqrt{n}}{\sqrt{\pi}}\int_{\R}e^{-nt^{2}}\hat{\tau}_{t}(\widetilde{R}_{\mu}^{(2,\varphi)})\d t\in\Cz{\hat{\G}}$,
where the integral converges in norm, is entire analytic with respect to
$\hat{\tau}$ in the C$^{*}$-algebraic sense. Define $\mu_{n}\in\CzU{\G}^{*}$
as in \prettyref{eq:CzU_star_tau_smear}. If $\mu$ is $\Rant^{\mathrm{u}}$-invariant,
so is $\mu_{n}$ because of the commutativity of $\tau_{t}^{\mathrm{u}}$
and $\Rant^{\mathrm{u}}$. Since $\widetilde{R}_{\mu}^{(2,\varphi)}=\hat{\tau}_{-i/4}(\widetilde{R}_{\mu}^{\varphi})$
and $\hat{\tau}_{t}(\widetilde{R}_{\mu}^{\varphi})=\widetilde{R}_{\mu\circ\tau_{-t}^{\mathrm{u}}}^{\varphi}$
for every $t\in\R$, we deduce from the ultraweak closedness of $\hat{\tau}_{-i/4}$
that $x_{n}=\hat{\tau}_{-i/4}(\widetilde{R}_{\mu_{n}}^{\varphi})$,
now in the von Neumann algebraic sense (use \prettyref{lem:C_z_U_star_conv_Hilbert}).
Hence $\widetilde{R}_{\mu_{n}}^{\varphi}=\hat{\tau}_{i/4}(x_{n})\in\Cz{\hat{\G}}$.
Recall that $\left\Vert \mu_{n}\right\Vert \le\left\Vert \mu\right\Vert $
for all $n\in\N$ and $\mu_{n}\xrightarrow[n\to\infty]{}\mu$ in the
$w^{*}$-topology, thus $\left\Vert \mu_{n}\right\Vert \xrightarrow[n\to\infty]{}\left\Vert \mu\right\Vert $.

Let $\left(\mu_{i}\right)_{i\in\mathcal{I}}$ be a net of states as
in the assumption. For $i\in\mathcal{I}$ and $n\in\N$, write $\mu_{i,n}:=\left(\mu_{i}\right)_{n}$.
Since $\mu_{i}\xrightarrow[i\in\mathcal{I}]{}\epsilon$ in the $w^{*}$-topology
(see above), there is a subnet $\left(\nu_{j}\right)_{j\in\mathcal{J}}$
of $\bigl(\frac{1}{\left\Vert \mu_{i,n}\right\Vert }\mu_{i,n}\bigr)_{(i,n)\in\mathcal{I}\times\N}$
that also converges to $\epsilon$ in the $w^{*}$-topology. By the
foregoing, $\bigl(\widetilde{R}_{\nu_{j}}^{\varphi}\bigr)_{j\in\mathcal{J}}$
is an approximate identity for $\Cz{\hat{\G}}$.

The final assertion, related to second countability, is obvious.
\end{proof}
The next result is a generalisation of \citep[Proposition 6.5]{Caspers_Skalski__Haagerup_AP_Dirichlet_forms},
which in turn is based on \citep{Sauvageot__str_Feller_smgrps} and
\citep{Jolissaint_Martin__alg_vN_fin_Haagerup}. Essentially, for
a C$^{*}$-algebra $\Aalg$, we replace $B(\H)$ and
$\K(\H)$  appearing in those results by $\M{\Aalg}$ and $\Aalg$, respectively. We remark that the constructions in the proof
are identical to those in \citep{Caspers_Skalski__Haagerup_AP_Dirichlet_forms},
that is, they do not depend on $\Aalg$. However, to verify that the
operators belong to $\M{\Aalg}$ or $\Aalg$ we need to present all
steps. A \emph{strictly continuous semigroup} in $\M{\Aalg}$ is a
family $\left(S_{t}\right)_{t\ge0}$ in $\M{\Aalg}$ such that $S_{0}=\one_{\M{\Aalg}}$,
$S_{t}S_{s}=S_{t+s}$ for every $t,s\ge0$, and $\lim_{t\to0^{+}}S_{t}=\one_{\M{\Aalg}}$
strictly. 
\begin{prop}
\label{prop:Haagerup_semigroup}Let $\H$ be a Hilbert space and $\Aalg$
be a separable C$^{*}$-subalgebra of $B(\H)$ acting non-degenerately on $\H$. Form $\M{\Aalg}$
in $B(\H)$. Let $\left(C_{i}\right)_{i\in\mathcal{I}}$ be a family
of closed convex subsets of $\H$, and let $\left(T_{n}\right)_{n=1}^{\infty}$
be a sequence in $\M{\Aalg}$ with the following properties:
\begin{enumerate}
\item for every $n\in\N$, $T_{n}$ is a selfadjoint contraction;
\item for every $n\in\N$ and $i\in\mathcal{I}$, $C_{i}$ is invariant
under $T_{n}$;
\item $T_{n}\xrightarrow[n\to\infty]{}\one_{\M{\Aalg}}$ strictly.
\end{enumerate}
Then there is a strictly continuous semigroup $\left(S_{t}\right)_{t\ge0}$
of selfadjoint contractions in $\M{\Aalg}$ leaving each of the sets
$C_{i}$ invariant. If $T_{n}\in\Aalg$ for every $n\in\N$, then
we can arrange that $S_{t}\in\Aalg$ for every $t>0$.
\end{prop}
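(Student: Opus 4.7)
The plan is to follow the argument of \citep[Proposition 6.5]{Caspers_Skalski__Haagerup_AP_Dirichlet_forms} (itself based on \citep{Sauvageot__str_Feller_smgrps}), with $(B(\H), \K(\H))$ there replaced by $(\M{\Aalg}, \Aalg)$. As remarked just before the proposition, the construction itself does not depend on the ambient $C^{*}$-algebra, so the only genuinely new content is to verify, at each stage, that the approximating operators live in $\M{\Aalg}$ (respectively in $\Aalg$) and that the $C_{i}$'s are preserved.

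First I would reduce to the case of positive $T_{n}$ by replacing each $T_{n}$ with $(\one + T_{n})/2$: this is still a selfadjoint contraction in $\M{\Aalg}$, still strictly converges to $\one$, and still preserves each $C_{i}$ (as a convex combination of $\one$ and $T_{n}$, and $C_{i}$ is convex). Using separability of $\Aalg$, I would fix a countable dense sequence $(a_{j})_{j \in \N}$ in $\Aalg$ and extract a subsequence $(T_{n_{k}})$ satisfying a summable approximation condition such as $\|(T_{n_{k}} - \one)a_{j}\|, \|a_{j}(T_{n_{k}} - \one)\| \le 2^{-k}$ for every $j \le k$. This is the key input required by the Sauvageot-type construction to guarantee strict continuity of the resulting semigroup at the origin.

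I would then apply the construction from \citep{Caspers_Skalski__Haagerup_AP_Dirichlet_forms} verbatim; it produces $S_{t}$ as a suitable limit of operators built from the $T_{n_{k}}$. At every intermediate stage the approximants are convex combinations of words in the $T_{n_{k}}$, hence selfadjoint contractions in $\M{\Aalg}$ preserving each $C_{i}$ (since $C_{i}$ is closed, convex, and invariant under every $T_{n_{k}}$). The passage to the limit preserves each of these properties: $\M{\Aalg}$ is strictly closed in $B(\H)$, and closedness and convexity of $C_{i}$ pass to limits. Verifying the semigroup law and strict continuity at $0$ --- the technical heart of the matter --- would be done exactly as in \citep{Caspers_Skalski__Haagerup_AP_Dirichlet_forms}, where strict continuity at $0$ rests on the summability choice made above.

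The main obstacle, and the only place where this extension requires more than routine bookkeeping, is the final claim that $S_{t} \in \Aalg$ for every $t > 0$ when each $T_{n} \in \Aalg$. The argument in \citep{Caspers_Skalski__Haagerup_AP_Dirichlet_forms} exploits the fact that $\K(\H)$ is a norm-closed ideal in $B(\H)$ and that the construction presents each $S_{t}$ (for $t > 0$) as a norm limit of finite products of the $T_{n_{k}}$. Since $\Aalg$ is likewise a norm-closed subalgebra of $\M{\Aalg}$ closed under products, the same mechanism applies: finite products of elements of $\Aalg$ remain in $\Aalg$, and $\Aalg$ being norm-closed, the resulting $S_{t}$ belongs to $\Aalg$ for every $t > 0$.
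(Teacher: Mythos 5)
Your overall strategy --- follow the Sauvageot/Caspers--Skalski construction and track membership in $\M{\Aalg}$, respectively $\Aalg$, at each stage --- is the same as the paper's, and most of the bookkeeping you describe is fine. The preliminary reduction to positive $T_{n}$ is harmless but unnecessary (the construction only uses $\one-T_{n}\ge0$, which already holds for selfadjoint contractions). Your handling of the $\M{\Aalg}$-membership and of the invariance of the $C_{i}$ is essentially correct: the approximants are (infinite) convex combinations of powers of convex combinations of the $T_{n}$, hence preserve each closed convex $C_{i}$, and bounded strictly convergent nets have limits in $\M{\Aalg}$.

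The genuine gap is in your last paragraph, which treats the one point the proposition really turns on. The construction does \emph{not} present $S_{t}$ as a norm limit of finite products of the $T_{n_{k}}$. Writing $\theta_{n}:=\frac{1}{n}(T_{1}+\cdots+T_{n})$ and $\Delta_{n}:=n(\one-\theta_{n})$, the approximants are $S_{t,n}=e^{-t\Delta_{n}}=e^{-tn}\sum_{k\ge0}\frac{(tn)^{k}}{k!}\theta_{n}^{k}$, and in the preliminary step that produces a commuting family one uses the resolvents $(\one+\a\Delta_{n})^{-1}=\frac{1}{n\a+1}\sum_{k\ge0}\bigl(\frac{n\a}{n\a+1}\bigr)^{k}\theta_{n}^{k}$. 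Each of these contains a nonzero scalar multiple of $\one$, so it does \emph{not} lie in $\Aalg$ even when every $T_{n}\in\Aalg$; moreover $S_{t,n}\to S_{t}$ only strictly, not in norm, so approximate membership in $\Aalg$ does not pass to the limit. The mechanism that actually works is the combination of the \emph{ideal} property $\Aalg\,\M{\Aalg}\subseteq\Aalg$ with a quantitative spectral estimate $\Vert(\one-\theta_{n})S_{t}\Vert\le\Vert(\one-\theta_{n})S_{t,n}\Vert\le K/\sqrt{nt}$, which exhibits $S_{t}$, for $t>0$, as the \emph{norm} limit of $\theta_{n}S_{t}\in\Aalg$; an analogous and more involved estimate $\Vert\rho_{\a}-\psi_{\a,n}\Vert\le2/(n\a)$ is needed to keep the commuting resolvent family $(\rho_{\a})_{\a>0}$ inside $\Aalg$ in the first reduction step. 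These estimates, imported from Jolissaint--Martin and Sauvageot and re-verified in the $\M{\Aalg}$ setting, are precisely the content that cannot be replaced by the observation that finite products of elements of $\Aalg$ stay in $\Aalg$.
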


\begin{proof}
We follow closely the proof of \citep[Proposition 6.5]{Caspers_Skalski__Haagerup_AP_Dirichlet_forms},
explaining which changes should be made and where. Write $\one:=\one_{\M{\Aalg}}$.

\emph{Step 1}: we first show that we may assume, without loss of generality, that the
operators $\left(T_{n}\right)_{n=1}^{\infty}$ commute.

Indeed, replacing $\left(T_{n}\right)_{n=1}^{\infty}$ by a subsequence if
necessary, we pick a dense sequence $\left(a_{l}\right)_{l=1}^{\infty}$
in $\Aalg$ such that $\left\Vert T_{n}a-a\right\Vert \le2^{-n}\left\Vert a\right\Vert $
for all $n\in\N$ and $a\in\linspan\left\{ T_{j}^{k}a_{l}:1\le j,l\le n-1,0\le k\le n^{2}\right\} $.
For $n\in\N$, let 
\[
\theta_{n}:=\frac{1}{n}(T_{1}+\ldots+T_{n})\in\M{\Aalg}\text{ and }\Delta_{n}:=n\one-(T_{1}+\ldots+T_{n})=n(\one-\theta_{n})\in\M{\Aalg}_{+},
\]
and for $\a>0$, consider the positive contraction $R_{\a,n}:=(\one+\a\Delta_{n})^{-1}\in\M{\Aalg}$.
By \citep[Lemma 6.4]{Caspers_Skalski__Haagerup_AP_Dirichlet_forms},
$C_{i}$ is invariant under $R_{\a,n}$ for every $i\in\mathcal{I}$.
Since $\left\Vert \theta_{n}\right\Vert \le1$, we have 
\[
R_{\a,n}=((n\a+1)\one-n\a\theta_{n})^{-1}=\frac{1}{n\a+1}\left(\one-\frac{n\a}{n\a+1}\theta_{n}\right)^{-1}=\frac{1}{n\a+1}\sum_{k=0}^{\infty}\left(\frac{n\a}{n\a+1}\right)^{k}\theta_{n}^{k},
\]
where the sum converges in the norm of $\M{\Aalg}$. For every $\a>0$,
repeating the calculations in \citep[pp.~88--89]{Sauvageot__str_Feller_smgrps}
or \citep[pp.~43--44]{Jolissaint_Martin__alg_vN_fin_Haagerup} yields
that $\left(R_{\a,n}a\right)_{n=1}^{\infty}$ converges in norm for
all $a\in\left\{ a_{l}\right\} _{l=1}^{\infty}$, thus for all $a\in\Aalg$.
From selfadjointness of the operators $R_{\a,n}$, $n\in\N$, and
strict completeness of $\M{\Aalg}$, we deduce that $\left(R_{\a,n}\right)_{n=1}^{\infty}$
converges strictly in $\M{\Aalg}$ to some positive contraction $\rho_{\a}\in\M{\Aalg}$,
which evidently preserves all sets $C_{i}$, $i\in\mathcal{I}$. Moreover,
by the further arguments in \citep[p.~89]{Sauvageot__str_Feller_smgrps}
or \citep[p.~44]{Jolissaint_Martin__alg_vN_fin_Haagerup}, we have
$\rho_{\a}\xrightarrow[\a\to0^{+}]{}\one$ strictly and $\a\rho_{\a}-\be\rho_{\be}=(\a-\be)\rho_{\a}\rho_{\be}$
for $\a,\be>0$, hence the operators $\left(\rho_{\a}\right)_{\a>0}$
commute. Therefore, we may replace $\left(T_{n}\right)_{n=1}^{\infty}$
by $\bigl(\rho_{\frac{1}{n}}\bigr)_{n=1}^{\infty}$.

It remains to show that if the operators $\left(T_{n}\right)_{n=1}^{\infty}$
belong to $\Aalg$, so do $\left(\rho_{\a}\right)_{\a>0}$, thus do
$\bigl(\rho_{\frac{1}{n}}\bigr)_{n=1}^{\infty}$. For $n,m\in\N$,
$n<m$, put $\Delta_{n,m}:=\Delta_{m}-\Delta_{n}$. For $\a>0$ and
$n\in\N$, repeating the above reasoning with $\left(T_{m}\right)_{m=n+1}^{\infty}$
in lieu of $\left(T_{m}\right)_{m=1}^{\infty}$ implies that the sequence
$\bigl(\bigl(\one+\frac{\a}{n\a+1}\Delta_{n,m}\bigr)^{-1}\bigr)_{m=n+1}^{\infty}$
converges strictly to some positive contraction $\gamma_{\a,n}\in\M{\Aalg}$.
One shows as in \citep[p.~45]{Jolissaint_Martin__alg_vN_fin_Haagerup}
that $\rho_{\a}-(\one+\a\Delta_{n})^{-1}\gamma_{\a,n}=\frac{n\a}{n\a+1}\bigl(\rho_{\a}-(\one+\a\Delta_{n})^{-1}\bigr)\theta_{n}\gamma_{\a,n}$,
and as in \citep[p.~46]{Jolissaint_Martin__alg_vN_fin_Haagerup} uses
this to prove that the operator 
\[
\psi_{\a,n}:=\theta_{n}(\one+\a\Delta_{n})^{-1}\gamma_{\a,n}+\frac{n\a}{n\a+1}\bigl(\rho_{\a}-(\one+\a\Delta_{n})^{-1}\bigr)\theta_{n}\gamma_{\a,n}
\]
satisfies $\left\Vert \rho_{\a}-\psi_{\a,n}\right\Vert \le\frac{2}{n\a}$.
Since $(\one+\a\Delta_{n})^{-1},\rho_{\a},\gamma_{\a,n}\in\M{\Aalg}$
and by assumption $\theta_{n}\in\Aalg$, we conclude that $\rho_{\a}\in\Aalg$.

\emph{Step 2}: we then argue that the assertion holds under the assumption that the operators
$\left(T_{n}\right)_{n=1}^{\infty}$ commute.

To this end we follow \citep[proof of Lemme 2]{Jolissaint_Martin__alg_vN_fin_Haagerup}.
Let $\left(a_{l}\right)_{l=1}^{\infty}$ be a dense sequence in $\Aalg$.
Replacing $\left(T_{n}\right)_{n=1}^{\infty}$ by a subsequence if
necessary, we may assume that $\sum_{n=1}^{\infty}\left\Vert T_{n}a_{l}-a_{l}\right\Vert <\infty$
for each $l\in\N$. Given $n,m\in\N$, $n<m$, define $\theta_{n},\Delta_{n},\Delta_{n,m}$
as above, and given $t\ge0$, consider the positive contraction $S_{t,n}:=e^{-t\Delta_{n}}\in\M{\Aalg}$.
These operators preserve the sets $C_{i}$, $i\in\mathcal{I}$, by
\citep[Lemma 6.4]{Caspers_Skalski__Haagerup_AP_Dirichlet_forms}.
From the commutativity assumption we get $S_{t,m}=S_{t,n}e^{-t\Delta_{n,m}}$.
For every $t\ge0$, one checks as in \citep[p.~40]{Jolissaint_Martin__alg_vN_fin_Haagerup}
that $\left(S_{t,n}a\right)_{n=1}^{\infty}$ converges in norm for
all $a\in\left\{ a_{l}\right\} _{l=1}^{\infty}$, thus for all $a\in\Aalg$.
Consequently, $\left(S_{t,n}\right)_{n=1}^{\infty}$ converges strictly
to a positive contraction $S_{t}\in\M{\Aalg}$, which leaves invariant
each of the sets $C_{i}$, $i\in\mathcal{I}$. As in \citep[p.~41]{Jolissaint_Martin__alg_vN_fin_Haagerup},
$\left(S_{t}\right)_{t\ge0}$ is a strictly continuous semigroup (equivalently,
when viewing these operators as elements of $B(\Aalg)$, $\left(S_{t}\right)_{t\ge0}$
is a $C_{0}$-semigroup).

It remains to show that if the operators $\left(T_{n}\right)_{n=1}^{\infty}$
belong to $\Aalg$, so do $\left(S_{t}\right)_{t>0}$. The argument
starting in \citep[middle of p.~41]{Jolissaint_Martin__alg_vN_fin_Haagerup}
shows that there exists $K>0$ such that for all $t>0$ and $n\in\N$,
$\left\Vert (\one-\theta_{n})S_{t}\right\Vert \le\left\Vert (\one-\theta_{n})S_{t,n}\right\Vert \le\frac{K}{\sqrt{nt}}$.
As a result, $S_{t}=\lim_{n\to\infty}\theta_{n}S_{t}$, and since
$\theta_{n}\in\Aalg$ for all $n\in\N$ by assumption, we conclude
that also $S_{t}\in\Aalg$.
\end{proof}
In the next corollary, which extends \citep[Proposition 6.6]{Caspers_Skalski__Haagerup_AP_Dirichlet_forms},
we use the following notation. For $k\in\N$, write $\Ltwo{M_{k}}:=\Ltwo{M_{k},tr_{k}}$,
where $tr_{k}$ is the canonical (non-normalised) trace on $M_{k}$.
For a Hilbert space $\H$ and $T\in B(\H)$, we consider the amplification
$T^{(k)}:=\one\tensor T\in B\bigl(\Ltwo{M_{k}}\tensor\H\bigr)$.
\begin{cor}
\label{cor:Haagerup_semigroup_ampl}Let $\H$ be a Hilbert space and
$\Aalg$ be a separable C$^{*}$-subalgebra of $B(\H)$ acting non-degenerately on $\H$. View $\M{\Aalg}$
in $B(\H)$. Let $\left(k_{i}\right)_{i\in\mathcal{I}}$ be a family
in $\N$, and for each $i\in\mathcal{I}$, let $C_{i}$ be a closed
convex subset of $\Ltwo{M_{k_{i}}}\tensor\H$. Let $\left(T_{n}\right)_{n=1}^{\infty}$
be a sequence in $\M{\Aalg}$ with the following properties:
\begin{enumerate}
\item for every $n\in\N$, $T_{n}$ is a selfadjoint contraction;
\item for every $n\in\N$ and $i\in\mathcal{I}$, $C_{i}$ is invariant
under $T_{n}^{(k_{i})}$;
\item $T_{n}\xrightarrow[n\to\infty]{}\one_{\M{\Aalg}}$ strictly.
\end{enumerate}
Then there is a strictly continuous semigroup $\left(S_{t}\right)_{t\ge0}$
of selfadjoint contractions in $\M{\Aalg}$ such that for each $t\ge0$
and $i\in\mathcal{I},$ $C_{i}$ is invariant under $S_{t}^{(k_{i})}$.
If $T_{n}\in\Aalg$ for every $n\in\N$, then we can arrange that
$S_{t}\in\Aalg$ for every $t>0$.
\end{cor}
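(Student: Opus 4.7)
The plan is to apply Proposition \ref{prop:Haagerup_semigroup} to the sequence $(T_n)_{n=1}^{\infty}$ itself — not to the amplified sequences $(T_n^{(k_i)})_{n=1}^{\infty}$ — producing the desired semigroup $(S_t)_{t \geq 0}$ in $\M{\Aalg}$, and then to verify separately for each $i \in \mathcal{I}$ that $S_t^{(k_i)}$ preserves $C_i$. The guiding principle is that, for fixed $i$, the amplification map $T \mapsto T^{(k_i)} = \one \tensor T$ is a unital $*$-homomorphism $B(\H) \to B(\Ltwo{M_{k_i}} \tensor \H)$ that is continuous for the strong operator topology; in particular it commutes with all the operations used in the proof of Proposition \ref{prop:Haagerup_semigroup} — finite sums and scalar multiples, resolvents $(\one + \alpha \Delta_n)^{-1}$, norm-convergent power series, functional calculus exponentials $e^{-t \Delta_n}$, and strong operator (hence also strict) limits of bounded sequences. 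Consequently, running the proof with $(T_n)_{n=1}^{\infty}$ and then amplifying each intermediate operator yields the same outcome as running the analogous proof with $(T_n^{(k_i)})_{n=1}^{\infty}$ from the outset.

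With this, the invariance of $C_i$ transports through the construction step by step. In Step 1 of the proof of Proposition \ref{prop:Haagerup_semigroup}, the operator $R_{\alpha,n}^{(k_i)}$ admits the norm-convergent expansion
\[
R_{\alpha,n}^{(k_i)} = \frac{1}{n\alpha+1}\sum_{k=0}^{\infty} \left(\frac{n\alpha}{n\alpha+1}\right)^{k} (\theta_n^{(k_i)})^k,
\]
which preserves $C_i$ because each $\theta_n^{(k_i)}$, being a convex combination of the selfadjoint contractions $T_1^{(k_i)},\ldots,T_n^{(k_i)}$, is itself a selfadjoint contraction leaving $C_i$ invariant. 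The strict limit $\rho_\alpha = \slim_n R_{\alpha,n}$ in $\M{\Aalg}$ then amplifies to the strong operator limit of $\bigl(R_{\alpha,n}^{(k_i)}\bigr)_{n=1}^{\infty}$ in $B(\Ltwo{M_{k_i}} \tensor \H)$, and closedness of $C_i$ yields invariance under $\rho_\alpha^{(k_i)}$. Step 2 is entirely parallel: $S_{t,n}^{(k_i)} = e^{-t \Delta_n^{(k_i)}}$ preserves $C_i$ by \citep[Lemma 6.4]{Caspers_Skalski__Haagerup_AP_Dirichlet_forms} applied to the (now commuting) amplified family $\bigl(\rho_{1/n}^{(k_i)}\bigr)_{n=1}^{\infty}$, and passing to the strict limit $S_t = \slim_n S_{t,n}$ preserves this invariance, giving the desired property of $S_t^{(k_i)}$.

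The main, though modest, technical point will be the passage from strict convergence in $\M{\Aalg}$ to strong operator convergence of the amplifications on $\Ltwo{M_{k_i}} \tensor \H$. This rests on the fact that, for bounded nets, strict convergence in $\M{\Aalg}$ implies strong operator convergence in $B(\H)$ thanks to the non-degeneracy of the action of $\Aalg$ on $\H$ (together with an approximate unit argument for $\Aalg$), and strong operator convergence is plainly stable under amplification by the finite-dimensional matrix factor $M_{k_i}$. The supplementary conclusion that $S_t \in \Aalg$ for all $t > 0$ when every $T_n$ lies in $\Aalg$ is inherited directly from Proposition \ref{prop:Haagerup_semigroup}, as the construction of $(S_t)_{t \geq 0}$ itself is not modified.
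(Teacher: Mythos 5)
Your proposal is correct and follows essentially the same route as the paper: the paper's proof consists precisely of the observation that the constructions in Proposition \ref{prop:Haagerup_semigroup} do not depend on the convex sets and are amplification invariant, which is what you verify step by step (convex combinations, resolvents, exponentials, and strict limits all commute with the amplification $T\mapsto T^{(k_i)}$, and closedness of $C_i$ handles the limits). Your added detail on passing from strict convergence in $\M{\Aalg}$ to strong operator convergence of the amplifications is a correct and welcome elaboration of what the paper leaves implicit.
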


\begin{proof}
Observe that the constructions in the proof of \prettyref{prop:Haagerup_semigroup}
do not depend on the closed convex sets in question, and they are
`amplification invariant'. The corollary thus follows.
\end{proof}

We are ready to present the main result of this subsection.

\begin{thm}
\label{thm:Haagerup}Let $\G$ be a second countable locally compact quantum group. The following
conditions are equivalent:
\begin{enumerate}
\item \label{enu:Haagerup__1}$\hat{\G}$ has the Haagerup Property;
\item \label{enu:Haagerup__2}there exists a $w^{*}$-continuous convolution
semigroup $\left(\mu_{t}\right)_{t\geq0}$ of $\Rant^{\mathrm{u}}$-invariant
states of $\CzU{\G}$ such that $\widetilde{R}_{\mu_{t}}^{(2,\varphi)}\in\Cz{\hat{\G}}$
for every $t>0$.
\end{enumerate}
\end{thm}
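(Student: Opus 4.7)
The strategy is to prove each direction separately, using Theorem \prettyref{thm:corres_conv_smgrps_Dirichlet_forms} and Corollary \prettyref{cor:Haagerup_semigroup_ampl} as the two main engines.

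For (b)~$\Rightarrow$~(a), pick any sequence $t_n \to 0^+$ and set $T_n := \widetilde R_{\mu_{t_n}}^{(2,\varphi)} \in \Cz{\hat\G}$. Each $T_n$ is a selfadjoint contraction, since $\mu_{t_n}$ is an $\Rant^{\mathrm u}$-invariant state (combine Corollary \prettyref{cor:R_L_KMS_sym} with \prettyref{eq:L2_symmetry_tr}). The $w^*$-continuity of $(\mu_t)$ and Lemma \prettyref{lem:C_z_U_star_conv_Hilbert}, applied to the constant-norm sequence $(\mu_{t_n})$, force $T_n \to \one$ strictly in $\M{\Cz{\hat\G}}$. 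Thus $(T_n)$ is an approximate identity for $\Cz{\hat\G}$ of the type appearing in Proposition \prettyref{prop:Haagerup_prop}~(c'), and the implication (c')~$\Rightarrow$~(a) of that proposition supplies the Haagerup Property for $\hat\G$.

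For (a)~$\Rightarrow$~(b), Proposition \prettyref{prop:Haagerup_prop}, together with the sequential version afforded by second countability (Proposition \prettyref{prop:second_countable_LCQG}), produces a sequence $(\mu_n)_{n \in \N}$ of $\Rant^{\mathrm u}$-invariant states of $\CzU\G$ with $T_n := \widetilde R_{\mu_n}^{(2,\varphi)} \in \Cz{\hat\G}$ forming an approximate identity for $\Cz{\hat\G}$. Each $T_n$ is a selfadjoint contraction in $\Linfty{\hat\G}$, completely Markov with respect to $\varphi$, and $T_n \to \one$ strictly in $\M{\Cz{\hat\G}}$. For $k \in \N$ put $C_k := \mathfrak{i}^{(2)}([0,\one]_{M_k(\mathcal{M}^{(2)})}) \subseteq \Ltwo{M_k} \tensor \Ltwo\G$; complete Markovianity of $T_n$ means precisely that $T_n^{(k)}$ leaves $C_k$ invariant for all $k,n$. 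Since $\Cz{\hat\G}$ is separable (Proposition \prettyref{prop:second_countable_LCQG}), Corollary \prettyref{cor:Haagerup_semigroup_ampl} applied with $\Aalg := \Cz{\hat\G}$ and the family $(C_k)_{k \in \N}$ yields a strictly continuous semigroup $(S_t)_{t \ge 0}$ of selfadjoint contractions in $\M{\Cz{\hat\G}}$, with $S_t \in \Cz{\hat\G}$ for every $t > 0$ and with each $S_t$ completely Markov on $\Ltwo\G$. Inspecting the construction in Proposition \prettyref{prop:Haagerup_semigroup}, each $S_t$ is a strict limit of operators built via resolvents and exponentials from the $T_n \in \Linfty{\hat\G}$, hence $S_t \in \Linfty{\hat\G}$ as well.

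Strict continuity in $\M{\Cz{\hat\G}}$ combined with the non-degenerate action of $\Cz{\hat\G}$ on $\Ltwo\G$ upgrades $(S_t)$ to a $C_0$-semigroup on $\Ltwo\G$. Theorem \prettyref{thm:corres_conv_smgrps_Dirichlet_forms} (supplemented by Remark \prettyref{rem:corres_conv_smgrps}) then supplies a $w^*$-continuous convolution semigroup $(\mu_t)_{t \ge 0}$ of $\Rant^{\mathrm u}$-invariant contractive positive functionals on $\CzU\G$ with $\widetilde R_{\mu_t}^{(2,\varphi)} = S_t$, which lies in $\Cz{\hat\G}$ for every $t > 0$. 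Finally one normalises: the scalar semigroup $t \mapsto \mu_t(\one) = \|\mu_t\|$ is $w^*$-continuous and multiplicative, hence of the form $e^{-ct}$ for some $c \ge 0$, and $\tilde{\mu}_t := e^{ct}\mu_t$ is the desired $w^*$-continuous convolution semigroup of $\Rant^{\mathrm u}$-invariant \emph{states}, with $\widetilde R_{\tilde{\mu}_t}^{(2,\varphi)} = e^{ct} S_t \in \Cz{\hat\G}$ for $t > 0$. The main obstacle will be the transition from the abstract operator-theoretic semigroup produced by Corollary \prettyref{cor:Haagerup_semigroup_ampl} to a genuine convolution semigroup: this hinges on simultaneously tracking, through the construction, both the complete Markov property (encoded by the invariance of the convex sets $C_k$) and the membership of each $S_t$ in $\Linfty{\hat\G}$ (equivalent to $\mathcal{U}(\Linfty{\hat\G}')$-invariance of the associated Dirichlet form), so that Theorem \prettyref{thm:corres_conv_smgrps_Dirichlet_forms} actually delivers a convolution semigroup and not merely a general quantum Markov semigroup.
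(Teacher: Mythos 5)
Your proof is correct and follows essentially the same route as the paper: \prettyref{prop:Haagerup_prop} combined with \prettyref{lem:C_z_U_star_conv_Hilbert} for (b)$\Rightarrow$(a), and \prettyref{prop:Haagerup_prop} plus \prettyref{cor:Haagerup_semigroup_ampl} plus \prettyref{thm:corres_conv_smgrps_Dirichlet_forms} for (a)$\Rightarrow$(b). The only cosmetic differences are that the membership $S_{t}\in\Linfty{\hat{\G}}$ is automatic from $\M{\Cz{\hat{\G}}}\subseteq\Cz{\hat{\G}}''=\Linfty{\hat{\G}}$ (no inspection of the construction is needed), and the final normalisation to states is already built into \prettyref{thm:corres_conv_smgrps_Dirichlet_forms} via \prettyref{rem:corres_conv_smgrps}, so the paper obtains a semigroup of states directly.
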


\begin{proof}
The implication \prettyref{enu:Haagerup__2}$\implies$\prettyref{enu:Haagerup__1}
follows from \prettyref{prop:Haagerup_prop} implication \prettyref{enu:Haagerup_prop__3_R_inv}$\implies$\prettyref{enu:Haagerup_prop__1}
and \prettyref{lem:C_z_U_star_conv_Hilbert} implication \prettyref{enu:C_z_U_star_conv_Hilbert__1}$\implies$\prettyref{enu:C_z_U_star_conv_Hilbert__3_strict}.

\prettyref{enu:Haagerup__1}$\implies$\prettyref{enu:Haagerup__2}
By \prettyref{prop:Haagerup_prop} implication \prettyref{enu:Haagerup_prop__1}$\implies$\prettyref{enu:Haagerup_prop__3_R_inv}
there exists a sequence $\left(\mu_{n}\right)_{n=1}^{\infty}$ of
$\Rant^{\mathrm{u}}$-invariant states of $\CzU{\G}$ such that $\bigl(\widetilde{R}_{\mu_{n}}^{(2,\varphi)}\bigr)_{n=1}^{\infty}$
is an approximate identity for $\Cz{\hat{\G}}$. Each of the operators
$\widetilde{R}_{\mu_{n}}^{(2,\varphi)}$, $n\in\N$, is selfadjoint
and completely Markov (as defined in the Appendix) by \prettyref{cor:R_L_KMS_sym} and \prettyref{thm:GL_4_7_forward}. Applying \prettyref{cor:Haagerup_semigroup_ampl}
to this sequence with $\Aalg:=\Cz{\hat{\G}}$, $\H:=\Ltwo{\G}$, and
for all $i\in\N$, $k_{i}:=i$ and $C_{i}:=\mathfrak{i}_{tr_{i}\tensor\varphi}^{(2)}(\left[0,\one\right]_{\mathcal{M}^{(2,tr_{i}\tensor\varphi)}})\subseteq\Ltwo{M_{i}}\tensor\Ltwo{\G}$
(cf.~\prettyref{def:n_Markov}), we obtain a strictly continuous semigroup
$\left(S_{t}\right)_{t\ge0}$ in $\M{\Cz{\hat{\G}}}\subseteq\Linfty{\hat{\G}}\subseteq B(\Ltwo{\G})$
consisting of selfadjoint (contractive) completely Markov operators
on $\Ltwo{\G}$ with respect to $\varphi$ with $S_{t}\in\Cz{\hat{\G}}$
for $t>0$. Since $\left(S_{t}\right)_{t\ge0}$ is, in particular,
a $C_{0}$-semigroup on $\Ltwo{\G}$, \prettyref{thm:corres_conv_smgrps_Dirichlet_forms}
implies that there is a $w^{*}$-continuous convolution semigroup
$\left(\mu_{t}\right)_{t\geq0}$ of $\Rant^{\mathrm{u}}$-invariant states
of $\CzU{\G}$ such that $S_{t}=\widetilde{R}_{\mu_{t}}^{(2,\varphi)}$
for every $t\ge0$. This completes the proof.
\end{proof}

\section{Examples}\label{sec:examples}

This section will be devoted to discussing examples of convolution semigroups on a locally compact quantum group $\QG$. As mentioned in the introduction, the classical theory (i.e.\ the case where $\QG$ is a classical locally compact group) is as rich as the general theory of L\'evy processes, and we refer to \citep{Fukushima_Oshima_Takeda__Dirichlet_forms_Markov_proc, Deny__methods_hilbertiennes_thy_potentiel} or \cite{Applebaum__Levy_proc_stock_calc, Liao__book} for further information. Here we focus on describing in detail the dual case and then on providing a method of constructing genuinely quantum examples via cocycle twisting. Finally note that several interesting \emph{compact} quantum group examples are treated in \cite{Cipriani_Franz_Kula__sym_Levy_proc}.

\subsection{\label{sub:cocomm_QGs}Co-commutative quantum groups}

Let $\Gamma$ be a locally compact group and consider $\G:=\hat{\Gamma}$.
Then $\CzU{\G}=\CStarF{\Gamma}$, the full C$^{*}$-algebra of $\Gamma$.
We will tacitly use the natural identification between $\CStarF{\Gamma}^{*}$
and the Fourier\textendash Stieltjes algebra $B(\Gamma)$ (see \citep{Eymard}) given as
follows: when viewing $B(\Gamma)$ as contained algebraically in $\Cb{\Gamma}$,
the pairing of $\mu\in B(\Gamma)$ and the image of $f\in\Lone{\Gamma}$
in $\CStarF{\Gamma}$ is $\int_{\Gamma}f(\gamma)\mu(\gamma)\d\gamma$.
In other words, letting $\left(\lambda_{\gamma}^{\mathrm{u}}\right)_{\gamma\in\Gamma}$
in $\M{\CStarF{\Gamma}}$ be the universal representation of $\Gamma$,
the identification is such that $\mu(\lambda_{\gamma}^{\mathrm{u}})=\mu(\gamma)$
for every $\gamma\in\Gamma$. Then $\sigma(\Ww_{\hat{\Gamma}}^{*})=\wW_{\Gamma}\in\M{\Cz{\Gamma}\tensormin\CStarF{\Gamma}}$,
as a continuous function from $\Gamma$ to $\CStarF{\Gamma}$ with
the strict topology, is the map $\gamma\mapsto\lambda_{\gamma}^{\mathrm{u}}$.
Thus, each $\mu\in B(\Gamma)$ satisfies $(\mu\tensor\i)(\Ww_{\hat{\Gamma}}^{*})=\mu$.
The positive elements in $\CStarF{\Gamma}^{*}$ correspond to positive-definite
functions in $B(\Gamma)$. Since $\Rant^{\mathrm{u}}(\lambda_{\gamma}^{\mathrm{u}})=\lambda_{\gamma^{-1}}^{\mathrm{u}}$
for each $\gamma\in\Gamma$, and since every positive-definite function
$\mu$ satisfies $\mu(\gamma^{-1})=\overline{\mu(\gamma)}$ for each
$\gamma\in\Gamma$, we conclude that positive $\Rant^{\mathrm{u}}$-invariant
elements in $\CStarF{\Gamma}^{*}$ correspond to real-valued positive-definite
functions on $\Gamma$.

A continuous function $\theta:\Gamma\to\R$ is \emph{conditionally
negative definite} if it satisfies the following conditions: CND1.~$\theta(e)=0$; CND2.~$\theta(\gamma^{-1})=\theta(\gamma)$
for all $\gamma\in\Gamma$; and CND3.~for every $n\in\N$, $\gamma_{1},\ldots,\gamma_{n}\in\Gamma$
and $c_{1},\ldots,c_{n}\in\R$ with $\sum_{i=1}^{n}c_{i}=0$, we have
$\sum_{i,j=1}^{n}c_{i}c_{j}\theta(\gamma_{i}^{-1}\gamma_{j})\le0$.
Such functions admit non-negative values, and \emph{Sch\"onberg's theorem}
asserts that a continuous function $\theta:\Gamma\to\R$ satisfying
CND1 and CND2 is conditionally negative definite if and only if $e^{-t\theta}$
is positive definite for all $t\ge0$.

Recall that in the state space of $\CStarF{\Gamma}$, that is, in
the space of positive-definite functions $\mu\in B(\Gamma)$ with
$\mu(e)=1$, the $\sigma(\CStarF{\Gamma}^{*},\CStarF{\Gamma})$-topology
coincides with the topology of uniform convergence on compact sets
\citep[Theorem 13.5.2]{Dixmier__C_star_English}. We note further that $w^{*}$-continuous
convolution semigroups $\left(\mu_{t}\right)_{t\geq0}$ of $\Rant^{\mathrm{u}}$-invariant
states of $\CStarF{\Gamma}$ are in $1-1$ correspondence with conditionally
negative-definite functions $\theta$ on $\Gamma$. Indeed, if $\theta$
is given, then the semigroup $\left(e^{-t\theta}\right)_{t\ge0}$
has the desired properties by Sch\"onberg's theorem. Conversely, given
a semigroup $\left(\mu_{t}\right)_{t\geq0}$ as above, for every $\gamma\in\Gamma$,
$\left(\mu_{t}(\gamma)\right)_{t\ge0}$ is a continuous positive semigroup
in $(0,1]$, hence of the form $\left(e^{-t\theta(\gamma)}\right)_{t\ge0}$
for some continuous function $\theta:\Gamma\to\R_{+}$, which clearly
satisfies CND1 and CND2. By Sch\"onberg's theorem, $\theta$ is conditionally
negative definite.

Since $\G=\hat{\Gamma}$ has trivial scaling group, $\widetilde{R}_{\mu}^{(2)}=\widetilde{R}_{\mu}=(\mu\tensor\i)(\Ww_{\hat{\Gamma}}^{*})$
for all $\mu\in B(\Gamma)$ by \prettyref{lem:L_R_Hilbert_2} \prettyref{enu:L_R_Hilbert_2__4}
and \prettyref{prop:R_L_phi_psi} (since $\G$ is unimodular we omit
the Haar weight from the notation). As already explained, this means
that $\widetilde{R}_{\mu}^{(2)}=\mu$ as elements of $\Cb{\Gamma}\subseteq B(\Ltwo{\Gamma})$
(as multiplication operators). So for a conditionally negative-definite
function $\theta:\Gamma\to\R$ and the matching convolution semigroup
$\left(\mu_{t}\right)_{t\geq0}$ in $B(\Gamma)$, the associated semigroup
$\bigl(\widetilde{R}_{\mu_{t}}^{(2)}\bigr)_{t\ge0}$ in $B(\Ltwo{\Gamma})$
is $\bigl(M_{e^{-t\theta}}\bigr)_{t\ge0}$. It is now an exercise
to check that the generator of this $C_{0}$-semigroup is $-A$, where
$A$ is the positive selfadjoint operator $M_{\theta}$ over $\Ltwo G$
given by $f\mapsto\theta f$ with maximal domain (compare \prettyref{lem:A_sqroot_sgrp_description}).
As a result, the associated Dirichlet form is $Q:\Ltwo G\to\left[0,\infty\right]$
given by $Qf:=\int_{\Gamma}\theta(\gamma)\left|f(\gamma)\right|^{2}\mathrm{d}\gamma$,
$f\in\Ltwo G$.

\subsection{Convolution semigroups via cocycle twistings}

Let $\QG$ be an arbitrary locally compact quantum group.

\begin{defn}
\label{def:2cocycle} A unitary $\Omega\in\Linfty{\G}\tensorn\Linfty{\G}$
is said to be a \emph{2-cocycle} if it satisfies the following equality:
\[
(\one\tensor\Omega)(\i\tensor\Delta)(\Omega)=(\Omega\tensor\one)(\Delta\tensor\i)(\Omega).
\]
We say that $\Omega$ is \emph{trivial} if it is of the form $(U\tensor U)\Delta(U^{*})$
for some unitary $U\in\Linfty{\G}$. 
\end{defn}

The following result is due to De Commer.

\begin{thm}[{\citep[Theorem 9.1.4]{DeCommer__PhD}}] \label{thm:cocycle_twist}
Let $\QG$ be a locally compact quantum group and let $\Omega\in\Linfty{\G}\tensorn\Linfty{\G}$
be a 2-cocycle. Put $\mathsf{M}:=\Linfty{\G}$ and let for $m\in\mathsf{M}$
\[
\Delta_{\Omega}(m):=\Omega\Delta(m)\Omega^{*}.
\]
Then the pair $(\mathsf{M},\Delta_{\Omega})$ defines a locally compact
quantum group $\QG_{\Omega}$ (so in particular we have $\Linfty{\QG_{\Omega}}=\Linfty{\G}$). 
\end{thm}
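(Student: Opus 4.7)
The plan is to verify the three defining ingredients for $(\mathsf{M},\Delta_\Omega)$ to be a locally compact quantum group: that $\Delta_\Omega$ is a normal unital $*$-homomorphism into $\mathsf{M}\tensorn\mathsf{M}$, that it is co-associative, and that there exist n.s.f.\ left and right invariant weights on $\mathsf{M}$ relative to $\Delta_\Omega$.

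The first point is immediate: $\Delta_\Omega=\mathrm{Ad}(\Omega)\circ\Delta$ is a composition of a normal unital $*$-homomorphism with the inner $*$-automorphism $\mathrm{Ad}(\Omega)$ of $\mathsf{M}\tensorn\mathsf{M}$, and the latter preserves the target algebra because $\Omega\in\mathsf{M}\tensorn\mathsf{M}$ is unitary. Co-associativity reduces to a direct computation. Using that, for $z\in\mathsf{M}\tensorn\mathsf{M}$,
\[
(\Delta_\Omega\tensor\i)(z) = (\Omega\tensor\one)(\Delta\tensor\i)(z)(\Omega^*\tensor\one),\qquad (\i\tensor\Delta_\Omega)(z) = (\one\tensor\Omega)(\i\tensor\Delta)(z)(\one\tensor\Omega^*),
\]
one expands both $(\Delta_\Omega\tensor\i)(\Delta_\Omega(m))$ and $(\i\tensor\Delta_\Omega)(\Delta_\Omega(m))$; the co-associativity $(\Delta\tensor\i)\circ\Delta=(\i\tensor\Delta)\circ\Delta$ of the original coproduct matches the ``middle'' factors, while the outer factors match precisely because of the 2-cocycle identity of \prettyref{def:2cocycle}.

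The main obstacle is the existence of the Haar weights for $\Delta_\Omega$, as $\varphi$ and $\psi$ are generically not invariant under the twisted co-product, and no purely algebraic manipulation produces substitutes. Following De Commer, the strategy is to build a multiplicative unitary $W_\Omega$ on $\Ltwo{\G}\tensor\Ltwo{\G}$ implementing $\Delta_\Omega$, constructed from $W$ together with a ``reflected'' partner of $\Omega$ that naturally lives on the dual side; next to show that $W_\Omega$ is manageable in the sense of Woronowicz; and finally to invoke the general theory to extract the dual quantum group $\hat\G_\Omega$ together with the left and right Haar weights, antipode, scaling group and modular element of $\G_\Omega$. Verifying manageability is the technically demanding step, and requires the Galois co-object / measured quantum groupoid machinery developed in \citep[Chapter 9]{DeCommer__PhD}. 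Once this is in place, the standard Kustermans--Vaes framework provides the invariant weights and all remaining structural data automatically, yielding the locally compact quantum group $\G_\Omega$ with $\Linfty{\G_\Omega}=\mathsf{M}$ and co-product $\Delta_\Omega$.
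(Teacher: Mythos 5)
This theorem is not proved in the paper at all: it is imported verbatim from De Commer's thesis, so there is no internal argument to compare against. Your sketch of the elementary ingredients is fine: $\Delta_{\Omega}=\Ad{\Omega}\circ\Delta$ is indeed a normal unital $*$-homomorphism into $\Linfty{\G}\tensorn\Linfty{\G}$, and co-associativity of $\Delta_{\Omega}$ is exactly equivalent to the 2-cocycle identity of \prettyref{def:2cocycle}, by the computation you indicate. You also correctly identify that the entire content of the theorem lies in producing the invariant n.s.f.~weights.

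The gap is in your proposed route to those weights. You suggest building a multiplicative unitary $W_{\Omega}$ implementing $\Delta_{\Omega}$, proving it manageable, and then letting ``the standard Kustermans--Vaes framework provide the invariant weights \dots automatically.'' This last step fails: manageability of a multiplicative unitary yields, via Woronowicz's theory, a C$^{*}$-bialgebra with a well-behaved antipode and scaling group, but it does \emph{not} yield Haar weights. The existence of invariant weights is precisely the axiom of the Kustermans--Vaes definition that cannot be recovered from the multiplicative unitary alone, and it is not known (and believed false in general) that every manageable multiplicative unitary comes from a locally compact quantum group. De Commer's actual proof avoids this: the cocycle $\Omega$ is interpreted as a Galois co-object for the dual $\hat{\G}$, and the twisted quantum group is obtained by a reflection construction inside a measured quantum groupoid linking $\G$ and $\G_{\Omega}$; the left and right invariant weights on $(\Linfty{\G},\Delta_{\Omega})$ are constructed there by hand, essentially from dual-weight data on the Galois co-object, rather than deduced from manageability. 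So while your outline names the right auxiliary objects, the logical engine you propose for the crucial step does not exist, and the argument as written would not close.
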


It is easy to see that if $\Omega$ is trivial, then $\QG\cong\QG_{\Omega}$.

Now the usefulness of this theorem in our context is in that it allows us to
define interesting convolution semigroups via cocycle twistings. Note
that the cocycle twisting construction need not preserve the Haar weight \citep{Fima_Vainerman__twist_Rieffel_deform},
nor does it have to preserve compactness \citep{DeCommer__cocycle_twist_CQG}!
In particular we need not have a straightforward equality $\Ltwo{\QG}=\Ltwo{\QG_{\Omega}}$
or $\Cz{\QG}=\Cz{\QG_{\Omega}}$.
\begin{prop}
\label{prop:convsemigtwist}Consider a $w^*$-continuous convolution semigroup of states
$(\mu_{t})_{t\geq0}$ on a locally compact quantum group $\G$. Assume
that $\Omega\in\Linfty{\G}$ is a 2-cocycle and that the associated
(via \prettyref{thm:corres_conv_smgrps} \prettyref{enu:corres_conv_smgrps__3})
semigroup on $\Linfty{\G}$, $(T_{t})_{t\geq0}$, satisfies the condition
\[
(T_{t}\tensor\i)(\Omega)=\Omega.
\]
Then $(T_{t})_{t\geq0}$ arises also from a convolution semigroup
of states $(\widetilde{\mu}_{t})_{t\geq0}$ on $\QG_{\Omega}$. 
\end{prop}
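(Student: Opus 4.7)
The plan is to apply \prettyref{thm:corres_conv_smgrps} in the reverse direction, from the von Neumann algebraic side to the universal C$^*$-algebraic side, now with $\QG_{\Omega}$ playing the role of the quantum group. Since $\Linfty{\QG_{\Omega}} = \Linfty{\G}$ as a von Neumann algebra, the family $(T_t)_{t \geq 0}$ is automatically a semigroup of normal, unital, completely positive maps on $\Linfty{\QG_\Omega}$ that is point--ultraweakly continuous at $0^+$; all three properties depend only on the underlying von Neumann algebra structure, which is unchanged by the twisting. It therefore remains to verify the intertwining relation with the twisted co-product:
\[
\Delta_{\Omega}\circ T_{t}=(T_{t}\tensor\i)\circ\Delta_{\Omega}\qquad(\forall_{t\ge0}).
\]
Once this is done, \prettyref{thm:corres_conv_smgrps} produces a (unique) $w^{*}$-continuous convolution semigroup of states $(\widetilde{\mu}_{t})_{t\geq0}$ of $\CzU{\QG_\Omega}$ such that $T_t = R_{\widetilde{\mu}_t}$ when viewed over $\QG_\Omega$.

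The key step, and the only substantive point, will be the verification of the intertwiner. I would use the multiplicative domain technique: since $T_t \tensor \i$ is a normal unital completely positive map on $\Linfty{\G}\tensorn\Linfty{\G}$, and since the unitary $\Omega$ satisfies $(T_{t}\tensor\i)(\Omega) = \Omega$, a direct application of Kadison's inequality gives
\[
(T_{t}\tensor\i)(\Omega^{*}\Omega)=\one=\Omega^{*}\Omega=(T_{t}\tensor\i)(\Omega)^{*}(T_{t}\tensor\i)(\Omega),
\]
with the analogous identity for $\Omega\Omega^{*}$. Thus $\Omega$ (and, by the same argument, $\Omega^{*}$) lies in the multiplicative domain of $T_{t}\tensor\i$. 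Consequently $T_t \tensor \i$ is bimodular with respect to $\Omega$ and $\Omega^*$, so that for every $m \in \Linfty{\G}$,
\[
(T_{t}\tensor\i)(\Delta_{\Omega}(m))=(T_{t}\tensor\i)(\Omega\,\Delta(m)\,\Omega^{*})=\Omega\,(T_{t}\tensor\i)(\Delta(m))\,\Omega^{*}.
\]

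Finally, combining this with the intertwiner with $\Delta$ satisfied by $(T_t)_{t\geq 0}$ (from its origin on $\G$),
\[
\Omega\,(T_{t}\tensor\i)(\Delta(m))\,\Omega^{*}=\Omega\,\Delta(T_{t}(m))\,\Omega^{*}=\Delta_{\Omega}(T_{t}(m)),
\]
yields $(T_{t}\tensor\i)\circ\Delta_{\Omega}=\Delta_{\Omega}\circ T_{t}$, completing the verification. I do not expect any genuine obstacle beyond identifying the multiplicative domain trick; all the remaining ingredients are purely formal consequences of the fact that $\QG$ and $\QG_\Omega$ share the same von Neumann algebra.
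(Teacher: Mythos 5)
Your proposal is correct and follows essentially the same route as the paper: reduce via Theorem \ref{thm:corres_conv_smgrps} to checking the intertwining relation with $\Delta_\Omega$, then use the multiplicative domain of the unital completely positive map $T_t\tensor\i$ (which the paper invokes in exactly the same way, leaving the Schwarz-inequality equality case implicit) to pull $\Omega$ and $\Omega^*$ out. Your spelling out of why $\Omega$ lies in the multiplicative domain is a welcome extra detail but not a different argument.
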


\begin{proof}
By \prettyref{thm:corres_conv_smgrps} it suffices to show that the
intertwining relation 
\[
\Delta_{\Omega}\circ T_{t}=(T_{t}\tensor\i)\circ\Delta_{\Omega}
\]
holds for each $t\geq0$. But the multiplicative domain arguments
for the unital completely positive map $T_{t}\tensor\i$ show that for each $m\in\Linfty{\G}$
and $t\geq0$ we have 
\begin{align*}
(T_{t}\tensor\i)(\Delta_{\Omega}(m)) & =(T_{t}\tensor\i)(\Omega\Delta(m)\Omega^{*})=(T_{t}\tensor\i)(\Omega)((T_{t}\tensor\i)(\Delta(m)))(T_{t}\tensor\i)(\Omega^{*})\\
 & =\Omega\Delta(T_{t}m)\Omega^{*}=\Delta_{\Omega}(T_{t}m).\qedhere
\end{align*}
\end{proof}
Note that by the comments above we have no guarantee that this construction
preserves KMS-symmetry; it will however preserve that property
if the extra assumptions guarantee that the Haar weights of $\QG_{\Omega}$
coincide with these of $\QG$.

As we want to produce examples related to non-classical quantum groups, we
begin from an `easy' noncommutative $\Linfty{\G}$. Let then $\Gamma$
be a locally compact group and consider $\QG:=\hat{\Gamma}$. Here
on one hand we know how all the convolution semigroups of states on
$\QG$ look like \textendash{} they are associated to conditionally
negative-definite functions on $\Gamma$ (\prettyref{sub:cocomm_QGs}),
and on the other hand we have a natural construction of the cocycle
twists, as explained for example in \citep{Enock_Vainerman__deform_Kac_alg_abel_grp}.
\begin{prop}
\label{prop:dualtwist} Suppose that $\Gamma$ is a locally compact
group with a closed abelian subgroup $H$ (so that via the Takesaki\textendash Tatsuuma theorem from \citep{Takesaki_Tatsuuma}
we have a natural normal inclusion $\gamma:\VN H\hookrightarrow\VN{\Gamma}\cong\Linfty{\hat{\Gamma}}$,
respecting the co-products). Let $k:\hat{H}\times\hat{H}\to\bt$ be
a measurable 2-cocycle on $\hat{H}$, i.e.~a measurable $\bt$-valued
function such that for all $r,s,t\in\hat{H}$, 
\[
k(s,t)k(r,st)=k(r,s)k(rs,t).
\]
Then if we first use the isomorphism $\VN H\cong\Linfty{\hat{H}}$
and then embed the element $k$ into $\Linfty{\hat{\Gamma}}\tensorn\Linfty{\hat{\Gamma}}$,
it becomes a 2-cocycle on $\hat{\Gamma}$, denoted henceforth by $\Omega$.
Further if $\psi:\Gamma\to\bc$ is a conditionally negative-definite
function vanishing on $H$, then the family $(\exp(-t\psi))_{t\geq0}$
of elements of $B(\Gamma)$, viewed as a convolution semigroup of
states on $\hat{\Gamma}$, satisfies the assumptions of Proposition
\ref{prop:convsemigtwist}. In particular we obtain a new convolution
semigroup on $\hat{\Gamma}_{\Omega}$.
\end{prop}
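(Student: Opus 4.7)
The plan is to verify two things: first, that $\Omega$ is indeed a 2-cocycle on $\hat{\Gamma}$ in the sense of \prettyref{def:2cocycle}; and second, that the semigroup $(T_t)_{t\geq 0}$ associated to $(\exp(-t\psi))_{t\geq 0}$ satisfies the hypothesis $(T_t\tensor\i)(\Omega)=\Omega$ of \prettyref{prop:convsemigtwist}.

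For the first point, I would begin by observing that under the Fourier duality $\VN{H}\cong\Linfty{\hat{H}}$ the Hopf\textendash von Neumann structures match in the sense that the co-product $\l_h\mapsto\l_h\tensor\l_h$ corresponds to the co-product on $\Linfty{\hat{H}}$ given by $(\Delta_{\hat{H}}f)(s,t)=f(st)$, after identifying $\Linfty{\hat{H}}\tensorn\Linfty{\hat{H}}$ with $\Linfty{\hat{H}\times\hat{H}}$. Under this identification, the measurable 2-cocycle identity $k(s,t)k(r,st)=k(r,s)k(rs,t)$ is precisely the statement that $k$ satisfies the 2-cocycle relation of \prettyref{def:2cocycle} inside $\Linfty{\hat{H}}\tensorn\Linfty{\hat{H}}$ with respect to $\Delta_{\hat{H}}$. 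Then, since $\gamma$ is a normal unital $*$-monomorphism satisfying $\Delta_{\hat{\Gamma}}\circ\gamma=(\gamma\tensor\gamma)\circ\Delta_{\hat{H}}$, applying $\gamma\tensor\gamma\tensor\gamma$ to the $\hat{H}$-cocycle identity gives the $\hat{\Gamma}$-cocycle identity for $\Omega=(\gamma\tensor\gamma)(k)$.

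For the second point, recall from \prettyref{sub:cocomm_QGs} that under the identification $\CStarF{\Gamma}^*\cong B(\Gamma)$, the convolution semigroup $\mu_t:=\exp(-t\psi)$ corresponds via Sch\"{o}nberg's theorem to a $w^*$-continuous convolution semigroup of $\Rant^{\mathrm{u}}$-invariant states on $\CzU{\hat{\Gamma}}$. The associated map $T_t=R_{\mu_t}$ on $\Linfty{\hat{\Gamma}}=\VN{\Gamma}$ satisfies $T_t(\l_\gamma)=\mu_t(\gamma)\l_\gamma=\exp(-t\psi(\gamma))\l_\gamma$ for every $\gamma\in\Gamma$, as follows directly from $R_{\mu_t}^{\mathrm{u}}=(\mu_t\tensor\i)\circ\Delta_{\mathrm{u}}$ and $\Delta_{\mathrm{u}}(\l_\gamma^{\mathrm{u}})=\l_\gamma^{\mathrm{u}}\tensor\l_\gamma^{\mathrm{u}}$. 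Since $\psi$ vanishes on $H$, for every $h\in H$ we have $T_t(\l_h)=\l_h$. By normality of $T_t$ and because $\gamma(\VN{H})$ is the ultraweak closure of $\linspan\{\l_h:h\in H\}$ in $\VN{\Gamma}$, this forces $T_t$ to act as the identity on $\gamma(\VN{H})$. As $\Omega\in\gamma(\VN{H})\tensorn\gamma(\VN{H})$, we deduce $(T_t\tensor\i)(\Omega)=\Omega$.

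With these two ingredients the conclusion follows from \prettyref{prop:convsemigtwist} applied to $\QG=\hat{\Gamma}$ and the cocycle $\Omega$, yielding a convolution semigroup of states $(\widetilde{\mu}_t)_{t\geq0}$ on $\hat{\Gamma}_\Omega$ implementing the same maps $(T_t)_{t\geq0}$ at the von Neumann algebra level. The only subtlety that needs a little care is the first step, namely matching the measurable/Borel notion of 2-cocycle on the abelian group $\hat{H}$ with the quantum group 2-cocycle identity in $\VN{H}\tensorn\VN{H}$ through Fourier duality; this is essentially a bookkeeping verification, but one has to be attentive to the fact that the embedding $\gamma$ respects the respective co-products, which is exactly the content of the Takesaki\textendash Tatsuuma theorem invoked in the statement.
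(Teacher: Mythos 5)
Your proposal is correct and follows essentially the same route as the paper: the cocycle identity for $\Omega=(\gamma\tensor\gamma)(k)$ is obtained by transporting the measurable cocycle relation through the co-product-preserving embedding $\gamma$ (the paper delegates this verification to Enock--Vainerman), and the hypothesis of Proposition \ref{prop:convsemigtwist} is reduced, exactly as in the paper, to $T_t$ acting as the identity on $\gamma(\VN H)$, which via $T_t(\l_\gamma)=e^{-t\psi(\gamma)}\l_\gamma$ is equivalent to $\psi|_H=0$.
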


\begin{proof}
The first part is essentially contained in \citep[Section 6]{Enock_Vainerman__deform_Kac_alg_abel_grp}
(the extra assumptions there are not necessary for us thanks to \citep[Theorem 9.1.4]{DeCommer__PhD}).
The construction is as follows: we view $k$ as an element of $\VN H\tensorn\VN H$
and then write $\Omega=(\gamma\tensor\gamma)(k)$. It is easy to check
(as in \citep{Enock_Vainerman__deform_Kac_alg_abel_grp}) that then
$\Omega$ is a 2-cocycle in the sense of \prettyref{def:2cocycle}.

To show the second part it suffices to see that for $t\geq0$ the
condition 
\[
(T_{t}\tensor\i)(\Omega)=\Omega
\]
takes the form 
\begin{equation}
(R_{\exp(-t\psi)}\tensor\i)\left((\gamma\tensor\gamma)(k)\right)=(\gamma\tensor\gamma)(k),\label{eq:cocyclecpdfunction}
\end{equation}
so in particular if $R_{\exp(-t\psi)}|_{\gamma(\VN H)}=\i_{\gamma(\VN H)}$,
then the condition above is satisfied. Now the latter is equivalent
to the fact that $\exp(-t\psi(h))=1$ for all $h\in H$; in other
words, to $\psi|_{H}=0$. 
\end{proof}
\begin{rem}
In fact the proof above shows that sometimes we can hope for the condition
\eqref{eq:cocyclecpdfunction} to hold even if $\psi$ does not vanish
everywhere on $H$. Suppose for simplicity that $H$ is discrete (so
that $\hat{H}$ is compact). To understand what \eqref{eq:cocyclecpdfunction}
says in terms of the relationship between $k$ and $\psi$ we need
to see that it holds if and only if the coefficients $c_{h_{1},h_{2}}$
and $d_{h_{1},h_{2}}$ ($h_{1},h_{2}\in H$), given by 
\[
c_{h_{1},h_{2}}:=\left\langle (R_{\exp(t\psi)}\tensor\i)\left((\gamma\tensor\gamma)(k)\right)(\delta_{e}\tensor\delta_{e}),\delta_{h_{1}}\tensor\delta_{h_{2}}\right\rangle ,
\]
\[
d_{h_{1},h_{2}}:=\left\langle (\gamma\tensor\gamma)(k)(\delta_{e}\tensor\delta_{e}),\delta_{h_{1}}\tensor\delta_{h_{2}}\right\rangle ,
\]
coincide. We have however 
\[
c_{h_{1},h_{2}}=\exp(t\psi(h_{1}))d_{h_{1},h_{2}}
\]
and 
\[
d_{h_{1},h_{2}}=\int_{\hat{H}}\int_{\hat{H}}k(\hat{h}_{1},\hat{h}_{2})\hat{h}_{1}(h_{1})\hat{h}_{2}(h_{2})\d\hat{h}_{1}\d\hat{h}_{2},
\]
where we use the explicit expressions for the isomorphism $\Linfty{\hat{H}}\cong\VN H$.
Thus the sufficient and necessary condition for \eqref{eq:cocyclecpdfunction}
to hold is that $\psi(h)=0$ for all $h\in H$ such that 
\[
\int_{\hat{H}}\int_{\hat{H}}k(\hat{h}_{1},\hat{h}_{2})\hat{h}_{1}(h_{1})\hat{h}_{2}(h_{2})\d\hat{h}_{1}\d\hat{h}_{2}\neq0.
\]
\end{rem}

Consider now the following example, based on \citep{Enock_Vainerman__deform_Kac_alg_abel_grp};
we will again begin from a general setup, and then pass to a more specific
context.
\begin{example}
Let $H$, $G$ be abelian groups, with $G$ acting on $H$ by automorphisms,
and let $\Gamma:=H\rtimes G$. Suppose further that $k:\hat{H}\times\hat{H}\to\bt$
is a measurable 2-cocycle and consider the quantum group $\QG_{\Omega}:=\widehat{\Gamma}_{\Omega}$
constructed out of the pair $(\Gamma,k)$ as in \prettyref{prop:dualtwist}.
Note that the quantised Heisenberg group of \citep{Enock_Vainerman__deform_Kac_alg_abel_grp}
is a special case of that construction, if we consider $\Gamma:=H_{n}(\br)=\br^{n+1}\rtimes\br^{n}$
($n\geq2$), and $k:\widehat{\br^{n+1}}\times\widehat{\br^{n+1}}\to\bt$
determined by a choice of $j,k\in\{1,\ldots,n\}$, $j\neq k$, and a parameter
$q_{jk}\in\br$, given by the formula 
\[
k(\hat{u},\hat{v},\hat{u}',\hat{v}'):=\exp(iq_{jk}\hat{u}\hat{u}'(\hat{v}_{j}\hat{v}'_{k}-\hat{v}_{k}\hat{v}'_{j}))\qquad(\hat{u},\hat{u}'\in\br,\hat{v},\hat{v}'\in\br^{n}).
\]
Return to the general case of $H\rtimes G$. Suppose that $\psi_{G}:G\to\bc$
is continuous, conditionally negative definite. It follows directly
from the definitions that $\psi:\Gamma\to\bc$ defined by 
\[
\psi(h,g):=\psi_{G}(g)\qquad(h\in H,g\in G),
\]
is a continuous, conditionally negative-definite function on $\Gamma$
such that $\psi|_{H}=0$. Thus it satisfies all the assumptions of
\prettyref{prop:dualtwist}. If we specify again to the context of
$\Gamma=H_{n}(\br)$ we obtain a natural construction associating
to each L\'{e}vy process on $\br^{n}$ a convolution semigroup on
the quantised Heisenberg group of \citep{Enock_Vainerman__deform_Kac_alg_abel_grp}.
\end{example}

The discussion in the above example can be summarised in the following
corollary. The second statement follows from the remarks above and
the fact that the (unitary) antipode does not change in this quantisation
of $H_{n}(\br)$ by \citep[Theorem 5.2]{Enock_Vainerman__deform_Kac_alg_abel_grp},
because $H_{n}(\br)$ is unimodular, so in particular its modular
function is identically $1$ on $\br^{n+1}$.
\begin{cor}
Let $n\in\bn$. Let $(\mu_{t})_{t\geq0}$ denote the family of distributions
of a L\'{e}vy process on $\br^{n}$. Given a choice of $j,k\in\{1,\ldots,n\}$, $j\neq k$,
and a parameter $q_{jk}\in\br$, the construction described in the
above example yields a convolution semigroup of operators on $\QH_{n}^{q}(\br)$,
the quantised Heisenberg group of \citep{Enock_Vainerman__deform_Kac_alg_abel_grp}.
If the distributions $\mu_{t}$ are symmetric, the convolution semigroup
obtained is KMS-symmetric. 
\end{cor}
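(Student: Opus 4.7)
The plan is to assemble the ingredients of the preceding example and proposition in a systematic way. By the classical L\'evy--Khintchine theory on $\br^n$, the convolution semigroup $(\mu_t)_{t\geq0}$ determines a continuous conditionally negative-definite function $\psi_G:\br^n\to\bc$ (the characteristic exponent of the process) such that, under the self-duality $\widehat{\br^n}\cong\br^n$, one has $\widehat{\mu_t}=e^{-t\psi_G}$ for all $t\geq0$. Setting $\Gamma:=H_n(\br)=\br^{n+1}\rtimes\br^n$ and $H:=\br^{n+1}$, define $\psi:\Gamma\to\bc$ by $\psi(h,g):=\psi_G(g)$. Conditions CND1--CND3 from Subsection~\ref{sub:cocomm_QGs} for $\psi$ transfer immediately from those for $\psi_G$, and by construction $\psi|_H=0$.

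Next I would apply \prettyref{prop:dualtwist}: with the $2$-cocycle $k$ on $\widehat{\br^{n+1}}$ associated to the chosen indices and parameter $q_{jk}$, the resulting $\Omega\in\Linfty{\hat{\Gamma}}\tensorn\Linfty{\hat{\Gamma}}$ implements precisely the twisting producing $\widehat{\Gamma}_{\Omega}=\QH_n^q(\br)$. Since $\psi|_H=0$, the vanishing hypothesis of \prettyref{prop:dualtwist} is met, and the associated convolution semigroup of states $(e^{-t\psi})_{t\geq0}\subseteq\CStarF{\Gamma}^*=\CzU{\hat{\Gamma}}^*$ satisfies the invariance assumption of \prettyref{prop:convsemigtwist}; the latter then produces the desired convolution semigroup of states $(\widetilde{\mu}_t)_{t\geq0}$ on $\QH_n^q(\br)$.

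For the symmetry statement, symmetry of each $\mu_t$ on $\br^n$ is equivalent, via Fourier transform, to $\psi_G$ being real-valued; hence so is $\psi$, and each $e^{-t\psi}$ is a real-valued positive-definite function on $\Gamma$. By the discussion at the beginning of Subsection~\ref{sub:cocomm_QGs}, this is precisely the condition that the corresponding state on $\CStarF{\Gamma}=\CzU{\hat{\Gamma}}$ be $\Rant^{\mathrm{u}}$-invariant, whence by \prettyref{cor:R_L_KMS_sym} each $L_{e^{-t\psi}}$ (equivalently $R_{e^{-t\psi}}$) on $\Linfty{\hat{\Gamma}}$ is KMS-symmetric with respect to the Plancherel weight $\varphi_{\hat{\Gamma}}$. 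The main (and essentially only non-formal) input is then \citep[Theorem 5.2]{Enock_Vainerman__deform_Kac_alg_abel_grp}: because $H_n(\br)$ is unimodular and its modular function restricts to $1$ on $\br^{n+1}$, the cocycle twist by $\Omega$ leaves both the Haar weight and the unitary antipode of $\hat{\Gamma}$ unchanged. KMS-symmetry with respect to $\varphi_{\hat{\Gamma}}$ therefore passes to KMS-symmetry with respect to the Haar weight of $\QH_n^q(\br)$, and \prettyref{thm:corres_conv_smgrps_Dirichlet_forms} then yields that $(\widetilde{\mu}_t)_{t\geq0}$ consists of $\Rant^{\mathrm{u}}$-invariant states of $\CzU{\QH_n^q(\br)}$, as required.
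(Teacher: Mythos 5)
Your argument is correct and follows essentially the same route as the paper: the first part is exactly the application of \prettyref{prop:dualtwist} to $\psi(h,g):=\psi_G(g)$ with $\psi_G$ the characteristic exponent of the L\'evy process, and the symmetry claim is handled, as in the paper, by noting that symmetry of the $\mu_t$ makes $\psi_G$ (hence $\psi$) real-valued so that the states $e^{-t\psi}$ are $\Rant^{\mathrm{u}}$-invariant, and then invoking \citep[Theorem 5.2]{Enock_Vainerman__deform_Kac_alg_abel_grp} together with unimodularity of $H_n(\br)$ to see that the twist preserves the Haar weight and unitary antipode. Your write-up merely makes explicit a few steps the paper leaves as ``the remarks above''.
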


Remark that it is proved in \citep{Enock_Vainerman__deform_Kac_alg_abel_grp}
that $\Linfty{\QH_{n}^{q}(\br)}\cong \Linfty{\br}\tensorn B(\Ltwo{\br^{n}})$.

\appendix

\section{\citep[Theorem 4.7, `conversely']{Goldstein_Lindsay__Markov_sgs_KMS_symm_weight}}

The purpose of this Appendix is to point out a mistake in the `conversely'
statement of \citep[Theorem 4.7]{Goldstein_Lindsay__Markov_sgs_KMS_symm_weight},
and propose two solutions under additional hypotheses, Propositions
\ref{prop:GL_4_7_fix_1} and \ref{prop:GL_4_7_fix_2}. 
Consider a von Neumann algebra $\Aa$ acting (not
necessarily standardly) on a Hilbert space $\H$ and an n.s.f.~weight
$\varphi$ on $\Aa$, and we denote $\mathbb{A}:=\Aa\rtimes_{\sigma^{\varphi}}\R\subseteq B(\Ltwo{\R}\tensor\H)$.
When indicating that an operator is Markov, we suppress the n.s.f.~weight
in question when there is no confusion.
\begin{thm}[{\citep[Theorem 4.7, first assertion]{Goldstein_Lindsay__Markov_sgs_KMS_symm_weight}}]
\label{thm:GL_4_7_forward}Let $T$ be a KMS-symmetric Markov operator
on $\Aa$ with domain $\mathcal{M}$. Then $T$ is ultraweakly continuous
and $p$-integrable for $p\in\{1,2\}$. Moreover, $(\widetilde{T}^{(1)})^{*}\supseteq T$,
and $\widetilde{T}^{(2)}$ is a symmetric Markov operator on $\Ltwo{\Aa}$. 
\end{thm}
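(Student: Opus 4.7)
The plan is to promote the data on $\mathcal{M}$ to an $L^1$-contraction, deduce ultraweak continuity by duality, and then obtain the $L^2$-claims by interpolation together with a density argument. For the $L^1$-step, I would fix an approximate unit $(e_\alpha)$ in $\mathcal{M} \cap [0,\one]_\Aa$ with $e_\alpha \nearrow \one$ ultrastrongly, and set $\om_\alpha := \mathfrak{i}^{(1)}(e_\alpha) \in \Aa_*^+$. The key preparatory observation is $\Phi := \sup_\alpha \om_\alpha = \varphi$ on $\Aa_+$: on $\mathcal{M}_+$ this is immediate from Proposition \ref{prop:GL_i}\prettyref{enu:GL_i_Prop_2_10} combined with normality of the functionals $\om_b := \mathfrak{i}^{(1)}(b) \in \Aa_*^+$ for $b \in \mathcal{M}_+$, since
\[\om_\alpha(b) = \tr(\mathfrak{i}^{(1)}(e_\alpha)\cdot b) = \tr(e_\alpha\cdot \mathfrak{i}^{(1)}(b)) = \om_b(e_\alpha) \nearrow \om_b(\one) = \varphi(b),\]
and this identification lifts from $\mathcal{M}_+$ to $\Aa_+$ by normality of $\varphi$ and $\Phi$, using that for $x \in \Aa_+$ the compressions $e_\beta x e_\beta$ lie in $\mathcal{M}_\varphi^+$ (since $\varphi(e_\beta x e_\beta) \le \left\Vert x\right\Vert \varphi(e_\beta) < \infty$) and converge ultrastrongly to $x$.

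Granted this, for $a \in \mathcal{M}_+$, KMS-symmetry and Markovness of $T$ yield
\[\om_\alpha(Ta) = \tr(Ta \cdot \mathfrak{i}^{(1)}(e_\alpha)) = \tr(\mathfrak{i}^{(1)}(a) \cdot T(e_\alpha)) = \om_a(T(e_\alpha)) \le \om_a(\one) = \varphi(a),\]
whence $\varphi(Ta) = \Phi(Ta) \le \varphi(a)$. Thus $T(\mathcal{M}_+) \subseteq \mathcal{M}_+$, so $T(\mathcal{M}) \subseteq \mathcal{M}$, and the induced map $\mathfrak{i}^{(1)}(a) \mapsto \mathfrak{i}^{(1)}(Ta)$ is a positive $L^1$-contraction on $\mathfrak{i}^{(1)}(\mathcal{M}_+)$, extending by the standard linear decomposition to a bounded operator $\widetilde{T}^{(1)}$ on $L^1(\Aa)$; this is $1$-integrability. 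Reading KMS-symmetry as a duality identity gives, for $a, b \in \mathcal{M}$,
\[\tr(Ta \cdot \mathfrak{i}^{(1)}(b)) = \tr(\mathfrak{i}^{(1)}(a) \cdot Tb) = \tr(a \cdot \widetilde{T}^{(1)} \mathfrak{i}^{(1)}(b)),\]
so norm-density of $\mathfrak{i}^{(1)}(\mathcal{M})$ in $L^1(\Aa)$ (Proposition \ref{prop:GL_i}\prettyref{enu:GL_i_Prop_2_11}) forces $Ta = (\widetilde{T}^{(1)})^*(a)$. Hence $T$ is the restriction to $\mathcal{M}$ of the normal contraction $(\widetilde{T}^{(1)})^*$ on $\Aa$, delivering both the ultraweak continuity of $T$ and the inclusion $(\widetilde{T}^{(1)})^* \supseteq T$.

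For $p = 2$, I would invoke Kosaki-style complex interpolation between $\widetilde{T}^{(1)}$ on $L^1(\Aa)$ and its normal dual $(\widetilde{T}^{(1)})^*$ on $L^\infty(\Aa) = \Aa$, producing a contractive operator $S$ on $L^2(\Aa)$; a consistency check on the dense subspace $\mathfrak{i}^{(2)}(\mathcal{M})$ via Proposition \ref{prop:GL_i}\prettyref{enu:GL_i_Prop_2_10} confirms $S \mathfrak{i}^{(2)}(a) = \mathfrak{i}^{(2)}(Ta)$ for $a \in \mathcal{M}$, so $T$ is $2$-integrable with $\widetilde{T}^{(2)} = S$. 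Symmetry of $\widetilde{T}^{(2)}$ follows from KMS-symmetry by running the computation in \eqref{eq:L2_symmetry_tr} in reverse, using the product identity $\mathfrak{i}^{(2)}(a)^* \cdot \mathfrak{i}^{(2)}(b) = \mathfrak{i}^{(1)}(a^*b)$ from Proposition \ref{prop:GL_i}\prettyref{enu:GL_i_Lem_2_9}. For the Markov property of $\widetilde{T}^{(2)}$: given $a \in [0,\one]_\mathcal{M}$, we have $Ta \in \mathcal{M} \cap [0,\one]_\Aa \subseteq [0,\one]_{\mathcal{M}^{(2)}}$, so by \eqref{eq:key_convex_set}, $\widetilde{T}^{(2)} \mathfrak{i}^{(2)}(a) = \mathfrak{i}^{(2)}(Ta) \in [0,h^{1/2}]_{L^2(\Aa)}$. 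Since $\mathfrak{i}^{(2)}([0,\one]_\mathcal{M})$ is dense in $[0,h^{1/2}]_{L^2(\Aa)}$ by Corollary \ref{cor:i_p_core}\prettyref{enu:i_p_core__2}, norm-closedness of this convex set and continuity of $\widetilde{T}^{(2)}$ promote the invariance to the full interval.

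The main obstacle is the very first step, specifically the verification that $\sup_\alpha \om_\alpha = \varphi$ on all of $\Aa_+$ (not merely on $\mathcal{M}_+$), because only then does the pointwise bound $\om_\alpha(Ta) \le \varphi(a)$ upgrade to the crucial inequality $\varphi(Ta) \le \varphi(a)$ anchoring the entire $L^1$-extension. Once this contractivity is secured, everything that follows is essentially bookkeeping, exploiting the density, duality and interpolation apparatus already assembled in Subsection \ref{sub:prelim_Haagerup_Lp}.
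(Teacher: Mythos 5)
First, a point of comparison: the paper does not actually prove this statement --- it is imported verbatim from Goldstein--Lindsay \citep{Goldstein_Lindsay__Markov_sgs_KMS_symm_weight}, so there is no internal argument to measure yours against. Judged on its own terms, your architecture (an $L^{1}$-contractivity estimate, ultraweak continuity by duality, the $p=2$ statements by interpolation and density) is the standard route, and the later stages are sound: the duality identity forcing $(\widetilde{T}^{(1)})^{*}\supseteq T$, the symmetry of $\widetilde{T}^{(2)}$ via \prettyref{prop:GL_i} \prettyref{enu:GL_i_Lem_2_9} and \prettyref{enu:GL_i_Prop_2_10}, and the promotion of the Markov property from $\mathfrak{i}^{(2)}(\left[0,\one\right]_{\mathcal{M}})$ to $\left[0,h^{1/2}\right]_{\Ltwo{\Aa}}$ via \prettyref{cor:i_p_core} \prettyref{enu:i_p_core__2} all go through.

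The gap is exactly where you suspect it, and it is genuine. The identity $\sup_{\alpha}\om_{\alpha}=\varphi$ is fine on $\mathcal{M}_{+}$, but your lift to $\Aa_{+}$ does not work. Normality of a weight is only $\sigma$-weak lower semicontinuity, so from $e_{\beta}xe_{\beta}\to x$ ultrastrongly you obtain $\varphi(x)\le\liminf_{\beta}\varphi(e_{\beta}xe_{\beta})$ and $\Phi(x)\le\liminf_{\beta}\Phi(e_{\beta}xe_{\beta})$ --- two inequalities pointing the same way, from which equality of $\varphi(x)$ and $\Phi(x)$ cannot be extracted. There is no matching upper bound: the net $e_{\beta}xe_{\beta}$ is not increasing, and the inequality $\varphi(exe)\le\varphi(x)$ for $0\le e\le\one$ is false for non-tracial weights (already in $M_{2}$ with a vector state and a non-commuting $e$). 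The obvious repairs hit the same wall: the monotone approximants $x^{1/2}e_{\alpha}x^{1/2}\nearrow x$ need not lie in $\mathcal{M}_{+}$ because $\mathcal{N}$ is only a \emph{left} ideal, while the approximants $e_{\alpha}^{1/2}xe_{\alpha}^{1/2}$ that do lie in $\mathcal{M}_{+}$ are neither monotone nor dominated by $x$. Since the conclusion $\varphi(Ta)\le\varphi(a)$, i.e.\ $T(\mathcal{M}_{+})\subseteq\mathcal{M}_{+}$, is the linchpin on which $\widetilde{T}^{(1)}$ and everything downstream rests, this is a missing idea rather than a routine verification; it is precisely the point at which the passage from traces to weights becomes hard, and it is what the machinery of Sections 2--4 of \citep{Goldstein_Lindsay__Markov_sgs_KMS_symm_weight} is built to circumvent.
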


The converse statement claimed that, starting with a symmetric Markov
operator on $\Ltwo{\Aa}$ with domain $\linspan\left[0,h^{1/2}\right]_{\Ltwo{\Aa}}=\linspan\mathfrak{i}^{(2)}(\left[0,\one\right]_{\mathcal{M}^{(2)}})=\mathfrak{i}^{(2)}(\mathcal{M}^{(2)})$,
one can produce a corresponding KMS-symmetric Markov operator on $\Aa$.
The problem with the proof is that $\mathcal{M}^{(2)}$ might not
be closed under the absolute value map; this was used in \citep{Goldstein_Lindsay__Markov_sgs_KMS_symm_weight} to establish
that the operator $T_{0}$ is bounded.

\subsection{First (better) approach}

As in \prettyref{sub:prelim_Haagerup_Lp}, let $h$ be the canonical
closed operator affiliated with $\mathbb{A}$, let $\theta$ be the
action dual to $\sigma^{\varphi}$, and let $\tau$ be the n.s.f.~trace
on $\mathbb{A}$ satisfying $\frac{\mathrm{d}\widetilde{\varphi}}{\mathrm{d}\tau}=h$.
For $n\in\N$ fixed, consider the von Neumann algebra $M_{n}(\Aa)\cong M_{n}\tensor\Aa$
acting on the Hilbert space $\C^{n}\tensor\H$ and the n.s.f.~weight
$tr_{n}\tensor\varphi$ on $M_{n}(\Aa)$, where $tr_{n}$ is the canonical
(non-normalised) trace on $M_{n}$. Below we use tensor products of
unbounded operators on Hilbert spaces \citep[Section 11.2]{Kadison_Ringrose_2}.
Then $\sigma^{tr_{n}\tensor\varphi}=\i_{M_{n}}\tensor\sigma^{\varphi}$,
thus $\mathbb{A}_{n}:=M_{n}(\Aa)\rtimes_{\sigma^{tr_{n}\tensor\varphi}}\R$,
acting on $\Ltwo{\R}\tensor\C^{n}\tensor\H\cong\C^{n}\tensor\Ltwo{\R}\tensor\H$,
equals $M_{n}\tensor\mathbb{A}$, the associated operator $h_{n}$
equals $\one_{M_{n}}\tensor h$, and the associated n.s.f.~trace
$\tau_{n}$ on $\mathbb{A}_{n}$ satisfying $\frac{\mathrm{d}(\widetilde{tr_{n}\tensor\varphi})}{\mathrm{d}\tau_{n}}=h_{n}$
equals $tr_{n}\tensor\tau$ as $\widetilde{tr_{n}\tensor\varphi}=tr_{n}\tensor\widetilde{\varphi}$.
Fixing a system $\left(e_{ij}\right)_{1\le i,j\le n}$ of matrix units
for $M_{n}$, we thus have a $*$-algebras isomorphism $\pres{\tau_{n}}{}{\mathbb{A}_{n}}\cong M_{n}\tensor\pres{\tau}{}{\mathbb{A}}$
as follows: an element $a\in\pres{\tau_{n}}{}{\mathbb{A}_{n}}$ is
associated to the matrix $\left(a_{ij}\right)_{1\le i,j\le n}\in M_{n}\tensor\pres{\tau}{}{\mathbb{A}}$
such that $(e_{ii}\tensor\one)\cdot a\cdot(e_{jj}\tensor\one)=e_{ij}\tensor a_{ij}$
for all $1\le i,j\le n$, and conversely, a matrix $\left(a_{ij}\right)_{1\le i,j\le n}\in M_{n}\tensor\pres{\tau}{}{\mathbb{A}}$
is associated to $\dot{\sum}_{i,j=1}^{n}e_{ij}\tensor a_{ij}$, where
the upper dot stands for summation in the algebra $\pres{\tau_{n}}{}{\mathbb{A}_{n}}$,
namely, followed by taking the closure.
\begin{lem}
\begin{enumerate}
\item \label{enu:amplif_GL_1}For every $p\in[1,\infty)$, the subspace
$\Lp{M_{n}(\Aa)}\subseteq\pres{\tau_{n}}{}{\mathbb{A}_{n}}$ is identified
with the subspace $M_{n}\tensor\Lp{\Aa}\subseteq M_{n}\tensor\pres{\tau}{}{\mathbb{A}}$.
\item \label{enu:amplif_GL_2}For every $q\in[2,\infty)$ we have $\mathcal{N}^{(q,tr_{n}\tensor\varphi)}\cong M_{n}\tensor\mathcal{N}^{(q,\varphi)}$,
and for every $a=\left(a_{ij}\right)_{1\le i,j\le n}$ in this set,
the element $\mathfrak{j}_{tr_{n}\tensor\varphi}^{(q)}(a)\in\Lq{M_{n}(\Aa)}$
is identified with $\bigl(\mathfrak{j}_{\varphi}^{(q)}(a_{ij})\bigr)_{1\le i,j\le n}$.
\item \label{enu:amplif_GL_3}For every $p\in[1,\infty)$ we have $\mathcal{M}^{(p,tr_{n}\tensor\varphi)}\cong M_{n}\tensor\mathcal{M}^{(p,\varphi)}$,
and for every $a=\left(a_{ij}\right)_{1\le i,j\le n}$ in this set,
the element $\mathfrak{i}_{tr_{n}\tensor\varphi}^{(p)}(a)\in\Lp{M_{n}(\Aa)}$
is identified with $\bigl(\mathfrak{i}_{\varphi}^{(p)}(a_{ij})\bigr)_{1\le i,j\le n}$.
\end{enumerate}
\end{lem}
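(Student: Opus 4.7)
The plan is to exploit the explicit identifications $\mathbb{A}_n = M_n \tensor \mathbb{A}$, $h_n = \one_{M_n} \tensor h$, $\tau_n = tr_n \tensor \tau$, and $\pres{\tau_n}{}{\mathbb{A}_n} \cong M_n \tensor \pres{\tau}{}{\mathbb{A}}$, established in the preceding discussion, together with the fact that the canonical $*$-algebra operations on $\pres{\tau}{}{\mathbb{A}}$ and $\pres{\tau_n}{}{\mathbb{A}_n}$ are compatible under this identification (cf.~\prettyref{rem:tau_meas_poly}). Throughout, I will compute $\mathfrak{j}^{(q)}$ and $\mathfrak{i}^{(p)}$ entry-by-entry in the matrix picture.

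For part \prettyref{enu:amplif_GL_1}, first observe that the dual action $\theta^{(n)}$ on $\mathbb{A}_n = M_n \tensor \mathbb{A}$ is $\i_{M_n} \tensor \theta$: the matrix units $e_{ij} \tensor \one$ lie in (the image of) $M_n(\Aa)$ and are therefore fixed by $\theta^{(n)}$, while $\theta_{s}^{(n)}(h_n^{it}) = \theta_s^{(n)}(\one \tensor h^{it}) = e^{-ist} \one \tensor h^{it} = e^{-ist} h_n^{it}$, matching the behaviour of $\i_{M_n} \tensor \theta$ on both sets of generators. Thus for $a = (a_{ij})_{1\le i,j\le n} \in M_n \tensor \pres{\tau}{}{\mathbb{A}}$, the condition $\theta_t^{(n)}(a) = e^{-t/p} a$ is equivalent to $\theta_t(a_{ij}) = e^{-t/p} a_{ij}$ for all $i,j$, yielding $\Lp{M_n(\Aa)} \cong M_n \tensor \Lp{\Aa}$.

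For part \prettyref{enu:amplif_GL_2}, I will use the characterisation of $\mathcal{N}^{(q)}$ given in \prettyref{rem:N_q}: $a \in \mathcal{N}^{(q)}$ iff $h^{1/q} a^* \in \pres{\tau}{}{\mathbb{A}}$. For $a = (a_{ij}) \in M_n(\Aa)$, the formal product $h_n^{1/q} a^*$ corresponds, in the matrix picture, to the matrix with entries $h^{1/q} a_{ji}^*$; more precisely the identification of $*$-algebra operations in \prettyref{rem:tau_meas_poly} gives that $h_n^{1/q} a^*$ belongs to $\pres{\tau_n}{}{\mathbb{A}_n}$ if and only if each entry $h^{1/q} a_{ji}^*$ belongs to $\pres{\tau}{}{\mathbb{A}}$. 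This yields $\mathcal{N}^{(q, tr_n \tensor \varphi)} \cong M_n \tensor \mathcal{N}^{(q,\varphi)}$, and the identification $\mathfrak{j}_{tr_n\tensor\varphi}^{(q)}(a) = \overline{a h_n^{1/q}}$ with $\bigl(\overline{a_{ij} h^{1/q}}\bigr)_{i,j} = \bigl(\mathfrak{j}_{\varphi}^{(q)}(a_{ij})\bigr)_{i,j}$ follows by taking adjoints (using again \prettyref{rem:tau_meas_poly}).

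For part \prettyref{enu:amplif_GL_3}, the identification $\mathcal{M}^{(p,tr_n\tensor\varphi)} \cong M_n \tensor \mathcal{M}^{(p,\varphi)}$ is immediate from part \prettyref{enu:amplif_GL_2} and the description $\mathcal{M}^{(p)} = \linspan\{b^* c : b, c \in \mathcal{N}^{(2p)}\}$, since any matrix product $b^* c$ with $b,c \in M_n \tensor \mathcal{N}^{(2p,\varphi)}$ has entries that are sums of products of elements of $\mathcal{N}^{(2p,\varphi)*} \cdot \mathcal{N}^{(2p,\varphi)} \subseteq \mathcal{M}^{(p,\varphi)}$, and conversely every matrix with entries in $\mathcal{M}^{(p,\varphi)}$ can be written in this form. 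To identify $\mathfrak{i}_{tr_n\tensor\varphi}^{(p)}$ entry-wise, I will apply \prettyref{prop:GL_i} \prettyref{enu:GL_i_Lem_2_9}: for $b = (b_{ij}), c = (c_{ij}) \in M_n \tensor \mathcal{N}^{(2p,\varphi)}$,
\[
\mathfrak{i}_{tr_n\tensor\varphi}^{(p)}(b^* c) = \mathfrak{j}_{tr_n\tensor\varphi}^{(2p)}(b)^* \cdot \mathfrak{j}_{tr_n\tensor\varphi}^{(2p)}(c),
\]
and by part \prettyref{enu:amplif_GL_2} the right-hand side corresponds, in the matrix picture, to the matrix with $(i,j)$-entry $\sum_{k} \mathfrak{j}_{\varphi}^{(2p)}(b_{ki})^* \cdot \mathfrak{j}_{\varphi}^{(2p)}(c_{kj}) = \sum_{k} \mathfrak{i}_{\varphi}^{(p)}(b_{ki}^* c_{kj}) = \mathfrak{i}_{\varphi}^{(p)}((b^*c)_{ij})$, where the second equality uses the linearity of $\mathfrak{i}_{\varphi}^{(p)}$. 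Extending linearly from the span completes the identification.

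The main obstacle I anticipate is ensuring that the formal manipulations of unbounded operators in the matrix setting are rigorous; this is exactly what \prettyref{rem:tau_meas_poly} provides, as it guarantees that polynomial expressions formed without closures coincide with their closures inside $\pres{\tau_n}{}{\mathbb{A}_n}$, so the strong operations on $M_n \tensor \pres{\tau}{}{\mathbb{A}}$ and $\pres{\tau_n}{}{\mathbb{A}_n}$ agree entry-by-entry without further difficulty.
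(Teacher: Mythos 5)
Your overall strategy is the same as the paper's: use the identifications $\mathbb{A}_{n}=M_{n}\tensor\mathbb{A}$, $h_{n}=\one_{M_{n}}\tensor h$, $\tau_{n}=tr_{n}\tensor\tau$ and compute $\mathfrak{j}^{(q)},\mathfrak{i}^{(p)}$ entrywise. Parts \prettyref{enu:amplif_GL_1} and \prettyref{enu:amplif_GL_3} are fine and match the paper (for \prettyref{enu:amplif_GL_3} the paper simply cites \prettyref{prop:GL_i} \prettyref{enu:GL_i_Lem_2_9}; your explicit computation with $e_{ij}\tensor b^{*}c=(e_{1i}\tensor b)^{*}(e_{1j}\tensor c)$ is correct).

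The gap is in part \prettyref{enu:amplif_GL_2}, where you claim that \prettyref{rem:tau_meas_poly} ``gives that $h_{n}^{1/q}a^{*}$ belongs to $\pres{\tau_{n}}{}{\mathbb{A}_{n}}$ if and only if each entry $h^{1/q}a_{ji}^{*}$ belongs to $\pres{\tau}{}{\mathbb{A}}$''. That remark only governs polynomials in elements that are \emph{already} $\tau$-measurable, and $h_{n}^{1/q}$ is not $\tau_{n}$-measurable in general (the paper stresses that $h^{1/p}\notin\pres{\tau}{}{\mathbb{A}}$), so $h_{n}^{1/q}a^{*}$ is not such a polynomial; nor can you speak of its ``matrix entries'' under $\pres{\tau_{n}}{}{\mathbb{A}_{n}}\cong M_{n}\tensor\pres{\tau}{}{\mathbb{A}}$ before you know it is measurable. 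The forward implication survives: if every $h^{1/q}a_{ji}^{*}\in\pres{\tau}{}{\mathbb{A}}$, then the (closed) sum $\dot{\sum}e_{ji}\tensor h^{1/q}a_{ij}^{*}$ lies in $\pres{\tau_{n}}{}{\mathbb{A}_{n}}$ by \prettyref{rem:tau_meas_poly}, and $h_{n}^{1/q}a^{*}$ is a closed extension of the raw sum, hence equal to it and measurable. But the converse direction --- $a\in\mathcal{N}^{(q,tr_{n}\tensor\varphi)}$ implies $a_{ij}\in\mathcal{N}^{(q,\varphi)}$ --- is where the actual content lies and your argument does not address it. One has to compress: since $\mathcal{N}^{(q,tr_{n}\tensor\varphi)}$ is a left ideal, $a_{i}:=(e_{ii}\tensor\one)a$ lies in it, so $D(h_{n}^{1/q}a_{i}^{*})$ is $\tau_{n}$-dense; then $(e_{jj}\tensor\one)h_{n}^{1/q}a_{i}^{*}\subseteq e_{ji}\tensor h^{1/q}a_{ij}^{*}$ shows the latter has $\tau_{n}$-dense domain, and a separate computation with $\tau_{n}=tr_{n}\tensor\tau$ (relating projections $p\in\mathbb{A}_{n}$ with $\tau_{n}(\one-p)$ small to projections in $\mathbb{A}$) is needed to conclude that $D(h^{1/q}a_{ij}^{*})$ is $\tau$-dense, whence $a_{ij}\in\mathcal{N}^{(q,\varphi)}$ by \prettyref{rem:N_q}. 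Without this step the ``$\cong$'' in \prettyref{enu:amplif_GL_2} is only proved as an inclusion $M_{n}\tensor\mathcal{N}^{(q,\varphi)}\subseteq\mathcal{N}^{(q,tr_{n}\tensor\varphi)}$.
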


\begin{proof}
\prettyref{enu:amplif_GL_1} This follows because the action on $\mathbb{A}_{n}\cong M_{n}\tensor\mathbb{A}$
dual to $\sigma^{tr_{n}\tensor\varphi}=\i_{M_{n}}\tensor\sigma^{\varphi}$
is $\i_{M_{n}}\tensor\theta$. 

\prettyref{enu:amplif_GL_2} Let $a=\left(a_{ij}\right)_{1\le i,j\le n}\in M_{n}(\Aa)$.
If $a\in M_{n}\tensor\mathcal{N}^{(q,\varphi)}$, namely $h^{1/q}a_{ji}^{*}\in\pres{\tau}{}{\mathbb{A}}$
for all $1\le i,j\le n$, then 
\[
h_{n}^{1/q}a^{*}=(\one_{M_{n}}\tensor h^{1/q})\sum_{1\le i,j\le n}e_{ji}\tensor a_{ij}^{*}\supseteq\sum_{1\le i,j\le n}(\one_{M_{n}}\tensor h^{1/q})(e_{ji}\tensor a_{ij}^{*})=\sum_{1\le i,j\le n}e_{ji}\tensor h^{1/q}a_{ij}^{*}
\]
(the last equality checks easily). We have $e_{ji}\tensor h^{1/q}a_{ij}^{*}\in\pres{\tau_{n}}{}{\mathbb{A}_{n}}$
for all $1\le i,j\le n$, hence $h_{n}^{1/q}a^{*}\in\pres{\tau_{n}}{}{\mathbb{A}_{n}}$,
so that $a\in\mathcal{N}^{(q,tr_{n}\tensor\varphi)}$ from \prettyref{rem:N_q}.
Also, by \prettyref{rem:tau_meas_poly}, the operator $h_{n}^{1/q}a^{*}=\mathfrak{j}_{tr_{n}\tensor\varphi}^{(q)}(a)^{*}$
equals $\dot{\sum\limits _{1\le i,j\le n}}e_{ji}\tensor h^{1/q}a_{ij}^{*}$,
i.e., it is associated to the matrix $\left(h^{1/q}a_{ji}^{*}\right)_{1\le i,j\le n}=\bigl(\mathfrak{j}_{\varphi}^{(q)}(a_{ij})\bigr)_{1\le i,j\le n}^{*}$.

Conversely, if $a\in\mathcal{N}^{(q,tr_{n}\tensor\varphi)}$, then
for fixed $1\le i,j\le n$, the element $a_{i}:=(e_{ii}\tensor\one)a$
also belongs to $\mathcal{N}^{(q,tr_{n}\tensor\varphi)}$, for the
latter is a left ideal in $M_{n}(\Aa)$. That is, $D(h_{n}^{1/q}a_{i}^{*})$
is $\tau_{n}$-dense. Now $(e_{jj}\tensor\one)h_{n}^{1/q}=(e_{jj}\tensor\one)(\one\tensor h^{1/q})\subseteq e_{jj}\tensor h^{1/q}$,
thus $(e_{jj}\tensor\one)h_{n}^{1/q}a_{i}^{*}\subseteq(e_{jj}\tensor h^{1/q})a_{i}^{*}=e_{ji}\tensor h^{1/q}a_{ij}^{*}$,
so these operators also have $\tau_{n}$-dense domains. Since $\tau_{n}=tr_{n}\tensor\tau$,
standard trace calculations show that $D(h^{1/q}a_{ij}^{*})$ is $\tau$-dense,
proving that $a_{ij}\in\mathcal{N}^{(q,\varphi)}$ by \prettyref{rem:N_q}.

\prettyref{enu:amplif_GL_3} This follows readily from \prettyref{enu:amplif_GL_2}
and \prettyref{prop:GL_i} \prettyref{enu:GL_i_Lem_2_9}.
\end{proof}
For simplicity, we keep writing $\mathcal{M}^{(2)},\mathfrak{i}^{(2)}$
for $\mathcal{M}^{(2,\varphi)},\mathfrak{i}_{\varphi}^{(2)}$, etc.
Notice that the identification of the linear spaces $\Ltwo{M_{n}(\Aa)}$
and $M_{n}\tensor\Ltwo{\Aa}$ is actually an isomorphism of Hilbert
spaces: $\Ltwo{M_{n}(\Aa)}\cong\Ltwo{M_{n},tr_{n}}\tensor\Ltwo{\Aa}$.
\begin{defn}
\label{def:n_Markov}An everywhere-defined operator $T$ on $\Aa$
is called \emph{$n$-Markov} if its amplification $\i_{M_{n}}\tensor T$
is Markov on $M_{n}(\Aa)$, namely $T$ is $n$-positive and $n$-contractive.
A (not necessarily everywhere defined) operator $S$ on $\Ltwo{\Aa}$
will be called \emph{$n$-Markov with respect to $\varphi$} if $\one_{M_{n}}\tensor S$
(algebraic tensor product) is Markov on $\Ltwo{M_{n}(\Aa)}\cong\Ltwo{M_{n},tr_{n}}\tensor\Ltwo{\Aa}$
with respect to $tr_{n}\tensor\varphi$, namely $\mathfrak{i}^{(2)}(\mathcal{M}^{(2)})\subseteq D(S)$
and $\mathfrak{i}_{tr_{n}\tensor\varphi}^{(2)}(\left[0,\one\right]_{\mathcal{M}^{(2,tr_{n}\tensor\varphi)}})$
is preserved by $\one_{M_{n}}\tensor S$.
\end{defn}

Clearly, an $n$-Markov operator is Markov.
\begin{prop}
\label{prop:GL_4_7_fix_1}Let $S$ be a symmetric $2$-Markov operator
on $\Ltwo{\Aa}$ with domain $\mathfrak{i}^{(2)}(\mathcal{M}^{(2)})$.
Then there is a unique everywhere-defined, normal, KMS-symmetric Markov
operator $T$ on $\Aa$ that satisfies $\widetilde{T}^{(2)}\supseteq S$.
\end{prop}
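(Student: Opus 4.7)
The plan is to invert the $L^2$-transfer construction: first build a linear $T_0:\mathcal{M}^{(2)} \to \mathcal{M}^{(2)}$ from $S$, then show that $T_0$ extends to a bounded, normal, KMS-symmetric Markov operator $T$ on $\Aa$. Markovness of $S$ means that $S$ preserves $\mathfrak{i}^{(2)}([0,\one]_{\mathcal{M}^{(2)}}) = [0,h^{1/2}]_{\Ltwo{\Aa}}$, hence by linearity also its span $\mathfrak{i}^{(2)}(\mathcal{M}^{(2)})$. Combined with injectivity of $\mathfrak{i}^{(2)}$ on $\mathcal{M}^{(2)}$ (\prettyref{prop:GL_i} \prettyref{enu:GL_i_Prop_2_12}), this produces a unique linear $T_0: \mathcal{M}^{(2)} \to \mathcal{M}^{(2)}$ with $\mathfrak{i}^{(2)} \circ T_0 = S \circ \mathfrak{i}^{(2)}$, and $T_0$ preserves $[0,\one]_{\mathcal{M}^{(2)}}$; in particular $T_0$ is positive on $\mathcal{M}^{(2)}_+$ and satisfies $\|T_0 a\| \le 1$ whenever $a \in \mathcal{M}^{(2)}_+$ with $\|a\| \le 1$.

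The $2$-Markov hypothesis, together with the identifications $\mathcal{M}^{(2, tr_2 \otimes \varphi)} \cong M_2 \otimes \mathcal{M}^{(2)}$ and the matching entrywise description of $\mathfrak{i}^{(2)}_{tr_2 \otimes \varphi}$ given by the preceding lemma, shows that $\i_{M_2} \otimes T_0$ is Markov on $M_2 \otimes \mathcal{M}^{(2)}$. Consequently $T_0$ is $2$-positive as a map $\mathcal{M}^{(2)} \to \mathcal{M}^{(2)}$.

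The crux is bounding $T_0$; this is precisely what the original GL proof was unable to accomplish because $\mathcal{M}^{(2)}$ is not closed under the absolute value. The $2$-positive substitute is a Schwarz-type estimate: for $b \in \mathcal{M}^{(2)}$ and $e \in \mathcal{M}^{(2)}_+$ with $\|e\| \le 1$, the matrix
\[
X_{e,b} := \begin{pmatrix} e & eb \\ b^*e & b^*eb \end{pmatrix} = \begin{pmatrix} e^{1/2} \\ b^*e^{1/2} \end{pmatrix}(e^{1/2}, \, e^{1/2} b) \ge 0
\]
has all four entries in $\mathcal{M}^{(2)}$, since $\mathcal{M}^{(2)}$ is a $*$-subalgebra. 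Applying $\i_{M_2} \otimes T_0$ preserves positivity, and a Schur-complement argument (after the standard regularisation $T_0(e) \rightsquigarrow T_0(e) + \varepsilon\one$, followed by $\varepsilon \to 0^+$) yields
\[
T_0(b^*eb) \ge T_0(b^*e)\,T_0(e)^{-1}\,T_0(eb) \ge T_0(eb)^*\,T_0(eb),
\]
where the second inequality uses $T_0(e) \le \one$. Since $b^*eb \in \mathcal{M}^{(2)}_+$ with $\|b^*eb\| \le \|b\|^2$, Markovness of $T_0$ gives $\|T_0(eb)\|^2 \le \|T_0(b^*eb)\| \le \|b\|^2$. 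Choosing an approximate identity $(e_\alpha) \subseteq \mathcal{M}^{(2)}_+ \cap [0,\one]$ (supplied by \prettyref{lem:Str_Terp_approx} applied inside $\mathcal{T}$) and letting $\alpha$ run, we conclude $\|T_0 b\| \le \|b\|$. The main obstacle is justifying this limit $\alpha \to \infty$, since $T_0$ is not yet known to be continuous in any topology coarser than equality; the resolution is to pair $T_0(e_\alpha b)$ with the dense family $\mathfrak{i}^{(1)}(\mathcal{M}) \subseteq \Aa_*$ (\prettyref{prop:GL_i} \prettyref{enu:GL_i_Prop_2_11}) and to transfer the limit inside $T_0$ using symmetry of $S$ together with the trace identities of \prettyref{prop:GL_i} \prettyref{enu:GL_i_Lem_2_9} and \prettyref{enu:GL_i_Prop_2_13}.

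Once $T_0$ is bounded with $\|T_0\| \le 1$, KMS-symmetry of $T_0$ on $\mathcal{M}$ follows from symmetry of $S$ via \prettyref{prop:GL_i} \prettyref{enu:GL_i_Lem_2_9} and \prettyref{enu:GL_i_Prop_2_13}. This KMS-symmetry furnishes a bounded adjoint of $T_0$ on the dense subspace $\mathfrak{i}^{(1)}(\mathcal{M}) \subseteq \Aa_*$, which extends to all of $\Aa_*$; its Banach-space adjoint is the required everywhere-defined normal extension $T: \Aa \to \Aa$. Positivity and contractivity of $T_0$ on the positive part of the unit ball of $\mathcal{M}^{(2)}$ pass to $T$ by normality, so $T$ is Markov; KMS-symmetry of $T$ and $\widetilde{T}^{(2)} \supseteq S$ follow from the construction, and uniqueness is immediate since any normal $T$ satisfying the conclusion must coincide with $T_0$ on the ultraweakly dense set $\mathcal{M}^{(2)}$.
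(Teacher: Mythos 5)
Your proposal is correct and follows essentially the same route as the paper: define $T_0$ on $\mathcal{M}^{(2)}$ via injectivity of $\mathfrak{i}^{(2)}$, use $2$-Markovianity to make $\mathrm{id}_{M_2}\otimes T_0$ positivity preserving, extract a uniform bound on $T_0(e_\alpha b)$ from the positivity of a $2\times 2$ matrix with entries in $\mathcal{M}^{(2)}$, and identify the ultraweak limit with $T_0 b$ by pairing against $\mathfrak{i}^{(1)}(\mathcal{M})$ and invoking the symmetry of $S$. The only cosmetic differences are that the paper uses the matrix $\bigl(\begin{smallmatrix}e_\lambda^2 & e_\lambda a\\ a^*e_\lambda & a^*a\end{smallmatrix}\bigr)$ together with Lance's Cauchy--Schwarz lemma where you use $\bigl(\begin{smallmatrix}e & eb\\ b^*e & b^*eb\end{smallmatrix}\bigr)$ with a Schur-complement argument (these are interchangeable), and that you spell out the pre-adjoint construction of the normal extension $T$, which the paper leaves to the original Goldstein--Lindsay argument.
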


\begin{proof}
As in the beginning of the proof of \citep[Theorem 4.7, `conversely']{Goldstein_Lindsay__Markov_sgs_KMS_symm_weight},
since $S$ is symmetric and Markov and $\mathfrak{i}^{(2)}$ is injective,
there is a linear map $T_{0}:\mathcal{M}^{(2)}\to\mathcal{M}^{(2)}$
given by $\mathfrak{i}^{(2)}(T_{0}a)=S\mathfrak{i}^{(2)}(a)$, $a\in\mathcal{M}^{(2)}$,
which is KMS-symmetric and preserves the set $\left[0,\one\right]_{\mathcal{M}^{(2)}}$
(in particular, it is positivity preserving). Analogously, using now
the full strength of $2$-Markovianity of $S$, the map $\one_{M_{2}}\tensor S$
on $\Ltwo{M_{2},tr_{2}}\tensor\mathfrak{i}^{(2)}(\mathcal{M}^{(2)})$
induces the linear map $\i_{M_{2}}\tensor T_{0}$ on $M_{2}\tensor\mathcal{M}^{(2)}\cong\mathcal{M}^{(2,tr_{2}\tensor\varphi)}$.
This map preserves the set $\left[0,\one\right]_{M_{2}\tensor\mathcal{M}^{(2)}}$,
hence it is positivity preserving. To fill the gap in \citep{Goldstein_Lindsay__Markov_sgs_KMS_symm_weight}
we need to show that $T_{0}$ is bounded.

Let $a\in\mathcal{M}^{(2)}$. By \prettyref{lem:Str_Terp_approx} there is a net $\left(e_{\l}\right)_{\l\in\mathcal{I}}$
in $\left[0,\one\right]_{\mathcal{M}_{\infty}}$ that converges
to $\one$ in the strong operator topology and such that $(\sigma_{-\frac{i}{4}}(e_{\l}))_{\l\in\mathcal{I}}$
is bounded. As in the proof of \prettyref{prop:i_p_core}, $\mathfrak{i}^{(2)}(e_{\l}a)\xrightarrow[\l\in\mathcal{I}]{}\mathfrak{i}^{(2)}(a)$
weakly. For every $\l\in\mathcal{I}$, since $T_{0}$ preserves $\left[0,\one\right]_{\mathcal{M}^{(2)}}$,
we have $T_{0}(e_{\l}^{2})\in\left[0,\one\right]_{\mathcal{M}^{(2)}}$,
$T_{0}(a^{*}a)\in\left[0,\left\Vert a\right\Vert ^{2}\one\right]_{\mathcal{M}^{(2)}}$,
and $T_{0}(e_{\l}a)^{*}=T_{0}(a^{*}e_{\l})$ because $T_{0}$ is positivity,
thus adjoint, preserving. In addition, $\left(\begin{smallmatrix}e_{\l} & a\\
0 & 0
\end{smallmatrix}\right)\in\mathcal{M}^{(2,tr_{2}\tensor\varphi)}$, thus $0\le\left(\begin{smallmatrix}e_{\l}^{2} & e_{\l}a\\
a^{*}e_{\l} & a^{*}a
\end{smallmatrix}\right)=\left(\begin{smallmatrix}e_{\l} & a\\
0 & 0
\end{smallmatrix}\right)^{*}\left(\begin{smallmatrix}e_{\l} & a\\
0 & 0
\end{smallmatrix}\right)\in\mathcal{M}^{(2,tr_{2}\tensor\varphi)}.$ Since $\i_{M_{2}}\tensor T_{0}$ is positivity preserving, $\left(\begin{smallmatrix}T_{0}(e_{\l}^{2}) & T_{0}(e_{\l}a)\\
T_{0}(a^{*}e_{\l}) & T_{0}(a^{*}a)
\end{smallmatrix}\right)\ge0$. By \citep[Lemma 5.2 (ii)]{Lance},
for every $\z,\eta\in\H$,
\[
\left|\left\langle T_{0}(e_{\l}a)\z,\eta\right\rangle \right|^{2}\leq\left\langle T_{0}(e_{\l}^{2})\eta,\eta\right\rangle \left\langle T_{0}(a^{*}a)\z,\z\right\rangle \le(\left\Vert a\right\Vert \left\Vert \z\right\Vert \left\Vert \eta\right\Vert )^{2}.
\]
Consequently, $\left(T_{0}(e_{\l}a)\right)_{\l\in\mathcal{I}}$ is
bounded by $\left\Vert a\right\Vert $, so by passing to a subnet,
we may assume that it converges ultraweakly to some $c\in\Aa$ with
$\left\Vert c\right\Vert \le\left\Vert a\right\Vert $. For every
$b\in\mathcal{M}$, by \prettyref{prop:GL_i} \prettyref{enu:GL_i_Prop_2_10},
\[
\tr\left(\mathfrak{i}^{(2)}(T_{0}(e_{\l}a))\cdot\mathfrak{i}^{(2)}(b)\right)=\tr\left(T_{0}(e_{\l}a)\cdot\mathfrak{i}^{(1)}(b)\right)\xrightarrow[\l\in I]{}\tr\left(c\cdot\mathfrak{i}^{(1)}(b)\right).
\]
Since $S$ is symmetric and adjoint preserving, the expression on
the left-hand side is also equal to
\begin{align*}
\tr\left(S(\mathfrak{i}^{(2)}(e_{\l}a))\cdot\mathfrak{i}^{(2)}(b)\right) & =\tr\left(\mathfrak{i}^{(2)}(e_{\l}a)\cdot S(\mathfrak{i}^{(2)}(b))\right)\xrightarrow[\l\in I]{}\tr\left(\mathfrak{i}^{(2)}(a)\cdot S(\mathfrak{i}^{(2)}(b))\right)\\&=\tr\left(S(\mathfrak{i}^{(2)}(a))\cdot\mathfrak{i}^{(2)}(b)\right)
 =\tr\left(\mathfrak{i}^{(2)}(T_{0}a)\cdot\mathfrak{i}^{(2)}(b)\right)=\tr\left(T_{0}a\cdot\mathfrak{i}^{(1)}(b)\right)
\end{align*}
(see \prettyref{eq:L2_symmetry_tr}). In conclusion, $c=T_{0}a$, and we deduce that $\left\Vert T_{0}a\right\Vert \leq\left\Vert a\right\Vert $,
as desired.
\end{proof}
\begin{defn}
\label{def:Markov_semigroup}Recall that a \emph{Markov semigroup
on $\Aa$} is a semigroup $\left(T_{t}\right)_{t\ge0}$ of everywhere-defined,
normal Markov operators (i.e., positive contractions) on $\Aa$ such
that $\R_{+}\ni t\mapsto T_{t}a$ is ultraweakly continuous for all
$a\in\mathcal{M}$; and a \emph{Markov semigroup on $\Ltwo{\Aa}$}
is a $C_{0}$-semigroup of Markov operators on $\Ltwo{\Aa}$ \citep[p.~62]{Goldstein_Lindsay__Markov_sgs_KMS_symm_weight} (see also Definition \ref{def:Markov,KMS}).
For $n\in\N$, we define $n$-Markov semigroups by replacing `Markov'
by `$n$-Markov'.
\end{defn}

Using \prettyref{thm:GL_4_7_forward} and \prettyref{prop:GL_4_7_fix_1}
we obtain a corrected replacement of \citep[Theorem 4.9]{Goldstein_Lindsay__Markov_sgs_KMS_symm_weight}. 
\begin{cor}
\label{cor:GL_4_9_fix}If $\left(T_{t}\right)_{t\ge0}$ is a KMS-symmetric
Markov semigroup on $\Aa$, then $\bigl(\widetilde{T}_{t}^{(2)}\bigr)_{t\ge0}$
is a symmetric Markov semigroup on $\Ltwo{\Aa}$. Conversely, if $\left(S_{t}\right)_{t\ge0}$
is a symmetric $2$-Markov semigroup on $\Ltwo{\Aa}$, then there
exists a KMS-symmetric Markov semigroup $\left(T_{t}\right)_{t\ge0}$
on $\Aa$ such that $S_{t}=\widetilde{T}_{t}^{(2)}$ for all $t\ge0$ (in particular,
$\left(S_{t}\right)_{t\ge0}$ consists of contractions).
\end{cor}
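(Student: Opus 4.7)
The forward direction is essentially a pointwise application of \prettyref{thm:GL_4_7_forward} together with routine extension arguments. For each $t \geq 0$, $T_t$ is $2$-integrable and $\widetilde{T}_t^{(2)}$ is a symmetric Markov operator on $\Ltwo{\Aa}$, and in particular a contraction. The semigroup relation $\widetilde{T}_t^{(2)}\widetilde{T}_s^{(2)} = \widetilde{T}_{t+s}^{(2)}$ would be checked on the dense subspace $\mathfrak{i}^{(2)}(\mathcal{M})$ using the defining identity $\widetilde{T}_r^{(2)}\mathfrak{i}^{(2)}(a) = \mathfrak{i}^{(2)}(T_r a)$ and \prettyref{rem:normal_2_integ} to handle the composition (since $T_s a \in \mathcal{M}^{(2)} \subseteq D(\overline{\mathfrak{i}^{(2)}})$), with boundedness extending the equality to all of $\Ltwo{\Aa}$. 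For the $C_0$-property one uses uniform contractivity of the $\widetilde{T}_t^{(2)}$ to reduce to weak continuity at $0^+$ on a dense subspace: for $a,b\in\mathcal{M}$, \prettyref{prop:GL_i} \prettyref{enu:GL_i_Prop_2_10} yields
\[
\bigl\langle\widetilde{T}_t^{(2)}\mathfrak{i}^{(2)}(a),\mathfrak{i}^{(2)}(b^*)\bigr\rangle = \tr\bigl(T_t a \cdot \mathfrak{i}^{(1)}(b)\bigr) \xrightarrow[t\to 0^+]{} \tr\bigl(a \cdot \mathfrak{i}^{(1)}(b)\bigr),
\]
by the ultraweak continuity of $(T_t)_{t\geq 0}$ at $0^+$ on $\mathcal{M}$, since $\mathfrak{i}^{(1)}(b) \in \Aa_*$.

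The converse reduces to a pointwise application of \prettyref{prop:GL_4_7_fix_1}. For each $t \geq 0$, the restriction of $S_t$ to $\mathfrak{i}^{(2)}(\mathcal{M}^{(2)})$ is symmetric and $2$-Markov with that domain (the relevant symmetry and convex-set invariance being inherited from the everywhere-defined symmetric $2$-Markov operator $S_t$), so \prettyref{prop:GL_4_7_fix_1} yields a unique everywhere-defined, normal, KMS-symmetric Markov operator $T_t$ on $\Aa$ with $\widetilde{T}_t^{(2)} \supseteq S_t\big|_{\mathfrak{i}^{(2)}(\mathcal{M}^{(2)})}$. As $\widetilde{T}_t^{(2)}$ and $S_t$ are bounded and agree on the dense subspace $\mathfrak{i}^{(2)}(\mathcal{M})$ (\prettyref{prop:GL_i} \prettyref{enu:GL_i_Prop_2_11}), they coincide on all of $\Ltwo{\Aa}$; in particular each $S_t$ is a contraction, as asserted parenthetically, and $S_0 = I$ forces $T_0 = \i$ by the uniqueness clause.

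It remains to verify the semigroup law for $(T_t)_{t\geq 0}$ and the ultraweak continuity at $0^+$ on $\mathcal{M}$. For the semigroup law, the forward calculation is run in reverse: for $a\in\mathcal{M}$, by \prettyref{rem:normal_2_integ} applied to $T_s$ and $T_t$,
\[
\overline{\mathfrak{i}^{(2)}}(T_t T_s a) = \widetilde{T}_t^{(2)}\overline{\mathfrak{i}^{(2)}}(T_s a) = S_t S_s \mathfrak{i}^{(2)}(a) = S_{t+s}\mathfrak{i}^{(2)}(a) = \overline{\mathfrak{i}^{(2)}}(T_{t+s}a),
\]
so injectivity of $\overline{\mathfrak{i}^{(2)}}$ (\prettyref{prop:GL_i} \prettyref{enu:GL_i_Prop_2_12}), combined with ultraweak density of $\mathcal{M}$ in $\Aa$ and normality of both $T_t T_s$ and $T_{t+s}$, gives $T_t T_s = T_{t+s}$ on $\Aa$. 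For the ultraweak continuity at $0^+$, fix $a \in \mathcal{M}$; then $\{T_t a : t \geq 0\}$ is bounded by $\|a\|$, so it suffices to test against functionals in a norm-dense subset of $\Lone{\Aa}$. For $b \in \mathcal{M}$, \prettyref{prop:GL_i} \prettyref{enu:GL_i_Prop_2_10} and the $C_0$-property of $(S_t)_{t\geq 0}$ give
\[
\tr\bigl(T_t a \cdot \mathfrak{i}^{(1)}(b)\bigr) = \bigl\langle S_t \mathfrak{i}^{(2)}(a), \mathfrak{i}^{(2)}(b^*)\bigr\rangle \xrightarrow[t\to 0^+]{} \tr\bigl(a \cdot \mathfrak{i}^{(1)}(b)\bigr),
\]
and density of $\mathfrak{i}^{(1)}(\mathcal{M})$ in $\Lone{\Aa}$ (\prettyref{prop:GL_i} \prettyref{enu:GL_i_Prop_2_11}) concludes the argument. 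The real work was absorbed into \prettyref{prop:GL_4_7_fix_1}, namely extracting boundedness of the $\Aa$-level operator from the matrix-level $2$-Markov hypothesis; with that in hand, the present corollary is merely the verification that the pointwise construction is compatible with the semigroup and the continuity structures at the two levels.
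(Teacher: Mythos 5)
Your proposal is correct and follows the same route the paper intends: the forward direction is \prettyref{thm:GL_4_7_forward} applied pointwise, the converse is \prettyref{prop:GL_4_7_fix_1} applied pointwise, and the remaining work is the routine transfer of the semigroup law and the continuity at $0^{+}$ between the two levels via \prettyref{rem:normal_2_integ}, \prettyref{prop:GL_i} \prettyref{enu:GL_i_Prop_2_10}--\prettyref{enu:GL_i_Prop_2_12} and uniform boundedness. The paper states this corollary with only a one-line pointer to those two results, so your write-up simply supplies the details it leaves implicit.
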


For a quadratic form $Q$ on a Hilbert space $\H$ and $n\in\N$,
define a quadratic form $Q^{(n)}$ on $\Ltwo{M_{n},tr_{n}}\tensor\H$
by 
\[
Q^{(n)}(\z):=\sum_{i,j=1}^{n}Q(\z_{ij})\quad\text{for }\left(\z_{ij}\right)_{1\leq i,j\leq n}=\z\in\Ltwo{M_{n},tr_{n}}\tensor\H.
\]
\begin{defn}
Let $n\in\N$. A quadratic form $Q$ on $\Ltwo{\Aa}$ is called \emph{$n$-Dirichlet
with respect to $\varphi$} if $Q^{(n)}$ is Dirichlet on $\Ltwo{M_{n}(\Aa)}\cong\Ltwo{M_{n},tr_{n}}\tensor\Ltwo{\Aa}$
with respect to $tr_{n}\tensor\varphi$. 
\end{defn}

Let $\left(S_{t}\right)_{t\ge0}$ be a $C_{0}$-semigroup of selfadjoint
contractions on a Hilbert space $\H$ and $Q$ be the associated (closed
densely-defined) quadratic form. Let $n\in\N$. Then the $C_{0}$-semigroup
$\left(\one_{M_{n}}\tensor S_{t}\right)_{t\ge0}$ of selfadjoint contractions
on $\Ltwo{M_{n},tr_{n}}\tensor\H$ has $Q^{(n)}$ as its associated
quadratic form. When $\H=\Ltwo{\Aa}$, \citep[Theorem 5.7]{Goldstein_Lindsay__Markov_sgs_KMS_symm_weight}
implies that $Q$ is $n$-Dirichlet if and only if $S_{t}$ is $n$-Markov
for all $t\ge0$. 

We use the terminology \emph{completely Markov}, resp.~\emph{completely Dirichlet}, to mean $n$-Markov, resp.~$n$-Dirichlet,
for every $n\in\N$. Finally, taking into account \prettyref{cor:GL_4_9_fix}
and the last paragraph, we obtain the following.
\begin{cor}
\label{cor:corres_compl_Markov}There are $1-1$ correspondences between
the following classes:
\begin{itemize}
\item KMS-symmetric completely Markov semigroups $\left(T_{t}\right)_{t\ge0}$
on $\Aa$;
\item symmetric completely Markov semigroups $\left(S_{t}\right)_{t\ge0}$
on $\Ltwo{\Aa}$;
\item completely Dirichlet forms $Q$ on $\Ltwo{\Aa}$.
\end{itemize}
They are given by: $S_{t}=\widetilde{T}_{t}^{(2)}$ for all $t\ge0$, and $\left(S_{t}\right)_{t\ge0}$
is the $C_{0}$-semigroup (of selfadjoint contractions) on $\Ltwo{\Aa}$
associated to $Q$.
\end{cor}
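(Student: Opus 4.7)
The plan is to assemble this corollary from the two pieces already in place: Corollary~\ref{cor:GL_4_9_fix} (the $n=1,2$ version of the $\Aa$-to-$\Ltwo{\Aa}$ correspondence) and the standard bijection between $C_{0}$-semigroups of selfadjoint contractions on a Hilbert space and closed densely-defined non-negative quadratic forms. The key technical point that makes everything fit is the amplification lemma preceding Proposition~\ref{prop:GL_4_7_fix_1}, which says $\Lp{M_{n}(\Aa)}\cong M_{n}\tensor\Lp{\Aa}$ and $\mathfrak{i}_{tr_{n}\tensor\varphi}^{(p)}$ is compatible with $\mathfrak{i}_{\varphi}^{(p)}$ entrywise; thus passage to $M_{n}(\Aa)$ with the weight $tr_{n}\tensor\varphi$ faithfully encodes $n$-Markovianity on both the algebra and Hilbert space sides.

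For the first-to-second correspondence, suppose $\left(T_{t}\right)_{t\ge0}$ is a KMS-symmetric completely Markov semigroup on $\Aa$. Corollary~\ref{cor:GL_4_9_fix} gives that $(S_{t})_{t\ge 0}:=(\widetilde{T}_{t}^{(2)})_{t\ge 0}$ is a symmetric Markov semigroup on $\Ltwo{\Aa}$. To see it is $n$-Markov for each $n\in\N$, apply the same Corollary to the amplified semigroup $\left(\i_{M_{n}}\tensor T_{t}\right)_{t\ge 0}$ on $M_{n}(\Aa)$: this is KMS-symmetric (with respect to $tr_{n}\tensor\varphi$) and Markov by the completely Markov hypothesis, so its KMS Hilbert space realisation is a symmetric Markov semigroup on $\Ltwo{M_{n}(\Aa)}$; via the amplification identification, this realisation is precisely $\left(\one_{M_{n}}\tensor S_{t}\right)_{t\ge 0}$. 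Conversely, given a symmetric completely Markov semigroup $(S_{t})_{t\ge 0}$ on $\Ltwo{\Aa}$, Proposition~\ref{prop:GL_4_7_fix_1} applied to each $S_{t}$ yields normal, KMS-symmetric Markov $T_{t}$ on $\Aa$ with $\widetilde{T}_{t}^{(2)}\supseteq S_{t}$; uniqueness gives the semigroup law and ultraweak continuity in $t$ on $\mathcal{M}$ (transfer it from the $C_{0}$ property of $(S_{t})_{t\ge 0}$ via duality between $\mathfrak{i}^{(2)}(\mathcal{M})$ and $\Linfty{\Aa}_{*}$). Complete Markovianity of $(T_{t})_{t\ge 0}$ is obtained by the same amplification trick in reverse: for each $n$, $\left(\one_{M_{n}}\tensor S_{t}\right)_{t\ge 0}$ is symmetric and $2$-Markov on $\Ltwo{M_{n}(\Aa)}$ (indeed, $\one_{M_{2}}\tensor(\one_{M_{n}}\tensor S_{t})=\one_{M_{2n}}\tensor S_{t}$), so Proposition~\ref{prop:GL_4_7_fix_1} produces a KMS-symmetric Markov operator on $M_{n}(\Aa)$ which, by uniqueness and the amplification identification, must be $\i_{M_{n}}\tensor T_{t}$.

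For the second-to-third correspondence, recall from Subsection~\ref{sub:Markov__KMS_sym__Dirichlet} the standard bijection sending a $C_{0}$-semigroup $(S_{t})_{t\ge 0}$ of selfadjoint contractions on $\Ltwo{\Aa}$ to its associated closed densely-defined non-negative quadratic form $Q$. Under this bijection, for each $n\in\N$, the amplified $C_{0}$-semigroup $\left(\one_{M_{n}}\tensor S_{t}\right)_{t\ge 0}$ on $\Ltwo{M_{n},tr_{n}}\tensor\Ltwo{\Aa}$ corresponds precisely to the amplified form $Q^{(n)}$, as is immediate from the functional calculus for the (amplified) generator. Applying \citep[Theorem 5.7]{Goldstein_Lindsay__Markov_sgs_KMS_symm_weight} in the von Neumann algebra $M_{n}(\Aa)$, the semigroup $\left(\one_{M_{n}}\tensor S_{t}\right)_{t\ge 0}$ is Markov with respect to $tr_{n}\tensor\varphi$ iff $Q^{(n)}$ is a Dirichlet form with respect to $tr_{n}\tensor\varphi$; that is, $(S_{t})_{t\ge 0}$ is $n$-Markov iff $Q$ is $n$-Dirichlet. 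Quantifying over all $n\in\N$ gives the desired equivalence between symmetric completely Markov semigroups on $\Ltwo{\Aa}$ and completely Dirichlet forms on $\Ltwo{\Aa}$.

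The main obstacle I anticipate is purely bookkeeping: making sure that the various amplifications intertwine correctly with $\mathfrak{i}^{(2)}$ and that the $C_{0}$-property on $\Ltwo{\Aa}$ translates cleanly into point-ultraweak continuity of $(T_{t}a)_{t\ge 0}$ for $a\in\mathcal{M}$ (rather than on a larger domain). Both are handled by the amplification lemma together with Proposition~\ref{prop:GL_i}, so no new ideas beyond what is already in the excerpt are required.
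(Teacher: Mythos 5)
Your proposal is correct and follows essentially the same route as the paper: the corollary is obtained by combining Corollary~\ref{cor:GL_4_9_fix} with the observation (via \citep[Theorem 5.7]{Goldstein_Lindsay__Markov_sgs_KMS_symm_weight} and the identification of the form associated to $\left(\one_{M_{n}}\tensor S_{t}\right)_{t\ge0}$ with $Q^{(n)}$) that $S_{t}$ is $n$-Markov for all $t$ if and only if $Q$ is $n$-Dirichlet, with the amplification lemma ensuring that everything is compatible at each matrix level. You merely make explicit the matrix-amplification bookkeeping that the paper leaves implicit, including the (correct, non-circular) identification of the operator produced by Proposition~\ref{prop:GL_4_7_fix_1} on $M_{n}(\Aa)$ with $\i_{M_{n}}\tensor T_{t}$ via injectivity of $\mathfrak{i}^{(2)}$ and normality.
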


\subsection{Second approach}

As an alternative to \prettyref{prop:GL_4_7_fix_1} we give the next
result. However, it is less practical for our purposes because the
new additional assumption cannot be expressed in terms of quadratic
forms using \citep[Theorem 5.3]{Goldstein_Lindsay__Markov_sgs_KMS_symm_weight}
since convex sets like $\mathfrak{i}^{(2)}(\mathcal{M})$
and $\mathfrak{i}^{(2)}(\left[0,\one\right]_{\mathcal{M}})$ are usually
not closed in $\Ltwo{\Aa}$ by \prettyref{prop:i_p_core}.
\begin{prop}
\label{prop:GL_4_7_fix_2}Let $S$ be a symmetric Markov operator
on $\Ltwo{\Aa}$ with domain $\mathfrak{i}^{(2)}(\mathcal{M}^{(2)})$.
Assume that \emph{$\mathfrak{i}^{(2)}(\mathcal{M})$ is invariant
under $S$}. Then there is a unique everywhere-defined, normal, KMS-symmetric
Markov operator $T$ on $\Aa$ that satisfies $\widetilde{T}^{(2)}\supseteq S$.
\end{prop}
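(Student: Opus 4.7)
The plan is to emulate the proof of \prettyref{prop:GL_4_7_fix_1}, with the invariance of $\mathfrak{i}^{(2)}(\mathcal{M})$ under $S$ replacing the role of $2$-Markovianity in securing boundedness of the induced operator on $\Aa$. First I would define $T_{0}:\mathcal{M}^{(2)}\to\mathcal{M}^{(2)}$ by $\mathfrak{i}^{(2)}(T_{0}a)=S\mathfrak{i}^{(2)}(a)$; this is well defined and injective by \prettyref{prop:GL_i} \prettyref{enu:GL_i_Prop_2_12}, KMS-symmetric via \prettyref{eq:L2_symmetry_tr}, and, because $S$ is Markov, it preserves $\left[0,\one\right]_{\mathcal{M}^{(2)}}$ thanks to \prettyref{eq:key_convex_set}. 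The invariance hypothesis then says exactly that $T_{0}(\mathcal{M})\subseteq\mathcal{M}$.

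The main obstacle, and the one place where the new hypothesis intervenes decisively, will be showing that $T_{0}|_{\mathcal{M}}$ is bounded in the $\Aa$-norm. For $a\in\mathcal{M}^{+}$ with $\left\Vert a\right\Vert\le1$, the relation $a\in\left[0,\one\right]_{\mathcal{M}^{(2)}}$ gives $T_{0}a\in\left[0,\one\right]_{\mathcal{M}^{(2)}}\cap\mathcal{M}=\left[0,\one\right]_{\mathcal{M}}$ by Markovianity and invariance, so that $\left\Vert T_{0}a\right\Vert\le1$. For general $a\in\mathcal{M}$ I would exploit that $\mathcal{M}$ is a $*$-algebra and that $\mathcal{M}^{+}$ is hereditary in $\Aa^{+}$, so the positive and negative parts of $\Ree a$ and $\Img a$ all belong to $\mathcal{M}^{+}$ with norm bounded by $\left\Vert a\right\Vert$; this yields $\left\Vert T_{0}a\right\Vert\le4\left\Vert a\right\Vert$.

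Next I would pass from $T_{0}|_{\mathcal{M}}$ to a normal operator on $\Aa$ by duality. For $b,c\in\mathcal{M}$, KMS-symmetry of $T_{0}$ combined with \prettyref{prop:GL_i} \prettyref{enu:GL_i_Prop_2_10} gives $\tr(\mathfrak{i}^{(1)}(T_{0}b)\cdot c)=\tr(\mathfrak{i}^{(1)}(b)\cdot T_{0}c)$, and the Kaplansky density theorem together with the bound on $\left\Vert T_{0}c\right\Vert$ then forces $\left\Vert\mathfrak{i}^{(1)}(T_{0}b)\right\Vert_{1}\le4\left\Vert\mathfrak{i}^{(1)}(b)\right\Vert_{1}$. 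Hence the densely-defined map $T_{*}:\mathfrak{i}^{(1)}(b)\mapsto\mathfrak{i}^{(1)}(T_{0}b)$ extends to a bounded operator on $\Lone{\Aa}$, and I would put $T:=T_{*}^{*}\in B(\Aa)$, which is automatically normal. A direct computation using KMS-symmetry of $T_{0}$ then confirms $T|_{\mathcal{M}}=T_{0}|_{\mathcal{M}}$, so that $T$ inherits KMS-symmetry.

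It remains to verify that $T$ is Markov and extends $S$ at the $\Ltwo{\Aa}$-level. Positivity of $T$ on $\Aa^{+}$ follows by ultraweak approximation from $\mathcal{M}^{+}$ via normality of $T$; picking a net $(e_{\l})$ in $\left[0,\one\right]_{\mathcal{M}}$ with $e_{\l}\nearrow\one$ ultrastrongly (which exists by semi-finiteness of $\varphi$) and invoking $Te_{\l}=T_{0}e_{\l}\in\left[0,\one\right]_{\mathcal{M}}$ delivers $T(\one)\le\one$, so $T$ is Markov. To obtain $\widetilde{T}^{(2)}\supseteq S$ it suffices to show $Ta=T_{0}a$ for every $a\in\mathcal{M}^{(2)}$: for each $b\in\mathcal{M}$ one has $\tr(Ta\cdot\mathfrak{i}^{(1)}(b))=\tr(a\cdot\mathfrak{i}^{(1)}(T_{0}b))$ from the definition of $T_{*}$ and $\tr(T_{0}a\cdot\mathfrak{i}^{(1)}(b))=\tr(a\cdot\mathfrak{i}^{(1)}(T_{0}b))$ from symmetry of $S$ via \prettyref{eq:L2_symmetry_tr} combined with \prettyref{prop:GL_i} \prettyref{enu:GL_i_Prop_2_10}, and density of $\mathfrak{i}^{(1)}(\mathcal{M})$ in $\Lone{\Aa}$ concludes the argument. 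Uniqueness is automatic: any two candidates coincide with $S$ on $\mathfrak{i}^{(2)}(\mathcal{M}^{(2)})$, so their $\Ltwo{\Aa}$-implementations agree, and the forward direction of \prettyref{thm:GL_4_7_forward} forces the underlying operators to coincide.
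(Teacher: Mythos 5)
Your construction of $T_{0}$, the duality relation, and the concluding uniqueness argument all follow the right lines, but the boundedness step --- the one place where, as you note, the new hypothesis must do real work --- contains a genuine gap. You deduce $\left\Vert T_{0}a\right\Vert \le4\left\Vert a\right\Vert $ for general $a\in\mathcal{M}$ by claiming that the positive and negative parts of $\Ree a$ and $\Img a$ lie in $\mathcal{M}^{+}$ with norm at most $\left\Vert a\right\Vert $. Hereditariness of $\mathcal{M}^{+}$ does not give this: for a self-adjoint $a=b-c$ with $b,c\in\mathcal{M}^{+}$ one only gets $a_{+}=eae\le ebe$, where $e$ is the support projection of $a_{+}$, and for a non-tracial weight $\varphi(ebe)$ is not controlled by $\varphi(b)$ (compressing by a projection can increase the value of a weight arbitrarily). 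In fact $\mathcal{M}=\mathcal{M}_{\varphi}$ need not be closed under taking positive parts at all: on $B(\ell^{2})$ with $\varphi=\operatorname{Tr}(h\,\cdot)$ for a diagonal $h$ with rapidly growing eigenvalues, one can build block-diagonal $b=\sum_{n}\xi_{n}\xi_{n}^{*}$ and $c=\sum_{n}\eta_{n}\eta_{n}^{*}$ with $\varphi(b),\varphi(c)<\infty$ but $\varphi((b-c)_{+})=\infty$. This is essentially the same pitfall (no norm-controlled positive-part decomposition inside $\mathcal{M}^{(p)}$) that caused the gap in \citep[Theorem 4.7]{Goldstein_Lindsay__Markov_sgs_KMS_symm_weight} which this appendix is written to repair, so it cannot be waved through; and since your $\Lone{\Aa}$-estimate and hence the very definition of $T=T_{*}^{*}$ rest on this sup-norm bound, the construction collapses at that point.

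The way out is to postpone the decomposition into positive parts until one is working in $\Aa$ itself, where it is harmless. Concretely: for $a\in\Aa_{+}$ take a net in $\left[0,\left\Vert a\right\Vert \one\right]_{\mathcal{M}}$ converging ultraweakly to $a$; its image under $T_{0}$ stays in $\left[0,\left\Vert a\right\Vert \one\right]_{\mathcal{M}^{(2)}}$ and so has an ultraweak cluster point $Ta$ with $\left\Vert Ta\right\Vert \le\left\Vert a\right\Vert $, and the identity $\tr\left(\mathfrak{i}^{(1)}(b)\cdot Ta\right)=\tr\left(\mathfrak{i}^{(1)}(T_{0}b)\cdot a\right)$ for $b\in\mathcal{M}$ --- this is exactly where the hypothesis $T_{0}(\mathcal{M})\subseteq\mathcal{M}$ enters --- shows $Ta$ is independent of the approximating net. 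Only then does one extend by linearity from $\Aa_{+}$ to $\Aa$, which is where the factor $4$ appears, and a further approximation shows $T_{0}\subseteq T$, so $T_{0}$ is bounded. If you prefer to keep your dual route through $\Lone{\Aa}$, you would have to test $\mathfrak{i}^{(1)}(T_{0}b)$ for $b\in\mathcal{M}^{+}$ only against $c\in\left[0,\one\right]_{\mathcal{M}}$, where your (correct) bound $\left\Vert T_{0}c\right\Vert \le1$ applies, and handle general $b$ by decomposing the functional $\mathfrak{i}^{(1)}(b)$ inside $\Aa_{*}$ rather than decomposing $b$ inside $\mathcal{M}$.
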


\begin{proof}
Recall that since $S$ is symmetric and Markov, there is a linear
map $T_{0}:\mathcal{M}^{(2)}\to\mathcal{M}^{(2)}$ given by $\mathfrak{i}^{(2)}(T_{0}a)=S\left(\mathfrak{i}^{(2)}(a)\right)$,
$a\in\mathcal{M}^{(2)}$, which is KMS-symmetric and preserves the
set $\left[0,\one\right]_{\mathcal{M}^{(2)}}$. By the additional
assumption, $\mathcal{M}$ is invariant under $T_{0}$. Fix $a\in\Aa_{+}$.
Let $\left(a_{\l}\right)_{\l\in\mathcal{I}}$ be a net in $\left[0,\left\Vert a\right\Vert \one\right]_{\mathcal{M}}$
that converges ultraweakly to $a$. Since $\left[0,\one\right]_{\mathcal{M}^{(2)}}$
is invariant under $T_{0}$, $\left(T_{0}a_{\l}\right)_{\l\in\mathcal{I}}$
is a net in $\left[0,\left\Vert a\right\Vert \one\right]_{\mathcal{M}^{(2)}}$,
so one can assume, by passing to a subnet if necessary, that it converges
ultraweakly to some $Ta\in\Aa_{+}$ of norm at most $\left\Vert a\right\Vert $.
For every $b\in\mathcal{M}$, the KMS-symmetry of $T_{0}$, \prettyref{prop:GL_i}
\prettyref{enu:GL_i_Prop_2_10}, and the fact that $T_{0}(b)\in\mathcal{M}$
imply that 
\begin{equation}
\begin{split}\tr\left(\mathfrak{i}^{(1)}(b)\cdot Ta\right) & =\lim_{\l\in\mathcal{I}}\tr\left(\mathfrak{i}^{(1)}(b)\cdot T_{0}(a_{\l})\right)=\lim_{\l\in\mathcal{I}}\tr\left(T_{0}(b)\cdot\mathfrak{i}^{(1)}(a_{\l})\right)\\
 & =\lim_{\l\in\mathcal{I}}\tr\left(\mathfrak{i}^{(1)}(T_{0}(b))\cdot a_{\l}\right)=\tr\left(\mathfrak{i}^{(1)}(T_{0}(b))\cdot a\right).
\end{split}
\label{eq:GL_Thm_4_7__corr_2}
\end{equation}
By density of $\mathfrak{i}^{(1)}(\mathcal{M})$ in $\Aa_{*}$ (\prettyref{prop:GL_i}
\prettyref{enu:GL_i_Prop_2_11}), $Ta$ does not depend on the choice
of $\left(a_{\l}\right)_{\l\in\mathcal{I}}$. The function $T:\Aa_{+}\to\Aa_{+}$
is evidently additive and positively homogeneous, so it extends to
a well-defined positivity preserving linear map $T:\Aa\to\Aa$, obviously
satisfying $\left\Vert T\right\Vert \leq4$.

Clearly $T_{0}|_{\mathcal{M}}\subseteq T$, but in fact $T_{0}\subseteq T$.
Indeed, if $a\in\mathcal{M}_{+}^{(2)}$ and $\left(a_{\l}'\right)_{\l\in\mathcal{I}}$
is a net in $\mathcal{M}$ that converges ultraweakly to $a$ such
that $\left(\mathfrak{i}^{(2)}(a_{\l}')\right)_{\l\in\mathcal{I}}$
converges weakly to $\mathfrak{i}^{(2)}(a)$ (\prettyref{prop:i_p_core}),
then by \prettyref{eq:GL_Thm_4_7__corr_2}, \prettyref{prop:GL_i}
\prettyref{enu:GL_i_Prop_2_10}, and symmetry of $S$ (see \prettyref{eq:L2_symmetry_tr}),
for every $b\in\mathcal{M}$, 
\[
\begin{split}\tr\left(\mathfrak{i}^{(1)}(b)\cdot Ta\right) & =\tr\left(\mathfrak{i}^{(1)}(T_{0}(b))\cdot a\right)=\lim_{\l\in\mathcal{I}}\tr\left(\mathfrak{i}^{(1)}(T_{0}(b))\cdot a_{\l}'\right)=\lim_{\l\in\mathcal{I}}\tr\left(\mathfrak{i}^{(2)}(T_{0}(b))\cdot\mathfrak{i}^{(2)}(a_{\l}')\right)\\
 & =\tr\left(\mathfrak{i}^{(2)}(T_{0}(b))\cdot\mathfrak{i}^{(2)}(a)\right)=\tr\left(\mathfrak{i}^{(2)}(b)\cdot\mathfrak{i}^{(2)}(T_{0}(a))\right)=\tr\left(\mathfrak{i}^{(1)}(b)\cdot T_{0}(a)\right).
\end{split}
\]
This proves that $T_{0}$ is bounded, and the proof is complete.
\end{proof}
\bibliographystyle{myamsalpha}
\bibliography{GenFunc}

\end{document}